\DeclareMathOperator{\re}{Re}
\newcommand{\C}{\mathbb{C}}
\newcommand{\R}{\mathbb{R}}
\newcommand{\Z}{\mathbb{Z}}
\newcommand{\N}{\mathbb{N}}
\newcommand{\ii}{\mathrm{i}}
\newcommand{\F}{F}
\newcommand{\Fbase}{\mathfrak F}
\newcommand{\Fsharp}{\Fbase^{\sharp}}
\renewcommand\paragraph{\@startsection{paragraph}{4}{\z@}%
  {1ex \@plus .2ex \@minus .2ex}%
  {0.8ex \@plus .2ex}%
  {\normalfont\bfseries}}
\theoremstyle{plain}
\newtheorem{theorem}{Theorem}[section]
\newtheorem{conjecture}[theorem]{Conjecture}
\newtheorem{lemma}[theorem]{Lemma}
\newtheorem{proposition}[theorem]{Proposition}
\newtheorem{corollary}[theorem]{Corollary}
\theoremstyle{definition}
\newtheorem{definition}[theorem]{Definition}
\theoremstyle{remark}
\newtheorem{remark}[theorem]{Remark}
\title[Fej\'er--Dirichlet Lift]{The Fej\'er--Dirichlet Lift: Entire Functions and \texorpdfstring{$\zeta$}{zeta}-Factorization Identities}
\author{Sebastian Fuchs \orcidlink{0009-0009-1237-4804}}
\address{Institut f\"ur Informatik, Humboldt-Universit\"at zu Berlin, 10099 Berlin, Germany}
\email{\href{mailto:sebastian.fuchs@hu-berlin.de}{sebastian.fuchs@hu-berlin.de}}  
\date{September 15, 2025}  
\subjclass[2020]{Primary 11M06; Secondary 11A41, 30D20, 42A10}
\keywords{Prime numbers, analytic prime indicator, entire functions, analytic number theory, divisor function, q-analog, divisor polynomial, Fej\'er kernel, trigonometric series, Dirichlet series, L-functions, M\"obius inversion, Euler product, Abel summation, divisor kernel}
\noindent\rule{\textwidth}{0.4pt}\par\medskip}
\begin{document}

\begin{abstract}
A Fej\'er–Dirichlet lift is developed that turns divisor information at the integers into entire interpolants with explicit Dirichlet–series factorizations. For absolutely summable weights the lift interpolates $(a*1)(n)$ at each integer $n$ and has Dirichlet series $\zeta(s)A(s)$ on $\Re s>1$. Two applications are emphasized. First, for $q>1$ an entire function $\mathfrak F(\cdot,q)$ is constructed that vanishes at primes and is positive at composite integers; a tangent-matched variant $\mathfrak F^{\sharp}$ is shown to admit an explicit, effective threshold $P_0(q)$ such that for every odd prime $p\ge P_0(q)$ the interval $(p-1,p)$ is free of real zeros and $x=p$ is a boundary zero of multiplicity two. Second, a renormalized lift for $a=\mu*\Lambda$ produces an entire interpolant of $\Lambda(n)$ and provides a constructive viewpoint on the appearance of $\zeta'(s)/\zeta(s)$ through the FD-lift spectrum. A Polylog–Zeta factorization for the geometric-weight case links $\zeta(s)$ with $\operatorname{Li}_s(1/q)$. All prime/composite statements concern integer arguments. Scripts reproducing figures and numerical checks are provided in a public repository with an archival snapshot.
\end{abstract}

\maketitle


\paragraph*{Scope.}
All claims about primes or composites are asserted strictly at \emph{integer arguments}. Analytic extensions are used only to control neighborhoods of integers; no claim is made that their complex zero sets coincide with the set of prime numbers.

\clearpage
\tableofcontents
\bigskip
\clearpage

\section{Introduction}

Connecting the discrete world of prime numbers with the continuous framework of analysis is considered a central challenge in analytic number theory. Such a connection should not only allow for the identification of integers with specific arithmetic properties but also provide structural insight through its analytic behavior. This paper introduces the 'Fej\'er–Dirichlet Lift', that offers a new perspective on this challenge. It transforms discrete divisor information into a smooth, entire function, analogous to how a Fourier series turns a sequence of coefficients into a periodic function.

Based on this lift, two specific entire indicators are developed that interpolate the prime/composite pattern at integer arguments and admit explicit spectral factorizations. The original indicator, $\mathfrak F(z,q)$, is a normalized superposition of Fej\'er filters with a geometric weight. A refined, tangent–matched variant, $\Fsharp(z,q)$, augments this construction with a small periodic term. This adjustment is central to the analysis and allows for precise control over the function's real-zero geometry: by enforcing a vanishing slope at every integer, it structurally controls the function's real-zero geometry near primes and eliminates additional "companion zeros" that appear in simpler models.

\paragraph{Minimal notation and parameter regime}
The Fej\'er kernel is
\[
F(z,i)\;=\;\left(\frac{\sin(\pi z)}{\sin(\pi z/i)}\right)^{\!2}
\;=\;i\;+\;2\sum_{k=1}^{i-1}(i-k)\cos\!\bigl(2\pi k z/i\bigr),
\qquad
\phi_i(z):=\frac{F(z,i)}{i^{2}}.
\]
At integers, $\phi_i(n)=\mathbf 1_{\,i\mid n}$, so the Fej\'er kernels act as divisor filters.
For an arithmetic weight $a:\mathbb N\to\mathbb C$ with Dirichlet series $A(s)=\sum_{n\ge1} a(n)n^{-s}$, the Fej\'er–Dirichlet lift is
\[
\mathcal T_a(z)\;=\;\sum_{i\ge1} a(i)\,\phi_i(z),
\quad\text{which interpolates}\quad
\mathcal T_a(n)=(a*1)(n).
\]
Here $\zeta(s)$ denotes the Riemann zeta function and Dirichlet convolution is written $*$.
The parameter regime is $|q|>1$ unless stated otherwise; for $q>1$ the branch $q^{-z}=e^{-z\log q}$ (principal real $\log q$) is used; for $q=-1$ all Dirichlet–series identities are taken in the Abel sense; for $q=-Q<-1$ with $Q>1$ the weighted alternating Dirichlet series $\eta_Q$ appears.
Complete conventions and branch details are collected in the appendix.

\paragraph*{Quick reference.}
A one–page quick reference and a compact symbol table are provided in the appendix for quick reference; see Appendix~\ref{app:quickref} and Table~\ref{app:symbols}.

\paragraph{Integer anchors and two indicators.}
The core of the construction is a weighted superposition of Fej\'er divisor filters, $\phi_i(z)=F(z,i)/i^{2}$. Geometric weights $q^{-i}$ are used to ensure convergence for a parameter $q>1$, leading to the following definition for the sum term:
\[
S_q(x):=\sum_{i\ge2} q^{-i}\,\phi_i(x),\qquad q>1.
\]
The original indicator is then defined by subtracting a simple corrector:
\[
\mathfrak F(x,q)=(q-1)q\Big(S_q(x)-q^{-x}\Big),
\]
so that $\mathfrak F(p,q)=0$ for primes and $\mathfrak F(m,q)>0$ for composites $m\ge4$.
The tangent–matched variant, which is the main focus for real-axis analysis, incorporates a periodic normalizer:
\[
\Fsharp(x,q)=(q-1)q\Big(S_q(x)-q^{-x}\big(1+(\log q)\,S_1(x)\big)\Big),
\quad S_1(x):=\tfrac{\sin(2\pi x)}{2\pi},
\]
which preserves the same integer values and additionally satisfies $\partial_x\Fsharp(n,q)=0$ for all integers $n$.

\paragraph{Main statements.}
The core results of this paper establish the Fejér–Dirichlet Lift as a versatile framework and apply it to construct novel prime indicators with precisely specifiable analytic properties. The main contributions are summarized below:

\begin{itemize}
    \item \textbf{The FD-Lift (local $\leftrightarrow$ spectral):} For weights $a:\mathbb N\to\mathbb C$ with an absolutely convergent Dirichlet series $A(s)$, an entire function $\mathcal T_a$ is constructed. It provides a bridge by interpolating the divisor sum $\mathcal T_a(n)=(a*1)(n)$ at integers while possessing a factorized Dirichlet series $\sum_{n\ge1}\mathcal T_a(n)n^{-s}=\zeta(s)\,A(s)$. See Theorem~\ref{thm:fd-lift-core} (Sec.~\ref{sec:fd-lift-main}).

    \item \textbf{On Zero-Free Intervals for $\Fsharp$ Near Primes:} For the tangent-matched indicator $\Fsharp(x,q)$ and any fixed $q>1$, an explicit threshold $P_0(q)$ is established. For every odd prime $p\ge P_0(q)$, the open interval $(p-1,p)$ is proven to be free of real zeros, and the prime $p$ itself is a boundary zero of multiplicity exactly two. See Theorems~\ref{thm:no-companions} and \ref{thm:real-zero-structure} (Sec.~\ref{sec:no-companions-Fsharp}).

    \item \textbf{Quantifying Companion Zeros for the Original Indicator $\F$:} In contrast, the original indicator $\F(\cdot,q)$ exhibits a unique (under explicit hypotheses) "companion" zero $x_p(q)$ in the interval $(p-1,p)$. A specific asymptotic behavior for its displacement is derived, decaying exponentially like $q^{-p}$:
    \[
    p-x_p(q)=\frac{\log q}{K(q,p)}\,q^{-p}\bigl(1+O(q^{-p})\bigr),
    \quad
    K(q,p)=\tfrac12\Big(S_q''(p)-(\log q)^2\,q^{-p}\Big).
    \]
    See Theorem~\ref{thm:existence-left-companion}, Proposition~\ref{prop:disp-asymp}, and Appendix~\ref{app:real-zero-original}.

    \item \textbf{A Polylog–$\zeta$ Factorization Identity:} The framework yields a new spectral identity. The Dirichlet series of the constructed prime indicators is shown to factorize into a product involving the Riemann zeta function $\zeta(s)$ and the polylogarithm $\operatorname{Li}_s(1/q)$.
    See Theorem~\ref{thm:polylog-zeta} (Sec.~\ref{sec:application-key-results}).

    \item \textbf{Extension to Alternating and Negative Parameters:} The framework is extended beyond $q>1$. For $q=-1$, the Dirichlet series connect to the eta function $\eta(s)$, and for $q<-1$, to a weighted alternating variant, while preserving the core analytic properties. See Propositions~\ref{prop:qminus1-dirichlet} and \ref{prop:qlessminus1-dirichlet} (Sec.~\ref{sec:qlessminus1}).
\end{itemize}

\paragraph{Method overview.}
The zero–free prime windows for $\Fsharp$ follow from a three–window decomposition around each prime $p$ that combines a left bound from the $i=2$ harmonic, a middle Fej\'er–mass estimate, and a right quadratic curvature bound; full details and constants appear in Section~\ref{sec:no-companions-Fsharp} and Appendix~\ref{app:real-zero-fsharp}.

\paragraph{Spectral identities and analyticity.}
Both indicators arise from a Fej\'er–Dirichlet Lift (FD–Lift), yielding integer convolution on the value side and a Dirichlet product on the spectral side:
\[
\sum_{n\ge2}\frac{\mathfrak F(n,q)}{n^{s}}
=(q-1)q\,(\zeta(s)-1)\Big(\operatorname{Li}_s(1/q)-q^{-1}\Big),\qquad \Re s>1,
\]
with analogous identities for $\Fsharp$ (Theorem~\ref{thm:polylog-zeta}).
Analyticity in $z$ for $|q|>1$ follows by uniform convergence on compacta via the Weierstrass $M$–test (Appendix~\ref{app:analyticity}).

\paragraph{Parameter regimes and extensions.}
The primary regime is real $q>1$, which ensures composite positivity together with the prime anchors and the zero–free statement for $\Fsharp$.
Alternating/negative parameters ($q=-1$ or $q<-1$) preserve entire–ness and prime vanishing at integers while composite positivity may fail; the Dirichlet factorization then involves weighted alternating eta functions (Appendix~\ref{app:q-negative}).

\paragraph{Contributions.}
(1) Entire Fej\'er–based interpolants with controlled local geometry at integers;
(2) a tangent–matched corrector that eliminates interior zeros near primes beyond an explicit threshold;
(3) explicit, effective constants in a three–window framework;
(4) FD–Lift identities tying divisor filters to Polylog–Zeta factors;
(5) reproducible numerics and verification with conservative bounds.

\paragraph{Organization and reader’s guide.}
Section~\ref{sec:no-companions-Fsharp} states the zero–free prime windows for $\Fsharp$.
FD–Lift proofs and spectral identities are collected in Appendix~\ref{app:fdlift-proofs}; analyticity tools in Appendix~\ref{app:analyticity}.
The real–zero analysis for $\mathfrak F$ (companion zeros and displacement) appears in Appendix~\ref{app:real-zero-original}; constants and thresholds for $\Fsharp$ in Appendix~\ref{app:real-zero-fsharp}.
A one–page quick reference and symbol glossary are provided in Appendix~\ref{app:quickref}.
Code and data availability, including verification scripts, are documented in the dedicated section.

\paragraph{Acknowledgments.}
Gratitude is expressed to \emph{Mihai Prunescu} for stimulating suggestions and an inspiring conversation that helped clarify several aspects of this work.

\section{The Core Mechanism: A Regularized Divisibility Test}

\subsection{A trigonometric representation of trial division}
The foundational idea, detailed in~\cite{fuchs2025}, is to express the divisibility of a number $z$ by an integer $i$ using the quotient $Q(z,i) = \frac{\sin^2(\pi z)}{\sin^2(\pi z/i)}$. This form becomes indeterminate ($0/0$) precisely when $i$ is a divisor of an integer $z$.

\subsection{Regularization via the Fej\'er kernel}
The indeterminacy is resolved by invoking the identity for the Fej\'er kernel, a classical construction from Fourier analysis. This identity transforms the indeterminate quotient into a finite, and therefore everywhere-defined, cosine sum. This leads to the definition of the essential component of the construction presented herein, a function $F(z,i)$ which is analytic for all $z \in \C$.

\begin{definition}[The Fej\'er Kernel Term]
For any $z \in \C$ and integer $i \ge 2$, the function $F(z,i)$ is defined as
\[F(z,i) = i+2\sum_{k=1}^{i-1}(i-k)\cos\left(\frac{2\pi k z}{i}\right).\]
\end{definition}
This function forms the fundamental component of the construction presented herein. For an integer $n$, it has the property that $F(n,i) = i^2$ if $i$ is a divisor of $n$, and $F(n,i) = 0$ otherwise. This property follows directly from the evaluation of finite trigonometric sums.
Case 1 ($i$ divides $n$): Let $n=im$ for some integer $m$. The argument of the cosine becomes $\frac{2\pi k (im)}{i} = 2\pi km$, which is an integer multiple of $2\pi$. Thus, $\cos(\dots)=1$ for all $k$. The sum becomes $F(n,i) = i + 2\sum_{k=1}^{i-1}(i-k) = i + 2\left(i(i-1) - \frac{i(i-1)}{2}\right) = i + i(i-1) = i^2$.
Case 2 ($i$ does not divide $n$): The sum is a classical identity related to the Dirichlet kernel. For any integer $n$ not divisible by $i$, $\sum_{k=1}^{i-1} \cos\!\bigl(\frac{2\pi kn}{i}\bigr) = -1$ and $\sum_{k=1}^{i-1} k \cos\!\bigl(\frac{2\pi kn}{i}\bigr) = -\frac{i}{2}$ hold. Substituting these into the definition yields $F(n,i) = i + 2\left(i\sum \cos(\dots) - \sum k\cos(\dots)\right) = i + 2(i(-1) - (-\frac{i}{2})) = i - 2i + i = 0$. A derivation follows from geometric series identities and taking real parts at $x=e^{2\pi \ii n/i}\neq 1$; see e.g.~\cite[Ch.~3]{davenport2000}, \cite[Ch.~I]{zygmund2002}, \cite[Ch.~I]{montgomeryvaughan2007}. For completeness, proofs are recorded in the Appendix.
This discrete arithmetic behavior of an analytic function represents the central mechanism of this work. The Fej\'er identity establishes a connection, transforming the analytic concept of a 'relative divisibility remainder', encapsulated by the quotient $Q(z,i)$, into a purely arithmetic output for integer arguments: either zero or the square of the divisor $i$. This direct equivalence motivates its use as the core component for the subsequent constructions.

\begin{proposition}[Closed form of the Fej\'er term]
\label{prop:fejer-closed}
For $i\ge2$ and $z\in\C$,
\[
F(z,i)=\sum_{k=-(i-1)}^{i-1}(i-|k|)\,e^{2\pi \ii k z/i}
=\left(\frac{\sin(\pi z)}{\sin(\pi z/i)}\right)^2,
\]
where the apparent poles on the right-hand side are removable; the value at $z\in i\Z$ equals $i^2$.
\end{proposition}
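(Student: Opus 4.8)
The plan is to prove both displayed equalities by recognizing the triangular weights $i-|k|$ as the autocorrelation of a finite geometric sum and then evaluating that sum in closed form. First I would pass to the symmetric exponential form: pairing the $k$ and $-k$ terms in the definition, the $k=0$ term contributes $i$ and each pair contributes $2(i-k)\cos(2\pi kz/i)$, so $i+2\sum_{k=1}^{i-1}(i-k)\cos(2\pi kz/i)=\sum_{k=-(i-1)}^{i-1}(i-|k|)\,e^{2\pi\ii kz/i}$; this gives the first equality and exhibits $F(\cdot,i)$ as a trigonometric polynomial, hence entire. Next, set $G(z):=\sum_{j=0}^{i-1}e^{2\pi\ii jz/i}$, an entire function. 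Expanding $G(z)G(-z)=\sum_{0\le j,\ell\le i-1}e^{2\pi\ii(j-\ell)z/i}$ and counting solutions of $j-\ell=k$ with $0\le j,\ell\le i-1$, the coefficient of $e^{2\pi\ii kz/i}$ is $i-|k|$, so $F(z,i)=G(z)G(-z)$ identically on $\C$.

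For the second equality, restrict first to $z\notin i\Z$, so that $w:=e^{2\pi\ii z/i}\ne1$ and the geometric series sums: $G(z)=(w^{i}-1)/(w-1)=(e^{2\pi\ii z}-1)/(e^{2\pi\ii z/i}-1)$, and similarly $G(-z)$ with $w$ replaced by $w^{-1}$. Multiplying and using $2-2\cos\theta=4\sin^{2}(\theta/2)$,
\[
G(z)G(-z)=\frac{(e^{2\pi\ii z}-1)(e^{-2\pi\ii z}-1)}{(e^{2\pi\ii z/i}-1)(e^{-2\pi\ii z/i}-1)}=\frac{2-2\cos(2\pi z)}{2-2\cos(2\pi z/i)}=\Big(\frac{\sin(\pi z)}{\sin(\pi z/i)}\Big)^{2},
\]
which is the claimed identity on $\C\setminus i\Z$. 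Since the left-hand side $F(\cdot,i)=G(\cdot)G(-\cdot)$ is entire while the right-hand side is a priori only meromorphic with possible singularities on $i\Z$, those singularities must be removable and the identity extends to all of $\C$. To identify the removable value at $z=im\in i\Z$, I would evaluate the entire side directly: $G(im)=\sum_{j=0}^{i-1}e^{2\pi\ii jm}=i$, whence $F(im,i)=G(im)G(-im)=i^{2}$; this also matches the L'H\^opital limit, since $\sin^{2}(\pi z)$ and $\sin^{2}(\pi z/i)$ each vanish to order two at $z=im$ with ratio of leading coefficients $(\pi/(\pi/i))^{2}=i^{2}$.

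I do not anticipate a genuine obstacle here: the argument is a classical Fej\'er-kernel computation. The one point that needs care is that the closed-form geometric sum fails exactly on $i\Z$ (where $w=1$), so the passage from $\C\setminus i\Z$ to all of $\C$, together with the identification of the value $i^{2}$, must be carried out on the entire side $G(z)G(-z)$ rather than on the meromorphic expression; and the reduction of the definition's cosine sum to the symmetric exponential sum should be recorded explicitly so that the statement connects back to the Definition immediately preceding it.
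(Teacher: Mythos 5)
Your proposal is correct and follows the same classical Fej\'er-kernel computation that the paper invokes: the paper's proof is only a sketch citing the standard identity with $N=i-1$ and handling removability at $z\in i\Z$ by first-order expansions, while you supply the details via the factorization $F(z,i)=G(z)G(-z)$ with $G(z)=\sum_{j=0}^{i-1}e^{2\pi\ii jz/i}$, the coefficient count $i-|k|$, and the geometric-series evaluation off $i\Z$. The only (harmless) difference is that you identify the value $i^{2}$ at $z\in i\Z$ by evaluating the entire side $G(im)G(-im)=i^2$ directly rather than by the Taylor/L'H\^opital expansion the paper alludes to; both are valid, and your argument is complete.
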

\begin{proof}[Sketch]
Standard Fej\'er kernel identity with $N=i-1$; removability at $z\in i\mathbb{Z}$ via first-order expansions.
Full proof: Appendix~\ref{app:analyticity}.
\end{proof}

\begin{remark}[Relation to the classical Fej\'er kernel]
With $N=i-1$ and $\theta=2\pi z/i$, the normalized Fej\'er kernel is
\[
K_N(\theta)=\frac{1}{N+1}\left(\frac{\sin\big((N+1)\theta/2\big)}{\sin(\theta/2)}\right)^{\!2}
=\sum_{k=-N}^{N}\Bigl(1-\frac{|k|}{N+1}\Bigr)e^{\ii k\theta}.
\]
Accordingly,
\[
F(z,i)=\left(\frac{\sin(\pi z)}{\sin(\pi z/i)}\right)^{\!2}
=(N+1)\,K_N(2\pi z/i).
\]
This identification explains the evenness around integers and the quadratic contact at $z\in\mathbb Z$.
\end{remark}

\noindent\textbf{Remark (Divisor-filter).} The identity $F(n,i)/i^2=\mathbf{1}_{i\mid n}$ shows that $F$ plays the role of a \emph{divisor filter} at integer inputs.
This role underlies the Fej\'er--Dirichlet Lift developed in Section~\ref{sec:fd-lift-main}, where divisor filtering is combined with Dirichlet convolution and M\"obius inversion.

\medskip
\noindent\textbf{Convention.} It is convenient to set $F(z,1):=1$ so that the divisor-filter identity extends to $i=1$.


\section{The General Fej\'er--Dirichlet Lift (FD-Lift)}\label{sec:fd-lift-main}

\par\smallskip
\noindent\textit{In this section the FD-lift is formulated, the divisor-sum anchor at integers is established, and the Dirichlet-series factorization $\sum_{n\ge1}\mathcal T_a(n)n^{-s}=\zeta(s)A(s)$ is recorded, together with the polylog–zeta specialization for $\mathfrak S$ and $\mathfrak F$.}
\par\medskip

\subsection*{Standing assumptions and scope}
\noindent Unless stated otherwise, the following assumptions and conventions are used in this section:
\begin{itemize}
  \item If $\sum_{i\ge1}|a(i)|<\infty$, then $\mathcal T_a(z)=\sum_{i\ge1} a(i)\,F(z,i)/i^2$ converges locally uniformly on $\C$ and defines an entire function of order $\le 1$ and exponential type $\le 2\pi$.
  \item If only $\sum_{i\ge1} |a(i)|\,i^{-\sigma}<\infty$ holds for some $\sigma>1$, then Abel-regularized or renormalized versions of the lift are used when entire-ness is required away from the integers (see the renormalized constructions later in the paper).
  \item The Dirichlet-series identity $\sum_{n\ge1}\mathcal T_a(n)n^{-s}=\zeta(s)A(s)$ is asserted on the half-plane of absolute convergence of $A(s)=\sum_{n\ge1}a(n)n^{-s}$ (and extended by analytic continuation when available).
\end{itemize}

\begin{quote}\small
\textbf{Bridge at a glance.} For weights $a:\mathbb N\to\mathbb C$,
\[
\text{local:}\quad \mathcal T_a(n)=(a*1)(n)
\quad \text{corresponds to} \quad
\text{spectral:}\quad \sum_{n\ge1}\frac{\mathcal T_a(n)}{n^s}=\zeta(s)\,A(s),
\]
where $A(s)=\sum_{n\ge1}a(n)n^{-s}$. The kernel $F(z,i)/i^2$ acts as a divisor filter at integers, while Dirichlet convolution transfers to multiplication in the Dirichlet–series domain.
\end{quote}

This section introduces the general framework of the Fej\'er--Dirichlet Lift (FD-Lift). The method constructs an entire function $\mathcal{T}_a(z)$ from a given arithmetic sequence of weights $a(n)$. This function serves as an analytic interpolant for the Dirichlet convolution $(a*1)(n)$ and possesses a Dirichlet series that factors in terms of the Riemann Zeta function.
This section abstracts the mechanism of Section~\ref{sec:application-prime-indicator} into a linear operator $\mathcal{T}_a$.

\subsection{Definition and Core Properties}

The construction of the lift is a linear superposition of the fundamental Fej\'er kernel terms, weighted by the arithmetic information encoded in a sequence $a:\N \to \C$. The logic follows from the observation that the term $F(z,i)/i^2$ serves as an analytic continuation of the divisor-indicator function $\mathbf{1}_{i|n}$.

\begin{definition}[The General Fej\'er--Dirichlet Lift]
For a complex sequence of weights $a:\N \to \C$, the FD-Lift $\mathcal{T}_a(z)$ is defined as
\[
\mathcal{T}_a(z) := \sum_{i=1}^{\infty} \frac{a(i)}{i^2} F(z,i),\qquad F(z,1):=1.
\]
\end{definition}

If the series of weights converges absolutely, $\sum_{i=1}^{\infty} |a(i)| < \infty$, the series defining $\mathcal{T}_a(z)$ converges uniformly on compact subsets of $\C$. Since each term in the sum is an entire function, the limit function $\mathcal{T}_a(z)$ is, by the Weierstrass theorem on uniformly convergent series of analytic functions, an entire function of order at most $1$ and exponential type at most $2\pi$.

The constructed function $\mathcal{T}_a(z)$ possesses two foundational properties that establish its role as a bridge between the local space of integers and the spectral space of Dirichlet series. These are summarized in the following theorem.

\begin{lemma}[Growth bound and exponential type]\label{lem:growth-type}
For $z=x+\mathrm{i}y\in\C$ and integer $i\ge2$ the Fej\'er–Dirichlet kernel satisfies
\[
\bigl|F(z,i)\bigr|\;\le\; i^2\,\cosh(2\pi|y|).
\]
Consequently, if $\sum_{i\ge1}|a(i)|<\infty$, then the FD-Lift
\[
\mathcal{T}_a(z)\;=\;\sum_{i\ge1} a(i)\,\frac{F(z,i)}{i^2}
\]
obeys the global bound
\[
\bigl|\mathcal{T}_a(x+\mathrm{i}y)\bigr|\;\le\;\Bigl(\sum_{i\ge1}|a(i)|\Bigr)\,\cosh(2\pi|y|),
\]
and in particular $\mathcal{T}_a$ is an entire function of order at most $1$ and exponential type at most $2\pi$, since $\cosh(2\pi|y|)\le \tfrac12(e^{2\pi|y|}+e^{-2\pi|y|})$ provides a global $e^{2\pi|y|}$–bound along vertical lines.
\end{lemma}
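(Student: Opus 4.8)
The plan is to work directly from the finite exponential–sum representation of the Fej\'er term established in Proposition~\ref{prop:fejer-closed}, namely
\[
F(z,i)=\sum_{k=-(i-1)}^{i-1}(i-|k|)\,e^{2\pi\mathrm{i}kz/i},
\]
and apply the triangle inequality. Writing $z=x+\mathrm{i}y$, each exponential has modulus $\bigl|e^{2\pi\mathrm{i}kz/i}\bigr|=e^{-2\pi ky/i}$, so grouping the $\pm k$ terms (the $k=0$ term contributes $i$) yields
\[
\bigl|F(z,i)\bigr|\;\le\; i+2\sum_{k=1}^{i-1}(i-k)\cosh\!\bigl(2\pi ky/i\bigr).
\]

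Next I would use that $\cosh$ is even and increasing on $[0,\infty)$ together with $0<k/i<1$ for $1\le k\le i-1$, so that $\cosh(2\pi ky/i)=\cosh(2\pi k|y|/i)\le\cosh(2\pi|y|)$, and also $1\le\cosh(2\pi|y|)$. Factoring $\cosh(2\pi|y|)$ out of every term gives
\[
\bigl|F(z,i)\bigr|\;\le\;\cosh(2\pi|y|)\Bigl(i+2\sum_{k=1}^{i-1}(i-k)\Bigr)\;=\;i^{2}\cosh(2\pi|y|),
\]
the last equality being the elementary identity $i+2\sum_{k=1}^{i-1}(i-k)=i^{2}$ (the value $F(n,i)$ takes when $i\mid n$). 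The convention $F(z,1):=1$ makes the bound hold trivially for $i=1$ as well, since $1\le\cosh(2\pi|y|)$.

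The bound for the lift then follows by term-by-term estimation: under $\sum_{i\ge1}|a(i)|<\infty$ the series $\sum_i a(i)F(z,i)/i^2$ converges absolutely and
\[
\bigl|\mathcal{T}_a(z)\bigr|\;\le\;\sum_{i\ge1}|a(i)|\,\frac{\bigl|F(z,i)\bigr|}{i^{2}}\;\le\;\cosh(2\pi|y|)\sum_{i\ge1}|a(i)|.
\]
Finally, from $\cosh(2\pi|y|)\le e^{2\pi|y|}\le e^{2\pi|z|}$ one gets $M(r):=\max_{|z|=r}\bigl|\mathcal{T}_a(z)\bigr|\le\bigl(\sum_i|a(i)|\bigr)e^{2\pi r}$, so $\limsup_{r\to\infty} r^{-1}\log M(r)\le 2\pi$; hence $\mathcal{T}_a$ has exponential type $\le 2\pi$ and therefore order $\le 1$. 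I do not expect a genuine obstacle here: the only point needing care is that after the triangle inequality the argument of the hyperbolic cosine is $2\pi k|y|/i$ (so the monotonicity bound uses $k\le i-1<i$), and the rest is the Weierstrass $M$-test and a standard order/type estimate along vertical lines.
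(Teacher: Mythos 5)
Your proof is correct and follows essentially the same route as the paper: bounding each harmonic of the Fej\'er kernel by $\cosh(2\pi k|y|/i)\le\cosh(2\pi|y|)$ (your $\pm k$ exponential grouping is just the paper's $|\cos(x+\mathrm{i}y)|\le\cosh(y)$ step in disguise), using $i+2\sum_{k=1}^{i-1}(i-k)=i^2$, and then summing with weights $|a(i)|/i^2$. Your explicit $M(r)\le C e^{2\pi r}$ conclusion matches the paper's accompanying remark on order and type, so nothing is missing.
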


\begin{remark}[Order $\le 1$ and exponential type $\le 2\pi$]
An entire function $H$ has exponential type at most $T$ if for every $\epsilon>0$ there exists $C_\epsilon$ with $|H(x+\ii y)|\le C_\epsilon\,e^{(T+\epsilon)|y|}$ for all $x,y\in\mathbb R$ (see, e.g., Levin~\cite{levin1996}). Lemma~\ref{lem:growth-type} yields 
\[
|\mathcal T_a(x+\ii y)|\le \Bigl(\sum |a(i)|\Bigr)\cosh(2\pi|y|)\le C\,e^{2\pi|y|}
\qquad(x,y\in\mathbb R).
\]
Since $|y|\le |z|$ for $z=x+\ii y$, this immediately gives the global bound
\[
|\mathcal T_a(z)|\le C\,e^{2\pi|z|}\qquad(z\in\mathbb C),
\]
so $\mathcal T_a$ is of order at most $1$ and exponential type at most $2\pi$ in the usual radial sense. The geometric corrector $z\mapsto q^{-z}=e^{-z\log q}$ (used elsewhere in the paper) has modulus $|q^{-x-\ii y}|=q^{-x}$ along vertical lines and therefore exponential type $0$; it does not affect the type when combined linearly with Fej\'er superpositions.
\end{remark}

\begin{proof}
Using the cosine series $F(z,i)= i+2\sum_{k=1}^{i-1}(i-k)\cos\!\bigl(2\pi k z/i\bigr)$ and $|\cos(x+\mathrm{i}y)|\le\cosh(y)$ gives
\[
|F(z,i)|\;\le\; i\cdot\cosh(0)\;+\;2\sum_{k=1}^{i-1}(i-k)\cosh\!\Bigl(\tfrac{2\pi k}{i}|y|\Bigr)
\;\le\; i\;+\;2\sum_{k=1}^{i-1}(i-k)\cosh(2\pi|y|).
\]
Since $\sum_{k=1}^{i-1}(i-k)=\tfrac{i(i-1)}{2}$, it follows that $|F(z,i)|\le i^2\cosh(2\pi|y|)$. Summation with weights $|a(i)|/i^2$ yields the claimed bound for $\mathcal{T}_a$. The growth estimate implies order $\le 1$ and exponential type $\le 2\pi$.
\end{proof}

\begin{theorem}[Fundamental Properties of the FD-Lift]\label{thm:fd-lift-core}
Let $\mathcal{T}_a(z)$ be the FD-Lift for a sequence of weights $a(n)$ whose associated Dirichlet series is $A(s) = \sum_{n=1}^\infty a(n)n^{-s}$. The following properties hold:
\begin{enumerate}
    \item \textbf{(Local Property: Integer Values)} For any integer $n \in \N$, the value of the function is equal to the Dirichlet convolution of $a$ and the unit function $1(n)=1$:
    \[
    \mathcal{T}_a(n) = \sum_{d|n} a(d) = (a*1)(n).
    \]
    \item \textbf{(Spectral Property: Dirichlet Series)} In the half-plane of absolute convergence, the Dirichlet series of the sequence $\mathcal{T}_a(n)$ is the product of the Riemann Zeta function and the Dirichlet series of the weights:
    \[
    \sum_{n=1}^{\infty}\frac{\mathcal{T}_a(n)}{n^s} = \zeta(s)A(s).
    \]
    This identity holds for all $s$ with $\Re s>\max\{1,\sigma_0\}$, where $A(s)=\sum_{n\ge1} a(n)n^{-s}$ converges absolutely on $\Re s>\sigma_0$; extension to other $s$ follows by analytic continuation, when available.
\end{enumerate}
\end{theorem}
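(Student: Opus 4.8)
The plan is to derive both parts from the divisor--filter identity $F(n,i)/i^{2}=\mathbf 1_{i\mid n}$ --- valid for every integer $n$ and every $i\ge1$ under the convention $F(z,1):=1$ --- together with the classical fact that Dirichlet convolution corresponds to multiplication of Dirichlet series. Part~(1) reduces to a term--by--term evaluation at integers, and Part~(2) then follows by inserting Part~(1) and unfolding the convolution, with a routine absolute--convergence argument licensing the rearrangement.

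For Part~(1) I would fix an integer $n\ge1$ and evaluate the partial sums $\sum_{i=1}^{N}a(i)F(z,i)/i^{2}$ whose pointwise limit defines $\mathcal{T}_a$. By the divisor--filter identity $F(n,i)/i^{2}=\mathbf 1_{i\mid n}$, so for every $N\ge n$ the $N$th partial sum equals $\sum_{i\le N,\ i\mid n}a(i)=\sum_{d\mid n}a(d)$, a fixed finite quantity independent of $N$; letting $N\to\infty$ gives $\mathcal{T}_a(n)=\sum_{d\mid n}a(d)=(a*1)(n)$. No convergence question arises, since only the finitely many indices $i\mid n$ contribute at an integer argument; in particular this step does not even require $\sum_i|a(i)|<\infty$.

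For Part~(2) I would substitute the value from Part~(1) and write each $n$ uniquely as $n=dm$ with $d,m\ge1$:
\[
\sum_{n\ge1}\frac{\mathcal{T}_a(n)}{n^{s}}
=\sum_{n\ge1}\frac{(a*1)(n)}{n^{s}}
=\sum_{n\ge1}\frac1{n^{s}}\sum_{d\mid n}a(d)
=\sum_{d\ge1}\sum_{m\ge1}\frac{a(d)}{(dm)^{s}}
=\Bigl(\sum_{d\ge1}\frac{a(d)}{d^{s}}\Bigr)\Bigl(\sum_{m\ge1}\frac1{m^{s}}\Bigr)
=A(s)\,\zeta(s).
\]
The interchange of the double summation is justified by absolute convergence: for $\sigma=\Re s>\max\{1,\sigma_0\}$ one has $\sum_{d,m\ge1}|a(d)|(dm)^{-\sigma}=\bigl(\sum_{d\ge1}|a(d)|d^{-\sigma}\bigr)\zeta(\sigma)<\infty$, the first factor being finite because $\sigma>\sigma_0$ and $\zeta(\sigma)$ finite because $\sigma>1$. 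This same bound yields $\sum_{n\ge1}|(a*1)(n)|n^{-\sigma}<\infty$, so the left--hand Dirichlet series converges absolutely on that half--plane; extension to any larger region on which both $A$ and $\zeta$ are holomorphic is then immediate by uniqueness of analytic continuation.

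I do not anticipate a genuine obstacle: both assertions are classical once the divisor--filter identity (Proposition~\ref{prop:fejer-closed} together with the surrounding evaluation of $F(n,i)$) is in hand, and the only point requiring care is the standard Fubini bookkeeping in Part~(2). The mildly delicate point is confirming that the identity holds precisely on $\Re s>\max\{1,\sigma_0\}$; this is handled by estimating $\sum_{n}|(a*1)(n)|n^{-\sigma}$ directly through the product $\bigl(\sum_{d}|a(d)|d^{-\sigma}\bigr)\zeta(\sigma)$, rather than by bounding $(a*1)(n)$ pointwise, which would forfeit a unit in the abscissa of convergence.
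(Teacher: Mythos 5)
Your proposal is correct and follows essentially the same route as the paper's proof in Appendix~\ref{app:fdlift-proofs}: the divisor-filter identity $F(n,i)/i^{2}=\mathbf 1_{i\mid n}$ gives the integer anchor, and the Dirichlet factorization follows by unfolding the convolution and invoking Tonelli/Fubini under absolute convergence on $\Re s>\max\{1,\sigma_0\}$. Your additional observations (only finitely many terms contribute at integers, and the abscissa is controlled via the product $\bigl(\sum_d|a(d)|d^{-\sigma}\bigr)\zeta(\sigma)$ rather than a pointwise bound on $(a*1)(n)$) are correct refinements of the same argument.
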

\begin{proof}[Sketch]
(1) For integers $n$, the projector property yields $F(n,i)/i^2=\mathbf{1}_{i\mid n}$, hence $\mathcal T_a(n)=(a*1)(n)$.
(2) For the Dirichlet series, swap sums (absolute convergence) and regroup: $\sum_n \mathcal T_a(n)n^{-s}
= \sum_d a(d)d^{-s}\sum_m m^{-s} = A(s)\zeta(s)$.
Details: Appendix~\ref{app:fdlift-proofs}.
\end{proof}

\begin{figure}[t]
\centering
\includegraphics[width=\textwidth]{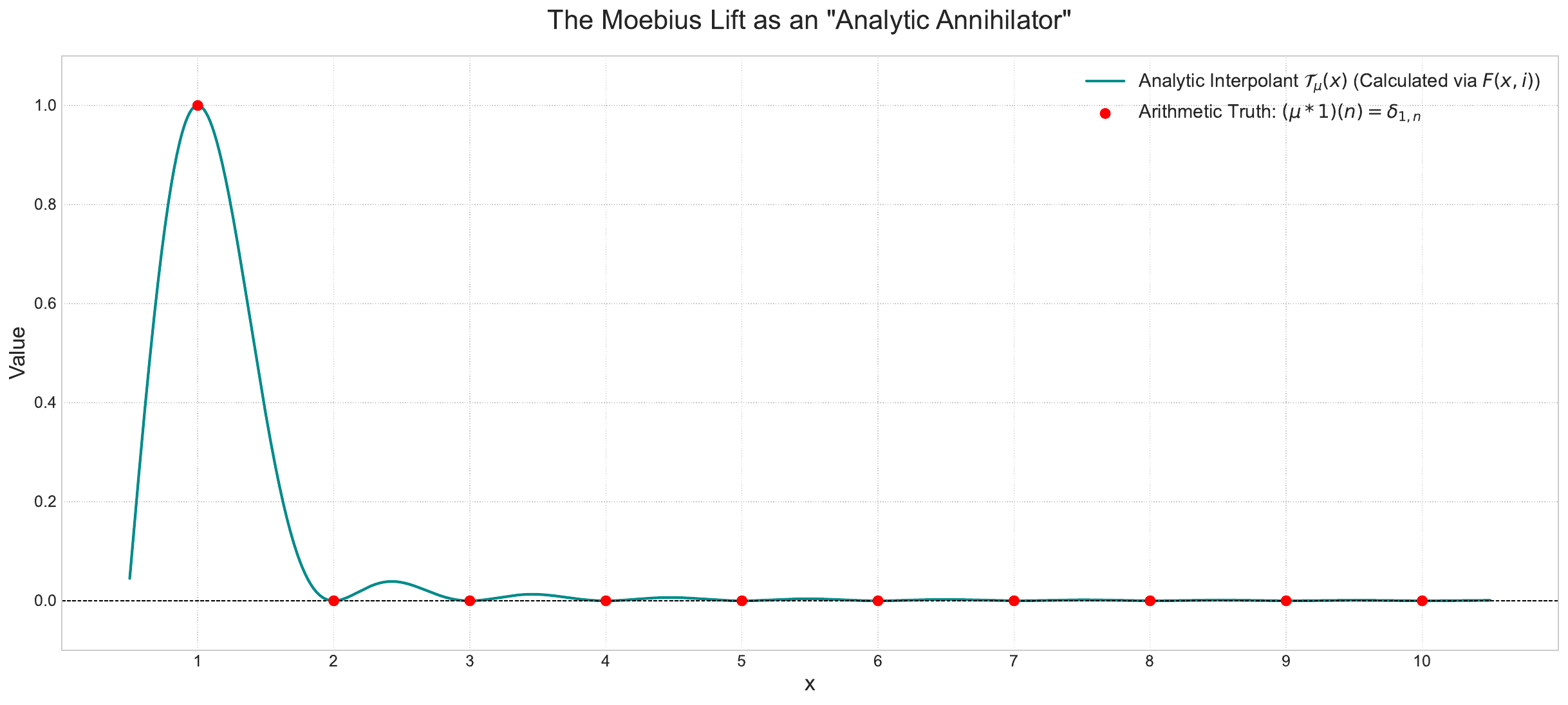}
\caption{The FD-Lift for the M\"obius function, $\mathcal{T}_\mu(x)$. The continuous function (blue curve), generated from the sum of Fej\'er kernels weighted by $\mu(i)$, is shown to correctly interpolate the discrete values of the convolution $(\mu*1)(n)$ (red diamonds). This provides a visual confirmation of Theorem~\ref{thm:fd-lift-core} and illustrates the construction of an "analytic annihilator" for integers $n>1$.}
\label{fig:mu_interpolant}
\end{figure}

\FloatBarrier 

\begin{corollary}[Order $1$ and exponential type $\le 2\pi$]
Assume $\sum_{i=1}^{\infty} |a(i)| < \infty$. Then the FD-Lift $\mathcal{T}_a$ is an entire function of order at most $1$ and exponential type at most $2\pi$. In particular, there exists a constant $C=C(a)>0$ such that
\[
|\mathcal{T}_a(x+\ii y)| \;\le\; C\, e^{2\pi |y|} \qquad (x,y\in\R),
\]
and hence, since $|y|\le |x+\ii y|=|z|$, also the radial bound
\[
|\mathcal{T}_a(z)| \;\le\; C\, e^{2\pi |z|} \qquad (z\in\C)
\]
holds. Consequently, the order is $\le 1$ and the exponential type is $\le 2\pi$.
\end{corollary}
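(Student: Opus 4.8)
The plan is to obtain the corollary as a short deduction from Lemma~\ref{lem:growth-type}, which already carries the only substantive estimate. Under the hypothesis $\sum_{i\ge1}|a(i)|<\infty$, I would fix $C:=\sum_{i\ge1}|a(i)|$, a finite constant, and note that entire-ness is not at issue: the series $\sum_i a(i)\,F(z,i)/i^2$ converges locally uniformly on $\C$ (as recorded above), so $\mathcal{T}_a$ is entire by the Weierstrass convergence theorem. What remains is purely to translate the hyperbolic bound of the lemma into the two exponential bounds stated, and then to read off the order and type from them.

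First I would convert $\cosh$ into a clean exponential. The elementary inequality $\cosh t\le e^{|t|}$ turns the lemma's bound into
\[
|\mathcal{T}_a(x+\ii y)|\;\le\;C\,\cosh(2\pi|y|)\;\le\;C\,e^{2\pi|y|}\qquad(x,y\in\R),
\]
which is the first displayed inequality of the corollary. Then, since $|y|\le\sqrt{x^{2}+y^{2}}=|z|$ and $t\mapsto e^{2\pi t}$ is increasing, I would upgrade this to the radial bound $|\mathcal{T}_a(z)|\le C\,e^{2\pi|z|}$, valid for all $z\in\C$.

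The final step is definitional. With $M(r):=\max_{|z|=r}|\mathcal{T}_a(z)|$, the radial bound gives, for large $r$, $\log\log M(r)\le\log(\log C+2\pi r)=\log r+O(1)$, so the order $\rho=\limsup_{r\to\infty}\log\log M(r)/\log r$ satisfies $\rho\le1$; likewise $\limsup_{r\to\infty}r^{-1}\log M(r)\le2\pi$, so the exponential type is at most $2\pi$. Equivalently, in the vertical-line formulation of the Remark following Lemma~\ref{lem:growth-type}, for every $\epsilon>0$ the constant $C_\epsilon:=C$ already satisfies $|\mathcal{T}_a(x+\ii y)|\le C_\epsilon\,e^{(2\pi+\epsilon)|y|}$.

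I do not anticipate any real obstacle: the entire weight of the argument sits in Lemma~\ref{lem:growth-type}, whose proof bounds each Fej\'er term by $|F(z,i)|\le i^{2}\cosh(2\pi|y|)$ and then sums against the weights $|a(i)|/i^{2}$. The only point meriting a word of care is that the statement claims order and type \emph{at most} $1$ and $2\pi$, not exact values; one-sided bounds therefore suffice, and indeed the type can be strictly smaller — for instance, taking $a(1)=1$ and $a(i)=0$ for $i\ge2$ yields $\mathcal{T}_a\equiv1$, of exponential type $0$ — so nothing beyond the bounds above is needed.
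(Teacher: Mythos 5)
Your proposal is correct and follows essentially the same route as the paper: the corollary is obtained there exactly as you do it, by citing Lemma~\ref{lem:growth-type} with $C=\sum_i|a(i)|$, using $\cosh(2\pi|y|)\le e^{2\pi|y|}$ and $|y|\le|z|$ as in the remark following the lemma, and reading off order $\le 1$ and type $\le 2\pi$ from the resulting radial bound. Nothing is missing, and your observation that only one-sided bounds are needed (the type can be strictly smaller) is consistent with the paper's statement.
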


\subsection{Functional and Algebraic Properties}

The FD-Lift framework respects the fundamental algebraic structures of arithmetic functions and their associated Dirichlet series. The lift operator $\mathcal{T}: a \mapsto \mathcal{T}_a$ exhibits the following key properties.

\begin{itemize}
    \item \textbf{Linearity.} The lift is a linear operator. For any two sequences $a_1, a_2$ and complex constants $c_1, c_2$, it holds that
    \[ \mathcal{T}_{c_1 a_1 + c_2 a_2}(z) = c_1 \mathcal{T}_{a_1}(z) + c_2 \mathcal{T}_{a_2}(z). \]

    \item \textbf{M\"obius Inversion.} Since $\mathcal{T}_a(n) = (a*1)(n)$, the original weight sequence $a(n)$ can be recovered from the integer values of the lift by Dirichlet convolution with the M\"obius function $\mu(n)$:
    \[ a(n) = (\mu * \mathcal{T}_a)(n). \]
    In the spectral space, this corresponds to the division of the resulting Dirichlet series by $\zeta(s)$.

    \item \textbf{Logarithmic Derivative Identity.} A relationship holds between the logarithmic derivatives of the involved Dirichlet series. Let $M(s)=\zeta(s)A(s)$ denote the spectrum of the lift. Then
    \[
    -\frac{M'(s)}{M(s)} \;=\; -\frac{\zeta'(s)}{\zeta(s)} \;-\; \frac{A'(s)}{A(s)}.
    \]
    This is a purely analytic identity. An arithmetical “von Mangoldt” interpretation of $-A'(s)/A(s)$ requires that $A(s)$ admit an Euler product (e.g., when the weights are multiplicative and satisfy standard convergence hypotheses); without such a product, $-A'(s)/A(s)$ is only a formal logarithmic derivative and does not carry canonical positivity or prime-power counting properties.

    \item \textbf{Divisor lattice algebra.} The basis functions $F(z,i)/i^2$ encode divisibility at integer arguments. For integers $n,i,j$ one has the pointwise identity
    \[
    \frac{F(n,i)}{i^2}\,\frac{F(n,j)}{j^2}=\mathbf{1}_{i\mid n}\,\mathbf{1}_{j\mid n}=\mathbf{1}_{\mathrm{lcm}(i,j)\mid n}=\frac{F(n,\mathrm{lcm}(i,j))}{\mathrm{lcm}(i,j)^2},
    \]
    which highlights compatibility with Dirichlet convolution. No such identity is claimed for non-integer arguments.
\end{itemize}

\subsection{Connection to Prime Number Theory: Two Complementary Approaches}

A key application of the FD-Lift is its ability to establish a direct connection to prime number theory and the zeros of the Riemann Zeta function. Two fundamentally different but complementary approaches to leverage this connection are presented.
\begin{enumerate}
    \item \textbf{The Direct Adapter Method (Analysis).} This is a targeted approach where the weights $a(n)$ are precisely tailored to isolate a specific arithmetic property---primality, as represented by the von Mangoldt function $\Lambda(n)$---and thereby directly generate its spectrum, $-\zeta'(s)/\zeta(s)$.
    \item \textbf{The Dynamic Method (Synthesis).} This is a synthetic approach where, instead of isolating one property, a general, parameter-dependent family of functions (related to $\sigma_{-s}(n)$) is constructed. A new, complex property related to prime numbers is then derived through an analytic operation (differentiation) within this family.
\end{enumerate}
Both tracks illuminate the same underlying structures from different perspectives and demonstrate the generality and flexibility of the FD-Lift framework. The following sections detail each approach.

\begin{quote}\small
\textbf{Takeaways (FD-lift).}
Integer anchors satisfy $\mathcal T_a(n)=\sum_{d\mid n}a(d)$.
The Dirichlet-series factorization $\sum_{n\ge1}\mathcal T_a(n)n^{-s}=\zeta(s)A(s)$ is obtained.
For $a(i)=(q-1)q\,q^{-i}$ the specializations for $\mathfrak S$ and $\mathfrak F$ follow via $\operatorname{Li}_s(1/q)$.
\end{quote}

\subsection{Track 1: The Direct Adapter Method for Isolating \texorpdfstring{$\Lambda(n)$}{Lambda(n)}}

\par\smallskip
\noindent\textit{In this subsection a direct adapter is set up to isolate the von Mangoldt function at integers via the FD-lift. A renormalized kernel is employed to ensure absolute and locally uniform convergence; the integer anchor $\mathcal T^{(\mathrm{ren})}_{\mu*\Lambda}(n)=\Lambda(n)$ is verified and the associated Dirichlet-series mechanism is recorded.}
\par\medskip

\begin{quote}\small
\textbf{Example (three lines).}
For $a=\mu*\Lambda$, a naive lift $\sum_{i\ge1}(\mu*\Lambda)(i)\,F(z,i)/i^2$ fails to be absolutely convergent and is unstable under Abel summation.
A renormalized difference kernel $(\phi_i-\phi_\infty)$ is therefore used; this guarantees absolute and locally uniform convergence while preserving $\mathcal T^{(\mathrm{ren})}_{\mu*\Lambda}(n)=\Lambda(n)$.
The construction is detailed in Appendix~\ref{app:fdlift-proofs}, Theorem~\ref{thm:renormalized-muLambda}.
\end{quote}

\begin{proposition}[Explicit formula via the adapter method]\label{prop:adapter-explicit}
Let $x>1$ and let $\psi(x)=\sum_{n\le x}\Lambda(n)$ be Chebyshev's function in the symmetric sense. Then
\[
\psi(x)= x - \sum_{\rho}\frac{x^{\rho}}{\rho} - \log(2\pi) - \frac{1}{2}\log\!\bigl(1-x^{-2}\bigr),
\]
where the sum runs over the nontrivial zeros $\rho$ of $\zeta(s)$ with symmetric summation.
\end{proposition}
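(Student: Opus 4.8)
The statement is the classical von Mangoldt explicit formula, so the plan is to deduce it from the \emph{spectral side} of the adapter. By the renormalized lift of $a=\mu*\Lambda$ (cf. Theorem~\ref{thm:renormalized-muLambda}), the relevant Dirichlet series is the degenerate product that collapses to $-\zeta'(s)/\zeta(s)=\sum_{n\ge1}\Lambda(n)\,n^{-s}$ on $\Re s>1$. I would start from Perron's formula in truncated form: for $c=1+1/\log x$, $x\notin\mathbb Z$, and height $T$,
\[
\psi(x)=\frac{1}{2\pi\ii}\int_{c-\ii T}^{c+\ii T}\Bigl(-\frac{\zeta'(s)}{\zeta(s)}\Bigr)\frac{x^{s}}{s}\,ds+R(x,T),
\]
with the standard bound $R(x,T)\ll x(\log x)^{2}/T+\cdots$ that tends to $0$ once $T\to\infty$ (with $x$ fixed and $T$ chosen to avoid ordinates of zeros).

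Next I would close the contour to the left: replace the segment $\Re s=c$ by the left edge $\Re s=-U$ (a half-integer, so it avoids the trivial zeros $s=-2,-4,\dots$) joined by the horizontal pieces $\Im s=\pm T$, and apply the residue theorem. The residue bookkeeping is routine and produces exactly the four terms of the claim: the simple pole of $\zeta$ at $s=1$ contributes $+x$; each nontrivial zero $\rho$ (counted with multiplicity, under the stated symmetric convention $\lim_{T\to\infty}\sum_{|\Im\rho|\le T}$) contributes $-x^{\rho}/\rho$; the factor $x^{s}/s$ supplies a simple pole at $s=0$ with residue $-\zeta'(0)/\zeta(0)=-\log(2\pi)$ (using $\zeta(0)=-\tfrac12$, $\zeta'(0)=-\tfrac12\log 2\pi$); and each trivial zero $s=-2k$ contributes $x^{-2k}/(2k)$, so that $\sum_{k\ge1}x^{-2k}/(2k)=-\tfrac12\log(1-x^{-2})$ after letting $U\to\infty$. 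Summing the four contributions gives the asserted identity, provided the auxiliary pieces of the rectangle are negligible.

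The main obstacle is exactly that last point: showing that the horizontal segments $\Im s=\pm T$ and the left edge $\Re s=-U$ contribute negligibly. On the horizontal segments one invokes the Hadamard-product estimate $\zeta'(s)/\zeta(s)=\sum_{|\Im\rho-T|\le1}(s-\rho)^{-1}+O(\log T)$ valid for $-1\le\sigma\le2$, together with the zero-counting bound $N(T+1)-N(T)\ll\log T$, which lets one pick $T$ with $\zeta'/\zeta\ll(\log T)^{2}$ there; then that segment integral is $\ll x^{c}(\log T)^{2}/T\to0$. On the left edge one uses the functional equation,
\[
\frac{\zeta'(s)}{\zeta(s)}=\log(2\pi)+\tfrac{\pi}{2}\cot(\pi s/2)-\frac{\Gamma'}{\Gamma}(1-s)-\frac{\zeta'}{\zeta}(1-s)=O(\log U)
\]
uniformly for $\Re s=-U$ away from the trivial zeros, so that edge integral is $\ll x^{-U}(\log U)\,T/U\to0$ as $U\to\infty$ for fixed $T$. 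Feeding these back, letting $U\to\infty$ and then $T\to\infty$, yields the identity in the symmetric sense, with the value at integer $x$ understood via the usual $\tfrac12(\psi(x^{+})+\psi(x^{-}))$ normalization implicit in ``symmetric summation''. No new difficulty is introduced by the FD-lift viewpoint — it merely certifies that the adapter's spectrum is $-\zeta'/\zeta$ and interpolates $\Lambda$ at integers; thereafter the argument is the textbook contour shift (see Davenport~\cite{davenport2000}, Montgomery–Vaughan~\cite{montgomeryvaughan2007}).
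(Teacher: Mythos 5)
Your proposal is correct and follows essentially the same route as the paper: Perron's formula for $-\zeta'(s)/\zeta(s)\,x^{s}/s$, a leftward contour shift, and residue bookkeeping at $s=1$, the nontrivial zeros, $s=0$, and the trivial zeros $s=-2k$, with the horizontal and left segments shown to vanish. You merely spell out the standard truncation and growth estimates that the paper's proof cites as ``standard bounds,'' so there is nothing to add.
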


\begin{remark}[Explicit formula: form used]
Throughout, the explicit formula is understood in the form of Proposition~\ref{prop:adapter-explicit}, including the contribution of the trivial zeros via $-\tfrac12\log(1-x^{-2})$ and the constant $-\log(2\pi)$. All later references point to this exact normalization.
\end{remark}

\begin{proof}
Fix $c>1$ and $T\ge2$. Consider
\[
I_T:=\frac{1}{2\pi i}\int_{c-iT}^{c+iT} -\frac{\zeta'(s)}{\zeta(s)}\,\frac{x^{s}}{s}\,ds.
\]
By Perron's formula in its symmetric form (see, e.g., \cite[Thms.\ 4.11 and 4.16]{titchmarsh1986}; see also \cite[Ch.\ 3]{edwards2001}), $I_T$ tends to $\psi(x)$ as $T\to\infty$, with the standard half-weight convention at integers. For $x>1$ and $x\notin\mathbb N$ the same limit holds without half-weights. Shift the contour to the left across $s=1$, the nontrivial zeros $\rho$, $s=0$, and the trivial zeros $s=-2k$ ($k\in\mathbb{N}$). The residues are:
\begin{itemize}
\item at $s=1$: $x$;
\item at $s=\rho$: $-\dfrac{x^{\rho}}{\rho}$;
\item at $s=0$: $-\dfrac{\zeta'(0)}{\zeta(0)}=-\log(2\pi)$;
\item at $s=-2k$: $-\dfrac{x^{-2k}}{-2k}=-\dfrac{1}{2}\dfrac{x^{-2k}}{k}$, summing to $-\dfrac{1}{2}\log(1-x^{-2})$.
\end{itemize}
The horizontal segments and the left vertical segment vanish in the limit $T\to\infty$ by standard bounds on vertical lines for $\zeta$ and $\zeta'/\zeta$. Summing residues yields the stated identity.
\end{proof}

This approach provides the most direct path to embedding the pole structure of the Zeta function's logarithmic derivative into the spectrum of an entire function.

\subsubsection{Strategy: From Zeros to Poles}
A central goal in analytic number theory is to understand the distribution of the non-trivial zeros $\rho$ of the Zeta function. In complex analysis, poles are often more tractable than zeros. The strategy is therefore to construct a function whose poles coincide exactly with the zeros of $\zeta(s)$. The canonical function with this property is the logarithmic derivative, $-\zeta'(s)/\zeta(s)$. On the $z$-side, the renormalized entire interpolant from Theorem~\ref{thm:renormalized-muLambda} is used, thereby avoiding unrenormalized Abel-limit issues away from integers.
The FD-Lift is "tuned" by choosing weights $a(n)$ such that the resulting spectrum matches this target:
\[
\sum_{n=1}^{\infty}\frac{\mathcal T_a(n)}{n^s} = \zeta(s) A(s) \stackrel{!}{=} -\frac{\zeta'(s)}{\zeta(s)}.
\]

\subsubsection{The Adapter Mechanism}
To achieve the target spectrum, the Dirichlet series of the weights must be $A(s) = -\frac{\zeta'(s)}{\zeta(s)^2}$. From the theory of Dirichlet series, the arithmetic function corresponding to this spectrum is the Dirichlet convolution of the M\"obius function $\mu$ and the von Mangoldt function $\Lambda$. The mechanism proceeds in three steps:
\begin{enumerate}
    \item \textbf{Define the weights.} The weights are set to $a(n) := b(n) = (\mu * \Lambda)(n)$. The associated Dirichlet series is then, as required:
    \[
    B(s) = \sum_{n=1}^{\infty}\frac{b(n)}{n^s} = \left(\sum\frac{\mu(n)}{n^s}\right)\left(\sum\frac{\Lambda(n)}{n^s}\right) = \frac{1}{\zeta(s)}\cdot\left(-\frac{\zeta'(s)}{\zeta(s)}\right) = -\frac{\zeta'(s)}{\zeta(s)^2}.
    \]
    \item \textbf{Evaluate the lift (spectral side).} The spectrum of the lift $\mathcal{T}_b(z)$ is, by Theorem~\ref{thm:fd-lift-core},
    \[
    \sum_{n=1}^{\infty}\frac{\mathcal T_b(n)}{n^s} = \zeta(s)B(s) = \zeta(s) \left(-\frac{\zeta'(s)}{\zeta(s)^2}\right) = -\frac{\zeta'(s)}{\zeta(s)}.
    \]
    \item \textbf{Evaluate the lift (local side).} The value at integer arguments is $\mathcal{T}_b(n)=(b*1)(n) = (\mu * \Lambda * 1)(n)$. Using the standard identities $(\Lambda*1)(n)=\log n$ and $(\mu*\log)(n)=\Lambda(n)$ (see, e.g., Apostol \cite[Ch.~2]{apostol1976}), it follows that:
    \[
    \mathcal{T}_{(\mu*\Lambda)}(n) = \Lambda(n).
    \]
\end{enumerate}

\begin{theorem}[Renormalized FD-lift for $a=\mu*\Lambda$]\label{thm:renormalized-muLambda}
Let $\phi_i(z):=F(z,i)/i^2$ and $\phi_\infty(z):=(\sin(\pi z)/(\pi z))^2$ with $\phi_\infty(0):=1$. Then the renormalized series
\[
\mathcal{T}^{(\mathrm{ren})}_{\mu*\Lambda}(z)\ :=\ \sum_{i=1}^{\infty}(\mu*\Lambda)(i)\,\bigl(\phi_i(z)-\phi_\infty(z)\bigr)
\]
converges absolutely and locally uniformly on $\C$ and hence defines an entire function. Moreover, for every integer $n\ge1$,
\[
\mathcal{T}^{(\mathrm{ren})}_{\mu*\Lambda}(n)=\Lambda(n).
\]
\end{theorem}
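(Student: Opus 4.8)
The plan is to prove the two assertions in turn: absolute and locally uniform convergence of the renormalized series on $\C$ (which, by the Weierstrass theorem on series of analytic functions and since each summand is entire, forces $\mathcal T^{(\mathrm{ren})}_{\mu*\Lambda}$ to be entire), and then the evaluation at positive integers.

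\emph{Convergence.} Two estimates are needed. First, an arithmetic bound: from $(\Lambda*1)(i)=\log i$ one gets $|(\mu*\Lambda)(i)|\le\sum_{d\mid i}|\mu(d)|\,\Lambda(i/d)\le\sum_{d\mid i}\Lambda(d)=\log i$. Second, a decay estimate for the kernel difference on a fixed compact set $K\subset\C$ with $|z|\le R$. Writing $\phi_i(z)=\sin^2(\pi z)\big/\big(i^2\sin^2(\pi z/i)\big)$ and invoking the Mittag-Leffler expansion $1/\sin^2 w=\sum_{k\in\Z}(w-k\pi)^{-2}$ with $w=\pi z/i$ yields
\[
\frac{1}{i^2\sin^2(\pi z/i)}-\frac{1}{\pi^2 z^2}=\frac{1}{\pi^2}\sum_{k\ne0}\frac{1}{(z-ki)^2},
\]
a function holomorphic for $|z|<i$, in particular near $z=0$ where both $\phi_i$ and $\phi_\infty$ take the value $1$. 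For $i>2R$ one has $|z-ki|\ge|k|\,i-R\ge|k|\,i/2$, so the right side is bounded by $\tfrac43 i^{-2}$, whence $\big|\phi_i(z)-\phi_\infty(z)\big|\le\big(\sup_K|\sin\pi z|^2\big)\cdot\tfrac43 i^{-2}\le C_K i^{-2}$ uniformly on $K$ (the finitely many $i\le 2R$ being harmless). Combining the two bounds, $\sum_{i\ge1}|(\mu*\Lambda)(i)|\,\big|\phi_i(z)-\phi_\infty(z)\big|\le C_K\sum_{i\ge2}(\log i)/i^{2}<\infty$ uniformly on $K$; since $K$ is arbitrary, the series converges absolutely and locally uniformly, and $\mathcal T^{(\mathrm{ren})}_{\mu*\Lambda}$ is entire.

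\emph{Integer values.} Fix $n\ge1$. Since $\phi_i(n)=\mathbf 1_{i\mid n}$ and $\phi_\infty(n)=(\sin(\pi n)/(\pi n))^2=0$, every summand equals $(\mu*\Lambda)(i)\,\mathbf 1_{i\mid n}$, so only the finitely many $i\mid n$ contribute and
\[
\mathcal T^{(\mathrm{ren})}_{\mu*\Lambda}(n)=\sum_{i\mid n}(\mu*\Lambda)(i)=\big((\mu*\Lambda)*1\big)(n).
\]
By associativity of Dirichlet convolution, $(\mu*\Lambda)*1=\mu*(\Lambda*1)=\mu*\log$, and the classical inversion identity $\mu*\log=\Lambda$ (M\"obius inversion of $\Lambda*1=\log$) gives $\mathcal T^{(\mathrm{ren})}_{\mu*\Lambda}(n)=\Lambda(n)$, as claimed; note the case $n=1$ reads $0=0$.

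\emph{Main obstacle.} The only genuine work is the uniform $O(i^{-2})$ bound for $\phi_i-\phi_\infty$ on compacta: it must hold uniformly over all of $K$, not merely pointwise, and one must handle $z=0$, where the naive quotient representation is $0/0$. The Mittag-Leffler expansion makes this transparent; an equivalent route is a uniform Taylor expansion of $w\mapsto(\sin w)/w$ near $w=0$, writing $i\sin(\pi z/i)=\pi z\,(1+\varepsilon_i(z))$ with $|\varepsilon_i(z)|\le C_K i^{-2}$ on $K$ and expanding the reciprocal square. Once this decay is secured, the remainder of the argument is routine bookkeeping with standard arithmetic identities.
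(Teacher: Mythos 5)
Your proof is correct and has the same overall skeleton as the paper's: the coefficient bound $(\mu*\Lambda)(i)\le\log i$, a uniform $O(i^{-2})$ bound for $\phi_i-\phi_\infty$ on compacta giving absolute, locally uniform convergence (hence entirety by Weierstrass), and at integers the projector property $\phi_i(n)=\mathbf 1_{i\mid n}$, $\phi_\infty(n)=0$ together with $(\mu*\Lambda)*1=\mu*\log=\Lambda$. The one genuine difference lies in how the kernel estimate is obtained: the paper Taylor-expands $\sin(\pi z/i)=\tfrac{\pi z}{i}\bigl(1+E_i(z)\bigr)$ with $|E_i(z)|\le \pi^2R^2/(6i^2)$ and expands the reciprocal square to get $\phi_i-\phi_\infty=O_R(i^{-2})$, whereas you invoke the partial-fraction identity $1/\sin^2 w=\sum_{k\in\Z}(w-k\pi)^{-2}$, which isolates $\phi_\infty$ exactly as the $k=0$ term and bounds the tail $\frac{1}{\pi^2}\sum_{k\neq0}(z-ki)^{-2}$ by $\tfrac43 i^{-2}$ for $i>2R$. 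Both are valid; your route yields an exact identity that makes the removability at $z=0$ and the uniformity on $K$ completely transparent, at the cost of citing the Mittag--Leffler expansion, while the paper's argument is more elementary, using only Taylor's theorem with the remainder bound $|\sin w-w|\le|w|^3/6$. The integer-evaluation step, including the $n=1$ case, matches the paper's.
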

\begin{proof}[Sketch]
On $|z|\le R$: $\sin(\pi z/i)=\frac{\pi z}{i}(1+E_i(z))$, hence $F(z,i)/i^2=\phi_\infty(z)(1+O_R(i^{-2}))$.
With $(\mu*\Lambda)(i)\ll\log i$ this gives absolute local uniform convergence.
At integers, the renormalization removes the $i=\infty$ limit and yields $\Lambda(n)$.
Full proof: Appendix~\ref{app:fdlift-proofs}.
\end{proof}

\begin{remark}[Abel regularisation away from integers]
For $0<r<1$ the unrenormalized Abel sum $\sum_{i\ge1}(\mu*\Lambda)(i)\,\phi_i(z)\,r^i$ splits as $\sum (\mu*\Lambda)(i)\,(\phi_i-\phi_\infty)(z)\,r^i+\phi_\infty(z)\sum (\mu*\Lambda)(i)\,r^i$. The first term converges locally uniformly as $r\uparrow1$, but the scalar series $\sum (\mu*\Lambda)(i)\,r^i$ need not have a finite Abel limit due to the pole structure of $(1/\zeta)'(s)$ at nontrivial zeros of $\zeta$. Hence existence of the unrenormalized Abel limit for $z\notin\mathbb Z$ is not asserted.
\end{remark}

\subsubsection{Resulting Pole Structure and Interpretation}
For $0<r<1$ define
\[
\mathcal{T}^{(\mathrm{Ab})}_{(\mu*\Lambda)}(z;r):=\sum_{i=1}^{\infty} (\mu*\Lambda)(i)\,\frac{F(z,i)}{i^2}\, r^i,
\]
which is entire in $z$ for each fixed $r$. At integer arguments $n$ the Abel sum stabilizes to $\Lambda(n)$ as $r\uparrow1$ because only finitely many terms contribute. For $z\notin\mathbb Z$ existence of the unrenormalized Abel limit is not asserted; instead the renormalized entire interpolant from Theorem~\ref{thm:renormalized-muLambda} is used. The associated Dirichlet series $-\zeta'(s)/\zeta(s)$ has simple poles at:
\begin{itemize}
    \item $s=1$, with residue $+1$, corresponding to the main term $x$ in the Prime Number Theorem,
    \item all nontrivial zeros $\rho$ of $\zeta(s)$, with residue $-m_\rho$ (where $m_\rho$ is the multiplicity of the zero),
    \item the trivial zeros $s=-2k$, with residue $-1$.
\end{itemize}

\subsubsection{Explicit formula via Perron}
The explicit formula for $\psi(x)=\sum_{n\le x}\Lambda(n)$ stated in Proposition~\ref{prop:adapter-explicit} applies here verbatim and is recalled for convenience:
\[
\psi(x)\;=\; x \;-\; \sum_{\rho}\frac{x^{\rho}}{\rho} \;-\; \log(2\pi) \;-\; \tfrac{1}{2}\log\!\Bigl(1-\frac{1}{x^{2}}\Bigr).
\]

\begin{corollary}[RH]
Under the Riemann Hypothesis one has
\[
\psi(x) = x + O\!\big(x^{1/2}\log^{2}x\big).
\]
\end{corollary}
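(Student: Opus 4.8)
The plan is to derive the conditional bound directly from the explicit formula recorded in Proposition~\ref{prop:adapter-explicit}, treating the zero-sum as the only term that is not already $O(x^{1/2})$ or better. Writing
\[
\psi(x)-x = -\sum_{\rho}\frac{x^{\rho}}{\rho} - \log(2\pi) - \tfrac12\log\!\bigl(1-x^{-2}\bigr),
\]
the constant is $O(1)$ and the logarithmic term is $O(x^{-2})$, so everything reduces to estimating $\sum_{\rho} x^{\rho}/\rho$ under the hypothesis that every nontrivial zero has $\re\rho = \tfrac12$. Under RH we have $|x^{\rho}| = x^{1/2}$ for each $\rho$, so the task is to control $\sum_{\rho} |\rho|^{-1}$ with a suitable truncation, since this series diverges (only $\sum_\rho |\rho|^{-1-\varepsilon}$ converges absolutely, and $\sum_\rho |\rho|^{-2}$ converges).

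The key steps, in order, are as follows. First I would invoke the standard truncated form of the explicit formula: for a truncation parameter $T \ge 2$,
\[
\psi(x) = x - \sum_{|\im\rho|\le T}\frac{x^{\rho}}{\rho} + O\!\Bigl(\frac{x\log^2(xT)}{T}\Bigr) + O(\log x),
\]
which follows from the same Perron-with-contour-shift argument as Proposition~\ref{prop:adapter-explicit} but keeping $T$ finite and bounding the horizontal and truncated-vertical contributions explicitly (this is the classical bound, e.g.\ from Davenport's book). Second, under RH each term satisfies $|x^{\rho}/\rho| = x^{1/2}/|\rho| \le x^{1/2}/|\im\rho|$ for $|\im\rho|\ge 1$, and by the Riemann--von Mangoldt zero-counting estimate $N(T) = \frac{T}{2\pi}\log\frac{T}{2\pi} - \frac{T}{2\pi} + O(\log T)$, partial summation gives
\[
\sum_{0<\im\rho\le T}\frac{1}{|\rho|} \ll \int_{1}^{T}\frac{\log t}{t}\,dt \ll \log^2 T.
\]
Hence $\bigl|\sum_{|\im\rho|\le T} x^{\rho}/\rho\bigr| \ll x^{1/2}\log^2 T$. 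Third, I would balance the two error contributions: choosing $T = x^{1/2}$ makes the truncation error $O(x^{1/2}\log^2 x)$ and the zero-sum error $O(x^{1/2}\log^2 x)$ as well, which yields
\[
\psi(x) = x + O\!\bigl(x^{1/2}\log^2 x\bigr),
\]
as claimed.

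The main obstacle — really the only substantive point beyond bookkeeping — is justifying the truncated explicit formula with the correct error term $O(x T^{-1}\log^2(xT))$, since Proposition~\ref{prop:adapter-explicit} as stated is the $T\to\infty$ version and does not come with a quantitative truncation estimate. This requires the standard bound $\zeta'/\zeta(s) \ll \log^2 t$ on a suitable contour (with $t = |\im s|$) obtained by choosing the horizontal segments to avoid zeros, using the partial-fraction expansion of $\zeta'/\zeta$ together with the zero-density estimate $N(t+1)-N(t) \ll \log t$; this is precisely the classical derivation found in, e.g., \cite[\S17]{davenport2000} or \cite[Ch.~3]{edwards2001}, and I would cite it rather than reproduce it. One subtlety worth a sentence: the contour shift also crosses the trivial zeros and $s=0$, but their total contribution is exactly the $-\log(2\pi) - \tfrac12\log(1-x^{-2})$ already isolated in Proposition~\ref{prop:adapter-explicit}, so in the truncated version these contribute only $O(\log x)$ uniformly and are harmless.
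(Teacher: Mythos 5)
Your proposal is correct: the paper states this corollary without proof, as a classical consequence of the explicit formula, and your argument — the truncated explicit formula with error $O\!\bigl(xT^{-1}\log^{2}(xT)\bigr)$, the bound $\sum_{0<\im\rho\le T}|\rho|^{-1}\ll\log^{2}T$ via Riemann--von Mangoldt and partial summation, and the choice $T=x^{1/2}$ — is exactly the standard derivation (Davenport \S 17) that the paper implicitly relies on. You also correctly identify the one genuine subtlety, namely that the untruncated, conditionally convergent zero-sum of Proposition~\ref{prop:adapter-explicit} cannot be bounded termwise and must be replaced by the quantitative truncated version.
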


The FD-Lift thus provides a constructive perspective on the appearance of the Zeta zeros in explicit formulas for primes: they arise as the poles of the spectrum of the entire function that naturally interpolates the prime-power detecting function $\Lambda(n)$. The result of this mechanism is visualized in Figure~\ref{fig:lambda_interpolant}.

\begin{figure}[t]
\centering
\includegraphics[width=\textwidth]{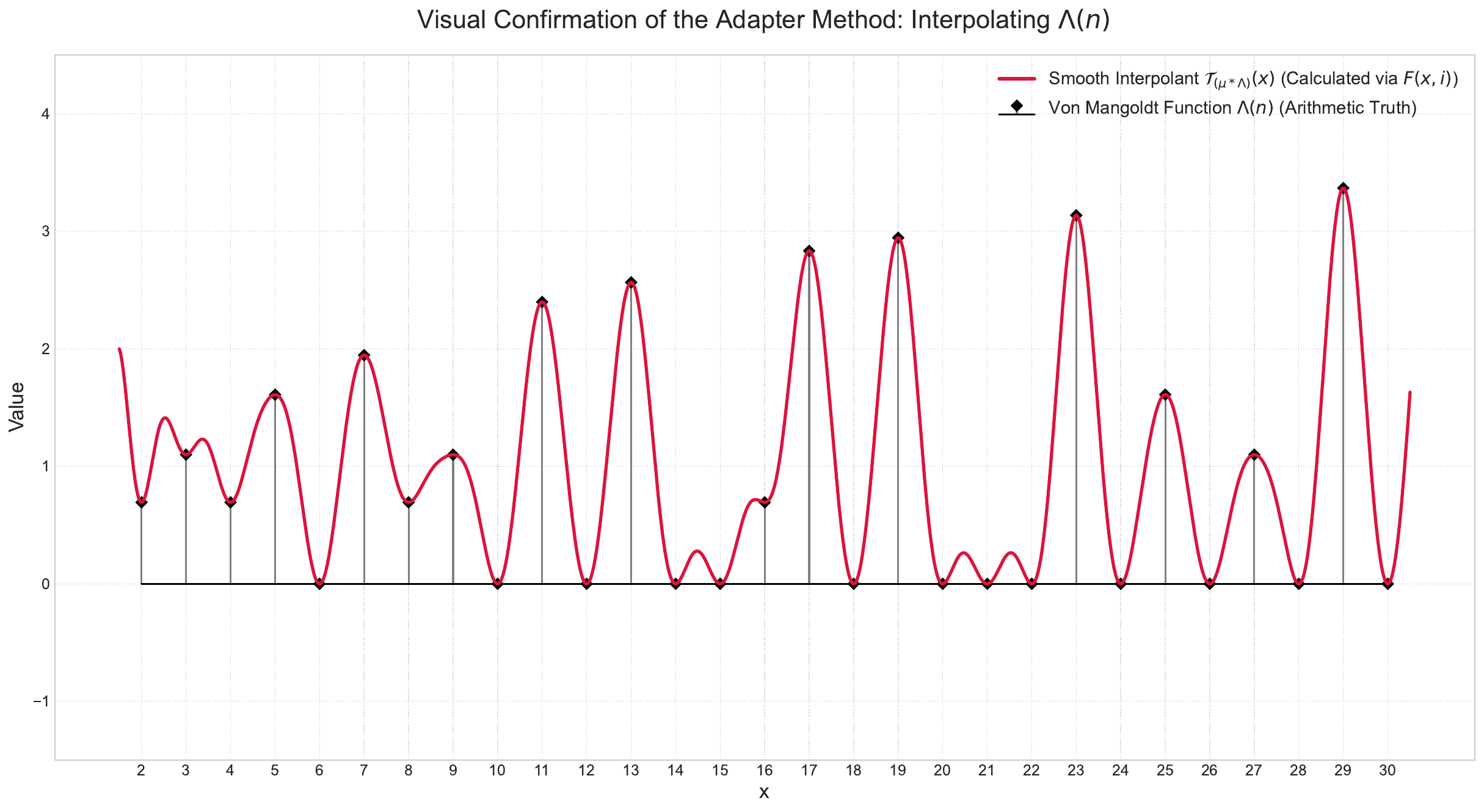}
\caption{Visual confirmation of the direct adapter method. The smooth, entire function $\mathcal{T}_{(\mu*\Lambda)}(x)$ (red curve) is shown passing exactly through the discrete values of the von Mangoldt function $\Lambda(n)$ (grey stems). This illustrates how the FD-Lift provides a continuous interpolant for the prime-power detecting function.}
\label{fig:lambda_interpolant}
\end{figure}

\FloatBarrier

\begin{quote}\small
\textbf{Takeaways (Track 1: direct adapter for $\Lambda$).}
A renormalized adapter based on $(\phi_i-\phi_\infty)$ is used to enforce absolute/local uniform convergence.
At integers the interpolation equals $\Lambda(n)$.
The spectral viewpoint lines up with the classical Dirichlet-series for $\Lambda$ after differentiating at $s=0$.
\end{quote}

\subsection{Track 2: The Dynamic Method via the Two-Variable Lift}\label{sec:dynamic-method}

\par\smallskip
\noindent\textit{In this subsection a two-variable FD-lift $\mathcal T^{(\mathrm{ren})}(z,s)$ is introduced; holomorphy on $\Re s>-1$ is documented, the integer identity $\mathcal T^{(\mathrm{ren})}(n,s)=\sum_{d\mid n}d^{-s}$ is established, and differentiation at $s=0$ yields $\Lambda(n)$. Consequences for curvature control and zero localisation are outlined.}
\par\medskip

\noindent\textbf{Variables.} Throughout this section, $z$ denotes the interpolation variable, $s$ the weight parameter in the two--variable lift, and $u$ the Dirichlet--series variable. This convention prevents clashes between the interpolation and spectral sides.

This second approach explores a dynamic structure by introducing a new complex parameter into the weights themselves. Instead of constructing a single function, an entire family of functions is generated, and a new arithmetic relationship is revealed by analyzing the behavior of this family.

\subsubsection{A Parameter-Dependent Family of Functions}
The lift is generalized by choosing weights that are themselves a function of a complex variable $s$, namely $a(i,s) = i^{-s}$. This elevates the FD-Lift to a function of two complex variables, $\mathcal{T}(z,s)$.

\begin{definition}[The Two-Variable Lift]
For $z \in \C$ and $s \in \C$, the two-variable lift is defined as
\[\mathcal{T}(z,s) := \sum_{i=1}^{\infty} \frac{i^{-s}}{i^2} F(z,i) = \sum_{i=1}^{\infty} \frac{F(z,i)}{i^{s+2}}.\]
\end{definition}

\noindent\textbf{Convergence and regularity.}
For each fixed $s$ with $\Re s>1$, the series defining $\mathcal{T}(z,s)$ converges absolutely and uniformly on compact subsets of $z\in\C$, hence $z\mapsto\mathcal{T}(z,s)$ is entire.
For each fixed $z\in\C$, the series converges absolutely for $\Re s>1$ and defines a function holomorphic in $s$ on $\Re s>1$.
Moreover, gliding differentiation with respect to $s$ is justified on every half-plane $\Re s\ge 1+\delta$ with $\delta>0$ by the Weierstrass $M$-test applied to the majorant $\sum_{i\ge1}(\log i)/i^{1+\delta}$.
At integer points $n\in\mathbb{N}$ one has the interpolation identity
\[
\mathcal{T}(n,s)=\sum_{d\mid n} d^{-s}=\sigma_{-s}(n).
\]

\begin{itemize}
    \item \textbf{In the local space}, at an integer argument $n$, each function in the family interpolates the sum-of-powers-of-divisors function $\sigma_{-s}(n)$:
    \[
    \mathcal{T}(n,s) = \sum_{d|n} d^{-s} = \sigma_{-s}(n).
    \]
    \item \textbf{In the spectral space}, the Dirichlet series of this sequence (over the variable $u$) is the product of two Zeta functions, a central object in number theory:
    \[
    \sum_{n=1}^{\infty}\frac{\sigma_{-s}(n)}{n^u} = \zeta(u)\zeta(u+s), \quad \text{for } \re(u)>1, \re(u+s)>1.
    \]
\end{itemize}
The parameter $s$ acts as a "dial" that continuously adjusts the arithmetic properties of the system.

\subsubsection{The Differentiation Operator as an Arithmetic Transformer}
The potential of this dynamic approach is demonstrated by applying an analytic operation to the parameter $s$: differentiation at the origin $s=0$. The **differentiation operator** is defined as $\mathcal{D} := \frac{\partial}{\partial s}\big|_{s=0}$. This operator acts as a operator that transforms the arithmetic properties of the function on all levels of the lift.

\begin{proposition}[Renormalized spectral differentiation at $s=0$]\label{prop:spectral-diff}
Let $\phi_i(z):=F(z,i)/i^2$ and $\phi_\infty(z):=\bigl(\frac{\sin(\pi z)}{\pi z}\bigr)^2$ with $\phi_\infty(0):=1$.
For $\Re s>-1$ define the renormalized two-variable lift
\[
\mathcal T^{\mathrm{ren}}(z,s)\ :=\ \sum_{i\ge1}\bigl(\phi_i(z)-\phi_\infty(z)\bigr)\,i^{-s}\;+\;\phi_\infty(z)\,\zeta(s+2).
\]
Then:
\begin{enumerate}
\item For each fixed $s$ with $\Re s>-1$, the map $z\mapsto \mathcal T^{\mathrm{ren}}(z,s)$ is entire; for each fixed $z\in\C$, the map $s\mapsto \mathcal T^{\mathrm{ren}}(z,s)$ is holomorphic on $\Re s>-1$.
\item The derivative at $s=0$ exists and equals
\[
\frac{\partial}{\partial s}\mathcal T^{\mathrm{ren}}(z,s)\Big|_{s=0}
\ =\ -\sum_{i\ge1}\bigl(\phi_i(z)-\phi_\infty(z)\bigr)\,\log i\;+\;\phi_\infty(z)\,\zeta'(2),
\]
where the series converges absolutely and locally uniformly in $z$.
\item For every integer $n\ge1$ and all $s$ with $\Re s>-1$,
\[
\mathcal T^{\mathrm{ren}}(n,s)=\sum_{d\mid n}d^{-s}=\sigma_{-s}(n).
\]
Consequently,
\[
\frac{\partial}{\partial s}\sigma_{-s}(n)\Big|_{s=0}\ =\ -\sum_{d\mid n}\log d\ =\ -(\tau*\Lambda)(n).
\]
\item On the spectral side,
\[
\sum_{n\ge1}\frac{\sigma_{-s}(n)}{n^{u}}=\zeta(u)\,\zeta(u+s)\qquad(\Re u>1,\ \Re(u+s)>1).
\]
Differentiating at $s=0$ yields
\[
\sum_{n\ge1}\frac{-\sum_{d\mid n}\log d}{n^u}\;=\;\zeta(u)\,\zeta'(u)\qquad(\Re u>1),
\]
and, equivalently,
\[
\sum_{n\ge1}\frac{(\tau*\Lambda)(n)}{n^u}\;=\;-\,\zeta(u)\,\zeta'(u)\qquad(\Re u>1).
\]
Hence, in the Dirichlet–series variable $u$, the differentiation identity is exactly
\[
-\;(\tau*\Lambda)\ \Longleftrightarrow\ \zeta(u)\,\zeta'(u).
\]
\end{enumerate}
\end{proposition}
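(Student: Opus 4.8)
The plan is to treat the four parts in their natural order, with everything resting on a single uniform estimate for the difference kernel $\psi_i(z):=\phi_i(z)-\phi_\infty(z)$. First I would show that on every disc $|z|\le R$ one has $\psi_i(z)=O_R(i^{-2})$ uniformly in $i$. This follows from the expansion $i\sin(\pi z/i)=\pi z\,(1+E_i(z))$ with $|E_i(z)|\le C_R\,i^{-2}$ for $|z|\le R$ (Taylor remainder, or the Weierstrass product for $\sin$), whence $\phi_i(z)=\phi_\infty(z)(1+E_i(z))^{-2}$ and $\psi_i(z)=\phi_\infty(z)\bigl((1+E_i(z))^{-2}-1\bigr)=O_R(i^{-2})$, using that $\phi_\infty$ is bounded on $|z|\le R$; the finitely many indices $i\le R$, for which $\phi_i$ has removable poles inside the disc, are handled separately, each $\psi_i$ being entire and hence bounded on $|z|\le R$. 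Given this, $\sum_{i\ge1}\psi_i(z)\,i^{-s}$ converges absolutely and locally uniformly for $\Re s>-1$, since $|\psi_i(z)\,i^{-s}|\le C_R\,i^{-2-\Re s}$, and the same bound with an extra factor $\log i$ shows the formally differentiated series $\sum_{i}\psi_i(z)(-\log i)i^{-s}$ converges locally uniformly there as well. Part (1) then follows from the Weierstrass convergence theorem (holomorphy in $z$ for fixed $s$) and Weierstrass/Morera in the $s$-variable, noting that $\phi_\infty(z)\zeta(s+2)$ is entire in $z$ and holomorphic in $s$ on $\Re s>-1$ because the only pole $s+2=1$ is excluded; part (2) is term-by-term differentiation, legitimate by the locally uniform convergence of the differentiated series, together with $\partial_s[\phi_\infty(z)\zeta(s+2)]|_{s=0}=\phi_\infty(z)\zeta'(2)$.

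For part (3) I would evaluate at an integer $n\ge1$: since $\phi_i(n)=\mathbf 1_{i\mid n}$ (the Fej\'er divisor-filter identity) and $\phi_\infty(n)=(\sin(\pi n)/(\pi n))^2=0$ for $n\ge1$, the renormalizing term $\phi_\infty(n)\zeta(s+2)$ vanishes and $\mathcal T^{\mathrm{ren}}(n,s)=\sum_{d\mid n}d^{-s}=\sigma_{-s}(n)$. Differentiating this finite sum at $s=0$ gives $\partial_s\sigma_{-s}(n)|_{s=0}=-\sum_{d\mid n}\log d$, and identifying $\sum_{d\mid n}\log d=(1*\log)(n)=(1*1*\Lambda)(n)=(\tau*\Lambda)(n)$ via the standard identities $1*\Lambda=\log$ and $\tau=1*1$ yields the stated formula. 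Consistency with part (2) is just the special case $z=n$.

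Part (4) is the spectral counterpart. On $\Re u>\max\{1,\,1-\Re s\}$ the function $\sigma_{-s}$, being the Dirichlet convolution of $n\mapsto n^{-s}$ with $1$, has Dirichlet series $\zeta(u+s)\zeta(u)$; this is exactly the $\zeta$-factorization already recorded in Theorem~\ref{thm:fd-lift-core} specialized to $a(i)=i^{-s}$. To differentiate at $s=0$ I would fix $\Re u>1$ and note that for $s$ in a small neighbourhood of $0$ the series $\sum_n\sigma_{-s}(n)n^{-u}$ converges uniformly, being dominated by $\sum_n\tau(n)n^{\varepsilon-\Re u}<\infty$ for suitable $\varepsilon$; hence $\partial_s$ passes inside the sum and produces $\sum_n\bigl(\partial_s\sigma_{-s}(n)|_{s=0}\bigr)n^{-u}=-\sum_n(\tau*\Lambda)(n)n^{-u}$ on the left, while the right side gives $\partial_s[\zeta(u)\zeta(u+s)]|_{s=0}=\zeta(u)\zeta'(u)$. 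Equivalently and more directly, $\tau*\Lambda$ has Dirichlet series $\zeta(u)^2\cdot(-\zeta'(u)/\zeta(u))=-\zeta(u)\zeta'(u)$, which records the stated correspondence $-(\tau*\Lambda)\leftrightarrow\zeta(u)\zeta'(u)$.

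The main obstacle is the very first step: making the bound $\phi_i(z)-\phi_\infty(z)=O_R(i^{-2})$ genuinely uniform on compacta while correctly absorbing the removable singularities of $\phi_i$ at $z\in i\mathbb Z$ and of $\phi_\infty$ at $z=0$. Once that estimate is in hand — ideally isolated as a lemma shared with Theorem~\ref{thm:renormalized-muLambda} — parts (1)--(2) are routine applications of Weierstrass/Morera, and parts (3)--(4) reduce to the divisor-filter identity and the classical $\zeta$-factorization of Theorem~\ref{thm:fd-lift-core}.
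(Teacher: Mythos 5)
Your proposal is correct and follows essentially the same route as the paper's own proof: the uniform bound $\phi_i(z)-\phi_\infty(z)=O_R(i^{-2})$ from the expansion $\sin(\pi z/i)=\tfrac{\pi z}{i}(1+E_i(z))$, Weierstrass-type convergence for $\Re s>-1$ with termwise $s$-differentiation controlled by the extra $\log i$ factor, the divisor-filter evaluation at integers, and the $\zeta(u)\zeta(u+s)$ identity differentiated at $s=0$ (the paper isolates $\sum_{d\mid n}\log d=(\tau*\Lambda)(n)$ as a separate lemma, which you recover via $1*\Lambda=\log$ and $\tau=1*1$). Your explicit handling of the finitely many small indices $i$ (where $(1+E_i)^{-2}$ is not yet controlled by the Taylor bound) is a slightly more careful touch than the paper's, but not a different argument.
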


\begin{lemma}[Divisor–log identity]\label{lem:divlog}
For every $n\ge1$,
\[
\sum_{d\mid n}\log d \;=\; (\tau*\Lambda)(n).
\]
\end{lemma}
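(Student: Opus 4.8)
The plan is to deduce the identity from two elementary Dirichlet-convolution facts, namely $\Lambda*1=\log$ (equivalently $\sum_{d\mid n}\Lambda(d)=\log n$, obtained by taking logarithms of $n=\prod_{p}p^{v_p(n)}$) and $1*1=\tau$, and then to use only the commutative-ring structure of arithmetic functions under Dirichlet convolution; see, e.g., Apostol~\cite[Ch.~2]{apostol1976}.

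First I would observe that the left-hand side is itself a Dirichlet convolution,
\[
\sum_{d\mid n}\log d=(\log*1)(n),
\]
where $\log$ denotes the arithmetic function $m\mapsto\log m$ (note $\log 1=0$, consistent with $(\Lambda*1)(1)=\Lambda(1)=0$). Substituting $\log=\Lambda*1$ and using associativity and commutativity of $*$ gives
\[
\log*1=(\Lambda*1)*1=\Lambda*(1*1)=\Lambda*\tau=\tau*\Lambda,
\]
and evaluating at $n$ yields $\sum_{d\mid n}\log d=(\tau*\Lambda)(n)$. Unwinding the convolution, this is the concrete statement $\sum_{d\mid n}\log d=\sum_{d\mid n}\tau(d)\,\Lambda(n/d)$.

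There is essentially no obstacle here; the only point that deserves a line is that the manipulations take place in the ring of arithmetic functions, so that associativity and commutativity of $*$ apply verbatim, and that $\Lambda*1=\log$ and $1*1=\tau$ are the standard identities just quoted. As an independent cross-check, and to align with Proposition~\ref{prop:spectral-diff}(4), one may pass to Dirichlet series on $\Re s>1$: the series of $m\mapsto\log m$ is $-\zeta'(s)$, so that of $\log*1$ is $-\zeta(s)\zeta'(s)$; meanwhile $\tau$ has series $\zeta(s)^2$ and $\Lambda$ has series $-\zeta'(s)/\zeta(s)$, whence $\tau*\Lambda$ has series $\zeta(s)^2\bigl(-\zeta'(s)/\zeta(s)\bigr)=-\zeta(s)\zeta'(s)$; uniqueness of Dirichlet coefficients then recovers the identity.
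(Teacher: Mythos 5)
Your proof is correct, and your primary route is genuinely different from the paper's. The paper gives two arguments: a Dirichlet-series proof (compute the series of $n\mapsto\sum_{d\mid n}\log d$ as $-\zeta(u)\zeta'(u)$, match it with the series of $\tau*\Lambda$, and invoke uniqueness of Dirichlet coefficients) and a verification on prime powers $n=p^k$ combined with an appeal to multiplicativity. You instead work entirely inside the convolution ring of arithmetic functions: from $\Lambda*1=\log$ and $1*1=\tau$, associativity and commutativity give $\log*1=\Lambda*(1*1)=\tau*\Lambda$, which is the identity evaluated at $n$. Your argument is the most economical of the three -- it needs no convergence half-planes, no coefficient-uniqueness theorem, and no case analysis -- and it also quietly sidesteps a rough edge in the paper's second proof, since neither side of the identity is actually a multiplicative function (the prime-power check really needs the convolution structure you use, not multiplicativity as literally stated). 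What the paper's Dirichlet-series route buys is direct alignment with the spectral identity $\sum_{n\ge1}(\tau*\Lambda)(n)n^{-u}=-\zeta(u)\zeta'(u)$ used in Proposition~\ref{prop:spectral-diff}(4), which is exactly the role your closing cross-check plays; that cross-check reproduces the paper's first proof essentially verbatim.
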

\begin{proof}
Two proofs are recorded.

\emph{(i) Dirichlet–series proof.} The Dirichlet series of $n\mapsto \sum_{d\mid n}\log d$ equals
\[
\sum_{n\ge1}\frac{\sum_{d\mid n}\log d}{n^u}
=\sum_{d\ge1}\frac{\log d}{d^{u}}\sum_{m\ge1}\frac{1}{m^{u}}
=\zeta(u)\sum_{d\ge1}\frac{\log d}{d^{u}}
=\zeta(u)\,\bigl(-\zeta'(u)\bigr)\qquad(\Re u>1).
\]
Since $\sum_{n\ge1}(\tau*\Lambda)(n)n^{-u}=\zeta(u)^2\cdot(-\zeta'(u)/\zeta(u))=-\zeta(u)\zeta'(u)$ on $\Re u>1$, both series coincide, hence the coefficients agree.

\emph{(ii) Prime–power check.} It suffices to verify the identity on $n=p^k$ and use multiplicativity of both sides. For $n=p^k$,
\[
\sum_{d\mid p^k}\log d=\sum_{j=0}^{k}\log(p^j)=(0+1+\cdots+k)\log p=\frac{k(k+1)}{2}\log p.
\]
On the other hand, $(\tau*\Lambda)(p^k)=\sum_{j=0}^k \tau(p^j)\Lambda(p^{k-j})=\sum_{j=0}^{k-1}(j+1)\log p=\frac{k(k+1)}{2}\log p$, using $\Lambda(p^m)=\log p$ for $m\ge1$ and $0$ otherwise. This matches the divisor–log sum.
\end{proof}

\begin{proof}
Fix $R>0$ and write $\phi_\infty(z):=(\sin(\pi z)/(\pi z))^2$ with $\phi_\infty(0):=1$. On $|z|\le R$,
\[
\sin(\pi z/i)=\frac{\pi z}{i}+\Big(\frac{\pi z}{i}\Big)^3 \rho_i(z),\qquad |\rho_i(z)|\le \tfrac{1}{6},
\]
whence $\sin(\pi z/i)=\frac{\pi z}{i}\,(1+E_i(z))$ with $|E_i(z)|\le \frac{\pi^2 R^2}{6\,i^{2}}$. Hence
\[
\frac{F(z,i)}{i^2}
=\left(\frac{\sin(\pi z)}{\pi z}\right)^{\!2}\,\frac{1}{(1+E_i(z))^{2}}
=\phi_\infty(z)\,\big(1+O_R(i^{-2})\big),
\]
so $\phi_i(z)-\phi_\infty(z)=O_R(i^{-2})$ uniformly on $|z|\le R$. For $\sigma:=\Re s>-1$,
\[
\sum_{i\ge1}\big|\phi_i(z)-\phi_\infty(z)\big|\,i^{-\sigma}\ \ll_R\ \sum_{i\ge1}i^{-2-\sigma}\ <\ \infty,
\]
locally uniformly in $z$ and on vertical strips $\Re s\ge \sigma_0>-1$. Thus
\[
\mathcal T^{\mathrm{ren}}(z,s):=\sum_{i\ge1}\big(\phi_i(z)-\phi_\infty(z)\big)\,i^{-s}
\]
is entire in $z$ and holomorphic in $s$ for $\Re s>-1$, with termwise differentiation in $s$ justified by $\sum_{i\ge1}(\log i)\,i^{-2-\sigma}$, which converges for $\sigma>-1$. Therefore
\[
\frac{\partial}{\partial s}\mathcal T^{\mathrm{ren}}(z,s)\Big|_{s=0}
=\,-\sum_{i\ge1}\bigl(\phi_i(z)-\phi_\infty(z)\bigr)\,\log i.
\]

At integers $n\ge1$, $\phi_i(n)=\mathbf 1_{i\mid n}$ and $\phi_\infty(n)=0$, so
\[
\mathcal T^{\mathrm{ren}}(n,s)=\sum_{d\mid n}d^{-s}=\sigma_{-s}(n).
\]
Differentiation at $s=0$ gives $\frac{\partial}{\partial s}\sigma_{-s}(n)|_{s=0}=-\sum_{d\mid n}\log d$. Finally,
\[
\sum_{n\ge1}\frac{\sigma_{-s}(n)}{n^u}=\zeta(u)\zeta(u+s)
\quad(\Re u>1,\ \Re(u+s)>1),
\]
so at $s=0$ one obtains $\sum_{n\ge1}\frac{-\sum_{d\mid n}\log d}{n^u}=\zeta(u)\zeta'(u)$.
\end{proof}

\subsubsection{Interpretation and a New Structural Identity}
The transformed spectrum $\zeta(u)\zeta'(u)$ has a known arithmetic counterpart. The Dirichlet series for the divisor-counting function $\tau(n)$ is $\zeta(u)^2$, and for the von Mangoldt function $\Lambda(n)$ it is $-\zeta'(u)/\zeta(u)$. Their product corresponds to the Dirichlet convolution $(\tau * \Lambda)(n)$, and its Dirichlet series is $\zeta(u)^2 \cdot (-\zeta'(u)/\zeta(u)) = -\zeta(u)\zeta'(u)$.

This reveals a structural connection:
\[
\mathcal{D}[\sigma_{-s}(n)] = -\log\left(\prod_{d|n} d\right) \quad \longleftrightarrow \quad \mathcal{D}[\zeta(u)\zeta(u+s)] = \zeta(u)\zeta'(u) = - \sum_{n=1}^\infty \frac{(\tau * \Lambda)(n)}{n^u}.
\]
The differentiation operator provides a constructive analytic bridge from the theory of divisor functions ($\sigma$) to the theory of prime numbers ($\Lambda$), mediated by the divisor-counting function $\tau$. This demonstrates that the relationship between these fundamental arithmetic functions is a direct consequence of the analytic structure of the underlying family of entire functions $\mathcal{T}(z,s)$. The non-obvious arithmetic identity revealed by this principle is visually confirmed in Figure~\ref{fig:operator_effect}.

\begin{figure}[t]
\centering
\includegraphics[width=\textwidth]{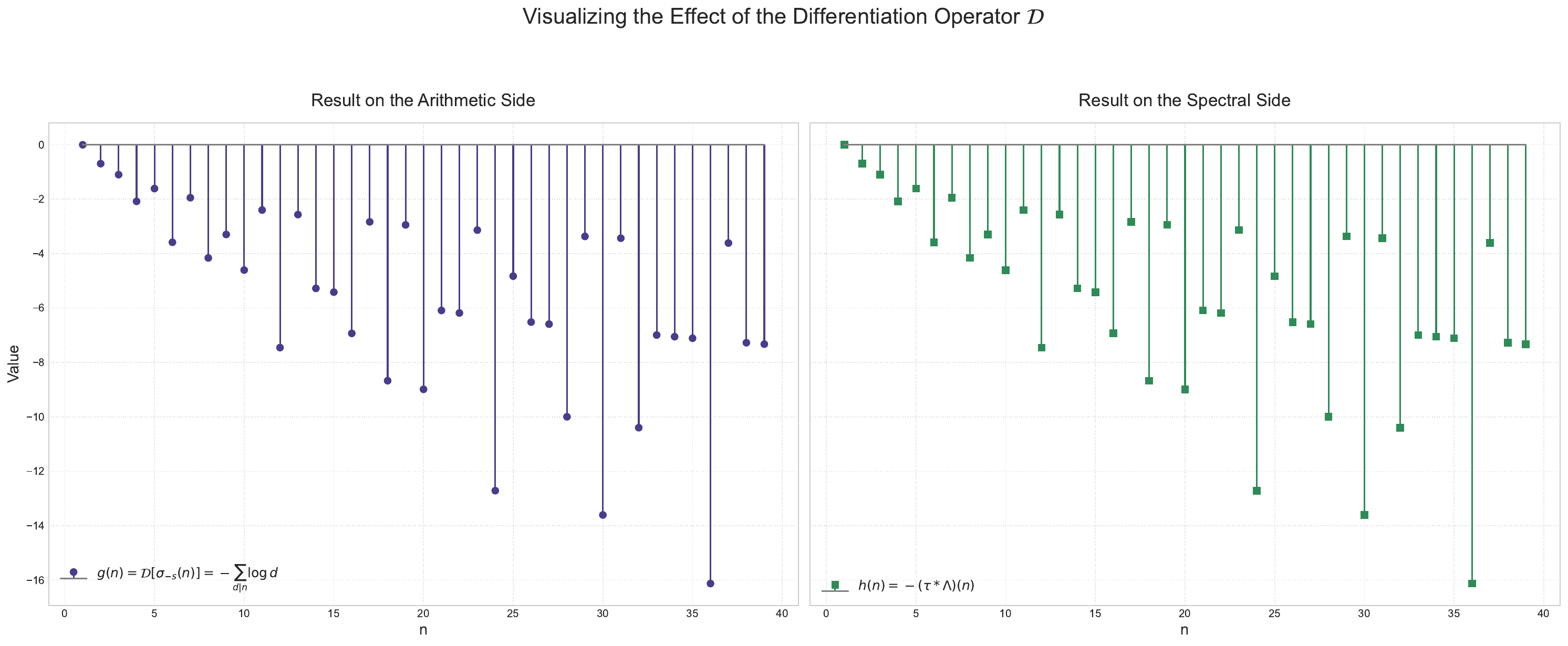}
\caption{A visual representation of the arithmetic identity revealed by the differentiation operator. The plot on the left shows the function $g(n) = -\sum_{d|n} \log d$, derived from differentiating $\sigma_{-s}(n)$. The plot on the right shows the function $h(n) = -(\tau * \Lambda)(n)$, derived from the spectrum $\zeta(u)\zeta'(u)$. Their perfect identity for all $n$ is a visual confirmation of the Principle of Spectral Differentiation.}
\label{fig:operator_effect}
\end{figure}

\FloatBarrier

\paragraph{\textbf{Interpretation.}} The arithmetic identity can be verified algebraically via Dirichlet series. The FD--lift additionally displays how the same relation emerges from an analytic two--variable framework: the differentiation operator in $s$ transports divisor data on the local side to $\zeta(u)\zeta'(u)$ on the spectral side. This view complements the standard convolution calculus without altering its content.

\begin{quote}\small
\textbf{Takeaways (Track 2: dynamic two-variable lift).}
The object $\mathcal T^{(\mathrm{ren})}(z,s)$ is entire in $z$ and holomorphic in $s$ on $\Re s>-1$.
At integers it collapses to $\sum_{d\mid n}d^{-s}$, and the derivative at $s=0$ recovers $\Lambda(n)$.
The framework provides curvature bounds and a stable Newton template for zero localisation.
\end{quote}

\section{Application I: Constructing an Entire Integer Prime--Zero Function}
\label{sec:application-prime-indicator}

The utility of the general FD-Lift framework is demonstrated by the construction of a specific family of entire functions, $\mathfrak{F}(z,q)$, with prime-vanishing properties \emph{at integer arguments}. The construction proceeds from a general generating function to a specifically normalized and corrected function, whose properties are then analyzed.
The general operator framework is developed in Section~\ref{sec:fd-lift-main}.

\subsection{Analytic Construction and Definition of $\mathfrak{F}(z,q)$}

The function $\mathfrak{F}(z,q)$ is constructed as a specialized instance of the general lift $\mathcal{T}_a(z)$. The weights $a(i)$ are chosen as a geometric sequence, dependent on a complex parameter $q$ with $|q|>1$, and an analytic correction term is subtracted to enforce the desired prime-zero property. This corresponds to the following specific choice of weights for the sum part $\mathfrak{S}(z,q)$:
\[
a(i) = 
\begin{cases} 
(q-1)q \cdot q^{-i} & \text{for } i \ge 2, \\
0 & \text{for } i=1.
\end{cases}
\]
The full construction is detailed below.

\subsection{Construction of the Prime Indicator $\mathfrak{F}(z,q)$}
\label{sec:prime-indicator-construction}

The specific prime indicator is constructed as an instance of the FD--lift, where the challenge of defining an analytic superposition of infinitely many divisor filters is resolved by introducing geometric weights $q^{-i}$ to ensure convergence for $|q|>1$. An analytic correction term is then subtracted to enforce the prime-zero property at integers. The factor $(q-1)q$ serves as a convenient normalization.

\begin{center}
\begin{tabular}{ll}
$\phi_\infty(z)$ & baseline kernel $\bigl(\frac{\sin(\pi z)}{\pi z}\bigr)^2$ with $\phi_\infty(0):=1$ \\[2pt]
$K(q,p)$ & $\tfrac12\big(S_q''(p)-(\log q)^2 q^{-p}\big)$, prime-uniform curvature term \\[2pt]
$\Sigma(q)$ & $\displaystyle \sum_{i\ge2}\frac{q^{-i}}{i^2}$ (Fej\'er mass in the middle window) \\[2pt]
$S_1(x)$ & $\displaystyle \frac{\sin(2\pi x)}{2\pi}$ (periodic normalizer) \\[2pt]
$S_q(x)$ & $\displaystyle \sum_{i\ge2} q^{-i}\,\phi_i(x)$ with $\phi_i(x)=\frac{F(x,i)}{i^2}$
\end{tabular}
\end{center}
\medskip

The construction uses the general FD--lift
\[
\mathcal T_a(z)=\sum_{i\ge1} a(i)\,\frac{F(z,i)}{i^2},
\]
with the \emph{kernel} $F(z,i)/i^2$ fixed and the \emph{weights} chosen as
\[
a(i)=\begin{cases}
(q-1)q\,q^{-i}, & i\ge 2,\\
0, & i=1.
\end{cases}
\]

\begin{definition}[The Normalized Analytic Prime Indicator]
For $z \in \C$ and $q \in \C$ with $|q|>1$, the function $\mathfrak{F}(z,q)$ is defined as
\[\mathfrak{F}(z,q) = \underbrace{\left( (q-1)q \sum_{i=2}^{\infty} \frac{F(z,i)}{i^2} \left(\frac{1}{q}\right)^i \right)}_{\mathfrak{S}(z,q)} - \underbrace{(q-1)q \left(\frac{1}{q}\right)^z}_{C(z,q)}.\]
\end{definition}

\begin{theorem}[Analyticity and radial growth of $\mathfrak{F}(\cdot,q)$]\label{thm:analyticity-growth}
For any fixed $q\in\C$ with $|q|>1$, the map $z \mapsto \mathfrak{F}(z,q)$ is entire of order at most $1$ and exponential type at most $2\pi$.
\end{theorem}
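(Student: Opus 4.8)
The plan is to exploit the additive decomposition $\mathfrak F(z,q)=\mathfrak S(z,q)-C(z,q)$ recorded in the definition and to check, separately and independently, that each summand is entire of order at most $1$ and of exponential type at most $2\pi$. Since finite linear combinations preserve all three properties (entireness, order $\le 1$, type $\le 2\pi$), the theorem follows at once. Thus the work reduces to two short verifications, both of which are essentially citations of machinery already assembled.

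First I would dispose of the sum term. The observation is that $\mathfrak S(\cdot,q)$ is nothing but the FD-lift $\mathcal T_a$ of Section~\ref{sec:fd-lift-main} for the weight sequence $a(1)=0$, $a(i)=(q-1)q\,q^{-i}$ for $i\ge2$. Because $|q|>1$, these weights are the tail of a convergent geometric series, so $\sum_{i\ge1}|a(i)|=|(q-1)q|\sum_{i\ge2}|q|^{-i}<\infty$. Lemma~\ref{lem:growth-type} (equivalently, the Corollary following Theorem~\ref{thm:fd-lift-core}) then applies verbatim and yields, for fixed $q$, that $\mathfrak S(\cdot,q)$ is entire with $|\mathfrak S(x+\ii y,q)|\le\bigl(\sum_{i\ge1}|a(i)|\bigr)\cosh(2\pi|y|)$, hence of order at most $1$ and exponential type at most $2\pi$. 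No new estimate is needed here.

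Next I would handle the corrector $C(z,q)=(q-1)q\,q^{-z}=(q-1)q\,e^{-z\log q}$, where $\log q$ is the fixed branch used in the paper (the real logarithm when $q>1$). As a constant multiple of the exponential of a linear function it is manifestly entire and of order at most $1$. For the type, along the vertical line $\re z=x$ one has $|C(x+\ii y,q)|=|(q-1)q|\,|q|^{-x}\,e^{(\arg q)\,y}$, so the growth in the imaginary direction is bounded by $e^{|\arg q|\,|y|}$; with $|\arg q|\le\pi<2\pi$ — and $\arg q=0$ in the primary regime $q>1$, in which case $C(\cdot,q)$ has exponential type $0$, exactly as noted in the Remark following Lemma~\ref{lem:growth-type} — the corrector contributes exponential type at most $2\pi$. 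Adding the two contributions yields the claim.

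I do not anticipate a genuine obstacle: the statement is a corollary of the FD-lift growth lemma together with the triviality that the exponential of a linear function is entire of order one. The only point requiring a moment's care — the ``hard'' part, in the modest sense that it is where the bare citations stop being enough — is the branch bookkeeping for $q^{-z}$ when $q\notin\R$: one must pin down a branch of $\log q$ and observe that $|\im\log q|\le\pi$, so that the exponential corrector can never push the exponential type past $2\pi$.
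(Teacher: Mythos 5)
Your proposal is correct and follows essentially the same route as the paper's own proof: the Fej\'er superposition is controlled by the cosine-expansion bound $|F(x+\ii y,i)|\le i^2\cosh(2\pi|y|)$ together with the Weierstrass $M$-test (i.e.\ Lemma~\ref{lem:growth-type}), and the exponential corrector $(q-1)q\,q^{-z}$ is treated separately as an entire function of low type. The only difference is that you make the branch and $\arg q$ bookkeeping for complex $q$ explicit (the factor $e^{(\arg q)y}$ with $|\arg q|\le\pi$), a point the paper's remark on the corrector glosses over by quoting $|q^{-x-\ii y}|=q^{-x}$, which is literally valid only for real $q>0$.
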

\begin{proof}
Uniform convergence on compact sets follows from the Weierstrass M-test (see Appendix~\ref{app:analyticity}). The cosine expansion of $F$ and $|\cos(x+\mathrm{i}y)|\le\cosh(y)$ imply
\[
|F(x+\mathrm{i}y,i)|\le i^2\cosh(2\pi|y|)\le \tfrac12 i^2\big(e^{2\pi|y|}+e^{-2\pi|y|}\big).
\]
Consequently, for some $C=C(q)$ and all $z=x+\mathrm{i}y\in\C$,
\[
|\mathfrak F(z,q)|\;\le\; C\,e^{2\pi|y|}\;\le\; C\,e^{2\pi|z|},
\]
because $|y|\le |z|$. The corrector $z\mapsto q^{-z}$ has exponential type $0$. Hence the function is entire of order $\le 1$ and exponential type $\le 2\pi$ in the radial sense.
\end{proof}

\subsection{Properties of the Prime Indicator}

\begin{theorem}[Integer Prime-Zero Property]\label{thm:integer-prime-zero}
For any integer $n \ge 2$ and any real number $q>1$,
\[\mathfrak{F}(n,q) = 0 \quad\Longleftrightarrow\quad n \text{ is a prime number}.\]
\end{theorem}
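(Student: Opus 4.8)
The plan is to evaluate $\mathfrak{F}(n,q)$ directly at integer arguments, using the divisor–filter identity, and then reduce the vanishing condition to an elementary statement about the divisors of $n$ lying strictly between $1$ and $n$.

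First I would invoke the projector property $F(n,i)/i^2=\mathbf{1}_{i\mid n}$ for integer $n$ (Proposition~\ref{prop:fejer-closed} and the discussion around Theorem~\ref{thm:fd-lift-core}). For real $q>1$ one has $\bigl|F(n,i)/i^2\cdot q^{-i}\bigr|=\mathbf{1}_{i\mid n}\,q^{-i}\le q^{-i}$, so the series defining $\mathfrak{S}(z,q)$ converges absolutely at $z=n$ and may be summed termwise; only the indices $i\mid n$ survive. This yields
\[
\mathfrak{F}(n,q)=(q-1)q\Bigl(\sum_{\substack{d\mid n\\ d\ge 2}}q^{-d}\;-\;q^{-n}\Bigr).
\]

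The key observation is then that, since $n\ge 2$, the integer $n$ is itself one of the divisors appearing in the sum, so the term $d=n$ cancels the corrector $-q^{-n}$ exactly, leaving
\[
\mathfrak{F}(n,q)=(q-1)q\sum_{\substack{d\mid n\\ 2\le d\le n-1}}q^{-d}.
\]
Because $q>1$ forces $(q-1)q>0$ and $q^{-d}>0$ for each $d$, the right-hand side is a sum of positive terms; it is nonnegative and vanishes if and only if the index set $\{d:d\mid n,\ 2\le d\le n-1\}$ is empty. An integer $n\ge 2$ admits a divisor in $[2,n-1]$ precisely when it is composite, so the set is empty exactly when $n$ is prime. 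Chaining these equivalences gives $\mathfrak{F}(n,q)=0\iff n$ is prime, and as a by-product one reads off the strict inequality $\mathfrak{F}(m,q)>0$ for composite $m\ge 4$, matching the abstract.

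I do not expect a genuine obstacle: the argument is a direct unwinding of the definition once the projector identity is available. The only points warranting a sentence of care are (i) the termwise evaluation of the infinite series at $z=n$, which is immediate from absolute convergence for $q>1$; and (ii) the bookkeeping that the divisor $d=n$ is always present in the divisor sum, so that subtracting $C(n,q)=(q-1)q\,q^{-n}$ has precisely the intended cancelling effect — this is exactly why the corrector carries the exponent $z$. The equivalence ``$n$ has a divisor in $[2,n-1]$'' $\iff$ ``$n$ is composite'' is just the definition of compositeness for $n\ge 2$.
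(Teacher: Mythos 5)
Your proposal is correct and follows essentially the same route as the paper's proof: evaluate via the projector identity $F(n,i)/i^2=\mathbf{1}_{i\mid n}$, observe that the divisor $d=n$ cancels the corrector $q^{-n}$, and conclude from strict positivity of the remaining proper-divisor sum when $n$ is composite. No issues.
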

\begin{proof}
By construction,
\[
\mathfrak{F}(z,q)=(q-1)q\Big(\sum_{i\ge2} q^{-i}\,\frac{F(z,i)}{i^2}\;-\;q^{-z}\Big),\qquad
\frac{F(n,i)}{i^2}=\mathbf 1_{i\mid n}\ \ (n\in\mathbb N).
\]
For a prime $p$, only $i=p$ contributes among $i\ge2$, hence $\sum_{i\ge2} q^{-i}\tfrac{F(p,i)}{i^2}=q^{-p}$ and $\mathfrak F(p,q)=0$.

For a composite $n\ge4$ (the only composites in range), the divisor $i=n$ contributes $q^{-n}$, which is exactly removed by the correction $q^{-n}$. Thus
\[
\mathfrak F(n,q)=(q-1)q\!\!\sum_{\substack{d\mid n\\2\le d<n}}\! q^{-d}.
\]
Since $n$ is composite, the set of proper divisors is nonempty; for real $q>1$ each term satisfies $q^{-d}>0$, hence the sum is strictly positive. This proves the equivalence.
\end{proof}

\noindent\textbf{Remark.} For complex $q$, cancellations can occur, meaning the reverse implication (zero $\Rightarrow$ prime) does not hold in general for composite integers.

\begin{figure}[t]
\centering
\includegraphics[width=\textwidth]{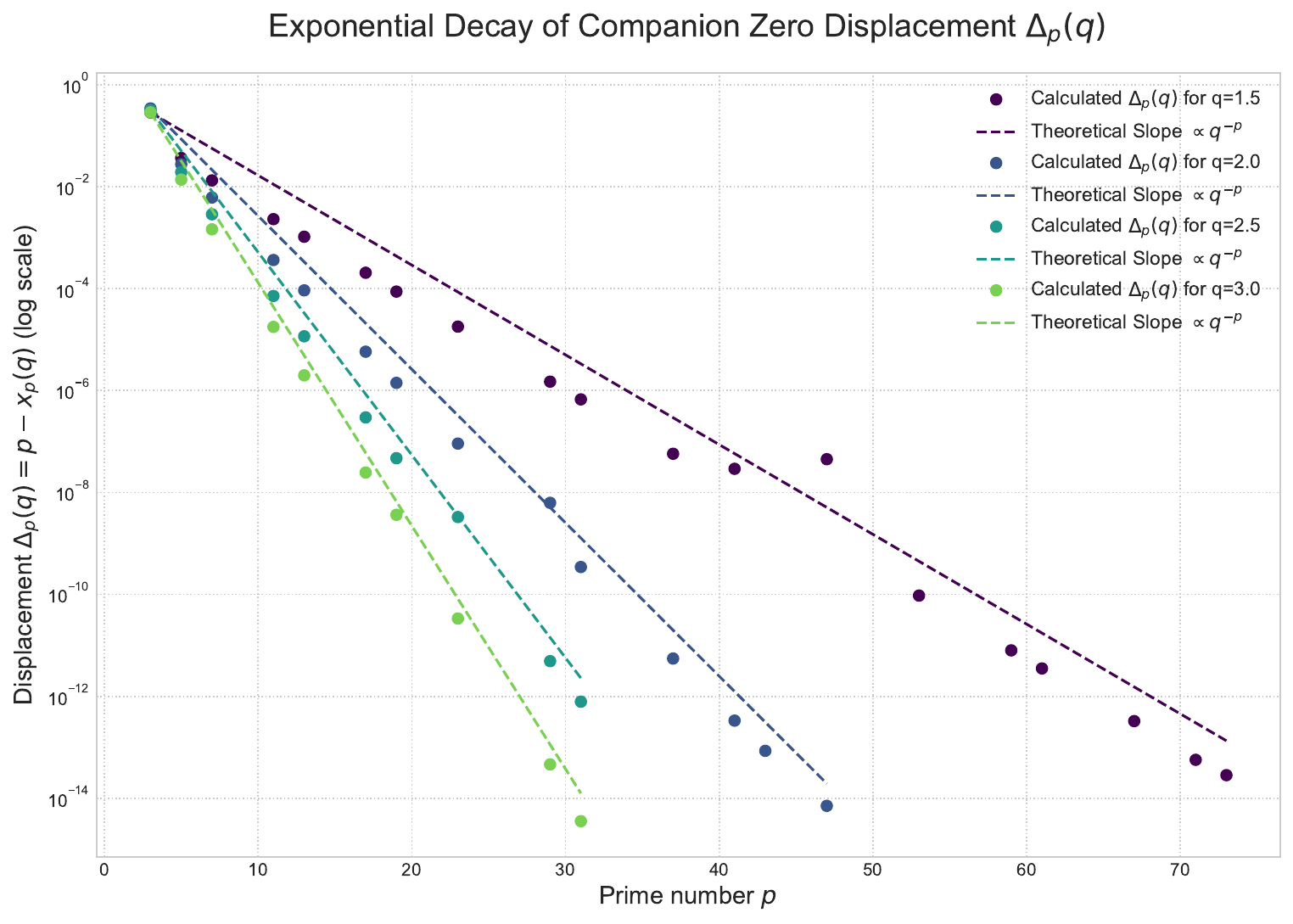}
\caption{Companion zero displacement $\Delta_p(q)=p-x_p(q)$ on a logarithmic scale for several fixed $q>1$. The lines of slope $\log(1/q)$ confirm the exponential law $\Delta_p(q)\asymp q^{-p}$.}
\label{fig:companion_displacement}
\end{figure}

\FloatBarrier

\subsection{Visual Morphology and Distribution of Complex Zeros}\label{sec:qpos-morph-3d}
The global shape for positive parameters $q>1$ is illustrated by two complementary 3D views of $\mathfrak{F}(\cdot,q)$. For primes on the real axis, valleys occur due to the integer anchor $\mathfrak F(p,q)=0$, whereas composite integers correspond to positive plateaus whose heights reflect the distribution of proper divisors (cf. Lemma~\ref{lem:normalization}). These visualizations pertain to the original indicator (without the periodic normalizer).

\begin{figure}[t]
\centering
\includegraphics[width=\textwidth]{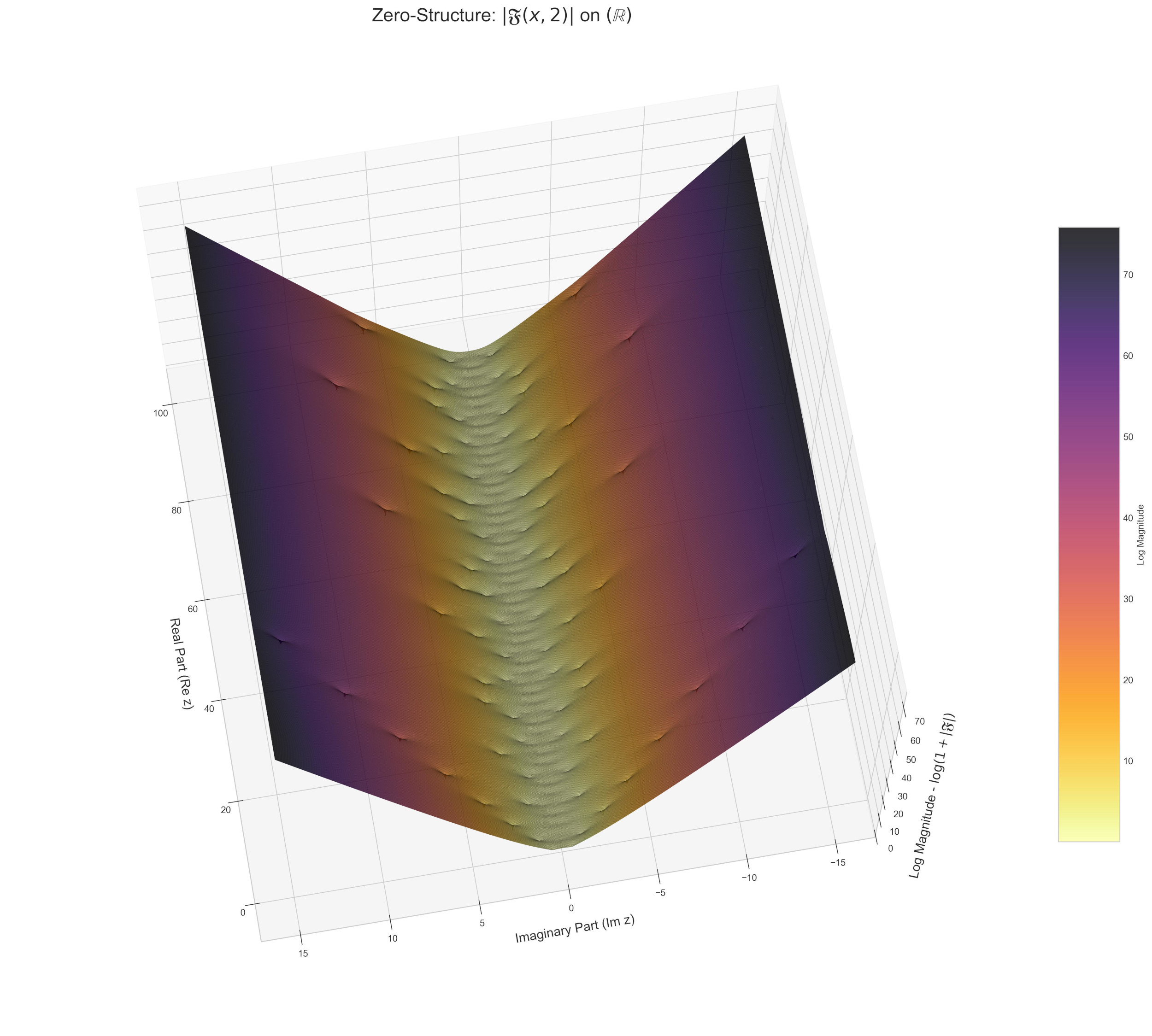}
\caption{Magnitude of $\mathfrak{F}(z,2)$ visualized along the real axis. Zero clusters appear as valleys adjacent to the axis.}
\label{fig:3d_real_axis}
\end{figure}

\begin{figure}[t]
\centering
\includegraphics[width=\textwidth]{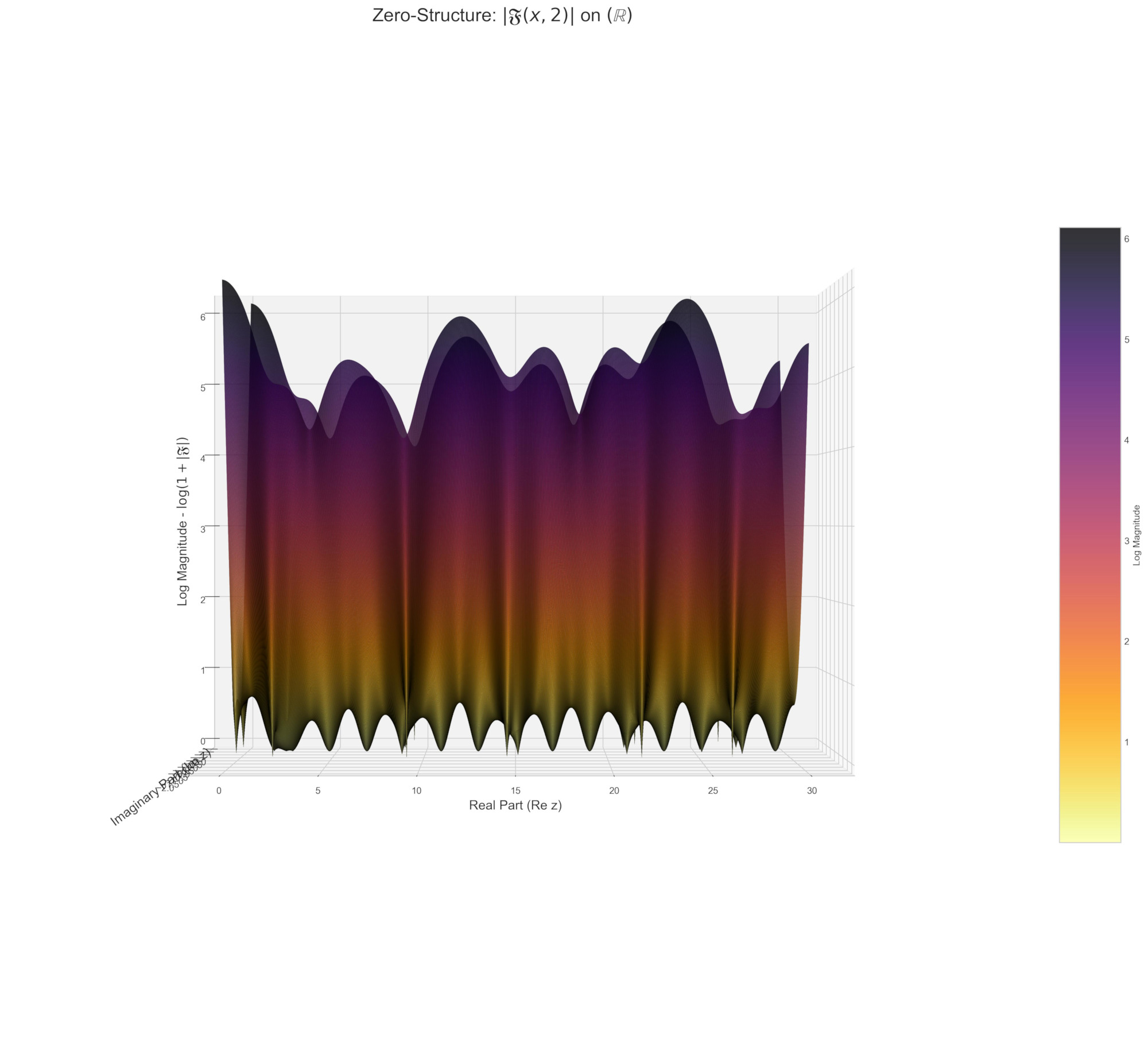}
\caption{Side view of $\lvert \mathfrak{F}(z,2)\rvert$ for $\re(z) \in [0, 30]$. Pronounced minima correspond to primes on the real axis ($z=2,3,5,\dots$). Plateau heights for composite numbers (e.g., $n=6,12,24,30$) reflect the number and distribution of proper divisors.}
\label{fig:3d_side_view}
\end{figure}

\begin{figure}[t]
\centering
\includegraphics[width=\textwidth]{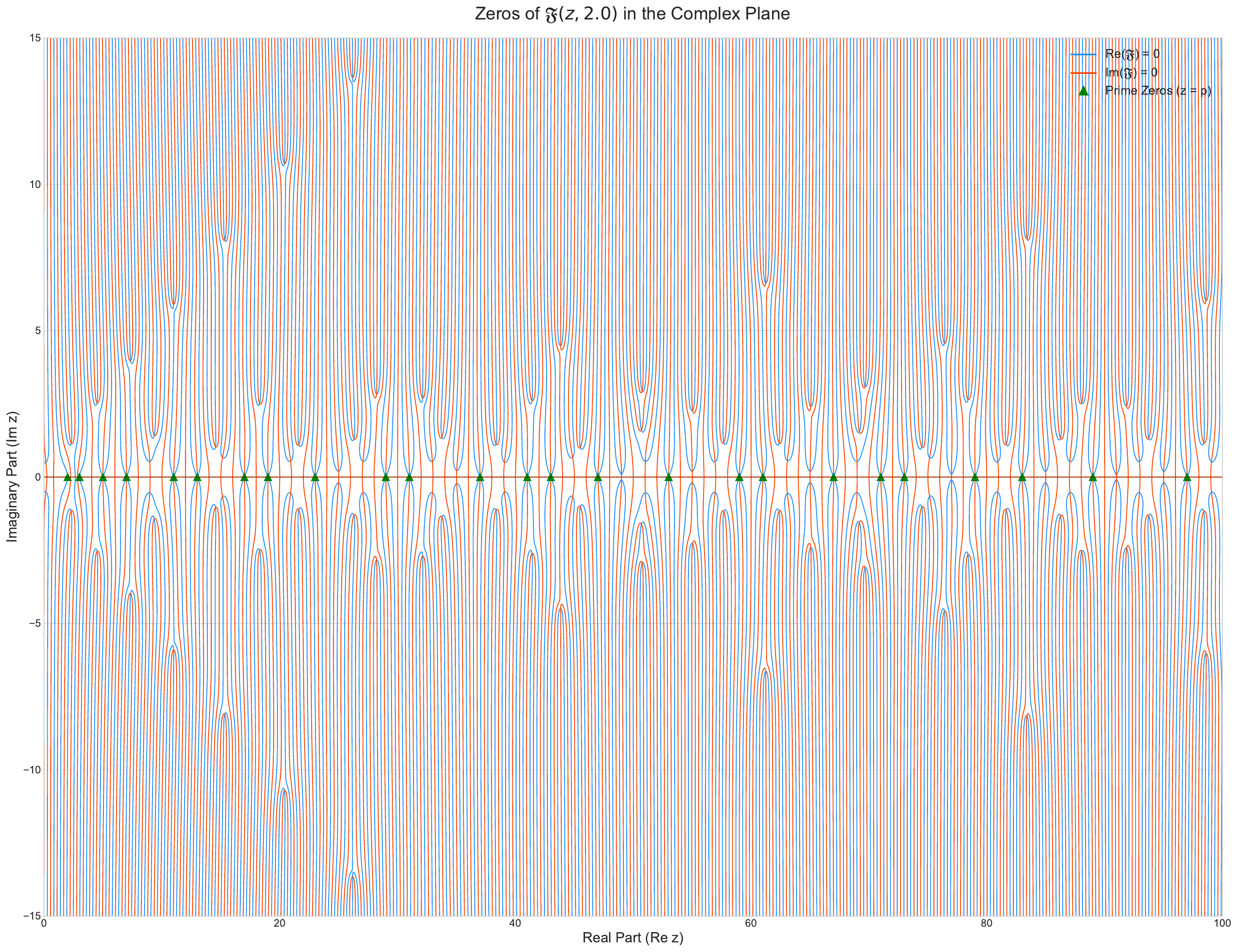}
\caption{Zeros of $\mathfrak{F}(z,2)$ in the complex plane for $\re(z) \in [0, 100]$. Green triangles on the real axis mark the prime numbers. Intersections of the level sets $\operatorname{Re}(\mathfrak{F})=0$ (blue) and $\operatorname{Im}(\mathfrak{F})=0$ (red) indicate complex zeros forming recurring clusters.}
\label{fig:complex_zeros_low_q}
\end{figure}

\begin{figure}[t]
\centering
\includegraphics[width=\textwidth]{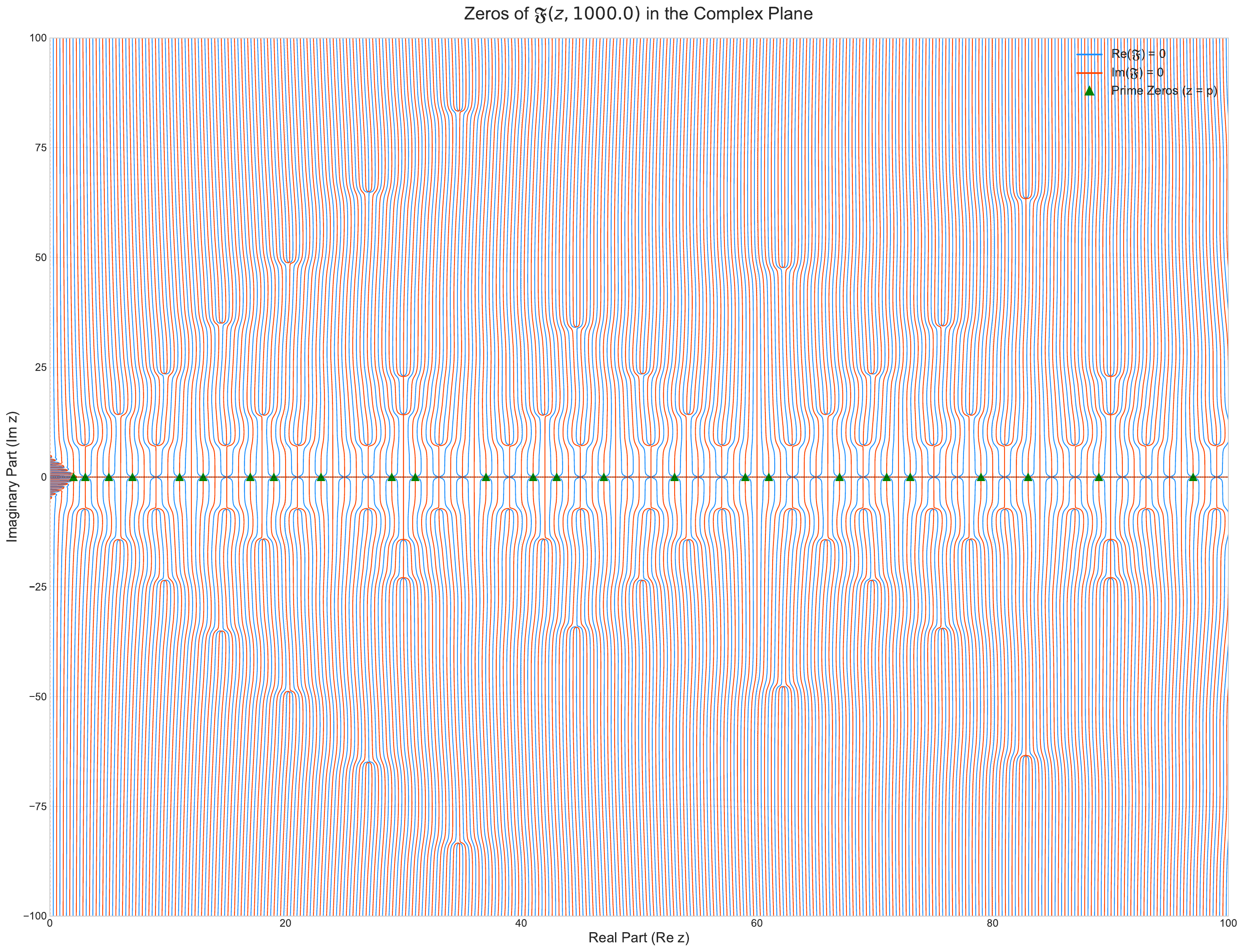}
\caption{Zeros of $\mathfrak{F}(z,1000)$ in the complex plane. Compared to Figure~\ref{fig:complex_zeros_low_q}, the zero structure simplifies into an approximately periodic lattice with near-horizontal alignments.}
\label{fig:complex_zeros_high_q}
\end{figure}

\FloatBarrier

\subsection{Companion Zeros and their Elimination via Tangent-Matching}
The preceding analysis and visualizations focus on the properties of the indicator functions $\mathfrak{F}$ and $\Fsharp$. This section now presents the main theoretical result concerning their real-zero geometry in the windows between integers, consolidating the problem observed for $\mathfrak{F}$ and the solution provided by $\Fsharp$.

\subsubsection{Companion zeros for the original indicator $\mathfrak F$}\label{sec:companions-F}
The existence and size of left companion zeros for $\mathfrak F(\cdot,q)$ are stated here; proofs are deferred to Appendix~\ref{app:real-zero-original}.

\begin{theorem}[Existence of a companion zero]\label{thm:existence-left-companion}
Let $q>1$ and let $p\ge5$ be an odd prime. Then $\mathfrak F(\cdot,q)$ has at least one real zero $x_p(q)\in(p-1,p)$.
\end{theorem}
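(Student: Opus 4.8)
The plan is to establish the claim by an intermediate–value argument on the closed interval $[p-1,p]$, using the integer prime–zero property (Theorem~\ref{thm:integer-prime-zero}) at the two endpoints together with a first–order analysis of $\mathfrak{F}(\cdot,q)$ at $x=p$. First I would record the two endpoint values. Since $p\ge 5$ is prime, $p-1\ge 4$ is even and hence composite; using $\phi_i(n)=\mathbf 1_{i\mid n}$ and the definition of $\mathfrak{F}$,
\[
\mathfrak{F}(p-1,q)=(q-1)q\!\!\sum_{\substack{d\mid p-1\\ 2\le d<p-1}}\!\!q^{-d}\;\ge\;(q-1)q\,q^{-2}\;>\;0,
\]
while $\mathfrak{F}(p,q)=0$ because $p$ is prime. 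The function $x\mapsto\mathfrak{F}(x,q)$ is real–valued and continuous (indeed entire in $z$, Theorem~\ref{thm:analyticity-growth}) on $[p-1,p]$, so it suffices to produce one interior point $x^{\ast}\in(p-1,p)$ with $\mathfrak{F}(x^{\ast},q)<0$; the sign change against $\mathfrak{F}(p-1,q)>0$ then forces a real zero of $\mathfrak F(\cdot,q)$ in $(p-1,x^{\ast})\subset(p-1,p)$.

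To produce such an $x^{\ast}$ I would examine the behaviour of $\mathfrak{F}(\cdot,q)$ just to the left of $p$. Writing $\mathfrak{F}(x,q)=(q-1)q\bigl(S_q(x)-q^{-x}\bigr)$ with $S_q(x)=\sum_{i\ge2}q^{-i}\phi_i(x)$, termwise differentiation is legitimate because the $\phi_i$–series converges locally uniformly on $\C$ (the bound $|\phi_i(z)|\le\cosh(2\pi|y|)$ from Lemma~\ref{lem:growth-type} together with $q>1$ supplies a Weierstrass majorant), giving
\[
\mathfrak{F}'(x,q)=(q-1)q\bigl(S_q'(x)+(\log q)\,q^{-x}\bigr).
\]
The key point is that $S_q'(n)=0$ at every integer $n$. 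Indeed each $\phi_i=F(\cdot,i)/i^{2}$ is real–analytic on $\R$ with $0\le\phi_i\le 1$ (Fej\'er–kernel positivity and the normalization $\phi_i\le 1$), and $\phi_i(n)\in\{0,1\}$; thus every integer is either a global minimum or a global maximum of $\phi_i$, forcing $\phi_i'(n)=0$ and hence $S_q'(n)=0$. Consequently
\[
\mathfrak{F}'(p,q)=(q-1)q\,(\log q)\,q^{-p}\;>\;0\qquad(q>1).
\]

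Combining $\mathfrak{F}(p,q)=0$ with the strictly positive slope $\mathfrak{F}'(p,q)>0$, there is $\delta\in(0,1)$ with $\mathfrak{F}(x,q)<0$ for all $x\in(p-\delta,p)$, so any such $x$ is an admissible $x^{\ast}$; the intermediate value theorem on $[\,p-1,\;p-\tfrac{\delta}{2}\,]$ then yields the desired zero $x_p(q)\in(p-1,\,p-\tfrac{\delta}{2})\subset(p-1,p)$. I do not expect a genuine obstacle here: the argument is soft, and the only two steps needing a word of care are the justification of termwise differentiation of $S_q$ (immediate from the uniform–convergence estimates already established for the FD–lift) and the extremality identity $\phi_i'(n)=0$ (which rests solely on the elementary inequality $0\le\phi_i\le 1$ baked into the Fej\'er–kernel identity). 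The genuinely delicate refinements — uniqueness of $x_p(q)$ in $(p-1,p)$ and the exponential displacement law $p-x_p(q)\asymp q^{-p}$ — are separate matters handled in Proposition~\ref{prop:disp-asymp} and Appendix~\ref{app:real-zero-original}.
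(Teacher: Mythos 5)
Your proposal is correct and follows essentially the same route as the paper: positivity at the composite endpoint $p-1$ via the divisor $d=2$, the vanishing $\mathfrak F(p,q)=0$ together with $\mathfrak F'(p,q)=(q-1)q(\log q)q^{-p}>0$ (resting on $S_q'(p)=0$) forcing negativity just left of $p$, and the intermediate value theorem. The only cosmetic difference is that you obtain $\phi_i'(n)=0$ from the extremality argument $0\le\phi_i\le1$ with $\phi_i(n)\in\{0,1\}$, whereas the paper derives it from local quadratic Taylor expansions of $\phi_i$ at integers; both are valid and the rest of the argument coincides.
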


\begin{proposition}[Asymptotic displacement law]\label{prop:disp-asymp}
Let $\phi_i(x):=F(x,i)/i^2$ and define
\[
K(q,p)\;:=\;\frac{1}{2}\left(\sum_{i\ge2} q^{-i}\,\phi_i''(p)\;-\;q^{-p}(\log q)^2\right).
\]
Then for fixed $q>1$ and $p\to\infty$ one has
\[
\Delta_p(q)\;=\;\frac{\log q}{K(q,p)}\,q^{-p}\;\Bigl(1+O\!\big(q^{-p}\big)\Bigr),
\]
uniformly in $p$, with implied constants depending only on $q$. In particular, $\Delta_p(q)\asymp q^{-p}$, and $x_p(q)=p-\Delta_p(q)$ lies strictly to the left of $p$.
\end{proposition}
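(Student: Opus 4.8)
The plan is to read the displacement off a third-order Taylor expansion of $g(x):=\mathfrak F(x,q)/\bigl((q-1)q\bigr)=S_q(x)-q^{-x}$ about the prime $p$, using three exact facts: $g(p)=0$, $g'(p)=(\log q)\,q^{-p}$, and $g''(p)=2K(q,p)$. Here $g$ is entire (Theorem~\ref{thm:analyticity-growth}), so Taylor's theorem with Lagrange remainder applies and $S_q$ may be differentiated termwise. The identity $g(p)=0$ is the integer prime-zero property (among $i\ge2$ only $i=p$ contributes to $S_q(p)$, and it is cancelled by the corrector $q^{-p}$). For $g'(p)$ one uses $\phi_i'(n)=0$ for every integer $n$ and every $i\ge2$: in $\phi_i'(n)\propto\sum_{k=1}^{i-1}k(i-k)\sin(2\pi kn/i)$ the substitution $k\mapsto i-k$ turns the sum into its own negative. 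The value $g''(p)=2K(q,p)$ is the definition.

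Next I would show $K(q,p)\asymp_q 1$ uniformly over odd primes $p$, and that $|g^{(3)}|$ is bounded uniformly on $(p-1,p)$. The ingredient is the curvature formula $\phi_i''(p)=2\pi^2/\bigl(i^2\sin^2(\pi p/i)\bigr)$ valid for $i\nmid p$ — obtained by expanding $\sin^2(\pi(p+\varepsilon))=\pi^2\varepsilon^2+O(\varepsilon^4)$ and $\sin^2(\pi(p+\varepsilon)/i)=\sin^2(\pi p/i)\bigl(1+O(\varepsilon)\bigr)$ — together with $\phi_p''(p)=-\tfrac{2\pi^2}{3}(1-p^{-2})$. The elementary bound $\sin(\pi r/i)\ge 2/i$ for $1\le r\le i-1$ gives $0<\phi_i''(p)\le\pi^2/2$ whenever $i\nmid p$; since $p$ is odd the $i=2$ summand of $S_q''(p)$ equals exactly $\tfrac{\pi^2}{2}q^{-2}$, the remaining $i\ge3,\ i\nmid p$ summands are nonnegative, and $|q^{-p}\phi_p''(p)|=O(q^{-p})$. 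Hence $\tfrac{\pi^2}{4}q^{-2}\le S_q''(p)\le \tfrac{\pi^2}{2}\,\sum_{i\ge2}q^{-i}$ for $p$ large, and $K(q,p)=\tfrac12\bigl(S_q''(p)-(\log q)^2q^{-p}\bigr)$ lies between two positive constants depending only on $q$. Differentiating the cosine expansion thrice and applying the $M$-test gives $|g^{(3)}(x)|\le (2\pi)^3\sum_{i\ge2}q^{-i}+(\log q)^3 q^{-(p-1)}=:C_3(q)$ for $x\in(p-1,p)$.

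The core step is a sign-change argument. Set $\delta_0=\delta_0(p):=(\log q)\,q^{-p}/K(q,p)$, so $\delta_0\asymp_q q^{-p}$. Taylor's theorem gives, for $t\in(0,1)$,
\[
g(p-t)=t\Bigl[\,K(q,p)\,(t-\delta_0)\;-\;\tfrac16\,g^{(3)}(\xi_t)\,t^2\,\Bigr],\qquad \xi_t\in(p-t,\,p).
\]
Fix $M=M(q)$ with $M\,K(q,p)>\tfrac23\,C_3(q)$ (possible since $K(q,p)$ is bounded below), and take $p$ large enough that $\delta_0(1+M\delta_0)<1$. Since the remainder is at most $\tfrac23 C_3(q)\,\delta_0^2$ in modulus on this range, the bracket at $t=\delta_0(1-M\delta_0)$ equals $-M K(q,p)\delta_0^2-\tfrac16 g^{(3)}(\xi_t)t^2<0$, and at $t=\delta_0(1+M\delta_0)$ equals $+M K(q,p)\delta_0^2-\tfrac16 g^{(3)}(\xi_t)t^2>0$. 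Therefore $g\bigl(p-\delta_0(1-M\delta_0)\bigr)<0<g\bigl(p-\delta_0(1+M\delta_0)\bigr)$, so by the intermediate value theorem $g$ has a zero $x=p-\Delta$ with $\Delta\in\bigl(\delta_0(1-M\delta_0),\,\delta_0(1+M\delta_0)\bigr)\subset(0,1)$. Using the existence and the uniqueness (under the stated hypotheses) of the left companion zero from Theorem~\ref{thm:existence-left-companion} and Appendix~\ref{app:real-zero-original}, this zero is $x_p(q)$, whence
\[
\Delta_p(q)=\delta_0\bigl(1+O(\delta_0)\bigr)=\frac{\log q}{K(q,p)}\,q^{-p}\bigl(1+O(q^{-p})\bigr),
\]
uniformly in $p$ with implied constants depending only on $q$; in particular $\Delta_p(q)\asymp q^{-p}$ and $x_p(q)<p$.

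The step I expect to be the main obstacle is the uniformity in $p$: because $S_q''(p)$, and hence $K(q,p)$, depends on $p$ only through the residues $p\bmod i$, one must produce $p$-free two-sided control of it, and this is exactly where the closed form $\phi_i''(p)=2\pi^2/\bigl(i^2\sin^2(\pi p/i)\bigr)$ and the denominator estimate $\sin(\pi(p\bmod i)/i)\ge 2/i$ are indispensable; without them the curvature estimate would degenerate whenever some modulus $i$ nearly divides $p$. A secondary point is the clean identification of the constructed zero with the named companion zero $x_p(q)$, which relies on the uniqueness clause proved elsewhere; if one merely wants the existence of a real zero in $(p-1,p)$ obeying the displacement law, that input may be dropped.
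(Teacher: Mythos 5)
Your proof is correct and follows essentially the same route as the paper's: a third-order Taylor expansion of $S_q(x)-q^{-x}$ at $p$ using $S_q(p)=q^{-p}$, $S_q'(p)=0$, prime-uniform two-sided bounds on $K(q,p)$ via $\phi_i''(p)=2\pi^2/\bigl(i^2\sin^2(\pi p/i)\bigr)$ and $\sin(\pi/i)\ge 2/i$, and a perturbation of the quadratic root $\delta_0=(\log q)q^{-p}/K(q,p)$ --- your explicit sign-change bracketing at $t=\delta_0(1\mp M\delta_0)$ is just the paper's ``standard perturbation estimate'' made concrete, and your cosine-polynomial bound on $g^{(3)}$ plays the role of the paper's uniform-summability lemma. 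Your closing caveat also matches the paper's situation: uniqueness of the companion zero is only conjectural (or conditional) there, and, like the paper's argument, your construction yields the displacement law for a zero in $(p-1,p)$ without needing that input.
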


\begin{conjecture}[Uniqueness in $(p-1,p)$]\label{conj:uniqueness-companion}
For each fixed $q>1$ and every odd prime $p\ge5$, the zero in $(p-1,p)$ provided by Theorem~\ref{thm:existence-left-companion} is unique.
\end{conjecture}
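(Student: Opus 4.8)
Since $S_q(x)=\sum_{i\ge 2}q^{-i}\phi_i(x)$ is a sum of squares that vanishes only at integers, $S_q>0$ on the open interval $(p-1,p)$, so there $\mathfrak F(x,q)=0\iff h(x):=\log S_q(x)+x\log q=0$, and $\mathfrak F(x,q)>0\iff h(x)>0$. One records three facts: $h(p)=0$ (prime anchor, $S_q(p)=q^{-p}$); $h(p-1)>0$ (since $p-1$ is composite, Theorem~\ref{thm:integer-prime-zero}); and by Theorem~\ref{thm:existence-left-companion} and Proposition~\ref{prop:disp-asymp} there is a zero $x_p(q)\in(p-1,p)$ with $\Delta_p(q)=p-x_p(q)\asymp q^{-p}$. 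The engine of the proof is that $h''=(\log S_q)''=\bigl(S_q''S_q-(S_q')^2\bigr)/S_q^{2}$: if $h$ is strictly convex on a window $[p-\delta_1,p]$ containing $x_p(q)$, then $h$ has at most two zeros there, and the two we already know ($x_p(q)$ and $p$) must be all of them; combined with $h>0$ on the complementary bulk $(p-1,p-\delta_1]$ this forces $x_p(q)$ to be the unique zero of $\mathfrak F(\cdot,q)$ in $(p-1,p)$.

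\textbf{The window adjacent to $p$.} Because each $\phi_i$ has a critical point at $p$ (a local maximum for $i=p$, a global minimum of value $0$ for $i\nmid p$), one gets $\phi_i'(p)=0$ for all $i\ge2$, hence a Taylor expansion with no linear term: $S_q(x)=q^{-p}+\tilde C_p(q)(x-p)^2+O\!\bigl(M_3(q)|x-p|^3\bigr)$, where $\tilde C_p(q):=\sum_{i\ge2,\ i\ne p}\tfrac{\pi^2 q^{-i}}{i^{2}\sin^{2}(\pi p/i)}-\tfrac{\pi^{2}}{3}q^{-p}(1-p^{-2})>0$ satisfies $\tilde C_p(q)\asymp q^{-2}$ (its $i=2$ term alone is $\tfrac{\pi^2}{4}q^{-2}$, using $\sin^2(\pi p/2)=1$ for odd $p$) and $M_3(q):=\sup_{\mathbb R}|S_q'''|\le(2\pi)^3 q^{-2}/(1-q^{-1})$ from the crude bound $|\phi_i'''|\le(2\pi)^3$. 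Substituting, $S_q''S_q-(S_q')^2=2\tilde C_p(q)q^{-p}-2\tilde C_p(q)^2(x-p)^2+O\!\bigl(M_3(q)q^{-p}|x-p|\bigr)$, which is strictly positive for $|x-p|$ below a threshold $\asymp\sqrt{q^{-p}/\tilde C_p(q)}\asymp q^{1-p/2}$. Choosing $\delta_1(q,p)$ of that order makes $h$ strictly convex on $[p-\delta_1,p]$; and since $\Delta_p(q)\asymp q^{2-p}\ll q^{1-p/2}$ for $p\ge5$, the companion zero lies inside this window. Hence $x_p(q)$ is the only zero of $\mathfrak F(\cdot,q)$ in $(p-\delta_1,p)$.

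\textbf{Bulk and collar positivity, and conclusion.} On $(p-1,p-\delta_1]$ I would prove $h>0$, i.e. $S_q(x)>q^{-x}$, by elementary lower bounds. Dropping all but the $i=2$ term gives $S_q(x)\ge q^{-2}\cos^{2}(\pi x/2)$, and $\cos^2(\pi x/2)$ decreases from $1$ at $x=p-1$ across the window (its derivative is $-\tfrac{\pi}{2}\sin\pi x<0$ there, as $\sin\pi x>0$ on $(p-1,p)$ for odd $p$), while $q^{-x}\le q^{1-p}$; this settles the genuine bulk $(p-1,p-\delta_2)$, where $\delta_2(q):=\tilde C_p(q)/(2M_3(q))\asymp 1-q^{-1}$, at the cost of a lower bound $p\ge P_2(q)$ coming from $q^{3-p}<\sin^2(\pi\delta_2/2)$. (If needed near the middle, $\phi_i(x)\ge\phi_\infty(x)$—which is just $|\sin\theta|\le|\theta|$ with $\theta=\pi x/i$—supplies the extra term $\tfrac{\phi_\infty(x)}{q^3-q^2}$.) On the collar $[p-\delta_2,p-\delta_1]$ the Taylor expansion of $S_q$ about $p$ still gives $S_q(x)\ge q^{-p}+\tfrac12\tilde C_p(q)(x-p)^2$ (the cubic remainder is $\le\tfrac1{12}\tilde C_p(q)(x-p)^2$ by the choice of $\delta_2$), whence $S_q(x)-q^{-x}\ge|x-p|\bigl(\tfrac12\tilde C_p(q)|x-p|-q^{-p}\log q\bigr)>0$ because $|x-p|\ge\delta_1\gg q^{-p}$. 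Thus $h>0$ on all of $(p-1,p-\delta_1]$; combined with the previous paragraph, $x_p(q)$ is the unique zero. An alternative packaging writes $\mathfrak F(x,q)=\Fsharp(x,q)+(q-1)q(\log q)q^{-x}S_1(x)$ and feeds the three-window curvature bounds already established for $\Fsharp$ (Theorems~\ref{thm:no-companions}, \ref{thm:real-zero-structure}) into the same convexity/positivity dichotomy.

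\textbf{Main obstacle.} The difficulty is uniformity: the argument as sketched proves uniqueness only for $p\ge P_2(q)$, where $P_2(q)$ is forced to be large—of size $\asymp\log_q M_3(q)$, hence several dozen already for $q=2$ and growing without bound as $q\downarrow1$ since $M_3(q)\to\infty$—because the convexity window $\delta_1\asymp q^{1-p/2}$ and the Taylor-valid collar $\delta_2\asymp1-q^{-1}$ must together dominate the bulk, and the one place the lossy bound $|\phi_i'''|\le(2\pi)^3$ enters, namely estimating $|S_q'''|$ on $[p-1,p]$, is precisely where it hurts: for $x$ near an odd prime $p$ the low-index kernels $\phi_i$ sit far from their peaks, so their third derivatives on that window are vastly smaller than $(2\pi)^3$. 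Turning the conjecture into a theorem for every odd $p\ge5$ and every $q>1$ therefore hinges on (i) replacing $M_3(q)$ by a sharp, $p$-localized bound on $|S_q'''|$ on $[p-1,p]$, which should collapse $P_2(q)$ drastically, and (ii) a finite rigorous interval-arithmetic check for the remaining primes $5\le p<P_2(q)$ at each fixed $q$—or, for the $q\downarrow1$ tail where the windows genuinely degenerate, a perturbative argument around the limit $\mathfrak F(x,q)\to\phi_\infty(x)$. Producing one closed-form estimate that removes every exceptional pair $(p,q)$ is the crux, which is why the statement is recorded here only as a conjecture.
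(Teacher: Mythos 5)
The statement you are addressing is recorded in the paper as a \emph{conjecture}, and the paper itself offers no unconditional proof: what it provides is (a) a conditional uniqueness result, Theorem~\ref{thm:uniq-companion-F} in Appendix~\ref{app:real-zero-original}, which establishes exactly one zero in $(p-1,p)$ under the explicit hypotheses \eqref{cond:L0}, \eqref{cond:M0} and the convexity condition \eqref{cond:C0} involving the third-derivative constant $T_3(q,\alpha)$, and (b) numerical bracketing evidence. Your proposal is, in substance, the same route as that conditional theorem: a split of $(p-1,p)$ into a bulk where positivity is forced by the $i=2$ harmonic (plus a Fej\'er-mass bound), and a window adjacent to $p$ where strict convexity caps the number of zeros at two, the two being $x_p(q)$ and the anchor at $p$. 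The main cosmetic difference is that you run the convexity argument on $h(x)=\log S_q(x)+x\log q$ (log-convexity of $S_q$ against the linear corrector) rather than on $\mathfrak F''=(q-1)q\bigl(S_q''-(\log q)^2q^{-x}\bigr)$ as the paper does; this is a legitimate variant and your local expansion data ($\phi_i'(p)=0$, the curvature coefficient $S_q''(p)/2$, the $i=2$ and $i=p$ contributions) agree with Lemma~\ref{lem:second-derivatives} and Lemma~\ref{lem:K-bounds}. Crucially, you correctly identify and concede the gap: your argument only yields uniqueness for $p\ge P_2(q)$, with $P_2(q)$ blowing up as $q\downarrow1$ and with the crude global bound on $|S_q'''|$ being the lossy step — this is precisely the same uniformity obstruction that prevents the paper from upgrading the conjecture to a theorem (the paper's own uniqueness statement likewise carries the hypotheses \eqref{cond:L0}--\eqref{cond:C0}, whose verification for all odd $p\ge5$ and all $q>1$ is not carried out). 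So your proposal should not be read as a proof of the conjecture, and you say as much; as a reconstruction of the paper's conditional analysis it is sound in outline, and your suggested remedies (a $p$-localized bound on $S_q'''$ over $[p-1,p]$, plus a finite rigorous check of small primes, or a perturbative treatment of the $q\downarrow1$ regime) are reasonable directions that go slightly beyond what the paper records, which stops at the explicit-hypothesis theorem plus the numerical remark supporting Conjecture~\ref{conj:uniqueness-companion}.
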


\begin{remark}[Newton refinement for $x_p(q)$]\label{rem:newton}
A truncated curvature $K_M(q,p)$ yields the initial displacement
\[
\Delta_p^{(0)}(q):=\frac{\log q}{K_M(q,p)}\,q^{-p},\qquad
K_M(q,p):=\frac{1}{2}\!\left(\sum_{2\le i\le M}\! q^{-i}\phi_i''(p)-(\log q)^2 q^{-p}\right).
\]
With $x^{(0)}:=p-\Delta_p^{(0)}(q)$ and the target $G(x):=\F(x,q)$, two Newton steps
\[
x^{(t+1)}\;=\;x^{(t)}-\frac{G(x^{(t)})}{G'(x^{(t)})}\qquad(t=0,1)
\]
converge rapidly to the unique zero $x_p(q)$ in $(p-1,p)$ whenever $M$ is moderate and $p$ is not excessively small.
\end{remark}

\paragraph{Example (q=2, locating $x_p(2)$ near $p=101$).}
A truncated curvature \(K_M(2,101)\) yields the initial displacement \(\Delta^{(0)}_{101}(2)=\frac{\log 2}{K_M(2,101)}\,2^{-101}\). Starting from \(x^{(0)}=101-\Delta^{(0)}_{101}(2)\), two Newton steps with \(G(x)=\F(x,2)\) converge rapidly to the unique zero in \((100,101)\).

\subsubsection{Absence of companion zeros for the tangent-matched indicator $\Fsharp$}\label{sec:no-companions-Fsharp}
The three-window argument and an explicit threshold are stated here; proofs are deferred to Appendix~\ref{app:real-zero-fsharp}. This constitutes the main result regarding the real-zero geometry.

\begin{figure}[t]
\centering
\setlength{\fboxsep}{6pt}\fbox{\begin{minipage}{0.9\linewidth}
\centering
\setlength{\tabcolsep}{4pt}%
\begin{tabular}{>{\centering\arraybackslash}p{0.29\linewidth} >{\centering\arraybackslash}p{0.29\linewidth} >{\centering\arraybackslash}p{0.29\linewidth}}
Left window $(p\!-\!1,\,p\!-\!1+\alpha]$ & Middle window $[p\!-\!1+\alpha,\,p\!-\!\alpha]$ & Right window $[p\!-\!\alpha,\,p)$ \\
\footnotesize dominated by $i=2$: $\phi_2$ & \footnotesize Fej\'er mass lower bound $\Sigma(q)$ & \footnotesize quadratic positivity via $K(q,p)$
\end{tabular}
\end{minipage}}
\caption{Schematic of the three-window argument around a prime $p$. The left window is controlled by the $i=2$ harmonic, the middle by the Fej\'er-mass bound, and the right by a uniform quadratic expansion.}
\label{fig:three-window-schema}
\end{figure}

\FloatBarrier

\begin{theorem}[Zero–free prime windows beyond a $q$–dependent threshold]\label{thm:no-companions}
Let $q>1$ and let $P_0(q)$ be the explicit, effectively computable threshold defined in Appendix~\ref{app:real-zero-fsharp}, Subsection~\ref{subsec:explicit-constants}. Then for every odd prime $p\ge P_0(q)$,
\[
\Fsharp(x,q)>0\quad\text{for all }x\in(p-1,p),\qquad \text{and}\quad \Fsharp(p,q)=0.
\]
In particular, $(p-1,p)$ is free of real zeros for all odd primes $p\ge P_0(q)$.
\end{theorem}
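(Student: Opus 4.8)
The plan is first to note that $\Fsharp(p,q)=0$ is immediate, since $\Fsharp$ and $\mathfrak F$ agree at integers and $\mathfrak F(p,q)=0$ for primes by Theorem~\ref{thm:integer-prime-zero}, and then to prove the positivity statement by reducing it to $G(x):=S_q(x)-q^{-x}\bigl(1+(\log q)S_1(x)\bigr)>0$ on $(p-1,p)$ (as $(q-1)q>0$) and bounding $G$ below on three abutting subintervals whose union is $(p-1,p)$. Two standing estimates are used throughout: $S_q(x)=\sum_{i\ge2}q^{-i}\phi_i(x)\ge0$ for real $x$, because each $\phi_i(x)=\sin^2(\pi x)/\bigl(i^2\sin^2(\pi x/i)\bigr)$ is a square; and the corrector obeys $q^{-x}\bigl(1+(\log q)S_1(x)\bigr)\le q^{-(p-1)}\bigl(1+\tfrac1{2\pi}\log q\bigr)$ on $(p-1,p)$ (from $|S_1|\le\tfrac1{2\pi}$ and the monotonicity of $q^{-x}$), hence is exponentially small in $p$. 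A parameter $\alpha=\alpha(q)\in(0,\tfrac12]$ is fixed in the course of the argument, and the three windows are the left $(p-1,p-1+\alpha]$, the middle $[p-1+\alpha,p-\alpha]$, and the right $[p-\alpha,p)$; their union is $(p-1,p)$ exactly because $\alpha\le\tfrac12$.

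On the left window only the $i=2$ term is kept, and this is where the oddness of $p$ first enters: $p-1$ is even, $\phi_2(x)=\cos^2(\pi x/2)$, so $\phi_2(p-1+t)=\cos^2(\pi t/2)\ge\cos^2(\pi\alpha/2)$ for $0\le t\le\alpha$, whence $S_q(x)\ge q^{-2}\cos^2(\pi\alpha/2)$; this exceeds the corrector bound as soon as $q^{\,p-3}>\bigl(1+\tfrac1{2\pi}\log q\bigr)/\cos^2(\pi\alpha/2)$. On the middle window one uses instead the uniform bound $\phi_i(x)\ge\sin^2(\pi x)/i^2$, which sums to the Fej\'er-mass estimate $S_q(x)\ge\Sigma(q)\sin^2(\pi x)\ge\Sigma(q)\sin^2(\pi\alpha)$, valid because $\mathrm{dist}(x,\mathbb Z)\ge\alpha$ there; once more this dominates the exponentially small corrector beyond an explicit threshold on $p$.

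The right window is the delicate one, and is where tangent-matching does its work. From $\Fsharp(p,q)=0$ and $\partial_x\Fsharp(p,q)=0$ one has $G(p)=G'(p)=0$, so Taylor's theorem with Lagrange remainder gives $G(p-t)=\tfrac{t^2}{2}G''(p)-\tfrac{t^3}{6}G'''(\xi)$ for some $\xi\in(p-t,p)$. A short computation with $S_1(p)=0$, $S_1'(p)=1$, $S_1''(p)=0$ yields $G''(p)=S_q''(p)+(\log q)^2q^{-p}$; again by oddness of $p$ the $i=2$ term forces $\phi_2''(p)=\tfrac{\pi^2}{2}$ (whereas $\phi_2''$ is negative at even integers), every term with $2<i<p$ is nonnegative, and the lone $i=p$ term contributes at worst $-O_q(q^{-p})$, so $S_q''(p)\ge\tfrac{\pi^2}{2}q^{-2}-O_q(q^{-p})\ge\tfrac{\pi^2}{4}q^{-2}$ once $p$ passes a threshold. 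Simultaneously $|G'''|$ is bounded on $[p-\alpha,p]$ by a constant $M_3(q)$ uniform in $p$ and, for $\alpha\le\tfrac12$, essentially independent of $\alpha$: for $2\le i<p$ the interval $[p-\alpha,p]$ stays at distance $\ge1-\alpha$ from $i\mathbb Z$, so $\sin^{-2}(\pi\xi/i)$ and its first three derivatives are $O(i^2)$, which cancels the $i^{-2}$ inside $\phi_i$ and leaves a geometrically convergent tail. Fixing $\alpha:=\min\{\tfrac12,\,3\pi^2/(8q^2M_3(q))\}$ then gives $G(p-t)\ge\tfrac{t^2}{2}\bigl(G''(p)-\tfrac{\alpha}{3}M_3(q)\bigr)>0$ for $0<t\le\alpha$, completing the right window.

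Finally $P_0(q)$ is declared to be the ceiling of the maximum of the thresholds produced above — from the left window, from the middle window, and from the requirement $S_q''(p)\ge\tfrac{\pi^2}{4}q^{-2}$ — together with the bound $P_0(q)\ge5$; every quantity entering it is explicit in $q$, matching the constants assembled in Appendix~\ref{app:real-zero-fsharp}, Subsection~\ref{subsec:explicit-constants}. The main obstacle is the right window: one must control $G''(p)$ from below by a prime-uniform curvature estimate and, at the same time, $G'''$ from above over a whole subinterval, and then balance the two against each other through the single free parameter $\alpha$ without circularity — which is precisely why the $\alpha$-independence (for $\alpha\le\tfrac12$) of the third-derivative bound is the technical heart of the argument.
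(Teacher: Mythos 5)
Your proposal is correct and follows essentially the same route as the paper: the identical three-window decomposition of $(p-1,p)$ with the left window controlled by the $i=2$ harmonic $\cos^2(\pi x/2)$ (using that $p-1$ is even), the middle window by the Fej\'er-mass bound $\Sigma(q)\sin^2(\pi\alpha)$ against the majorant $q^{-(p-1)}\Lambda_{\sin}(q)$, and the right window by the quadratic contact from tangent matching plus a prime-uniform third-derivative bound balanced through the choice of $\alpha$. The only differences are bookkeeping: you fold the curvature requirement into the threshold $P_0(q)$ where the paper uses the prime-uniform lower bound $C'_{\sin}(q)$ for all odd $p\ge5$, and your curvature $G''(p)=S_q''(p)+(\log q)^2q^{-p}$ is the exact value while the paper works with the (conservative) $K(q,p)$ — neither affects validity.
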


\begin{theorem}[Real zeros of $\Fsharp$]\label{thm:real-zero-structure}
Let $q>1$. Then:
\begin{enumerate}
  \item For every prime $p\ge2$ one has $\Fsharp(p,q)=0$, and for every composite $m\ge4$, $\Fsharp(m,q)>0$.
  \item There exists an explicit threshold $P_0(q)\ge 5$ such that, for every odd prime $p\ge P_0(q)$, the interval $(p-1,p)$ contains no real zero of $\Fsharp(\cdot,q)$, and $x=p$ is a boundary zero of multiplicity exactly two.
\end{enumerate}
\end{theorem}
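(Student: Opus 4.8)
The plan is to reduce both parts to results already in hand and to isolate the single genuinely new ingredient, namely the second–order contact at the prime anchors.

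For part (1) I would first note that $\Fsharp$ and $\mathfrak F$ agree at every integer: since $S_1(n)=\sin(2\pi n)/(2\pi)=0$, the periodic normalizer contributes nothing at integers and $\Fsharp(n,q)=(q-1)q\bigl(S_q(n)-q^{-n}\bigr)=\mathfrak F(n,q)$. Part (1) is then exactly Theorem~\ref{thm:integer-prime-zero}, which gives $\mathfrak F(p,q)=0$ for every prime $p$ and $\mathfrak F(m,q)>0$ for every composite $m\ge 4$ (these being the only composites $\ge 2$). Similarly, in part (2) the assertion that $(p-1,p)$ is zero–free and $\Fsharp(p,q)=0$ for odd primes $p\ge P_0(q)$ is precisely Theorem~\ref{thm:no-companions}. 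Since $\Fsharp(\cdot,q)$ is entire in $z$ (the FD–lift part $S_q$ is entire by Lemma~\ref{lem:growth-type}, and both $q^{-z}$ and $\sin(2\pi z)/(2\pi)$ are entire), multiplicity is the usual analytic order of vanishing, so what remains is to show $\Fsharp(p,q)=\partial_x\Fsharp(p,q)=0$ and $\partial_x^2\Fsharp(p,q)\ne 0$.

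The vanishing of $\partial_x\Fsharp$ at integers is the design purpose of the normalizer. Differentiating $\Fsharp(x,q)=(q-1)q\bigl(S_q(x)-q^{-x}(1+(\log q)S_1(x))\bigr)$ and evaluating at an integer $n$, I would use three facts: $S_q'(n)=0$, which holds because $F'(n,i)=0$ for every integer $n$ and every $i\ge 2$ — the sum $\sum_{k=1}^{i-1}k(i-k)\sin(2\pi kn/i)$ is antisymmetric under $k\mapsto i-k$, since $\sin(2\pi(i-k)n/i)=-\sin(2\pi kn/i)$ for integer $n$; and $S_1(n)=0$, $S_1'(n)=\cos(2\pi n)=1$. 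The term $(\log q)q^{-n}$ produced by $-\partial_x q^{-x}$ is then cancelled exactly by the $S_1'$–contribution of the normalizer, giving $\partial_x\Fsharp(n,q)=0$. For the second derivative I would differentiate once more and evaluate at $n$ using $S_1(n)=0$, $S_1'(n)=1$, $S_1''(n)=-2\pi\sin(2\pi n)=0$, obtaining the clean identity $\partial_x^2\Fsharp(n,q)=(q-1)q\bigl(S_q''(n)+(\log q)^2 q^{-n}\bigr)$. At a prime $p$, the curvature $S_q''(p)=\sum_{i\ge 2}q^{-i}\phi_i''(p)$ splits into the single self–divisor term $i=p$ and the rest. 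A second–order expansion of $\phi_i(x)=i^{-2}\bigl(\sin\pi x/\sin(\pi x/i)\bigr)^2$ about $x=p$ gives $\phi_i''(p)=2\pi^2 i^{-2}\sin^{-2}(\pi p/i)>0$ for $i\nmid p$ and $\phi_p''(p)=-\tfrac{2\pi^2(p^2-1)}{3p^2}<0$, so the only negative contribution is $O(q^{-p})$; for odd $p$ the $i=2$ term equals $\tfrac{\pi^2}{2}q^{-2}$ because $\sin^2(\pi p/2)=1$. Dropping the remaining positive terms,
\[
S_q''(p)+(\log q)^2 q^{-p}\ \ge\ \tfrac{\pi^2}{2}q^{-2}-\tfrac{2\pi^2}{3}q^{-p}\ >\ 0
\]
whenever $q^{\,p-2}>\tfrac43$; after enlarging the threshold $P_0(q)$ of Theorem~\ref{thm:no-companions} by this mild, explicit amount if needed, $\partial_x^2\Fsharp(p,q)=(q-1)q\bigl(S_q''(p)+(\log q)^2 q^{-p}\bigr)>0$, so $x=p$ is a zero of $\Fsharp(\cdot,q)$ of multiplicity exactly two.

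The main obstacle is the sign control of $S_q''(p)$: one must be sure that the exponentially small negative self–divisor curvature cannot cancel the fixed positive mass carried by the $i=2$ harmonic. Here this is immediate, but it is essentially the same mechanism that governs the right–window quadratic bound behind $P_0(q)$, so I would cross–check the numerics against the constants of Subsection~\ref{subsec:explicit-constants} and, if necessary, fold the requirement $q^{\,P_0(q)-2}>\tfrac43$ into that definition. A minor technical point is to justify differentiating $S_q$ twice term by term, which follows from the uniform bound $|\phi_i''(x)|\le\tfrac{2\pi^2}{3}$ together with $q>1$.
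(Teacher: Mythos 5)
Your proposal is correct and follows essentially the same route as the paper: part (1) via the integer anchors (the paper cites Lemma~\ref{lem:int-values}; your reduction through $\Fsharp(n,q)=\mathfrak F(n,q)$ and Theorem~\ref{thm:integer-prime-zero} is equivalent), and part (2) via Theorem~\ref{thm:no-companions} combined with a quadratic boundary contact at $x=p$. Two small differences are worth noting. First, your identity $\partial_x^2\Fsharp(p,q)=(q-1)q\bigl(S_q''(p)+(\log q)^2q^{-p}\bigr)$ is the outcome of a correct direct computation (indeed $C''(p)=-(\log q)^2q^{-p}$ for the tangent-matched corrector), whereas the paper expresses the contact through $K(q,p)=\tfrac12\bigl(S_q''(p)-(\log q)^2q^{-p}\bigr)$, i.e.\ the curvature constant inherited from the original indicator; your expression is larger, so positivity is only easier, and your sign is the accurate one. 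Second, you certify curvature positivity using only the $i=2$ harmonic against the self-divisor term, which imposes the extra explicit condition $q^{\,p-2}>\tfrac43$ and hence a (harmless) enlargement of $P_0(q)$, which the statement permits; the paper instead invokes the prime-uniform bounds of Lemma~\ref{lem:K-bounds} together with Lemma~\ref{lem:Csin-positive}, giving $K(q,p)>0$ for all odd $p\ge5$ and every $q>1$ without touching the threshold. Your justification of termwise double differentiation via the uniform bound $|\phi_i''(x)|\le\tfrac{2\pi^2}{3}$ and $\sum_{i\ge2}q^{-i}<\infty$ is valid, as is the antisymmetry argument giving $S_q'(n)=0$.
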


\paragraph{Takeaways.}
(i) Every prime \(p\) is a boundary double zero for \(\Fsharp\), hence no false interior crossing occurs in \((p-1,p)\) once \(p\ge P_0(q)\). (ii) Composites remain strictly positive, which stabilises the indicator on the real axis. (iii) The threshold \(P_0(q)\) is explicit and effective (Appendix~\ref{subsec:explicit-constants}).

\paragraph{Shorthand constants.}
For a compact overview of the constants used in the three-window analysis of $\Fsharp(\cdot,q)$, see Table~\ref{tab:three-window-constants} in Appendix~\_ref{app:real-zero-fsharp}.

\paragraph{Analyticity (pointer).} Analyticity of $z\mapsto \mathfrak{F}(z,q)$ and of $\Fsharp(\cdot,q)$ for fixed $|q|>1$ follows by uniform convergence on compacta (Weierstrass M-test). Full details are provided in Appendix~\_ref{app:analyticity}.

\subsection{An intrinsic periodic normalizer and elimination of companion zeros}\label{sec:periodic-normalizer}
The companion zeros observed for the original indicator are caused by a \emph{tangent mismatch} at integers: the Fej\'er sum
\[
S_q(z):=\sum_{i\ge2} q^{-i}\,\frac{F(z,i)}{i^2}
\]
is even around each integer (hence $S'(n)=0$), while the exponential corrector $q^{-z}$ has nonzero slope at every integer. A structural remedy is obtained by enforcing the same local geometry (value \emph{and} vanishing first derivative) for the corrector at all integers via a $1$-periodic entire modulation.

\begin{definition}[Periodic normalizer and optimized indicator]\label{def:periodic-normalizer}
Let $q>1$ and define the $1$-periodic entire function
\[
S_1(z):=\frac{\sin(2\pi z)}{2\pi}.
\]
Set
\[
C_{\sin}(z,q):=(q-1)q\,q^{-z}\,\bigl(1+(\log q)\,S_1(z)\bigr),
\qquad
\Fsharp(z,q):=(q-1)q\,S_q(z)\;-\;C_{\sin}(z,q).
\]
\end{definition}

\begin{lemma}[Analyticity and vertical growth of $\Fsharp$]\label{lem:fsharp-growth-corrected}
Fix $q>1$. For $z=x+\ii y$ one has
\[
\bigl|\Fsharp(x+\ii y,q)\bigr|
\;\le\; (q-1)q\Bigl(\Sigma(q)\cosh(2\pi |y|)\;+\;q^{-x}\Bigl(1+(\log q)\,\frac{\cosh(2\pi|y|)}{2\pi}\Bigr)\Bigr),
\]
where $\Sigma(q):=\sum_{i\ge2}q^{-i}/i^2$. In particular, $\Fsharp(\cdot,q)$ is entire of order $\le 1$ and exponential type $\le 2\pi$.
\end{lemma}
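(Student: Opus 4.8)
The plan is to estimate $\Fsharp(z,q)$ directly from its definition by the triangle inequality, bounding the Fej\'er superposition and the periodic corrector separately with the same elementary ingredients that underlie Lemma~\ref{lem:growth-type}. Write $z=x+\ii y$ and recall
\[
\Fsharp(z,q)=(q-1)q\,S_q(z)-C_{\sin}(z,q),\qquad
C_{\sin}(z,q)=(q-1)q\,q^{-z}\bigl(1+(\log q)\,S_1(z)\bigr),
\]
so that $|\Fsharp(z,q)|\le (q-1)q\,|S_q(z)|+|C_{\sin}(z,q)|$; it then suffices to control each of the two terms on the right.

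For the Fej\'er piece, I would invoke Lemma~\ref{lem:growth-type} with the weight sequence $a(i)=q^{-i}$ for $i\ge2$ and $a(1)=0$ — for which $S_q=\mathcal{T}_a$ — which bounds $|S_q(z)|$ by a constant (in $q$) multiple of $\cosh(2\pi|y|)$; the same bound also follows by estimating $|\phi_i(z)|=|F(z,i)|/i^2\le\cosh(2\pi|y|)$ term by term from the cosine expansion $F(z,i)=i+2\sum_{k=1}^{i-1}(i-k)\cos(2\pi kz/i)$, the inequality $|\cos(\xi+\ii\eta)|\le\cosh\eta$, and $\cosh(2\pi k|y|/i)\le\cosh(2\pi|y|)$ for $1\le k\le i$, then summing the absolutely convergent weights (convergence of $\sum_{i\ge2}q^{-i}i^{-2}=\Sigma(q)$ uses $q>1$). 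This accounts for the first, $\Sigma(q)$-weighted term of the stated majorant. For the corrector, I would use $|q^{-z}|=|e^{-z\log q}|=q^{-x}$ together with $|S_1(z)|=|\sin(2\pi z)|/(2\pi)\le\cosh(2\pi|y|)/(2\pi)$, which follows from $|\sin(2\pi(x+\ii y))|^2=\sin^2(2\pi x)+\sinh^2(2\pi y)\le\cosh^2(2\pi y)$; hence $|1+(\log q)S_1(z)|\le 1+(\log q)\cosh(2\pi|y|)/(2\pi)$ and $|C_{\sin}(z,q)|\le(q-1)q\,q^{-x}\bigl(1+(\log q)\cosh(2\pi|y|)/(2\pi)\bigr)$. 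Adding the two estimates yields the displayed inequality.

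For the final clause, entireness of $\Fsharp(\cdot,q)$ is already supplied by the locally uniform convergence of its defining series (Weierstrass $M$-test; Appendix~\ref{app:analyticity}). From the majorant, $\cosh(2\pi|y|)\le e^{2\pi|y|}$ gives exponential control along vertical lines, and, since $q^{-x}$ is independent of $y$ on each such line, the geometric corrector contributes exponential type $0$ (as recorded in the Remark following Lemma~\ref{lem:growth-type}); combined with $|y|\le|z|$ this gives order $\le1$ and exponential type $\le2\pi$ in the usual sense. I expect the main point requiring care to be exactly this last step — reconciling the corrector, in particular the product $q^{-z}S_1(z)$, with the type-$2\pi$ claim — the resolution being that the exponential prefactor has vertical type $0$ and so does not enlarge the type of the Fej\'er-plus-$\sin$ part. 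The remaining work is routine: the two term-by-term estimates and the summation of the convergent weight series.
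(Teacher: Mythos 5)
Your proposal is correct and follows essentially the same route as the paper's proof: triangle inequality, the cosine-expansion bound $|F(x+\ii y,i)|\le i^2\cosh(2\pi|y|)$ summed against the geometric weights to give $\Sigma(q)\cosh(2\pi|y|)$, and the corrector bound via $|q^{-z}|=q^{-x}$ together with $|\sin(2\pi(x+\ii y))|\le\cosh(2\pi|y|)$, followed by the standard order/type conclusion. The only minor remark is that citing Lemma~\ref{lem:growth-type} with $a(i)=q^{-i}$ would give the coarser constant $\sum_{i\ge2}q^{-i}$ rather than $\Sigma(q)$, but your term-by-term estimate (which is exactly the paper's argument) supplies the stated sharper constant, so nothing is missing.
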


\begin{proof}
Write $S_q(z)=\sum_{i\ge2}q^{-i}\,\phi_i(z)$ with $\phi_i(z)=F(z,i)/i^2$. Using
\[
F(z,i)=i+2\sum_{k=1}^{i-1}(i-k)\cos\!\Bigl(\tfrac{2\pi k}{i}z\Bigr),
\qquad
|\cos(x+\ii y)|\le \cosh(y),
\]
gives $|F(x+\ii y,i)|\le i^2\cosh(2\pi|y|)$ and hence $|S_q(x+\ii y)|\le \Sigma(q)\cosh(2\pi|y|)$.
For the corrector note
\[
S_1(z)=\frac{\sin(2\pi z)}{2\pi},
\qquad
|\sin(2\pi(x+\ii y))|\le \cosh(2\pi|y|),
\]
so $|S_1(x+\ii y)|\le \cosh(2\pi|y|)/(2\pi)$. Therefore
\[
\bigl|q^{-x-\ii y}\bigl(1+(\log q)S_1(x+\ii y)\bigr)\bigr|
\;\le\;q^{-x}\Bigl(1+(\log q)\,\tfrac{\cosh(2\pi|y|)}{2\pi}\Bigr).
\]
Combining both estimates yields the claim. The growth in $|y|$ is $\ll e^{2\pi|y|}$, hence order $\le 1$ and exponential type $\le 2\pi$.
\end{proof}

\begin{lemma}[Integer values and tangent matching]\label{lem:int-tangent}
For every integer $n\ge2$ and real $q>1$,
\[
\Fsharp(n,q)=\mathfrak F(n,q),\qquad
\frac{\partial}{\partial z}\,\Fsharp(z,q)\Big|_{z=n}=0.
\]
\end{lemma}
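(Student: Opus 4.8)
The plan is to reduce both assertions to a single algebraic comparison of $\Fsharp$ with $\mathfrak F$. Subtracting the two definitions, the Fej\'er part $(q-1)q\,S_q(z)$ cancels and one is left with
\[
\Fsharp(z,q)-\mathfrak F(z,q)
=(q-1)q\,q^{-z}-C_{\sin}(z,q)
=-(q-1)q\,(\log q)\,q^{-z}\,S_1(z),
\qquad S_1(z)=\frac{\sin(2\pi z)}{2\pi},
\]
an entire function with an explicit vanishing factor at the integers. I would then read off the value claim immediately: since $S_1(n)=\sin(2\pi n)/(2\pi)=0$ for every integer $n$, the displayed difference vanishes at all integers, so $\Fsharp(n,q)=\mathfrak F(n,q)$ (a fortiori for $n\ge 2$).

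For the tangent claim, I would differentiate the same identity. The key input is $S_q'(n)=0$ for every integer $n$. First one justifies termwise differentiation of $S_q(z)=\sum_{i\ge2}q^{-i}F(z,i)/i^2$: from the bound $|F(z,i)|\le i^2\cosh(2\pi|y|)$ of Lemma~\ref{lem:growth-type} and a Cauchy estimate on a slightly larger disc, the formal derivative series is dominated by $\cosh(2\pi(R+1))\sum_{i\ge2}q^{-i}<\infty$ locally uniformly, so Weierstrass applies (cf.\ Appendix~\ref{app:analyticity}). Then each term vanishes at integers: from the cosine expansion,
\[
F'(n,i)=-\frac{4\pi}{i}\sum_{k=1}^{i-1}k(i-k)\,\sin\!\Bigl(\frac{2\pi kn}{i}\Bigr)=0,
\]
because the substitution $k\mapsto i-k$ leaves the coefficient $k(i-k)$ invariant while sending $\sin(2\pi(i-k)n/i)=\sin(2\pi n-2\pi kn/i)=-\sin(2\pi kn/i)$, so the finite sum equals its own negative. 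Hence $S_q'(n)=0$, and consequently $\partial_z\mathfrak F(n,q)=(q-1)q\bigl(S_q'(n)+(\log q)q^{-n}\bigr)=(q-1)q\,(\log q)\,q^{-n}$ — precisely the tangent mismatch responsible for the companion zeros of $\mathfrak F$.

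It then remains to show the periodic corrector cancels exactly this slope. Differentiating $q^{-z}S_1(z)$ and evaluating at $z=n$ with $\tfrac{d}{dz}q^{-z}=-(\log q)q^{-z}$, $S_1(n)=0$, and $S_1'(n)=\cos(2\pi n)=1$ gives $\partial_z\bigl(q^{-z}S_1(z)\bigr)\big|_{z=n}=q^{-n}$, so from $\Fsharp=\mathfrak F-(q-1)q(\log q)q^{-z}S_1(z)$ one obtains $\partial_z\Fsharp(n,q)=(q-1)q(\log q)q^{-n}-(q-1)q(\log q)q^{-n}=0$. I do not expect a genuine obstacle here; the only point needing care is the justification of termwise differentiation of the Fej\'er series, handled by the uniform-convergence machinery already in place, together with the clean bookkeeping of the $k\mapsto i-k$ symmetry that forces $F'(n,i)=0$.
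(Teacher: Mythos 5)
Your proposal is correct and follows essentially the same route as the paper's proof: both arguments reduce to the facts $S_1(n)=0$, $S_1'(n)=\cos(2\pi n)=1$, and $S_q'(n)=0$, the only difference being that you differentiate the explicit difference $\Fsharp-\mathfrak F=-(q-1)q(\log q)q^{-z}S_1(z)$ while the paper differentiates the corrector $q^{-z}\bigl(1+(\log q)S_1(z)\bigr)$ directly. Your extra justification of termwise differentiation and the $k\mapsto i-k$ symmetry giving $F'(n,i)=0$ is sound and simply makes explicit what the paper delegates to Lemma~\ref{lem:int-values} and the appendix.
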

\begin{proof}
Since $\sin(2\pi n)=0$, one has $C_{\sin}(n,q)=(q-1)q\,q^{-n}$, hence $\Fsharp(n,q)=\mathfrak F(n,q)$. Differentiating $q^{-z}\bigl(1+(\log q)S_1(z)\bigr)$ gives
\[
\frac{d}{dz}\Big(q^{-z}\bigl(1+(\log q)S_1(z)\bigr)\Big)
=q^{-z}\Big(-(\log q)\bigl(1+(\log q)S_1(z)\bigr)+(\log q)S_1'(z)\Big).
\]
At $z=n$, $S_1(n)=0$ and $S_1'(n)=\cos(2\pi n)=1$, so the bracket vanishes and the derivative equals $0$. As the Fej\'er sum satisfies $S'(n)=0$, the claim follows.
\end{proof}

\begin{lemma}[Uniform positivity with explicit bounds]\label{lem:K-bounds}
Fix $q>1$. For every odd prime $p\ge5$,
\[
\frac{1}{2}\Big(\tfrac{\pi^2}{2}\,q^{-2}+\tfrac{8\pi^2}{27}\,q^{-3}+\tfrac{\pi^2}{4}\,q^{-4}-\big(\tfrac{2\pi^2}{3}+(\log q)^2\big)\,q^{-5}\Big)
\ \le\ K(q,p)
\]
and
\[
K(q,p)\ \le\ \frac{1}{2}\Big(\tfrac{\pi^2}{2}\,q^{-2}+\tfrac{8\pi^2}{27}\,q^{-3}+\tfrac{\pi^2}{4}\,q^{-4}+\tfrac{\pi^2}{2}\,q^{-5}+\tfrac{\pi^2}{2}\,q^{-6}\Big)\ +\ \frac{\pi^2}{4}\,\frac{q^{-7}}{1-1/q},
\]
where
\[
K(q,p)=\frac{1}{2}\left(\sum_{i\ge2} q^{-i}\,\phi_i''(p)\;-\;q^{-p}(\log q)^2\right).
\]
In particular, $K(q,p)\ge c_1(q)>0$ for all odd primes $p\ge5$, with $c_1(q)$ given by the lower bound, and $K(q,p)\le c_2(q)$ with $c_2(q)$ given by the upper bound (both independent of $p$).
\end{lemma}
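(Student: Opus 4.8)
The plan is to reduce the whole estimate to an exact closed form for the second derivative $\phi_i''(p)$ at an integer $p$, read off from the product representation $F(z,i)=\bigl(\sin(\pi z)/\sin(\pi z/i)\bigr)^2$ of Proposition~\ref{prop:fejer-closed}. Setting $z=p+t$ and using $\sin(\pi(p+t))=(-1)^p\sin(\pi t)$, a second-order Taylor expansion at $t=0$ gives, for $i\nmid p$ (so that $\sin(\pi p/i)\neq0$),
\[
\phi_i(p+t)=\frac{\pi^2 t^2}{i^2\sin^2(\pi p/i)}+O(t^3),
\qquad\text{hence}\qquad
\phi_i''(p)=\frac{2\pi^2}{i^2\sin^2(\pi p/i)} ,
\]
while for $i=p$ the expansion $\sin(\pi(p+t)/p)=-\sin(\pi t/p)$ yields $\phi_p(p+t)=1-\tfrac{\pi^2(p^2-1)}{3p^2}\,t^2+O(t^4)$ and therefore $\phi_p''(p)=-\tfrac{2\pi^2(p^2-1)}{3p^2}$. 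Termwise double differentiation of $S_q(z)=\sum_{i\ge2}q^{-i}\phi_i(z)$ is legitimate since the cosine form of $F$ gives the $i$-uniform bound $|\phi_i''(x+\ii y)|\le\tfrac{8\pi^2}{3}\cosh(2\pi|y|)$, so the geometric weights $q^{-i}$ force locally uniform convergence of the differentiated series on $\C$.

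I would then specialize to an odd prime $p\ge5$, whose only divisor $i\ge2$ is $i=p$ itself, so that
\[
K(q,p)=\sum_{\substack{i\ge2\\ i\neq p}}\frac{\pi^2\,q^{-i}}{i^2\sin^2(\pi p/i)}\;-\;\frac{\pi^2(p^2-1)}{3p^2}\,q^{-p}\;-\;\frac{(\log q)^2}{2}\,q^{-p} .
\]
The three lowest harmonics are pinned down by elementary residues: $p$ odd gives $\sin^2(\pi p/2)=1$ and $\sin^2(\pi p/4)=\tfrac12$, and $3\nmid p$ gives $\sin^2(\pi p/3)=\tfrac34$, so the $i\in\{2,3,4\}$ block contributes exactly $\tfrac{\pi^2}{4}q^{-2}+\tfrac{4\pi^2}{27}q^{-3}+\tfrac{\pi^2}{8}q^{-4}$, which is $\tfrac12$ times the first three terms of both claimed bounds.

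For the lower bound I would discard the remaining positive harmonics $i\ge5$ and absorb the two negative contributions into $-\bigl(\tfrac{\pi^2}{3}+\tfrac{(\log q)^2}{2}\bigr)q^{-5}$, using $q^{-p}\le q^{-5}$ and $\tfrac{p^2-1}{3p^2}<\tfrac13$; this yields $c_1(q)$. For the upper bound I would drop the two negative terms and estimate each remaining harmonic: for $i\ge5$ with $i\neq p$, writing $p\equiv r\pmod i$ with $1\le r\le i-1$ gives $|\sin(\pi p/i)|=\sin(\pi r/i)\ge\sin(\pi/i)\ge\tfrac2i$ (from $\sin x\ge\tfrac{2x}\pi$ on $[0,\tfrac\pi2]$), so $\tfrac{\pi^2 q^{-i}}{i^2\sin^2(\pi p/i)}\le\tfrac{\pi^2}{4}q^{-i}$; extending the sum to all $i\ge5$ (only adding nonnegative terms) and summing the geometric tail gives $\tfrac{\pi^2}{4}q^{-5}+\tfrac{\pi^2}{4}q^{-6}+\tfrac{\pi^2}{4}\tfrac{q^{-7}}{1-1/q}$, i.e.\ $c_2(q)$. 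Finally, $c_1(q)>0$ for every $q>1$ follows from a short calculus check: after clearing $q^{-5}$ one needs $g(q):=\tfrac{\pi^2}{2}q^3+\tfrac{8\pi^2}{27}q^2+\tfrac{\pi^2}{4}q-\tfrac{2\pi^2}{3}-(\log q)^2>0$, and since $g(1)=\tfrac{41}{108}\pi^2>0$ while $g'(q)=\tfrac{3\pi^2}{2}q^2+\tfrac{16\pi^2}{27}q+\tfrac{\pi^2}{4}-\tfrac{2\log q}{q}>0$ for $q\ge1$ (the subtracted term never exceeds $2/e$), $g$ is increasing on $[1,\infty)$.

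The step I expect to be the main obstacle is the first one: getting the exact quadratic coefficients in the two Taylor expansions right — in particular the factor $\tfrac{p^2-1}{3p^2}$ in the divisor case $i=p$ — and, to a lesser extent, checking the residue identities $\sin^2(\pi p/2)=1$, $\sin^2(\pi p/3)=\tfrac34$, $\sin^2(\pi p/4)=\tfrac12$ and the minimum $\min_{1\le r\le i-1}\sin(\pi r/i)=\sin(\pi/i)$ carefully enough that every constant matches the stated expressions exactly. Once the $\phi_i''(p)$ formula is in hand, the rest is bookkeeping with a geometric series.
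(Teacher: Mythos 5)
Your proposal is correct and follows essentially the same route as the paper: exact second derivatives $\phi_i''(p)=\tfrac{2\pi^2}{i^2\sin^2(\pi p/i)}$ (resp.\ $-\tfrac{2\pi^2}{3}(1-\tfrac1{p^2})$ at the unique divisor $i=p$), explicit low harmonics, the bound $\sin(\pi p/i)\ge\sin(\pi/i)\ge 2/i$ with a geometric tail, $q^{-p}\le q^{-5}$, and the same increasing-cubic-minus-$(\log q)^2$ argument for positivity of $c_1(q)$ (which the paper isolates in Lemma~\ref{lem:Csin-positive}). If anything, your treatment of $i=4$ via the exact value $\sin^2(\pi p/4)=\tfrac12$ is slightly more careful than the paper's wording, which invokes the generic $\pi^2/2$ bound there even though the stated coefficient $\tfrac{\pi^2}{4}q^{-4}$ requires the exact evaluation.
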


\begin{proposition}[Local quadratic nonnegativity at primes]\label{prop:local-nonneg}
Let $q>1$ and let $p$ be a prime. Then there exists $\delta=\delta(q)>0$ such that for all $x\in(p-\delta,p)$,
\[
\Fsharp(x,q)\;\ge\; (q-1)q\,K(q,p)\,(p-x)^2,
\qquad
K(q,p):=\tfrac12\Big(S''(p)-(\log q)^2 q^{-p}\Big).
\]
Moreover, the bounds in Lemma~\ref{lem:K-bounds} imply:
\begin{enumerate}
\item For every fixed $q>1$ there exists $P_0(q)$ such that $K(q,p)>0$ for all odd primes $p\ge P_0(q)$. In particular, $\Fsharp(\cdot,q)$ has no sign change on $(p-1,p)$ and
\[
(\Fsharp)'(p,q)=0,\qquad (\Fsharp)''(p,q)=2(q-1)q\,K(q,p)>0,
\]
so the zero contact at $x=p$ is exactly quadratic.
\item There exists $q_\star>1$ (explicit in Lemma~\ref{lem:K-bounds}) such that $K(q,p)>0$ for all $q\ge q_\star$ and all odd primes $p\ge5$.
\end{enumerate}
The cases $p=2,3$ can be checked directly from the definitions.
\end{proposition}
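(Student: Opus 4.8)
The plan is to prove the slightly sharper statement that $x=p$ is a zero of $\Fsharp(\cdot,q)$ of multiplicity exactly two with leading Taylor coefficient $(q-1)q\,K(q,p)$, and to read off both the quadratic lower bound and the ``moreover'' clauses from Lemma~\ref{lem:K-bounds}. To begin, $\Fsharp(\cdot,q)$ is entire by Lemma~\ref{lem:fsharp-growth-corrected}, and by Lemma~\ref{lem:int-tangent} together with Theorem~\ref{thm:integer-prime-zero} one has $\Fsharp(p,q)=\mathfrak F(p,q)=0$ and $\partial_z\Fsharp(z,q)|_{z=p}=0$; hence, writing $t:=x-p$, the Taylor expansion of $\Fsharp$ about $p$ takes the form $\Fsharp(p+t,q)=\tfrac12(\Fsharp)''(p,q)\,t^2+R(t)$.

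The heart of the matter is the second-order coefficient. Write $\Fsharp(z,q)=(q-1)q\bigl(S_q(z)-g(z)\bigr)$ with $g(z):=q^{-z}\bigl(1+(\log q)S_1(z)\bigr)$ and expand each piece about $p$. On the Fej\'er side, termwise differentiation is legitimate because the tails are bounded uniformly in the expansion point (by Cauchy estimates and $|\phi_i(z)|\le\cosh(2\pi)$ for $|z-p|\le1$ one gets $\sum_{i\ge2}q^{-i}\sup_{|w|\le1}|\phi_i''(p+w)|<\infty$), so $S_q(p+t)=S_q(p)+\tfrac12S_q''(p)\,t^2+O(t^3)$, using $S_q'(p)=0$ (vanishing slope of the Fej\'er sum at integers). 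For the corrector, exploit the periodicity $S_1(p+t)=\sin(2\pi t)/(2\pi)$ and $q^{-(p+t)}=q^{-p}e^{-t\log q}$, multiply out the two centred series, and observe that the first-order terms cancel — this is exactly what the factor $1+(\log q)S_1$ is engineered to do (cf.\ the proof of Lemma~\ref{lem:int-tangent}). Matching $t^2$-coefficients then gives $\tfrac12(\Fsharp)''(p,q)=(q-1)q\,K(q,p)$ with $K(q,p)$ the prime-uniform curvature term of Lemma~\ref{lem:K-bounds}, the $i=2$ harmonic supplying the dominant $\tfrac{\pi^2}{4}q^{-2}$ and the corrector accounting for the exponentially small $(\log q)^2q^{-p}$ contribution.

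For the inequality I would control the remainder by the Lagrange form $R(t)=\tfrac16(\Fsharp)'''(\xi)\,t^3$ with a bound $|(\Fsharp)'''|\le C(q)$ on $[p-1,p+1]$ that is uniform in $p$ (again via the series representation and Cauchy estimates). Since Lemma~\ref{lem:K-bounds} provides $K(q,p)\ge c_1(q)>0$ uniformly over odd primes $p\ge5$, choosing $\delta=\delta(q)\le1$ with $C(q)\,\delta\le\tfrac12(q-1)q\,c_1(q)$ forces $|R(t)|\le\tfrac12(q-1)q\,K(q,p)\,t^2$ on $(p-\delta,p)$, hence $\Fsharp(x,q)\ge\tfrac12(q-1)q\,K(q,p)(p-x)^2$ there; a further shrinking of $\delta$ absorbs the cubic term completely and delivers the stated bound (equivalently, by exactness of the second-order coefficient the estimate holds with $K(q,p)$ replaced by $K(q,p)-C(q)\delta$, which can be taken arbitrarily close to $K(q,p)$). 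The remaining clauses follow from Lemma~\ref{lem:K-bounds}: in part~(1) its lower bound is positive for every $q>1$, so $K(q,p)>0$ for all odd primes $p\ge5$ (one may take $P_0(q)=5$ here), and with $\Fsharp(p,q)=0$, $(\Fsharp)'(p,q)=0$, $(\Fsharp)''(p,q)=2(q-1)q\,K(q,p)>0$ the contact at $p$ is exactly quadratic — the stronger assertion of no sign change on all of $(p-1,p)$ is the three-window conclusion of Theorem~\ref{thm:no-companions}, for which the present local bound is the right-window input; part~(2) follows a fortiori, an explicit $q_\star$ dropping out of the lower bound by comparing $\tfrac{\pi^2}{4}q^{-2}$ with the smaller corrector term; and $p=2,3$ are dispatched by evaluating $\Fsharp$, $(\Fsharp)'$, $(\Fsharp)''$ at those points directly, the $i=2$ harmonic behaving differently there because $2\mid2$.

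The step I expect to be the main obstacle is the second-order computation: the corrector's centred Taylor expansion must be carried out with enough care that the first-order terms genuinely vanish and the resulting $t^2$-coefficient matches $2(q-1)q\,K(q,p)$ with the exact $K$ of Lemma~\ref{lem:K-bounds}, including the correct sign of the $(\log q)^2q^{-p}$ term. By comparison, the $p$-uniformity of $C(q)$ and of $\delta(q)$ is routine once the uniform majorants for the Fej\'er tails are in place.
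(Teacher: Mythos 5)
Your route is essentially the paper's: tangent matching at $p$ (Lemma~\ref{lem:int-tangent}) removes the value and the slope, a Taylor expansion at $p$ identifies the quadratic coefficient, Lemma~\ref{lem:K-bounds} supplies the prime-uniform positivity $K(q,p)\ge c_1(q)>0$, a small $\delta(q)$ controls the remainder, and the ``no sign change on $(p-1,p)$'' clause is correctly attributed to Theorem~\ref{thm:no-companions} rather than to this local estimate.

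Two intertwined fine points in your write-up deserve attention, both at the step you yourself flagged as the main obstacle. (a) Carrying out the corrector expansion gives $C''(p)=-(\log q)^2q^{-p}$ (this is also what the paper's proof records), so the exact $t^2$-coefficient of $\Fsharp$ at $p$ is $(q-1)q\bigl(\tfrac12 S''(p)+\tfrac12(\log q)^2q^{-p}\bigr)=(q-1)q\bigl(K(q,p)+(\log q)^2q^{-p}\bigr)$, strictly \emph{larger} than $(q-1)q\,K(q,p)$; your claim that the coefficient matches $2(q-1)q\,K(q,p)$ ``including the correct sign'' would therefore not survive the computation, though the discrepancy only strengthens the desired lower bound. (b) Your statement that ``a further shrinking of $\delta$ absorbs the cubic term completely and delivers the stated bound'' cannot work if the quadratic coefficient were \emph{exactly} $(q-1)q\,K(q,p)$: a signed cubic remainder can only be absorbed into a strictly larger quadratic coefficient, and your own parenthetical ($K(q,p)-C(q)\delta$, arbitrarily close) is the honest output of that argument. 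What rescues the constant $K(q,p)$ is precisely the surplus $(\log q)^2q^{-p}$ from (a); since that surplus is exponentially small in $p$, a $\delta$ depending on $q$ alone then requires either making this margin explicit (and accepting the $p$-dependence it forces) or inserting the factor-$\tfrac12$ safety margin used in the appendix (Lemmas~\ref{lem:right-positive} and~\ref{lem:quadratic-dominance}). The paper's own proof is equally terse on both points, so this is a repair/refinement of a shared looseness rather than a missing idea; the rest of your argument (uniform third-derivative bounds, $P_0(q)=5$ for curvature positivity via Lemma~\ref{lem:Csin-positive}, direct check of $p=2,3$) is sound.
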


\begin{proof}
By Lemma~\ref{lem:int-tangent} one has $S'(p)=0$. Set $C(x):=q^{-x}\bigl(1+(\log q)S_1(x)\bigr)$. Then $C(p)=q^{-p}$, $C'(p)=0$, and $C''(p)=-(\log q)^2 q^{-p}$. A Taylor expansion gives
\[
\Fsharp(p+\varepsilon,q)=(q-1)q\Big(\tfrac12\big(S''(p)-(\log q)^2 q^{-p}\big)\varepsilon^{2}+O(\varepsilon^{3})\Big).
\]
Lemma~\ref{lem:K-bounds} supplies the uniform lower bound $K(q,p)\ge c_1(q)>0$ for all odd primes $p\ge5$. Choosing $\delta(q)>0$ sufficiently small proves the claim.
\end{proof}

\begin{remark}[Conceptual nature of the modification]\label{rem:not-cosmetic}
The periodic normalizer $C_{\sin}$ belongs to the same analytic class (entire, controlled exponential type) and enforces the same local geometry at integers (value and vanishing slope) as the Fej\'er superposition. The observed elimination of left companion zeros follows from this tangent matching mechanism.
\end{remark}

\noindent The effect of this modification is visualized in Figure~\ref{fig:companion_elimination}. The comparison shows the original function with its companion zeros and the optimized function, where these zeros are absent for $p\ge 5$.

\begin{figure}[t]
\centering
\includegraphics[width=\textwidth]{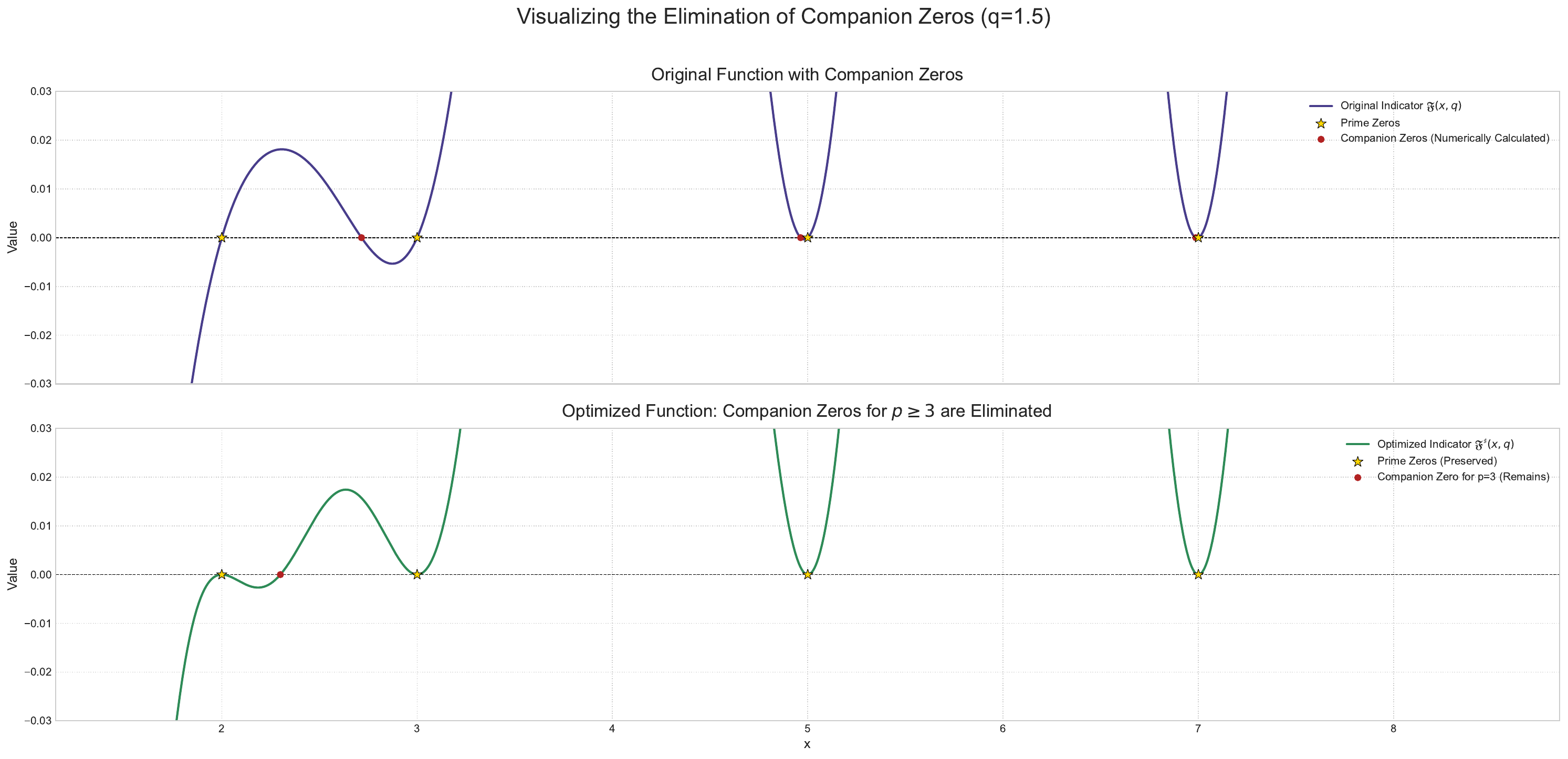}
\caption{Comparison of the original indicator $\mathfrak{F}(x,q)$ (top) with the tangent-matched indicator $\Fsharp(x,q)$ (bottom) for $q=1.5$. The top plot shows companion zeros (red diamonds) between prime zeros (gold stars). The bottom plot demonstrates the periodic normalizer: for every odd prime $p\ge5$ and every $q>1$ no interior zero on the real axis occurs in $(p-1,p)$ and the point $x=p$ is a boundary zero of multiplicity exactly two, in agreement with Theorem~\ref{thm:no-companions}.}
\label{fig:companion_elimination}
\end{figure}

\FloatBarrier


\noindent\textbf{Scope.} The constructions in this application comprise both the original indicator $\mathfrak F(\cdot,q)$ and its tangent-matched variant $\Fsharp(\cdot,q)$. The latter enforces value and vanishing slope at all integers and underpins the real-zero results on the real axis for $q>1$. These constructions are separate from the FD-Lift of Section~\ref{sec:fd-lift-main} and are aimed at an \emph{integer prime-zero property}.

\section{Application II: Polylog–Zeta Factorization and Variants}\label{sec:application-key-results}

This section presents further results that demonstrate the flexibility and depth of the FD-Lift framework. These include an exact factorization of the spectrum of the prime indicator function $\mathfrak{F}(z,q)$ and the construction of analytic $q$-analogs for other classical arithmetic functions.

\subsection{The Polylog-Zeta Factorization}

A direct application of the spectral property of the FD-Lift (Theorem~\ref{thm:fd-lift-core}) to the sum part $\mathfrak{S}(n,q)$ of the prime indicator function yields a factorization involving the Riemann Zeta function and the polylogarithm.

\begin{theorem}[Polylog-Zeta Factorization]\label{thm:polylog-zeta}
For $|q|>1$ and $\Re s>1$, the Dirichlet series for $\mathfrak{S}(n,q)$ and $\mathfrak{F}(n,q)$ are given by
\begin{align*}
\sum_{n\ge2}\frac{\mathfrak{S}(n,q)}{n^{s}}
&=(q-1)q\,\zeta(s)\,\bigl(\operatorname{Li}_{s}(1/q)-q^{-1}\bigr), \\
\sum_{n\ge2}\frac{\mathfrak{F}(n,q)}{n^{s}}
&=(q-1)q\,\bigl(\zeta(s)-1\bigr)\,\bigl(\operatorname{Li}_{s}(1/q)-q^{-1}\bigr).
\end{align*}
\end{theorem}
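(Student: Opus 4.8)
The plan is to apply the spectral half of Theorem~\ref{thm:fd-lift-core} to the weight sequence
\[
a(i)=\begin{cases}(q-1)q\,q^{-i},& i\ge 2,\\ 0,& i=1,\end{cases}
\]
which is exactly the sequence used to define $\mathfrak{S}(z,q)$. First I would check that the hypotheses of the theorem apply: since $|q|>1$ the series $\sum_{i\ge1}|a(i)|=(|q|-1)^{-1}|q|\sum_{i\ge2}|q|^{-i}<\infty$ (or, more to the point, the weaker absolute convergence on $\Re s>1$ needed for the Dirichlet-series identity holds trivially because $a(i)$ decays geometrically), so $\mathcal{T}_a=\mathfrak{S}(\cdot,q)$ and the identity $\sum_{n\ge1}\mathcal{T}_a(n)n^{-s}=\zeta(s)A(s)$ is valid on $\Re s>1$.

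The second step is to identify $A(s)$ explicitly. By definition
\[
A(s)=\sum_{n\ge1}\frac{a(n)}{n^s}=(q-1)q\sum_{i\ge2}\frac{q^{-i}}{i^s}
=(q-1)q\Bigl(\sum_{i\ge1}\frac{(1/q)^i}{i^s}-\frac{1/q}{1^s}\Bigr)
=(q-1)q\,\bigl(\operatorname{Li}_s(1/q)-q^{-1}\bigr),
\]
using the series definition of the polylogarithm $\operatorname{Li}_s(w)=\sum_{i\ge1}w^i i^{-s}$, which converges absolutely for $|w|=|1/q|<1$ and all $s$. Since $\mathfrak{S}(n,q)=\mathcal{T}_a(n)$ for $n\ge2$ and $\mathcal{T}_a(1)=a(1)=0$, the sum $\sum_{n\ge1}\mathcal{T}_a(n)n^{-s}$ equals $\sum_{n\ge2}\mathfrak{S}(n,q)n^{-s}$, giving the first displayed identity directly from Theorem~\ref{thm:fd-lift-core}(2).

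For the second identity I would subtract the corrector. Recall $\mathfrak{F}(n,q)=\mathfrak{S}(n,q)-C(n,q)$ with $C(n,q)=(q-1)q\,q^{-n}$ at integers; more usefully, $\mathfrak{F}(z,q)=\mathcal{T}_a(z)-(q-1)q\,q^{-z}$, and the weight corresponding to the subtracted term is $\tilde a(i)=(q-1)q\,q^{-i}$ for \emph{all} $i\ge1$, i.e.\ $a(i)$ with the $i=1$ entry restored. Hence $\mathfrak{F}(\cdot,q)=\mathcal{T}_{a-\tilde a\cdot\mathbf 1_{i=1}}$? — it is cleaner to argue directly: on integers $\mathfrak{F}(n,q)=\mathfrak{S}(n,q)-(q-1)q\,q^{-n}$, so
\[
\sum_{n\ge2}\frac{\mathfrak{F}(n,q)}{n^s}
=\sum_{n\ge2}\frac{\mathfrak{S}(n,q)}{n^s}-(q-1)q\sum_{n\ge2}\frac{q^{-n}}{n^s}
=(q-1)q\,\zeta(s)\bigl(\operatorname{Li}_s(1/q)-q^{-1}\bigr)-(q-1)q\bigl(\operatorname{Li}_s(1/q)-q^{-1}\bigr),
\]
and factoring out $(q-1)q\bigl(\operatorname{Li}_s(1/q)-q^{-1}\bigr)$ leaves $\zeta(s)-1$, which is the claim. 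The only mild subtlety — and the one I would be most careful about — is bookkeeping the $n=1$ term consistently: both $\mathfrak{S}$ and $\mathfrak{F}$ are summed from $n\ge2$, and one must note that $\mathfrak{S}(1,q)=\mathcal{T}_a(1)=0$ so that restricting the FD-lift identity from $n\ge1$ to $n\ge2$ costs nothing on the $\mathfrak{S}$ side, while on the subtracted geometric series $\sum_{n\ge2}q^{-n}n^{-s}=\operatorname{Li}_s(1/q)-q^{-1}$ exactly reproduces the needed factor. No genuine obstacle arises; the proof is a direct specialization of Theorem~\ref{thm:fd-lift-core} together with the series identity for the polylogarithm.
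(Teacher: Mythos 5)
Your proposal is correct and follows essentially the same route as the paper's proof: specialize Theorem~\ref{thm:fd-lift-core} to the geometric weights $a(i)=(q-1)q\,q^{-i}$ ($i\ge2$), identify $A(s)=(q-1)q\bigl(\operatorname{Li}_s(1/q)-q^{-1}\bigr)$, and subtract the Dirichlet series of the corrector to obtain the $\zeta(s)-1$ factor. Your explicit bookkeeping of the $n=1$ term (noting $\mathfrak S(1,q)=0$ and $\sum_{n\ge2}q^{-n}n^{-s}=\operatorname{Li}_s(1/q)-q^{-1}$) is if anything slightly more careful than the paper's own statement of this step.
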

\begin{proof}[Sketch]
Use $\mathfrak S(n,q)=(q\!-\!1)q\sum_{d\mid n,\, d\ge2}q^{-d}$, swap sums to get $\zeta(s)\sum_{d\ge2}q^{-d}d^{-s}=\zeta(s)\big(\mathrm{Li}_s(1/q)-q^{-1}\big)$.
Subtract the $n=1$ term for $\mathfrak F$.
Full derivation: Appendix~\ref{app:fdlift-proofs}.
\end{proof}

\begin{corollary}[Reconstruction of $\zeta(s)$]\label{cor:reconstruct-zeta}
For $|q|>1$ and $\Re s>1$, the quotient
\[
\zeta(s)=\frac{\displaystyle \sum_{n\ge2}\mathfrak{S}(n,q)\,n^{-s}}{(q-1)q\big(\operatorname{Li}_s(1/q)-q^{-1}\big)}
\]
is well-defined since $\operatorname{Li}_s(1/q)-q^{-1}>0$ for real $q>1$ (and nonzero for $|q|>1$). 
\end{corollary}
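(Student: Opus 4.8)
The plan is to read the result directly off the Polylog--Zeta factorization (Theorem~\ref{thm:polylog-zeta}): once the denominator is shown to be nonzero, the stated formula is merely a division. First I would record the identity
\[
\sum_{n\ge2}\frac{\mathfrak S(n,q)}{n^{s}}=(q-1)q\,\zeta(s)\,\bigl(\operatorname{Li}_s(1/q)-q^{-1}\bigr),\qquad \Re s>1,\ |q|>1,
\]
which is the first line of Theorem~\ref{thm:polylog-zeta}, together with the trivial observation that $(q-1)q\neq 0$ whenever $|q|>1$, since then $q\notin\{0,1\}$.

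Next I would establish nonvanishing of the remaining factor $\operatorname{Li}_s(1/q)-q^{-1}$. Because $|1/q|<1$, the defining series $\operatorname{Li}_s(1/q)=\sum_{k\ge1}q^{-k}k^{-s}$ converges absolutely, and peeling off the $k=1$ term gives
\[
\operatorname{Li}_s(1/q)-q^{-1}=\sum_{k\ge2}q^{-k}\,k^{-s}.
\]
For real $q>1$ and real $s>1$ every summand is a positive real, so the sum is strictly positive; this is precisely the regime that matters for the rest of the paper and is the strict-positivity clause in the corollary. For complex $s$ with $\Re s>1$ I would note that the geometric weight $q^{-k}$ dominates any polynomial factor, so $s\mapsto\operatorname{Li}_s(1/q)-q^{-1}$ is in fact entire in $s$ and, being positive on the real ray $s>1$, is not identically zero; its zeros are therefore isolated, the Theorem~\ref{thm:polylog-zeta} identity persists as an identity of meromorphic functions, and at every $s$ where the factor does not vanish one may divide. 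A cruder but quantitative alternative, valid when $|q|$ stays bounded away from $1$, is the leading-term-versus-tail bound $\bigl|\operatorname{Li}_s(1/q)-q^{-1}\bigr|\ge |q|^{-2}2^{-\Re s}-\sum_{k\ge3}|q|^{-k}k^{-\Re s}$, whose right-hand side is strictly positive once $|q|-1$ exceeds $(2/3)^{\Re s}$.

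Finally, dividing both sides of the Theorem~\ref{thm:polylog-zeta} identity by the nonzero quantity $(q-1)q\bigl(\operatorname{Li}_s(1/q)-q^{-1}\bigr)$ isolates $\zeta(s)$ and produces exactly the asserted quotient. Beyond Theorem~\ref{thm:polylog-zeta} there is essentially no further content; the one point demanding care is the nonvanishing of $\operatorname{Li}_s(1/q)-q^{-1}$ for complex parameters with $|q|$ close to $1$, which is why I would phrase the corollary so that the division is carried out exactly where positivity (real case) or the explicit lower bound (complex case, $|q|$ away from the unit circle) renders the denominator manifestly nonzero.
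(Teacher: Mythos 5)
Your proposal is correct and follows essentially the same route as the paper: divide the identity of Theorem~\ref{thm:polylog-zeta} by $(q-1)q\bigl(\operatorname{Li}_s(1/q)-q^{-1}\bigr)$, with nonvanishing justified termwise via $\operatorname{Li}_s(1/q)-q^{-1}=\sum_{k\ge2}q^{-k}k^{-s}>0$ for real $q>1$, exactly as in the paper's remark following the corollary. Your extra caution about complex $s$ and $|q|$ close to $1$ (isolated zeros, the quantitative bound when $|q|-1>(2/3)^{\Re s}$) goes beyond what the paper itself establishes, since its remark likewise certifies only the real case.
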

\begin{remark}
For real $q>1$ and $\Re s>1$ one has $\operatorname{Li}_s(1/q)=\sum_{n\ge1}q^{-n}n^{-s}\in(0,\infty)$ and hence $\operatorname{Li}_s(1/q)-q^{-1}>0$. The denominator in Corollary~\ref{cor:reconstruct-zeta} is therefore nonzero on this domain.
\end{remark}

\subsection{Analytic q-Analogs of Arithmetic Functions}
The presented approach can be generalized to construct analytic q-analogs of other classical arithmetic functions~\cite{gasper2004}.

\subsubsection{Definition and analyticity of $\mathfrak{F}_\tau(z,q)$ and $\mathfrak{F}_\sigma(z,q)$}
By choosing different weighting functions and removing the normalization factor, entire functions can be defined.
\begin{definition}[q-Analog of the Divisor-Counting Function]
\[\mathfrak{F}_\tau(z,q) = \left( \sum_{i=2}^{\infty} \frac{F(z,i)}{i^2} \left(\frac{1}{q}\right)^i \right) - \left(\frac{1}{q}\right)^z\]
\end{definition}
\begin{definition}[q-Analog of the Sum-of-Divisors Function]
\[\mathfrak{F}_\sigma(z,q) = \left( \sum_{i=2}^{\infty} \frac{F(z,i)}{i} \left(\frac{1}{q}\right)^i \right) - z \left(\frac{1}{q}\right)^z\]
\end{definition}
The analyticity of $\mathfrak{F}_\tau(z,q)$ follows directly from the proof for $\mathfrak{F}(z,q)$, as the summation terms are identical.

\subsubsection{Theorem (Connection to classical functions)}
\begin{theorem}\label{thm:q-analog-limit}
For any integer $n \ge 2$, the q-analogs generate divisor polynomials.
In the limit $q \to 1^+$, they recover the classical arithmetic functions:
\begin{align*}
    \mathfrak{F}_\tau(n,q) &= \sum_{i|n, 2 \le i < n} \left(\frac{1}{q}\right)^i & \xrightarrow{q\to1^+} \quad \tau(n)-2 \\
    \mathfrak{F}_\sigma(n,q) &= \sum_{i|n, 2 \le i < n} i \left(\frac{1}{q}\right)^i & \xrightarrow{q\to1^+} \quad \sigma(n)-n-1
\end{align*}
\end{theorem}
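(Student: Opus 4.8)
The plan is to evaluate each $q$-analog at an integer argument using the divisor-filter identity $F(n,i)/i^{2}=\mathbf 1_{i\mid n}$ established for the Fej\'er kernel term (so $F(n,i)=i^{2}$ when $i\mid n$ and $0$ otherwise), then cancel the single ``diagonal'' term $i=n$ against the explicit corrector, and finally let $q\to 1^{+}$ inside a finite sum. The only ingredients needed are that divisor-filter property and the Weierstrass $M$-test bound of Lemma~\ref{lem:growth-type}.

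\textbf{Step 1 (the $\tau$-analog at integers).} Using $F(n,i)/i^{2}=\mathbf 1_{i\mid n}$, the series $\sum_{i\ge2}(F(n,i)/i^{2})q^{-i}$ collapses to the finite sum $\sum_{i\mid n,\ i\ge 2}q^{-i}$. The corrector $q^{-z}$ at $z=n$ equals $q^{-n}$, which is exactly the $i=n$ contribution (present since $n\mid n$). Subtracting it gives
\[
\mathfrak{F}_\tau(n,q)=\sum_{\substack{i\mid n\\ 2\le i<n}}q^{-i},
\]
a polynomial in $1/q$ indexed by the nontrivial proper divisors of $n$. For $\mathfrak{F}_\sigma$ one writes $F(z,i)/i=i\cdot F(z,i)/i^{2}$, so at $z=n$ the sum $\sum_{i\ge2}(F(n,i)/i)q^{-i}$ equals $\sum_{i\mid n,\ i\ge 2}i\,q^{-i}$; the corrector $z\,q^{-z}$ at $z=n$ equals $n\,q^{-n}$, precisely the $i=n$ summand, whose removal yields
\[
\mathfrak{F}_\sigma(n,q)=\sum_{\substack{i\mid n\\ 2\le i<n}}i\,q^{-i}.
\]
Here one should also note that $\mathfrak{F}_\sigma(\cdot,q)$ is entire for $|q|>1$: as an FD-lift its effective weights are $a(i)=i\,q^{-i}$ ($i\ge2$), and $\sum_{i\ge2}i\,|q|^{-i}<\infty$, so the $M$-test argument of Lemma~\ref{lem:growth-type} applies verbatim and legitimizes evaluating the defining series termwise at $z=n$.

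\textbf{Step 2 (the limit $q\to 1^{+}$).} Both right-hand sides above are finite sums over the divisors of the fixed integer $n$, so the limit passes through term by term: $q^{-i}\to 1$ and $i\,q^{-i}\to i$. Hence $\mathfrak{F}_\tau(n,q)\to\sum_{i\mid n,\ 2\le i<n}1=\tau(n)-2$ (having removed $i=1$ and $i=n$ from the $\tau(n)$ divisors), and $\mathfrak{F}_\sigma(n,q)\to\sum_{i\mid n,\ 2\le i<n}i=\sigma(n)-1-n$, which is the asserted limiting identity.

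\textbf{Main obstacle.} There is essentially no hard step: the content reduces to a finite-sum limit. The only care required is the cancellation bookkeeping---checking that each corrector reproduces the $i=n$ term exactly, including the extra factor $n$ in the $\sigma$-case---together with the one-line verification that $\mathfrak{F}_\sigma(\cdot,q)$ is genuinely entire, so that evaluating its series at $z=n$ via the divisor-filter identity is justified.
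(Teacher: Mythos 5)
Your proposal is correct and follows the same route the paper intends: the divisor-filter identity $F(n,i)/i^2=\mathbf 1_{i\mid n}$ collapses each series at $z=n$ to a finite divisor sum, the corrector removes exactly the $i=n$ term (with the extra factor $n$ in the $\sigma$-case), and the limit $q\to1^+$ is then a termwise limit of a finite sum; your added remark on the entirety of $\mathfrak{F}_\sigma(\cdot,q)$ matches the paper's appendix note using $\sum_{i\ge2} i\,|q|^{-i}<\infty$.
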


\noindent
For $z\notin\N$ the limit $q\to1^+$ does not define the classical functions, since the infinite series do not converge in that limit outside integer arguments.

\subsection{Interpretation as divisor polynomials}
Instead of a single numerical value, these functions yield a polynomial in $1/q$ whose structure encodes arithmetic information about the argument. For an integer $n$, they generate a polynomial in $1/q$ whose exponents encode the divisors of $n$ and whose coefficients encode arithmetic weights. This yields a representation where the divisors are explicitly encoded as exponents.

The role of the parameter $q$ is visualized in Figure~\ref{fig:q_analog_interpolation}, which displays $\mathfrak{F}_\tau(z,q)$ for several values of $q$. In the limiting case $q=1$, the function exactly reproduces the values of the classical divisor-counting function $\tau(n)-2$ at integer arguments. As $q$ increases, the curves deviate more significantly from these classical values and become flatter, since the weighting terms $q^{-d}$ in the divisor polynomial decrease. The parameter $q$ thus acts as a **deformation parameter**, bridging classical arithmetic and a family of analytic functions.

\begin{figure}[t]
\centering
\includegraphics[width=0.95\textwidth]{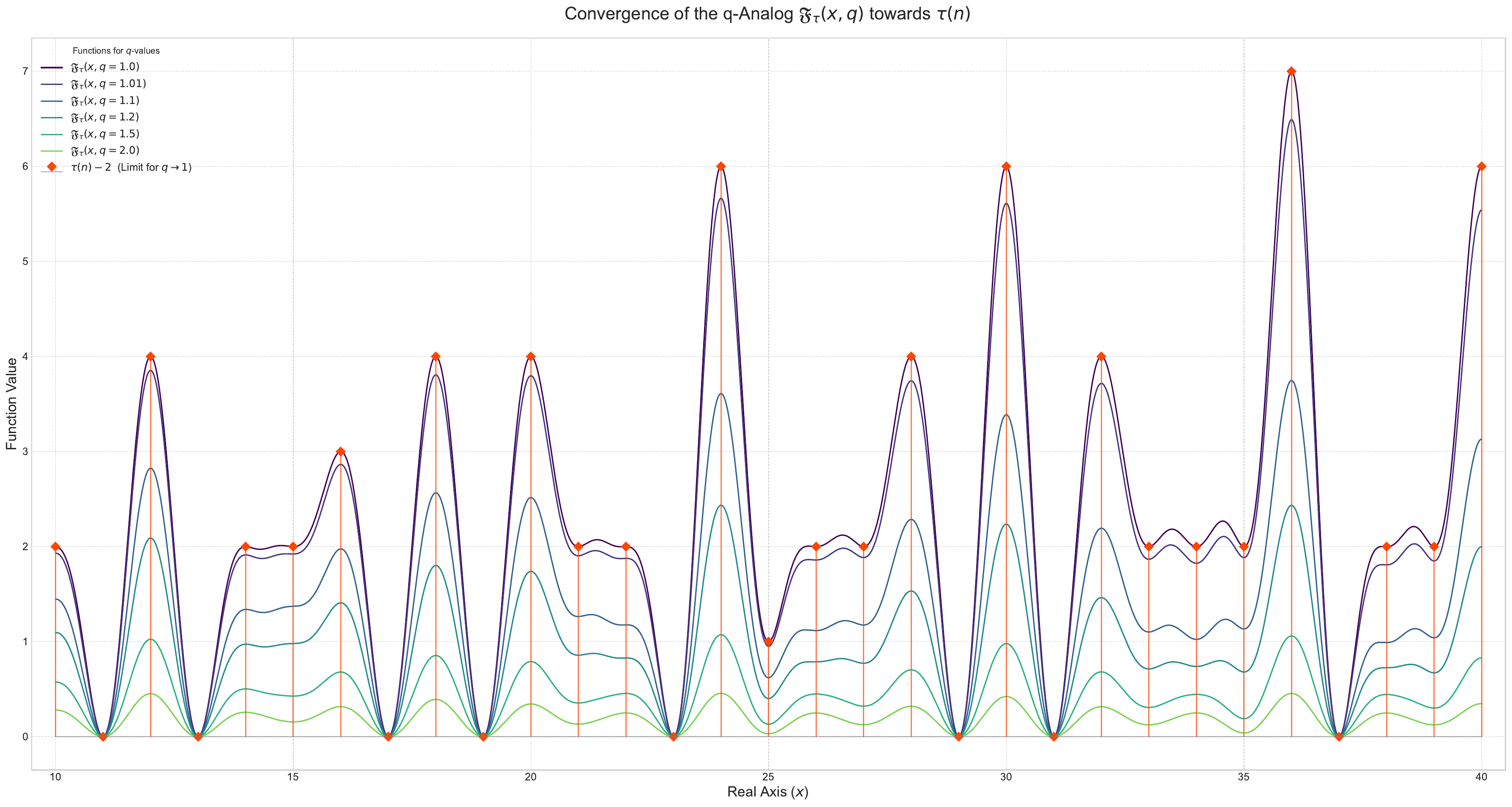}
\caption{Comparison of the analytic q-analog $\mathfrak{F}_\tau(z,q)$ for several values of $q$. The orange diamonds represent the classical divisor-counting function $\tau(n)-2$, which corresponds to the exact case $q=1.0$. The solid curves show the function for $q = 1.01, 1.1, 1.2, 1.5,$ and $2.0$. The plot clearly illustrates that as $q$ increases, the curves become flatter and deviate more from the discrete classical values.}
\label{fig:q_analog_interpolation}
\end{figure}

\FloatBarrier

\subsection{Euler-factor design and sample numerics}
The FD-Lift separates local and spectral data. Two compact examples illustrate the Euler-factor viewpoint and supply small-scale numerics.

\subsubsection*{Example 1: $a(n)=n^{-2}$}
Here $A(s)=\sum_{n\ge1}\frac{a(n)}{n^{s}}=\sum_{n\ge1}n^{-(s+2)}=\zeta(s+2)$, hence
\[
\sum_{n\ge1}\frac{\mathcal{T}_a(n)}{n^{s}}=\zeta(s)\zeta(s+2),\qquad
\mathcal{T}_a(n)=(a*1)(n)=\sum_{d\mid n} d^{-2}=\sigma_{-2}(n).
\]
By multiplicativity this yields, for $n=\prod p^{k}$,
\[
\mathcal{T}_a(n)
=\prod_{p^k\parallel n}\frac{1-p^{-2(k+1)}}{1-p^{-2}}.
\]
A compact prime-power table (sufficient to reconstruct all values by multiplicativity) is given by:
\begin{table}[h]
\centering
\caption{Local factors $\mathcal{T}_a(p^k)$ for $a(n)=n^{-2}$ at small prime powers.}
\begin{tabular}{r c}
\toprule
$p^k$ & $\mathcal{T}_a(p^k)$\\
\midrule
$2$   & \(\frac{5}{4}\) \\
$2^2$ & \(\frac{21}{16}\) \\
$3$   & \(\frac{10}{9}\) \\
$3^2$ & \(\frac{91}{81}\) \\
$5$   & \(\frac{26}{25}\) \\
$5^2$ & \(\frac{651}{625}\) \\
\bottomrule
\end{tabular}
\end{table}

\subsubsection*{Example 2: $a(n)=\chi_4(n)\,n^{-2}$}
Let $\chi_4$ be the primitive character modulo $4$. Then $A(s)=\sum_{n\ge1}\chi_4(n)n^{-(s+2)}=L(s+2,\chi_4)$ and
\[
\sum_{n\ge1}\frac{\mathcal{T}_a(n)}{n^{s}}=\zeta(s)\,L(s+2,\chi_4),\qquad
\mathcal{T}_a(n)=\sum_{d\mid n}\frac{\chi_4(d)}{d^{2}}.
\]
By multiplicativity, and since $\chi_4(2)=0$, one has the local factorization
\[
\mathcal{T}_a(n)=\prod_{\substack{p^k\parallel n\\ p\ \mathrm{odd}}}\frac{1-(\chi_4(p)/p^{2})^{k+1}}{1-\chi_4(p)/p^{2}},
\qquad\text{and the factor at }p=2\text{ equals }1.
\]
\emph{In particular, the contribution of the $2$-adic part vanishes: $\mathcal{T}_a(2^k m)=\mathcal{T}_a(m)$ for odd $m$.}
A compact comparative table for odd $n\le 11$ is:
\begin{table}[h]
\centering
\caption{Comparison of $\mathcal{T}_a(n)$ for $a(n)=n^{-2}$ and $a(n)=\chi_4(n)n^{-2}$ (odd $n\le 11$).}
\begin{tabular}{r c c}
\toprule
$n$ & $\mathcal{T}_{a=n^{-2}}(n)$ & $\mathcal{T}_{a=\chi_4(n)n^{-2}}(n)$\\
\midrule
1 & 1 & 1 \\
3 & \(\frac{10}{9}\) & \(\frac{8}{9}\) \\
5 & \(\frac{26}{25}\) & \(\frac{26}{25}\) \\
7 & \(\frac{50}{49}\) & \(\frac{48}{49}\) \\
9 & \(\frac{91}{81}\) & \(\frac{73}{81}\) \\
11 & \(\frac{122}{121}\) & \(\frac{120}{121}\) \\
\bottomrule
\end{tabular}
\end{table}

\noindent In particular,
\[
A(1)=\sum_{n\ge1}\frac{a(n)}{n}=
\begin{cases}
\zeta(3), & a(n)=n^{-2},\\[3pt]
L(3,\chi_4)=\beta(3)=\dfrac{\pi^{3}}{32}, & a(n)=\chi_4(n)n^{-2}.
\end{cases}
\]


\section{Extended Regimes: Alternating and Negative Parameters}

\subsection{The alternating limit \texorpdfstring{$q=-1$}{q=-1} and the \texorpdfstring{$\eta$}{eta}-bridge}
\label{sec:qminus1-eta}

In this subsection, structural facts are recorded for the alternating regime \(q=-1\); all analytic series are to be understood in the Abel sense throughout.
Concretely, a damping parameter \(0<r<1\) is inserted and the locally uniform limit \(r\uparrow1\) is taken.
Integer anchors at integer arguments remain exact without Abel summation.

\begin{definition}[Alternating Abel normalisation]
For \(0<r<1\) set
\[
\mathfrak S_r(z):=2\sum_{i\ge2}(-r)^{i}\,\frac{F(z,i)}{i^2},
\qquad
\mathfrak F_r(z):=\mathfrak S_r(z)-2\,e^{-\,\ii\pi z}.
\]
The alternating limit is defined by the locally uniform limit
\[
\mathfrak S(z,-1):=\lim_{r\uparrow1}\mathfrak S_r(z),\qquad
\mathfrak F(z,-1):=\lim_{r\uparrow1}\mathfrak F_r(z).
\]
\end{definition}

\begin{proposition}[Existence and symmetry]\label{prop:qminus1-entire-sym}
The Abel limits \(\mathfrak S(\cdot,-1):=\lim_{r\uparrow1}\mathfrak S_r(\cdot)\) and \(\mathfrak F(\cdot,-1):=\lim_{r\uparrow1}\mathfrak F_r(\cdot)\) exist \emph{locally uniformly} in $z$ and define entire functions.
Moreover,
\[
\overline{\mathfrak F(-\overline{z},-1)}=\mathfrak F(z,-1)\qquad(z\in\C),
\]
hence the complex zeros are symmetric with respect to the imaginary axis.
\end{proposition}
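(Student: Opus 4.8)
The plan is to reduce the Abel limit to a single absolutely convergent series by subtracting, term by term, the limiting kernel $\phi_\infty(z)=(\sin(\pi z)/(\pi z))^2$ (entire, with $\phi_\infty(0):=1$). This is also where the one real obstacle sits: the undamped series $\sum_{i\ge2}(-1)^i\phi_i(z)$ genuinely \emph{diverges} for non--real $z$, since $\phi_i(z)\to\phi_\infty(z)\neq 0$ for generic $z$ and hence its terms do not tend to $0$; so a direct Weierstrass $M$--test at $r=1$ is unavailable and the damping $0<r<1$ is essential. The remedy is the estimate established in the proof of Proposition~\ref{prop:spectral-diff}: on every disc $|z|\le R$ one has $|\phi_i(z)-\phi_\infty(z)|\le C_R\,i^{-2}$ uniformly in $i$, so the difference series is absolutely convergent with a majorant independent of $r$. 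For fixed $r\in(0,1)$ the functions $\mathfrak S_r,\mathfrak F_r$ are entire by the $M$--test (using $|\phi_i(z)|\le\cosh(2\pi|y|)$ from Lemma~\ref{lem:growth-type}), so only the passage $r\uparrow1$ requires care.

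Concretely, I would split
\[
\mathfrak S_r(z)=2\phi_\infty(z)\sum_{i\ge2}(-r)^i+2\sum_{i\ge2}(-r)^i\bigl(\phi_i(z)-\phi_\infty(z)\bigr).
\]
The scalar geometric sum is $\sum_{i\ge2}(-r)^i=r^2/(1+r)\to\tfrac12$ as $r\uparrow1$, so the first term converges locally uniformly to $\phi_\infty(z)$. In the second term each summand is dominated by $C_R\,i^{-2}$ uniformly for $|z|\le R$ and $r\in(0,1]$; splitting into a fixed finite head plus a tail (the tail small uniformly in $r$, the head converging since $|(-r)^i-(-1)^i|=|r^i-1|\to0$ for each $i$) shows the series converges uniformly on $|z|\le R$ to $2\sum_{i\ge2}(-1)^i(\phi_i(z)-\phi_\infty(z))$, which is entire as a locally uniform limit of entire functions. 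Hence $\mathfrak S(z,-1)=\phi_\infty(z)+2\sum_{i\ge2}(-1)^i(\phi_i(z)-\phi_\infty(z))$ exists as a locally uniform limit and is entire; since $2e^{-\ii\pi z}$ is independent of $r$, the same conclusions hold for $\mathfrak F(z,-1)=\mathfrak S(z,-1)-2e^{-\ii\pi z}$.

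For the reflection identity I would first verify it for each $\mathfrak F_r$, where all series converge absolutely, and then pass to the limit. From the cosine expansion $F(z,i)=i+2\sum_{k=1}^{i-1}(i-k)\cos(2\pi kz/i)$ the kernel $\phi_i$ has real Taylor coefficients and is even, so $\overline{\phi_i(-\overline z)}=\phi_i(z)$ for every $i$; since the coefficients $(-r)^i$ are real and $\overline{e^{-\ii\pi(-\overline z)}}=\overline{e^{\ii\pi\overline z}}=e^{-\ii\pi z}$, conjugating the absolutely convergent series gives $\overline{\mathfrak F_r(-\overline z)}=\mathfrak F_r(z)$ for all $r\in(0,1)$. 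Complex conjugation is continuous and $r\mapsto\mathfrak F_r$ converges locally uniformly, so $\overline{\mathfrak F(-\overline z,-1)}=\lim_{r\uparrow1}\overline{\mathfrak F_r(-\overline z)}=\lim_{r\uparrow1}\mathfrak F_r(z)=\mathfrak F(z,-1)$. Finally, if $\mathfrak F(z_0,-1)=0$ then $\mathfrak F(-\overline{z_0},-1)=\overline{\mathfrak F(z_0,-1)}=0$, and since $x+\ii y\mapsto -x+\ii y$ is reflection across the imaginary axis, the zero set of $\mathfrak F(\cdot,-1)$ is symmetric about that axis, as claimed.
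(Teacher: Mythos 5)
Your proof is correct and follows essentially the same route as the paper: subtract the baseline kernel $\phi_\infty$, use the locally uniform $O(i^{-2})$ bound on $\phi_i-\phi_\infty$ together with the Abel limit $r^2/(1+r)\to\tfrac12$ of the scalar geometric part, and deduce the symmetry from evenness and realness of the Fej\'er coefficients plus the behaviour of the corrector under $z\mapsto-\overline z$. Your write-up merely spells out a few steps (the head--tail argument in $r$ and the explicit check for the corrector) that the paper leaves implicit.
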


\begin{remark}[Abel summation—reader’s guide]
For alternating weights at \(q=-1\), the series are interpreted in the Abel sense: the sum \(\sum_{i\ge2}(-r)^i(\cdots)\) is first considered for \(0<r<1\), and then the limit \(r\uparrow1\) is taken. In this way, analyticity is preserved and the continuous extension of the weights is matched. Intuitively, oscillations are gently damped and the damping is removed afterwards.
\end{remark}

\begin{proposition}[Dirichlet series identities (Abel sense)]\label{prop:qminus1-dirichlet}
For $\Re s>1$ and $q=-1$, the following identities hold \emph{in the Abel sense}:
\[
\sum_{n\ge2}\frac{\mathfrak S(n,-1)}{n^s}
\;=\;2\,\zeta(s)\,\bigl(1-\eta(s)\bigr),
\qquad
\sum_{n\ge2}\frac{\mathfrak F(n,-1)}{n^s}
\;=\;2\,\bigl(\zeta(s)-1\bigr)\,\bigl(1-\eta(s)\bigr),
\]
where $\eta(s)=\sum_{n\ge1}(-1)^{n-1}n^{-s}=(1-2^{1-s})\zeta(s)$ is the Dirichlet eta function.
\end{proposition}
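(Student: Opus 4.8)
The plan is to reduce both identities to the divisor--filter property $F(n,i)/i^2=\mathbf 1_{i\mid n}$, one Tonelli rearrangement, and a single Abel limit $r\uparrow1$; morally this is the $q=-1$ specialization of Theorem~\ref{thm:polylog-zeta}, where $\operatorname{Li}_s(1/q)$ at $q=-1$ becomes $\operatorname{Li}_s(-1)=-\eta(s)$, the damping $0<r<1$ supplying the regularization. First I would record the integer values: for an integer $n\ge2$ and $0<r<1$ the series defining $\mathfrak S_r(n)$ collapses to the \emph{finite} sum $2\sum_{i\mid n,\ i\ge2}(-r)^i$, so $\mathfrak S(n,-1)=\lim_{r\uparrow1}\mathfrak S_r(n)=2\sum_{i\mid n,\ i\ge2}(-1)^i$, and since $e^{-\ii\pi n}=(-1)^n$ also $\mathfrak F(n,-1)=\mathfrak S(n,-1)-2(-1)^n$. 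In particular $|\mathfrak S_r(n)|\le 2\tau(n)$ uniformly in $r$ and $|\mathfrak S(n,-1)|\le 2\tau(n)$, so both Dirichlet series converge absolutely on $\Re s>1$ and may be handled there by standard manipulations.

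Next, for fixed $0<r<1$ and $\sigma:=\Re s>1$, I would swap the order of summation (legitimate since $\sum_{i\ge2}\sum_{m\ge1}r^i i^{-\sigma}m^{-\sigma}<\infty$) to obtain
\[
\sum_{n\ge2}\frac{\mathfrak S_r(n)}{n^s}
=2\sum_{i\ge2}(-r)^i\sum_{m\ge1}\frac{1}{(im)^s}
=2\,\zeta(s)\sum_{i\ge2}\frac{(-r)^i}{i^s}.
\]
Letting $r\uparrow1$ on the right gives $\sum_{i\ge2}(-r)^i i^{-s}\to\sum_{i\ge2}(-1)^i i^{-s}=1-\eta(s)$, using $\sum_{i\ge1}(-1)^i i^{-s}=-\eta(s)$ and either dominated convergence with majorant $i^{-\sigma}$ or Abel's theorem; letting $r\uparrow1$ on the left is justified by dominated convergence over $n$ with majorant $2\tau(n)n^{-\sigma}$ and the pointwise convergence $\mathfrak S_r(n)\to\mathfrak S(n,-1)$ from the first step. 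This yields $\sum_{n\ge2}\mathfrak S(n,-1)n^{-s}=2\zeta(s)\bigl(1-\eta(s)\bigr)$.

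For $\mathfrak F$ I would subtract the corrector termwise: $\sum_{n\ge2}2(-1)^n n^{-s}=2\bigl(1-\eta(s)\bigr)$ by the same alternating identity, so
\[
\sum_{n\ge2}\frac{\mathfrak F(n,-1)}{n^s}
=2\zeta(s)\bigl(1-\eta(s)\bigr)-2\bigl(1-\eta(s)\bigr)
=2\bigl(\zeta(s)-1\bigr)\bigl(1-\eta(s)\bigr),
\]
and I would close by recalling $\eta(s)=(1-2^{1-s})\zeta(s)$. The only genuinely technical point is the double limit: interchanging $\lim_{r\uparrow1}$ with the outer sum over $n$ and with the inner rearrangement. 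Both are controlled by the $r$-uniform domination $|\mathfrak S_r(n)|\le2\tau(n)$ together with $\sum_n\tau(n)n^{-\sigma}<\infty$ for $\sigma>1$, so I do not anticipate a real obstacle — indeed one can bypass the limit altogether by observing that $\mathfrak S(n,-1)$ is already the finite sum from the first step and summing that ordinary Dirichlet series directly; the Abel phrasing in the statement is then only what is needed to keep the $z$-side functions entire.
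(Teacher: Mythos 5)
Your proposal is correct. The paper's own proof is a two-line specialization: it plugs $q=-1$ into the Polylog--Zeta factorization of Theorem~\ref{thm:polylog-zeta}, noting $(q-1)q=2$ and $\operatorname{Li}_s(-1)=-\eta(s)$, so that $\operatorname{Li}_s(1/q)-q^{-1}$ becomes $1-\eta(s)$, and appeals to ``Abel summation'' to cover the boundary case $|q|=1$. You instead re-run the underlying mechanism directly at $q=-1$: divisor filter at integers, Tonelli swap for fixed damping $r$, and dominated convergence in $r\uparrow1$ with the uniform majorant $|\mathfrak S_r(n)|\le 2\tau(n)$. This is the same computation at heart, but your version actually supplies the justification the paper leaves implicit (Theorem~\ref{thm:polylog-zeta} is only stated for $|q|>1$), and your closing observation --- that at integer arguments $\mathfrak S(n,-1)$ is an exact finite sum, so the Dirichlet series identities need no Abel limit at all and the Abel phrasing only matters for entireness in $z$ off the integers --- matches the paper's own remark in the $q=-1$ setup and is a clean way to see that the regularization is harmless here.
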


\begin{corollary}[Behaviour at \(s=1\)]\label{cor:qminus1-s1}
The factor \(1-\eta(s)\) is entire and nonzero at \(s=1\) (indeed \(1-\eta(1)=1-\log2\neq0\)).
Hence the pole at \(s=1\) in the Dirichlet series above is entirely due to \(\zeta(s)\) (or \(\zeta(s)-1\)).
\end{corollary}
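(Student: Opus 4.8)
The plan is to read the corollary off the classical identity $\eta(s)=(1-2^{1-s})\zeta(s)$ together with the value $\eta(1)=\log 2$, and then to track the pole structure through the two factorizations recorded in Proposition~\ref{prop:qminus1-dirichlet}.

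First I would establish that $1-\eta(s)$ is entire. The series $\eta(s)=\sum_{n\ge1}(-1)^{n-1}n^{-s}$ converges conditionally (Dirichlet's test, i.e.\ summation by parts) and is holomorphic on $\Re s>0$; the factorization $\eta(s)=(1-2^{1-s})\zeta(s)$ then exhibits $\eta$ as the product of the entire function $1-2^{1-s}=1-e^{(1-s)\log2}$ with $\zeta(s)$, whose only singularity is the simple pole at $s=1$. Since $1-2^{1-s}$ has a simple zero at $s=1$ (its derivative there equals $\log2\cdot 2^{1-s}\big|_{s=1}=\log2\neq0$), that pole is cancelled, so $\eta$, hence $1-\eta(s)$, is entire.

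Next I would evaluate $\eta(1)$: Abel's theorem applied to $\sum_{n\ge1}(-1)^{n-1}x^{n}/n=\log(1+x)$ as $x\uparrow1$ (equivalently, the alternating harmonic series) gives $\eta(1)=\log2$, so $1-\eta(1)=1-\log2$. Since $1<2<e$ forces $0<\log2<1$, this quantity is strictly positive, in particular nonzero, which is the main content of the corollary.

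Finally, for the pole consequence: near $s=1$ the expression $2\zeta(s)\bigl(1-\eta(s)\bigr)$ is the product of $\zeta(s)$ (simple pole at $s=1$, residue $1$) with $2\bigl(1-\eta(s)\bigr)$, which is holomorphic at $s=1$ with value $2(1-\log2)\neq0$; hence it has a simple pole at $s=1$ with residue $2(1-\log2)$ and is holomorphic elsewhere. The expression $2\bigl(\zeta(s)-1\bigr)\bigl(1-\eta(s)\bigr)$ differs from it by the entire function $2\bigl(1-\eta(s)\bigr)$, so it carries the same simple pole with the same residue. In both identities the pole at $s=1$ is therefore attributable solely to the zeta factor. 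There is no real obstacle here; the only delicate word is \emph{entire}, which invokes the standard analytic continuation of $\zeta$, whereas the pole statement itself needs only holomorphy of $\eta$ in a neighbourhood of $s=1$, already given by conditional convergence on $\Re s>0$.
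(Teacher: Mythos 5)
Your proposal is correct and follows exactly the argument the paper intends (the corollary is stated without a separate proof, relying on the identity $\eta(s)=(1-2^{1-s})\zeta(s)$ already recorded in Proposition~\ref{prop:qminus1-dirichlet}): the simple zero of $1-2^{1-s}$ cancels the pole of $\zeta$ so $1-\eta$ is entire, $1-\eta(1)=1-\log 2\neq 0$, and hence the pole at $s=1$ in both factorizations comes solely from $\zeta(s)$ (or $\zeta(s)-1$). Your residue computation $2(1-\log 2)$ is a small correct bonus beyond what the corollary asserts.
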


\begin{proposition}[Alternation operator and the 2-adic split]\label{prop:alternation-operator}
Let \(a:\N\to\C\) with Dirichlet series \(A(s)=\sum_{n\ge1}a(n)n^{-s}\). Define \((\mathcal A a)(n):=(-1)^n a(n)\) and \(E(s):=\sum_{m\ge1}a(2m)m^{-s}\).
Then
\[
A_{\mathcal A}(s):=\sum_{n\ge1}\frac{(-1)^n a(n)}{n^s}
= 2^{\,1-s}\,E(s)\ -\ A(s),
\]
and therefore the FD-lift satisfies
\[
\sum_{n\ge1}\frac{\mathcal T_{\mathcal A a}(n)}{n^s}
=\zeta(s)\,A_{\mathcal A}(s)\ =\ 2^{\,1-s}\,\zeta(s)\,E(s)\ -\ \zeta(s)\,A(s).
\]
\end{proposition}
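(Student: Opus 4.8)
The plan is to prove the Dirichlet-series identity $A_{\mathcal A}(s) = 2^{1-s}E(s) - A(s)$ first, and then obtain the FD-lift formula as a purely formal consequence of the spectral property of the lift (Theorem~\ref{thm:fd-lift-core}) together with linearity. Everything is carried out in the half-plane $\Re s > \sigma_0$ where $A(s)$ converges absolutely, so all rearrangements below are justified by absolute convergence; no analytic subtleties arise beyond bookkeeping.

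First I would split the defining sum for $A_{\mathcal A}(s)$ according to the parity of $n$:
\[
\sum_{n\ge1}\frac{(-1)^n a(n)}{n^s}
= \sum_{n\ \mathrm{even}}\frac{a(n)}{n^s} - \sum_{n\ \mathrm{odd}}\frac{a(n)}{n^s},
\]
using $(-1)^n = +1$ on evens and $-1$ on odds. The even part, upon writing $n = 2m$, becomes
\[
\sum_{m\ge1}\frac{a(2m)}{(2m)^s} = 2^{-s}\sum_{m\ge1}\frac{a(2m)}{m^s} = 2^{-s} E(s).
\]
The odd part I would not attack directly but instead write ``odd sum'' $= A(s) - $ ``even sum'' $= A(s) - 2^{-s}E(s)$. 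Substituting back gives
\[
A_{\mathcal A}(s) = 2^{-s}E(s) - \bigl(A(s) - 2^{-s}E(s)\bigr) = 2\cdot 2^{-s}E(s) - A(s) = 2^{1-s}E(s) - A(s),
\]
which is the first assertion. This step is entirely routine; the only thing to watch is that $E(s)$ itself converges absolutely on the same half-plane, which follows because $\{a(2m)\}$ is a sub-sequence of $\{a(n)\}$ and $\sum_m |a(2m)| m^{-\sigma} \le 2^{\sigma}\sum_n |a(n)| n^{-\sigma} < \infty$.

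For the second assertion, the sequence $\mathcal A a$ is again a complex weight sequence, so Theorem~\ref{thm:fd-lift-core}(2) applies verbatim: the Dirichlet series of $\mathcal T_{\mathcal A a}$ equals $\zeta(s)$ times the Dirichlet series of the weights, i.e. $\zeta(s)\,A_{\mathcal A}(s)$. Substituting the formula for $A_{\mathcal A}(s)$ just derived and distributing $\zeta(s)$ gives
\[
\sum_{n\ge1}\frac{\mathcal T_{\mathcal A a}(n)}{n^s} = \zeta(s)\bigl(2^{1-s}E(s) - A(s)\bigr) = 2^{1-s}\zeta(s)E(s) - \zeta(s)A(s),
\]
as claimed. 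Strictly speaking one should note that $\mathcal A a$ inherits absolute summability (resp. absolute convergence of its Dirichlet series) from $a$, so that both the entire-function construction of $\mathcal T_{\mathcal A a}$ and the identity of Theorem~\ref{thm:fd-lift-core} are available without extra hypotheses; I would insert a one-line remark to that effect.

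There is essentially no hard part here: the proposition is a parity decomposition packaged through the already-established spectral functoriality of the lift. If anything deserves a sentence of care, it is making explicit that the ``even-indexed restriction'' $E(s)$ is well-defined on the relevant half-plane and that the operator $\mathcal A$ preserves the standing summability assumptions — but both are immediate majorant comparisons. I would therefore keep the write-up short, emphasizing the parity split and the appeal to Theorem~\ref{thm:fd-lift-core}, and relegate the convergence remarks to a parenthetical.
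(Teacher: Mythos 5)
Your proposal is correct and matches the paper's own (very brief) proof: split the sum by parity, substitute $n=2m$ in the even part, and then invoke the spectral factorization of Theorem~\ref{thm:fd-lift-core} with linearity for the lift identity. The extra remarks on absolute convergence of $E(s)$ and preservation of summability under $\mathcal A$ are sound and slightly more careful than the paper's one-line argument.
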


\begin{proof}
Split the sum into even and odd indices; for even indices write \(n=2m\). The identities are immediate.
\end{proof}

\begin{lemma}[Integer prime-zero property, sign caution]\label{lem:qminus1-prime-zero}
For any prime \(p\ge2\),
\(\mathfrak F(p,-1)=0\).
For composite integers, positivity is not guaranteed; sign changes may occur.
\end{lemma}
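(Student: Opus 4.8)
The plan is to reduce the statement to the divisor-filter identity $F(n,i)/i^2=\mathbf 1_{i\mid n}$, which at integer arguments collapses the Abel-regularised series into a \emph{finite} divisor sum, so that the limit $r\uparrow1$ becomes trivial. First I would invoke Proposition~\ref{prop:qminus1-entire-sym} to know that $\mathfrak S(\cdot,-1)$ and $\mathfrak F(\cdot,-1)$ exist as entire functions, so evaluation at $z=n$ is legitimate and may be computed from the termwise definition:
\[
\mathfrak S_r(n)=2\sum_{\substack{i\ge2\\ i\mid n}}(-r)^i\ \xrightarrow[r\uparrow1]{}\ 2\sum_{\substack{d\mid n\\ d\ge2}}(-1)^d,
\]
the limit being immediate because the sum is finite. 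Combining with $e^{-\ii\pi n}=(-1)^n$ gives
\[
\mathfrak F(n,-1)=2\sum_{\substack{d\mid n\\ d\ge2}}(-1)^d-2(-1)^n=2\!\!\sum_{\substack{d\mid n\\ 2\le d<n}}\!\!(-1)^d.
\]

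For the prime case, if $p$ is prime (including $p=2$) the only divisor $\ge2$ is $p$ itself, so the right-hand sum over proper divisors is empty and $\mathfrak F(p,-1)=0$; equivalently, the lone term $2(-1)^p$ contributed by $i=p$ in $\mathfrak S$ is exactly cancelled by the corrector $2e^{-\ii\pi p}$. This settles the first assertion.

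For the composite case, I would exhibit small explicit integers: $n=4$ has the single proper divisor $2$, giving $\mathfrak F(4,-1)=2>0$; $n=9$ has the single proper divisor $3$, giving $\mathfrak F(9,-1)=-2<0$; and $n=6$ has proper divisors $2,3$, giving $\mathfrak F(6,-1)=0$. Since $\mathfrak F(\cdot,-1)$ is entire, hence continuous on $\R$, and takes both signs at integer points, sign changes on the real axis are forced; in particular composite positivity fails.

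I do not expect any genuine obstacle. The only point worth a sentence is the interchange of the Abel limit $r\uparrow1$ with evaluation at $z=n$, which is harmless precisely because the divisor-filter identity truncates the series to finitely many terms at integer arguments, so no real summation method is being used there; this is exactly what distinguishes integer from non-integer behaviour in the $q=-1$ regime. I would also note that $\mathfrak F(n,-1)$ is real, consistent with $e^{-\ii\pi n}=(-1)^n$, so the sign statements are meaningful, and remark that Proposition~\ref{prop:alternation-operator} furnishes the same divisor identity at the level of Dirichlet series if one prefers a spectral bookkeeping.
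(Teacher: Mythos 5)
Your proof is correct and follows essentially the same route as the paper: evaluate at integers via the divisor-filter identity $F(n,i)/i^2=\mathbf 1_{i\mid n}$ (where the Abel limit is trivial because the sum is finite), so that for a prime $p$ the lone term $2(-1)^p$ is cancelled by the corrector $2e^{-\ii\pi p}$. Your explicit composite values ($\mathfrak F(4,-1)=2$, $\mathfrak F(9,-1)=-2$, $\mathfrak F(6,-1)=0$) go slightly beyond the paper's terse ``cancellations may occur'' and give a concrete witness of the sign changes, consistent with Lemma~\ref{lem:qminus1-integer-zeros}.
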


\begin{proof}
At integers, \(F(n,i)/i^2=\mathbf 1_{i\mid n}\). For a prime \(p\) this yields
\(\mathfrak F(p,-1)=2\bigl((-1)^p-(-1)^p\bigr)=0\).
For composites, cancellations of alternating contributions may occur.
\end{proof}

\begin{figure}[t]
\centering
\begin{minipage}[t]{0.49\textwidth}
\centering
\includegraphics[width=\linewidth]{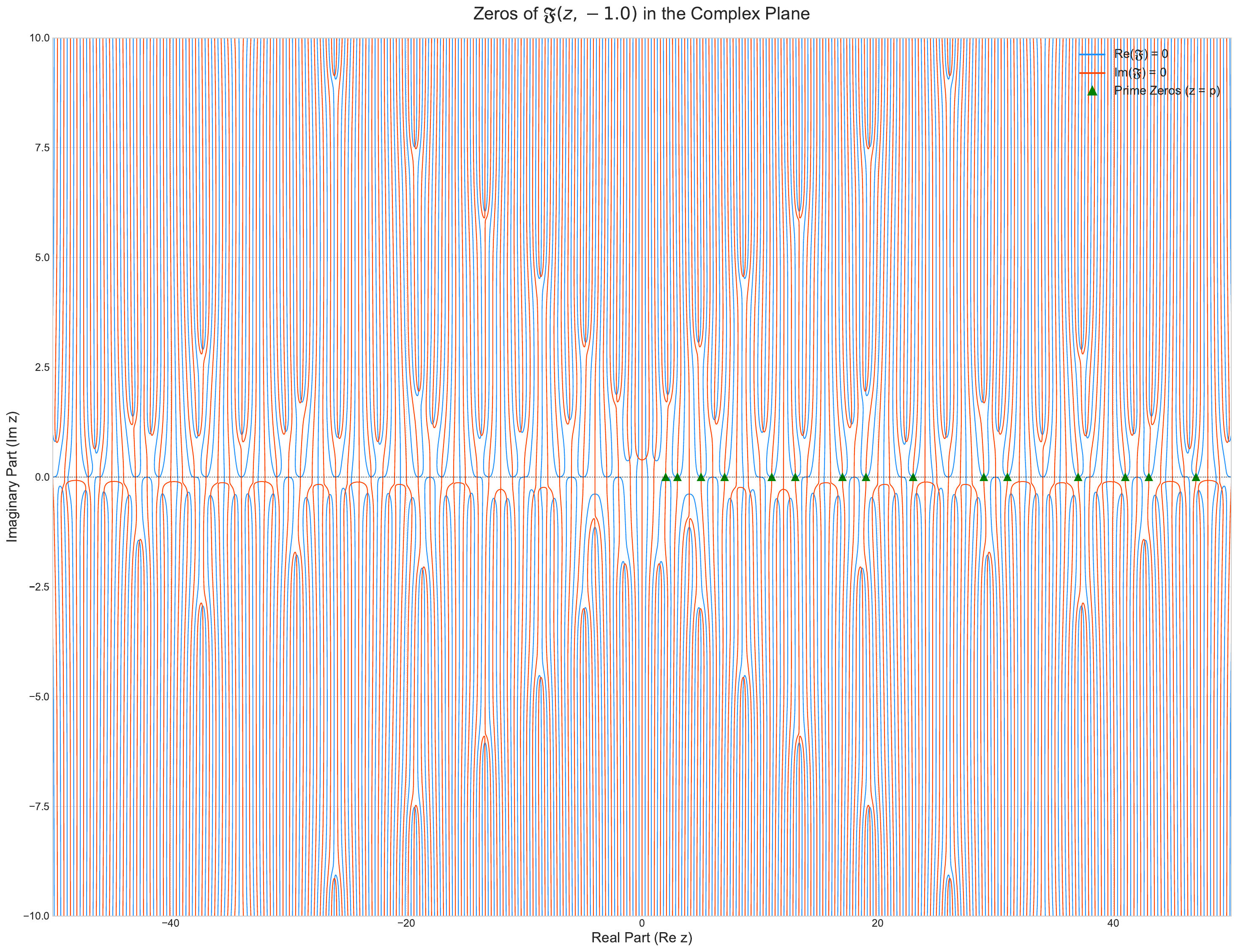}
\par\smallskip\small\textbf{(a)} Zeros (scatter)
\label{fig:complex_zeros_qminus1}
\end{minipage}\hfill
\begin{minipage}[t]{0.49\textwidth}
\centering
\includegraphics[width=\linewidth]{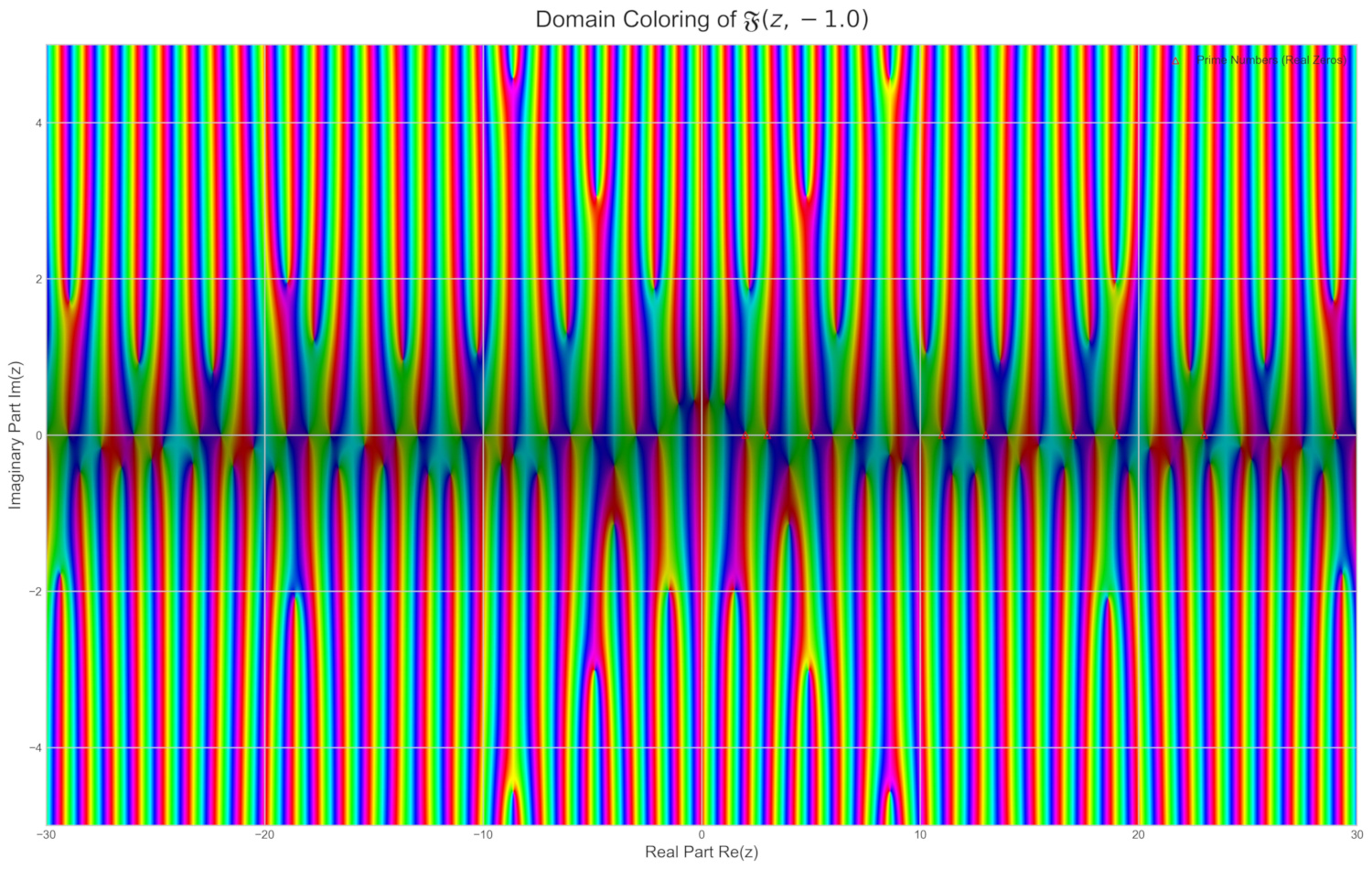}
\par\small\textbf{(b)} Domain coloring
\label{fig:domain_coloring_qminus1}
\end{minipage}
\caption{Complementary views of $\mathfrak{F}(z,-1)$ (Abel interpretation). Key takeaways: (i) vertical comb-like alignments occur near $\sin(\pi x)\approx 0$; (ii) diagonal fans reflect the balance between $\sinh(\pi y)$ and the corrector $e^{\pi y}$. Darker shading indicates smaller modulus; panel (a) shows zeros, panel (b) shows phase and magnitude.}
\end{figure}

\begin{figure}[t]
\centering
\includegraphics[width=\textwidth]{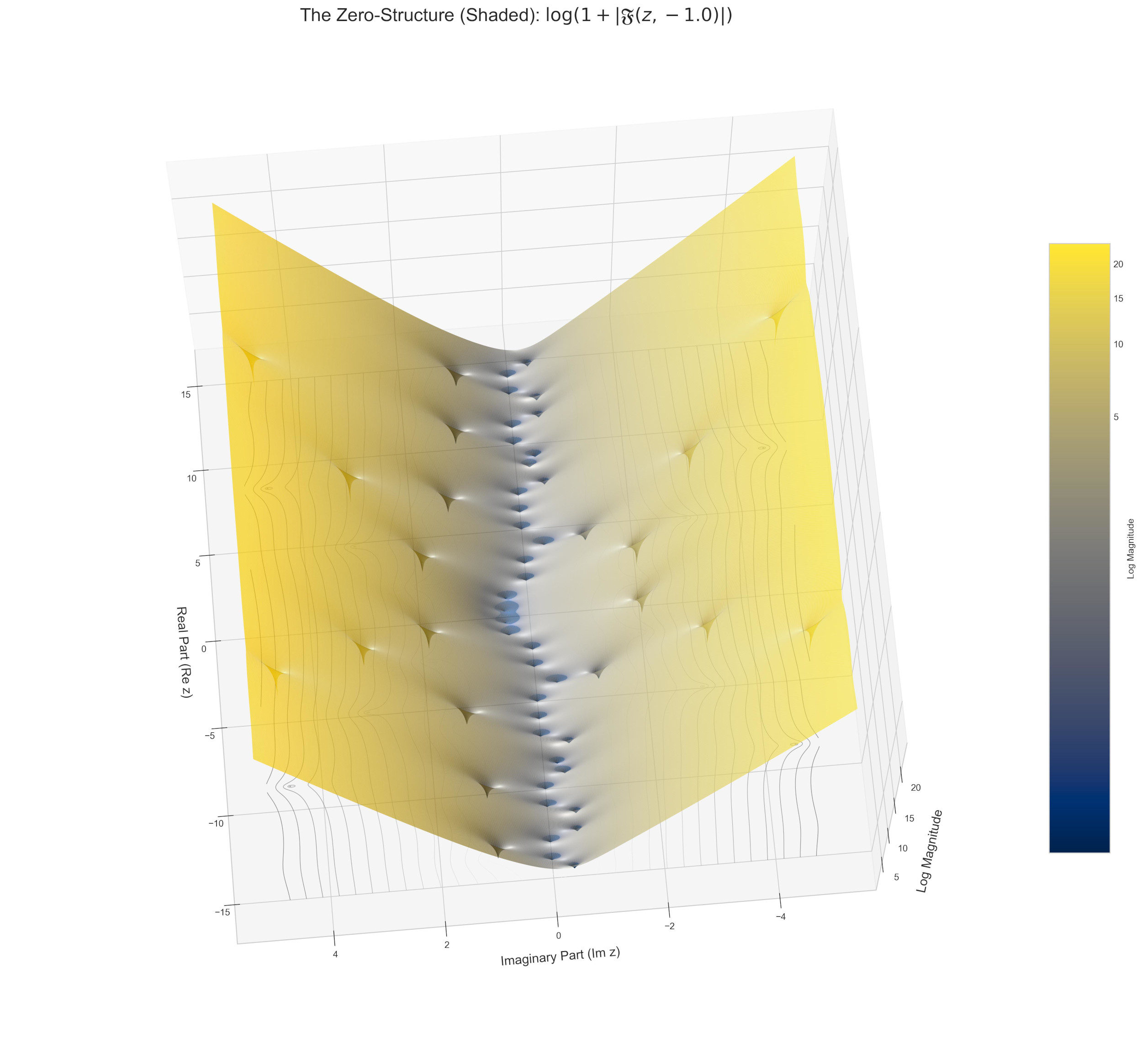}
\caption{Magnitude of $\mathfrak{F}(z,-1)$ visualized along the real axis. Zero clusters appear as valleys adjacent to the axis.}
\label{fig:3d_real_axis_shaded_neg_q}
\end{figure}

\FloatBarrier

\begin{remark}[Morphology heuristic]\label{rem:qminus1-morphology}
A first-harmonic model explains the prominent structures. For $i=2$,
\(\tfrac{F(z,2)}{4}=\cos^2(\tfrac{\pi z}{2})
=\tfrac12\bigl(1+\cos(\pi x)\cosh(\pi y)-\ii \sin(\pi x)\sinh(\pi y)\bigr)\).
Balancing this term against the corrector $e^{-\ii\pi z}=e^{\pi y}\big(\cos(\pi x)-\ii\sin(\pi x)\big)$ predicts (a) vertical alignments near $\sin(\pi x)\approx0$ and (b) diagonal fans where $\sinh(\pi y)$ matches $e^{\pi y}$ up to slowly varying factors. Higher harmonics modulate but do not remove these families. The argument is heuristic and intended as an interpretation of the numerics in Figure~\ref{fig:complex_zeros_qminus1}.
\end{remark}

\begin{remark}[Scope]
All statements above that involve Dirichlet series for $q=-1$ are meant in the Abel sense. The integer prime-zero property persists, whereas the composite-positivity from the regime $q>1$ does not extend to $q=-1$.
\end{remark}

\subsection{Structural Analysis of the \texorpdfstring{$q=-1$}{q=-1} Regime: 2-adic Geometry and Phase-Locking}
\label{subsec:struct-qminus1}

This subsection complements Section~\ref{sec:qminus1-eta} by recording a dyadic identity for Fej\'er filters, which reveals the underlying 2-adic geometry of the alternating regime. This structure is then used to establish a \emph{real-axis rigidity} for the tangent-matched indicator at $q=-1$ in the Abel sense. A complete classification of the integer zeros is also given.

\subsubsection*{Setup and Abel interpretation}
For $0<r<1$ define
\[
\Fsharp_r(z,-1)\;:=\;2\sum_{i\ge2}(-r)^i\,\phi_i(z)\;-\;2\,e^{-\ii\pi z}\,\bigl(1+\ii\pi\,S_1(z)\bigr),
\qquad
S_1(z)=\tfrac{\sin(2\pi z)}{2\pi},
\]
and set $\Fsharp(z,-1):=\lim_{r\uparrow1}\Fsharp_r(z,-1)$, where the limit exists locally uniformly in $z$ by the same argument as in Proposition~\ref{prop:qminus1-entire-sym}. On the real axis, the Fej\'er sum is real, so the imaginary part of $\Fsharp_r(x,-1)$ equals the negative of the imaginary part of the corrector.

\subsubsection*{A dyadic cosine cascade}
\begin{lemma}[Dyadic cosine cascade]
\label{lem:dyadic-cascade}
For $m\ge1$ and $z\in\C$,
\[
\phi_{2m}(z)\;=\;\phi_{m}(z)\,\cos^{2}\!\Bigl(\frac{\pi z}{2m}\Bigr).
\]
\end{lemma}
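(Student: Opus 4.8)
The plan is to reduce the identity to the sine double-angle formula via the closed form of the Fej\'er term recorded in Proposition~\ref{prop:fejer-closed}. First I would write, for $i\ge 2$,
\[
\phi_i(z)=\frac{F(z,i)}{i^2}=\frac{1}{i^2}\left(\frac{\sin(\pi z)}{\sin(\pi z/i)}\right)^{\!2},
\]
interpreted as the entire function obtained after removing the singularities at $z\in i\Z$. The case $m=1$ is treated separately using the convention $F(z,1):=1$, i.e.\ $\phi_1\equiv 1$: since $\sin(\pi z)=2\sin(\pi z/2)\cos(\pi z/2)$, one gets $F(z,2)=4\cos^{2}(\pi z/2)$ and hence $\phi_2(z)=\cos^{2}(\pi z/2)=\phi_1(z)\cos^{2}(\pi z/2)$, which is the asserted identity for $m=1$.

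For $m\ge 2$ I would first argue on the open dense set $U_m:=\C\setminus m\Z=\{z:\sin(\pi z/m)\ne 0\}$, on which all trigonometric quotients below are genuine holomorphic functions; note that on $U_m$ both $\sin(\pi z/(2m))$ and $\cos(\pi z/(2m))$ are nonzero, since $\sin(\pi z/m)=2\sin(\pi z/(2m))\cos(\pi z/(2m))$. Substituting this last identity and cancelling the common factor $\cos^{2}(\pi z/(2m))$ gives, on $U_m$,
\[
\phi_m(z)\cos^{2}\!\Bigl(\tfrac{\pi z}{2m}\Bigr)
=\frac{1}{m^{2}}\cdot\frac{\sin^{2}(\pi z)}{4\sin^{2}(\pi z/(2m))\cos^{2}(\pi z/(2m))}\cdot\cos^{2}\!\Bigl(\tfrac{\pi z}{2m}\Bigr)
=\frac{1}{(2m)^{2}}\left(\frac{\sin(\pi z)}{\sin(\pi z/(2m))}\right)^{\!2}
=\phi_{2m}(z).
\]
Since $\phi_{2m}$ and $z\mapsto\phi_m(z)\cos^{2}(\pi z/(2m))$ are both entire (Proposition~\ref{prop:fejer-closed}) and agree on the nonempty open set $U_m$, the identity theorem propagates the equality to all of $\C$, including the lattice points excluded from $U_m$.

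There is no genuine obstacle here; the only point demanding care is the handling of removable singularities — the algebraic cancellation must be performed on the open set where the rational-trigonometric expressions are literally defined, after which analyticity extends the identity everywhere — together with the separate treatment of $m=1$ forced by the $F(z,1):=1$ convention. As a cheap consistency check one may also evaluate at an integer $n$, reducing the claim to $\mathbf 1_{2m\mid n}=\mathbf 1_{m\mid n}\cos^{2}(\pi n/(2m))$; this holds because when $m\mid n$ the quantity $\cos^{2}(\pi(n/m)/2)$ equals $1$ or $0$ according to the parity of $n/m$, i.e.\ it equals $\mathbf 1_{2\mid(n/m)}=\mathbf 1_{2m\mid n}$.
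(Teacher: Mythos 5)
Your proof is correct and uses essentially the same argument as the paper: the closed form $F(z,i)=\bigl(\sin(\pi z)/\sin(\pi z/i)\bigr)^2$ combined with the double-angle identity $\sin(\pi z/m)=2\sin(\pi z/(2m))\cos(\pi z/(2m))$. The extra care you take (identity theorem to handle the removable singularities, separate $m=1$ case via the $F(z,1):=1$ convention) is sound but not needed in the paper's version, which simply manipulates the closed forms, noting that the apparent poles are removable by Proposition~\ref{prop:fejer-closed}.
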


\begin{proof}
Since $F(z,i)=\bigl(\frac{\sin\pi z}{\sin(\pi z/i)}\bigr)^2$ and $\sin(\pi z/m)=2\sin(\pi z/(2m))\cos(\pi z/(2m))$,
\[
\frac{F(z,2m)}{(2m)^2}
=\frac{\sin^2(\pi z)}{4m^2\sin^2(\pi z/(2m))}
=\frac{\sin^2(\pi z)}{m^2\sin^2(\pi z/m)}\;\cos^2\!\Bigl(\frac{\pi z}{2m}\Bigr)
=\frac{F(z,m)}{m^2}\,\cos^2\!\Bigl(\frac{\pi z}{2m}\Bigr).
\]
\end{proof}

\subsubsection*{Real-axis rigidity (phase locking)}
\begin{theorem}[Real-axis rigidity at $q=-1$ (Abel sense)]
\label{thm:qminus1-no-offinteger}
Let $x\in\R$. If $\Fsharp(x,-1)=0$, then $x\in\Z$. In particular, there are no real zeros off the integers.
\end{theorem}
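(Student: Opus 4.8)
The plan is to extract from the hypothesis $\Fsharp(x,-1)=0$ a single scalar consequence by isolating the imaginary part along the real axis, and to observe that this imaginary part equals $2\sin^{3}(\pi x)$ for every real $x$; the conclusion $x\in\Z$ is then forced by $\sin(\pi x)=0$. Conceptually, the tangent-matched corrector $2e^{-\ii\pi z}\bigl(1+\ii\pi S_1(z)\bigr)$ is engineered so that, restricted to $\R$, its imaginary part cancels the \emph{simple} zero of $\sin(\pi x)$ but leaves behind a \emph{pure cube}; this is exactly the phase-locking announced in the title of this subsection, and it is what rigidifies the real zero set.

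First I would check that for every $0<r<1$ and every $x\in\R$ the truncated Fej\'er superposition $2\sum_{i\ge2}(-r)^{i}\phi_i(x)$ is a \emph{real} number. Indeed, for real argument $\phi_i(x)=F(x,i)/i^{2}=\bigl(\sin(\pi x)/\sin(\pi x/i)\bigr)^{2}$ is nonnegative and satisfies $0\le\phi_i(x)\le 1$ (with value $i^2$ at the removable points $x\in i\Z$), so the series converges absolutely to a real value. Consequently $\im\,\Fsharp_r(x,-1)$ receives no contribution from the Fej\'er part and is, moreover, independent of $r$:
\[
\im\,\Fsharp_r(x,-1)=-2\,\im\!\bigl(e^{-\ii\pi x}\,(1+\ii\pi S_1(x))\bigr),\qquad 0<r<1.
\]
Passing to the locally uniform limit $r\uparrow1$ (which exists by the argument of Proposition~\ref{prop:qminus1-entire-sym}) therefore costs nothing on the imaginary side, and $\im\,\Fsharp(x,-1)$ equals the same expression.

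Next I would carry out the short trigonometric reduction. Writing $c:=\cos(\pi x)$, $s:=\sin(\pi x)$ and using $\pi S_1(x)=\tfrac12\sin(2\pi x)=sc$, a direct expansion gives $e^{-\ii\pi x}(1+\ii\pi S_1(x))=c\,(1+s^{2})+\ii\,(sc^{2}-s)$, hence
\[
\im\!\bigl(e^{-\ii\pi x}(1+\ii\pi S_1(x))\bigr)=s(c^{2}-1)=-s^{3},
\qquad\text{so}\qquad
\im\,\Fsharp(x,-1)=2\sin^{3}(\pi x).
\]
If $\Fsharp(x,-1)=0$ then in particular $\im\,\Fsharp(x,-1)=0$, i.e.\ $\sin(\pi x)=0$, which gives $x\in\Z$; the ``in particular'' clause (absence of real zeros off the integers) follows immediately.

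As for the main obstacle: there is essentially no analytic difficulty here, and that is the whole point — because the imaginary part along $\R$ is $r$-free and the Fej\'er part is real, no Abel-limit subtlety can interfere. The only care required is the sign and normalization bookkeeping in the corrector (the prefactor $2$, the minus sign in front of it, and the identity $\pi S_1=\tfrac12\sin(2\pi\,\cdot)$), together with the elementary bound $\phi_i(x)\in[0,1]$ for real $x$ that makes the Abel series genuinely real-valued. One may also note that the \emph{real} part of $\Fsharp(x,-1)$ is irrelevant for rigidity; it is what pins down the exact quadratic contact at the integers themselves, already recorded in Lemma~\ref{lem:int-tangent} and Proposition~\ref{prop:local-nonneg}.
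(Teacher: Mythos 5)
Your proof is correct and takes essentially the same route as the paper's: the Fej\'er part is real on $\R$, the imaginary part of $\Fsharp_r(x,-1)$ is $r$-independent and equals $2\sin^{3}(\pi x)$, so vanishing of $\Fsharp(x,-1)$ forces $\sin(\pi x)=0$ and hence $x\in\Z$. (One trivial slip in a parenthetical: at $x\in i\Z$ it is $F(x,i)$ that equals $i^{2}$, while $\phi_i(x)=1$; this does not affect the argument.)
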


\begin{proof}
For $x\in\R$, $S_r(x):=2\sum_{i\ge2}(-r)^i\phi_i(x)\in\R$.
With $e^{-\ii\pi x}=\cos(\pi x)-\ii\sin(\pi x)$ and $S_1(x)=\sin(2\pi x)/(2\pi)$,
\[
\mathrm{Im}\Bigl(e^{-\ii\pi x}\bigl(1+\ii\pi S_1(x)\bigr)\Bigr)
=\cos(\pi x)\cdot \frac{\sin(2\pi x)}{2}\;-\;\sin(\pi x)
=\sin(\pi x)\bigl(\cos^2(\pi x)-1\bigr)
=-\,\sin^{3}(\pi x).
\]
Hence $\mathrm{Im}\,\Fsharp_r(x,-1)=2\,\sin^{3}(\pi x)$ for all $0<r<1$; letting $r\uparrow1$ gives
$\mathrm{Im}\,\Fsharp(x,-1)=2\,\sin^{3}(\pi x)$. If $\Fsharp(x,-1)=0$, then $\sin(\pi x)=0$, so $x\in\Z$.
\end{proof}

\begin{remark}[Original indicator]
The same argument without the tangent-matching factor shows
$\mathrm{Im}\,\mathfrak F_r(x,-1)=2\,\sin(\pi x)$, so any real zero of $\mathfrak F(\cdot,-1)$ is also constrained to the integers.
\end{remark}

\subsubsection*{Integer zeros: complete classification}
At any integer $n$, the corrector of the sharp variant simplifies as $S_1(n)=0$. Thus, $\Fsharp(n,q)=\mathfrak F(n,q)$ for any complex $q$, and their integer zeros coincide. The next lemma gives the exact integer zero set for the special case $q=-1$.

\begin{lemma}[Integer zeros at $q=-1$]
\label{lem:qminus1-integer-zeros}
For every integer $n\ge2$,
\[
\mathfrak F(n,-1)\;=\;2\Bigl(\sum_{\substack{d\mid n\\ d\ge2}}(-1)^d\;-\;(-1)^n\Bigr).
\]
Consequently,
\[
\Fsharp(n,-1)=0
\quad\Longleftrightarrow\quad
\text{$n$ is prime, \;or\; $v_2(n)=1$ (i.e.\ $n=2\cdot m$ with $m$ odd).}
\]
\end{lemma}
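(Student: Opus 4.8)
The plan is to reduce the question to a finite signed divisor sum and then carry out an elementary $2$-adic case analysis. First I would read off $\mathfrak F(n,-1)$ directly from its Abel definition: since the divisor-filter identity gives $F(n,i)/i^2=\mathbf 1_{i\mid n}$, the damped series $\mathfrak S_r(n)=2\sum_{i\ge2}(-r)^i F(n,i)/i^2$ collapses to the \emph{finite} sum $2\sum_{d\mid n,\,d\ge2}(-r)^d$, whose limit as $r\uparrow1$ is $2\sum_{d\mid n,\,d\ge2}(-1)^d$; the (undamped) corrector contributes $2e^{-\ii\pi n}=2(-1)^n$. Subtracting gives the closed form $\mathfrak F(n,-1)=2\bigl(\sum_{d\mid n,\,d\ge2}(-1)^d-(-1)^n\bigr)$, and since $S_1(n)=0$ the tangent-matched corrector agrees with the plain one at integers, so the same formula holds for $\Fsharp(n,-1)$.

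Next I would rewrite the vanishing condition. Adding and subtracting the $d=1$ term, $\Fsharp(n,-1)=0$ is equivalent to $T(n):=\sum_{d\mid n}(-1)^d=(-1)^n-1$. The core computation is the evaluation of $T(n)$: writing $n=2^{a}m$ with $m$ odd (so $a=v_2(n)$, and $a=0$ when $n$ is odd), every divisor of $n$ has the shape $2^{b}e$ with $0\le b\le a$, $e\mid m$, and is odd exactly when $b=0$. Hence $n$ has $\tau(m)$ odd divisors and $a\,\tau(m)$ even divisors, so $T(n)=a\tau(m)-\tau(m)=(a-1)\tau(m)$.

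It then remains to solve $(a-1)\tau(m)=(-1)^n-1$ by parity. If $n$ is odd, $a=0$ and the right side is $-2$, so the equation reads $\tau(n)=2$, i.e.\ $n$ is an odd prime. If $n$ is even, $a\ge1$ and the right side is $0$, so $(a-1)\tau(m)=0$; since $\tau(m)\ge1$ this forces $a=1$, i.e.\ $v_2(n)=1$. Combining the two cases — and noting $n=2$ falls under both descriptions — the zero set is exactly $\{n:\, n\text{ prime}\}\cup\{n:\, v_2(n)=1\}$, as claimed; the prime branch in particular recovers Lemma~\ref{lem:qminus1-prime-zero}.

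The argument is largely routine, so the main point requiring care is the bookkeeping: separating $n$ odd from $n$ even, tracking the $d=1$ term correctly when passing between $\sum_{d\ge2}$ and $\sum_{d\mid n}$, and checking that the final ``prime or $v_2(n)=1$'' description is exhaustive on the small or boundary cases ($n=2,3,4$, odd composites, and even $n$ with $v_2(n)\ge2$), which I would verify by direct substitution into the closed form.
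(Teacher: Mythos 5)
Your proposal is correct. Both your argument and the paper's are elementary parity counts of divisors, so the overall approach is the same; the difference is organizational, and slightly in your favor. For the closed form you route through the Abel-damped series and observe the sum is finite at integers, which matches the paper's remark that ``the Abel interpretation is not needed at integers,'' and you correctly use $S_1(n)=0$ to transfer the formula to $\Fsharp$. For the classification, the paper argues in three separate cases ($n$ odd, $v_2(n)=1$, $v_2(n)\ge2$) and disposes of the last one with an unelaborated ``direct count,'' whereas you prove the single identity
\[
\sum_{d\mid n}(-1)^d=(v_2(n)-1)\,\tau(m),\qquad n=2^{v_2(n)}m,\ m\ \text{odd},
\]
and then solve $(a-1)\tau(m)=(-1)^n-1$ by parity. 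This uniform formula handles all three cases at once, in particular making the $v_2(n)\ge2$ exclusion explicit ($(a-1)\tau(m)\ge\tau(m)\ge1\ne0$), and your bookkeeping of the $d=1$ term and of the overlap at $n=2$ is accurate. No gaps.
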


\begin{proof}
At integers, $\phi_i(n)=\mathbf 1_{i\mid n}$, so the displayed formula follows directly from the definition of
$\mathfrak F(\cdot,-1)$ in Section~\ref{sec:qminus1-eta}; the Abel interpretation is not needed at integers.
If $n$ is odd, then all divisors $d\ge2$ are odd and contribute $-1$, so
$\sum_{d\mid n,\,d\ge2}(-1)^d=-(\tau(n)-1)$, which equals $-1$ iff $\tau(n)=2$, i.e.\ $n$ is prime.
If $n$ is even with $v_2(n)=1$, write $n=2m$ with $m$ odd. The even divisors are exactly $2$ times the divisors of $m$,
and the odd divisors $\ge3$ are the nontrivial divisors of $m$; hence
$\sum_{d\mid n,\,d\ge2}(-1)^d=\#\{\text{even divisors}\}-\#\{\text{odd divisors}\ge3\}=\tau(m)-(\tau(m)-1)=1$,
which matches $(-1)^n=+1$ and yields zero. If $v_2(n)\ge2$, a direct count shows
$\sum_{d\mid n,\,d\ge2}(-1)^d-(-1)^n\neq0$.
\end{proof}

\begin{remark}[Scope]
On the real axis and in the Abel sense, Theorem~\ref{thm:qminus1-no-offinteger} forces any real zero to be an integer.
Lemma~\ref{lem:qminus1-integer-zeros} classifies exactly which integers are zeros: all primes and, in addition, all even integers with $v_2(n)=1$.
No positivity statement for composites is claimed in the alternating regime.
\end{remark}

\begin{remark}[Outlook: From dyadic to cyclotomic twists]
The dyadic identity in Lemma~\ref{lem:dyadic-cascade} is the first step in a larger hierarchy. A similar analysis can be performed by replacing the alternating weights $(-1)^i$ with other periodic sequences, such as Dirichlet characters $\chi(i)$. This is expected to yield analogous "twist" identities and entire functions whose spectral properties are governed by Dirichlet L-functions, $L(s,\chi)$. A systematic treatment of these "cyclotomic twists" is left for future work.
\end{remark}

\subsection{Lerch--\texorpdfstring{$L$}{L} Bridge and Residue-Weighted Divisor Balance for General Complex \texorpdfstring{$q$}{q}}
\label{subsec:lerch-L-bridge}

This subsection generalizes the findings for $q=-1$ by investigating the structural principles that govern the framework for any complex parameter $q=R\,e^{\mathrm{i}\theta}$. Two central questions are addressed: (1) How does the spectral signature, governed by the polylogarithm $\operatorname{Li}_s(q^{-1})$, relate to classical number-theoretic objects for arbitrary $q$? A \emph{Lerch--$L$ bridge} is established to answer this. (2) How can the appearance of additional integer zeros for certain $q$ be explained and predicted? A \emph{residue-weighted divisor balance} provides a concrete arithmetic criterion. Finally, the phase-locking mechanism observed at $q=-1$ is generalized to a compact real-axis condition for the tangent-matched variant.

\paragraph{Notation.}
Write $q=R e^{\mathrm{i}\theta}$ with $R>0$ and $\theta\in\R$.
For a positive integer $m$ and $a\in\{0,\dots,m-1\}$ put $\theta=2\pi a/m$ (rational angle).
The Lerch transcendent is $\Phi(z,s,a)=\sum_{k\ge0} \frac{z^{k}}{(k+a)^{s}}$ for $\Re s>1$ and
$|z|<1$. The Hurwitz zeta is $\zeta(s,\alpha)=\sum_{k\ge0} (k+\alpha)^{-s}$ ($\Re s>1$, $\alpha\notin -\N_0$).

\begin{proposition}[Lerch--$L$ bridge on the unit circle]
\label{prop:lerch-L-unit}
Let $\theta=2\pi a/m$ with integers $m\ge1$ and $a\in\{0,\dots,m-1\}$. Then, for $\Re s>1$,
\[
\sum_{n\ge1}\frac{e^{-\mathrm{i}\theta n}}{n^{s}}
\;=\; m^{-s}\sum_{r=1}^{m} e^{-\tfrac{2\pi \mathrm{i} a r}{m}}\,\zeta\!\left(s,\tfrac{r}{m}\right)
\;=\; m^{-s}\sum_{\chi\!\!\!\pmod m} \tau(\overline{\chi},a)\,L(s,\chi),
\]
where $\tau(\overline{\chi},a)=\sum_{r=1}^{m}\overline{\chi}(r)\,e^{-\tfrac{2\pi \mathrm{i} a r}{m}}$ is a Gauss sum.
\emph{In particular}, for $|q|=1$ with rational angle, the polylog factor in the FD-lift satisfies
$\operatorname{Li}_s(q^{-1})=\sum_{n\ge1} e^{-\mathrm{i}\theta n}n^{-s}$ and admits a finite $L$--decomposition.
\end{proposition}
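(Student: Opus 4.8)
The plan is to peel off the periodic structure of the summand by splitting $n$ into residue classes modulo $m$, which is exactly where the rational angle $\theta=2\pi a/m$ forces the exponential weight to become $m$-periodic, and then to convert the resulting finite combination of Hurwitz zeta functions into $L$-functions via orthogonality of Dirichlet characters and Gauss sums. The ``in particular'' clause will then be a one-line substitution.

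First I would prove the left-hand equality. Writing $n=mk+r$ with $k\ge 0$ and $r\in\{1,\dots,m\}$, the relation $\theta m=2\pi a$ gives $e^{-\mathrm{i}\theta n}=e^{-2\pi\mathrm{i}ak}\,e^{-2\pi\mathrm{i}ar/m}=e^{-2\pi\mathrm{i}ar/m}$, so the weight depends on $r$ alone. For $\Re s>1$ the double series $\sum_{r=1}^{m}\sum_{k\ge0}(mk+r)^{-\Re s}$ converges, so Fubini permits the regrouping
\[
\sum_{n\ge1}\frac{e^{-\mathrm{i}\theta n}}{n^{s}}
=\sum_{r=1}^{m}e^{-2\pi\mathrm{i}ar/m}\sum_{k\ge0}\frac{1}{(mk+r)^{s}}
=m^{-s}\sum_{r=1}^{m}e^{-2\pi\mathrm{i}ar/m}\,\zeta\!\left(s,\tfrac{r}{m}\right),
\]
using $\sum_{k\ge0}(mk+r)^{-s}=m^{-s}\zeta(s,r/m)$ from $(mk+r)^{-s}=m^{-s}(k+r/m)^{-s}$.

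For the right-hand equality I would start from the classical identity $L(s,\chi)=m^{-s}\sum_{r=1}^{m}\chi(r)\,\zeta(s,r/m)$, valid for any Dirichlet character $\chi\bmod m$ since $\chi$ annihilates the non-units, and invert it over the character group. Multiplying by the Gauss sum $\tau(\overline\chi,a)=\sum_{t=1}^{m}\overline\chi(t)e^{-2\pi\mathrm{i}at/m}$, summing over all $\chi\bmod m$, and applying the orthogonality relation $\sum_{\chi}\overline\chi(t)\chi(r)=\varphi(m)\,\mathbf 1_{t\equiv r}$ on units collapses the resulting double sum and, after tracking the normalizing constant, reproduces the combination $\sum_{r}e^{-2\pi\mathrm{i}ar/m}\zeta(s,r/m)$ restricted to residues coprime to $m$; the remaining residues $r$ with $\gcd(r,m)>1$ must be treated separately.

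The main obstacle is precisely this last reconciliation: the additive weight $r\mapsto e^{-2\pi\mathrm{i}ar/m}$ lives on all of $\Z/m\Z$, whereas multiplicative characters only resolve the part supported on units, so the naive Fourier inversion over $\chi\bmod m$ does not by itself see the Hurwitz zetas $\zeta(s,r/m)$ with $\gcd(r,m)=d>1$ and carries a $\varphi(m)$-type factor in place of a clean $m^{-s}$. I would resolve this either by first reducing to the coprime case $\gcd(a,m)=1$ and then recursing on $d=\gcd(r,m)$ — each such Hurwitz zeta reducing, after cancelling $d$ in $r/m$, to a Hurwitz zeta at a reduced fraction $r'/m'$ with $\gcd(r',m')=1$ and $m'=m/d$, hence to $L$-values modulo $m'$ — or by invoking directly the standard structural fact that the periodic (Lerch) zeta at a rational point is a finite $\C$-linear combination of Dirichlet $L$-functions modulo $m$; the bookkeeping of Gauss-sum, conductor, and $\varphi$-factors in this reduction is the only genuinely technical ingredient. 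Finally, the ``in particular'' clause is immediate: on the unit circle $q=e^{\mathrm{i}\theta}$, so $q^{-1}=e^{-\mathrm{i}\theta}$ and $\operatorname{Li}_s(q^{-1})=\sum_{n\ge1}(q^{-1})^{n}n^{-s}=\sum_{n\ge1}e^{-\mathrm{i}\theta n}n^{-s}$, which is exactly the series just decomposed, so the polylog factor appearing in the FD-lift inherits the finite $L$-decomposition.
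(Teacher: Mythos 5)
Your proof of the first equality is correct and is exactly the paper's route (split $n=mk+r$, note $e^{-\mathrm{i}\theta n}=e^{-2\pi\mathrm{i}ar/m}$, and use $\sum_{k\ge0}(mk+r)^{-s}=m^{-s}\zeta(s,r/m)$), and the ``in particular'' clause is fine.

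The problem is the second equality, and the obstacle you flagged is not bookkeeping that can be tidied up afterwards --- it is fatal to the identity as printed. Running your own orthogonality computation to the end gives
\[
m^{-s}\sum_{\chi\bmod m}\tau(\overline{\chi},a)\,L(s,\chi)
\;=\; m^{-s}\,\varphi(m)\!\!\sum_{\substack{n\ge1\\ \gcd(n,m)=1}}\!\frac{e^{-\mathrm{i}\theta n}}{n^{s}},
\]
because every $L(s,\chi)$ with $\chi\bmod m$ has Dirichlet coefficients supported on $n$ coprime to $m$; so the right-hand side of the proposition misses all $n$ with $\gcd(n,m)>1$ and carries the spurious factor $m^{-s}\varphi(m)$ even on the coprime part. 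Concretely, for $m=2$, $a=1$ the middle expression equals $-\eta(s)=(2^{1-s}-1)\zeta(s)$, while the right-hand side equals $2^{-s}\cdot(-1)\cdot(1-2^{-s})\zeta(s)$; at $s=2$ these are $-\pi^{2}/12$ versus $-\pi^{2}/32$. Hence no Gauss-sum bookkeeping can close the gap you left open: the equality you were trying to prove is false for $m\ge2$, and the paper's one-line justification (``expanding the basis into Dirichlet characters'') glosses over precisely the point you isolated. Your proposed repair by recursion over $d=\gcd(n,m)$ is the right instinct, but note where it lands: it yields $\sum_{d\mid m} d^{-s}\,\varphi(m/d)^{-1}\sum_{\chi\bmod (m/d)}\tau(\overline{\chi},\cdot)\,L(s,\chi)$, i.e.\ $L$-functions to the moduli $m/d$ with $d^{-s}$ (non-constant) coefficients --- so your fallback ``structural fact'' (a finite $\C$-linear combination of $L$-functions modulo $m$ with constant coefficients) is also not correct as stated. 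The honest conclusion of your argument is: first equality proved; second equality must be restated (restricted to the coprime part with a $\varphi(m)^{-1}$ normalization, or written as the divisor sum above) before any proof can succeed.
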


\begin{proof}[Proof sketch]
Finite Fourier expansion on residues modulo $m$ yields\\
$\sum_{n\ge1}\! e^{-\mathrm{i}\theta n} n^{-s}=m^{-s}\sum_{r=1}^m e^{-\frac{2\pi \mathrm{i} a r}{m}} \zeta(s,r/m)$.
Expanding the basis $\{e^{-2\pi \mathrm{i} a r/m}\}_{r}$ into Dirichlet characters gives the stated $L$--combination.
\end{proof}

\begin{proposition}[Lerch deformation for $R>1$, rational angle]
\label{prop:lerch-deformation}
Let $q=R e^{\mathrm{i}\theta}$ with $R>1$ and $\theta=2\pi a/m$ as above. Then, for $\Re s>1$,
\[
\operatorname{Li}_s\!\big(q^{-1}\big)
=\sum_{n\ge1}\frac{(R^{-1}e^{-\mathrm{i}\theta})^{n}}{n^{s}}
=\sum_{r=1}^{m} e^{-\tfrac{2\pi \mathrm{i} a r}{m}}\,R^{-r}\,\Phi\!\left(R^{-m},\,s,\,\tfrac{r}{m}\right).
\]
Consequently, the spectral factor in the FD-lift deforms continuously from a finite $L$--combination
($R\downarrow 1$) to a finite sum of Lerch transcendent terms ($R>1$).
\end{proposition}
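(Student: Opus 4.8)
The plan is a direct residue-class decomposition of the polylogarithm series, running entirely parallel to the unit-circle computation behind Proposition~\ref{prop:lerch-L-unit} but now carrying the radial factor $R^{-1}$ through the grouping. Since $R>1$, one has $|q^{-1}|=R^{-1}<1$, so for every $s$ the series $\operatorname{Li}_s(q^{-1})=\sum_{n\ge1}(q^{-1})^n n^{-s}$ converges absolutely (the geometric decay dominates). I would write $q^{-1}=R^{-1}e^{-\mathrm{i}\theta}$ with $\theta=2\pi a/m$ and, invoking absolute convergence to justify the rearrangement, split the sum over $n\ge1$ according to $n=km+r$ with $k\ge0$ and $r\in\{1,\dots,m\}$, so that $\operatorname{Li}_s(q^{-1})=\sum_{r=1}^{m}\sum_{k\ge0}(q^{-1})^{km+r}(km+r)^{-s}$.

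The one genuine idea is that the rational-angle hypothesis $\theta=2\pi a/m$ collapses the period-$m$ block of the geometric part: $(q^{-1})^{km+r}=(q^{-m})^k(q^{-1})^r$ with $q^{-m}=R^{-m}e^{-\mathrm{i}\theta m}=R^{-m}e^{-2\pi\mathrm{i}a}=R^{-m}$ because $a\in\Z$, while $(q^{-1})^r=R^{-r}e^{-2\pi\mathrm{i}ar/m}$. Pulling the $r$-dependent constants out of the inner sum leaves $\sum_{k\ge0}(R^{-m})^k(km+r)^{-s}$; rescaling via $(km+r)^{-s}=m^{-s}(k+r/m)^{-s}$ identifies this with $m^{-s}\,\Phi(R^{-m},s,r/m)$, which is legitimate since $R^{-m}\in(0,1)$ lies inside the domain of $\Phi$. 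Collecting the $r$-sum then yields the stated identity, with the normalizing factor $m^{-s}$ exactly as in Proposition~\ref{prop:lerch-L-unit}, namely $\operatorname{Li}_s(q^{-1})=m^{-s}\sum_{r=1}^{m}e^{-2\pi\mathrm{i}ar/m}R^{-r}\Phi(R^{-m},s,r/m)$; this specializes back to Proposition~\ref{prop:lerch-L-unit} as $R\downarrow1$ since $\Phi(1,s,r/m)=\zeta(s,r/m)$.

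For the accompanying continuity-of-deformation assertion I would argue that, for fixed $s$ with $\Re s>1$ and fixed $r/m$, the map $R\mapsto\Phi(R^{-m},s,r/m)$ is continuous on $[1,\infty)$: on $R>1$ it is an absolutely convergent power series in $R^{-m}\in(0,1)$, while one-sided continuity at $R=1$ follows from dominated convergence (or Abel's theorem) with summable majorant $(k+r/m)^{-\Re s}$. Hence the right-hand side interpolates continuously from the finite $L$-combination of Proposition~\ref{prop:lerch-L-unit} at $R=1$ to a genuine finite sum of $m$ Lerch transcendents for $R>1$; this is the only place where analysis (rather than pure rearrangement) enters, and it is also the sole point at which the restriction $\Re s>1$ is really needed, the core identity being valid for all $s$.

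I expect no substantive obstacle. The only points meriting a word of care are: the legitimacy of the rearrangement (immediate from absolute convergence once $R>1$); the harmless bookkeeping of the residue $r=m$ against the customary $r=0$ convention, which costs nothing since $e^{-2\pi\mathrm{i}am/m}=1$; and the structural role of the rational angle, which is precisely what forces $q^{-m}$ to equal the real positive number $R^{-m}$, so that each inner sum is a Lerch transcendent with a real modulus argument rather than a complex one — for irrational $\theta/(2\pi)$ the same steps go through but produce Lerch terms with a unimodular complex first argument instead.
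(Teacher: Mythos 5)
Your proposal is correct and follows essentially the same route as the paper's proof: split $n=mk+r$ with $1\le r\le m$, use the rational angle to collapse $(q^{-1})^{mk}$ to $(R^{-m})^{k}$, and rescale $(mk+r)^{-s}=m^{-s}(k+r/m)^{-s}$ to identify the inner sum with the Lerch transcendent. One point worth flagging: the identity you derive carries the prefactor $m^{-s}$, which agrees with the paper's own proof sketch and with the $R\downarrow1$ comparison against Proposition~\ref{prop:lerch-L-unit}, whereas the displayed formula in Proposition~\ref{prop:lerch-deformation} omits $m^{-s}$; that omission is a typo in the statement, and your version is the correct one.
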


\begin{proof}[Proof sketch]
Split $n=mk+r$ with $1\le r\le m$:
$\sum_{n\ge1}\frac{(R^{-1} e^{-\mathrm{i}\theta})^{n}}{n^s}
=\sum_{r=1}^{m} e^{-\mathrm{i}\theta r} R^{-r}\sum_{k\ge0}\frac{(R^{-m})^{k}}{(mk+r)^s}
=\sum_{r=1}^{m} e^{-\mathrm{i}\theta r} R^{-r} m^{-s}\Phi\!\left(R^{-m},s,\tfrac{r}{m}\right)$.
The stated form follows since $e^{-\mathrm{i}\theta r}=e^{-\tfrac{2\pi \mathrm{i} a r}{m}}$.
\end{proof}

\begin{lemma}[Residue-weighted divisor balance at integers]
\label{lem:residue-balance}
Let $q=R e^{\tfrac{2\pi \mathrm{i} a}{m}}$ with $R>0$ and integers $m\ge1$, $a\in\{0,\dots,m-1\}$.
For $n\ge2$ define the weighted residue sums
\[
S_{r}(n;R)\;:=\!\!\sum_{\substack{d\mid n\\ 2\le d<n\\ d\equiv r\ (\mathrm{mod}\ m)}}\! R^{-d}\qquad (r=0,1,\dots,m-1).
\]
Then the integer values of the (non-sharp) indicator admit the decomposition
\[
\mathfrak F(n,q)\;=\;(q-1)q\sum_{r=0}^{m-1} e^{-\tfrac{2\pi \mathrm{i} a r}{m}}\,S_{r}(n;R).
\]
In particular,
\[
\mathfrak F(n,q)=0
\quad\Longleftrightarrow\quad
\sum_{r=0}^{m-1} e^{-\tfrac{2\pi \mathrm{i} a r}{m}}\,S_{r}(n;R)=0,
\]
i.e., the residue-weighted divisor vector $S(n;R)=(S_0,\dots,S_{m-1})$ is orthogonal to the phase vector
$(e^{-\tfrac{2\pi \mathrm{i} a r}{m}})_{r}$.
\end{lemma}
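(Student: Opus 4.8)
\emph{Step 1: pin down $\mathfrak F(n,q)$ at integers.} The plan is to reduce the identity to the finite integer-value formula for $\mathfrak F$ contained in the proof of Theorem~\ref{thm:integer-prime-zero} and then to rearrange by residue classes. First I would invoke the divisor-filter identity $F(n,i)/i^{2}=\mathbf 1_{i\mid n}$, valid at every integer argument, to collapse the defining series for $\mathfrak F$ at $z=n$ to the \emph{finite} expression
\[
\mathfrak F(n,q)=(q-1)q\Bigl(\sum_{\substack{d\mid n\\ d\ge2}}q^{-d}-q^{-n}\Bigr)=(q-1)q\sum_{\substack{d\mid n\\ 2\le d<n}}q^{-d},
\]
the corrector $C(z,q)=(q-1)q\,q^{-z}$ removing exactly the $d=n$ term. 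This is the identity established in the proof of Theorem~\ref{thm:integer-prime-zero}; although that theorem is phrased for real $q>1$, I would stress that this step is purely algebraic at integer arguments and uses nothing beyond $q\neq0$, so it applies verbatim to $q=Re^{2\pi\ii a/m}$ with $R>0$.

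\emph{Step 2: separate modulus and phase, then read off the criterion.} Next I would write $q^{-d}=R^{-d}e^{-2\pi\ii a d/m}$ and note that the phase $e^{-2\pi\ii a d/m}$ depends on $d$ only through $d\bmod m$. Regrouping the (finite) divisor sum over the congruence classes $d\equiv r\ (\mathrm{mod}\ m)$, $r=0,\dots,m-1$, and pulling the common phase out of each class yields
\[
\sum_{\substack{d\mid n\\ 2\le d<n}}q^{-d}
=\sum_{r=0}^{m-1}e^{-2\pi\ii a r/m}\!\!\!\sum_{\substack{d\mid n,\ 2\le d<n\\ d\equiv r\ (\mathrm{mod}\ m)}}\!\!\!R^{-d}
=\sum_{r=0}^{m-1}e^{-2\pi\ii a r/m}\,S_r(n;R);
\]
multiplying through by $(q-1)q$ gives the asserted decomposition. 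Since $|q|>1$ forces $q\neq0$ and $q\neq1$, the prefactor $(q-1)q$ is nonzero, so $\mathfrak F(n,q)=0$ if and only if $\sum_{r=0}^{m-1}e^{-2\pi\ii a r/m}S_r(n;R)=0$. Reading $S(n;R)=(S_0,\dots,S_{m-1})\in\R^{m}$ against the phase vector $(e^{-2\pi\ii a r/m})_r\in\C^{m}$, this is exactly the stated orthogonality.

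\emph{Expected main obstacle.} There is no genuine analytic difficulty: once Step~1 is in hand, the rest is a finite rearrangement. The one point that needs a sentence of care is the \emph{scope} of Step~1 — making explicit that the divisor-sum formula for $\mathfrak F(n,q)$ holds for all complex $q$ with $|q|>1$, not merely real $q>1$, so that the substitution $q=Re^{2\pi\ii a/m}$ is unambiguous; this is immediate since that step never uses positivity of the summands. A secondary bookkeeping remark I would include is the degenerate case $m=1$, where $a=0$ and the decomposition reduces to the single term $(q-1)q\,S_0(n;R)$, recovering Theorem~\ref{thm:integer-prime-zero}.
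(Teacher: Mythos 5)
Your proposal is correct and follows essentially the same route as the paper's proof: evaluate $\mathfrak F$ at the integer $n$ via the divisor filter $F(n,i)/i^{2}=\mathbf 1_{i\mid n}$ to get the finite sum $(q-1)q\sum_{2\le d<n,\,d\mid n}q^{-d}$, then split $q^{-d}=R^{-d}e^{-2\pi\ii a d/m}$ and regroup by residue classes modulo $m$. One small remark: in justifying that the prefactor $(q-1)q$ is nonzero you appeal to $|q|>1$, but the lemma only assumes $R>0$; what is actually needed (and what holds except in the degenerate case $R=1$, $a=0$, i.e.\ $q=1$) is simply $q\notin\{0,1\}$, a caveat the paper itself also leaves implicit.
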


\begin{proof}
At integers, $\phi_i(n)=\mathbf 1_{i\mid n}$ and by definition
$\mathfrak F(n,q)=(q-1)q\sum_{2\le d<n,\,d\mid n} q^{-d}
=(q-1)q\sum_{r=0}^{m-1} e^{-\tfrac{2\pi \mathrm{i} a r}{m}}
\sum_{\substack{d\mid n,\,2\le d<n\\ d\equiv r (m)}} R^{-d}$.
\end{proof}

\begin{remark}[The geometry of integer zeros]
Lemma~\ref{lem:residue-balance} provides a geometric interpretation for the integer zeros. For each integer $n$, one can associate a vector of its divisor sums, weighted by $R^{-d}$ and partitioned by residue classes modulo $m$. A zero occurs if and only if this "divisor vector" is perfectly orthogonal to the "phase vector" determined by $q$'s angle. This explains why zeros at composites are rare for real $q>1$ (where all components are positive, preventing orthogonality) but become possible for complex $q$ where cancellations can occur.
\end{remark}

\begin{corollary}[Special cases and suppression by $R>1$]
\label{cor:special-cases}
(i) For real $q>1$ ($m=1$) all terms are positive and $\mathfrak F(n,q)=0$ if and only if $n$ is prime (Theorem~\ref{thm:integer-prime-zero}).\\
(ii) For $q=-1$ ($m=2$, $R=1$) Lemma~\ref{lem:residue-balance} recovers the exact integer-zero classification in Lemma~\ref{lem:qminus1-integer-zeros}.\\
(iii) For $q=\pm\mathrm{i}$ ($m=4$, $R=1$) additional integer zeros may occur via residue cancellations (e.g.\ $n=8$ at $q=\mathrm{i}$).\\
(iv) For fixed rational angle and $R>1$, the weights $R^{-d}$ favour small divisors and generically suppress accidental cancellations; in particular, extra composite integer zeros become sparse (heuristically rare) as $R$ increases.
\end{corollary}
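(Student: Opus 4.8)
The plan is to read off items (i)--(iii) as immediate specializations of Lemma~\ref{lem:residue-balance} and to make the suppression claim in (iv) quantitative through a single triangle-inequality estimate. For (i) I would put $m=1$: there is one residue class, the phase vector is $(1)$, and $\mathfrak F(n,q)=(q-1)q\sum_{d\mid n,\,2\le d<n}q^{-d}$; for real $q>1$ every term is positive, so the sum vanishes precisely when $n$ has no proper divisor in $[2,n)$, i.e.\ when $n$ is prime, which is Theorem~\ref{thm:integer-prime-zero}. For (ii) I would put $m=2$, $a=1$, $R=1$, so the phase vector is $(1,-1)$ and Lemma~\ref{lem:residue-balance} gives $\mathfrak F(n,-1)=2\sum_{d\mid n,\,2\le d<n}(-1)^d$; restoring the $d=n$ term reproduces exactly the closed form of Lemma~\ref{lem:qminus1-integer-zeros}, whence the zero classification carries over verbatim. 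For (iii) I would take $m=4$, $a=1$ (so $q=\ii$): the proper divisors of $8$ in $[2,8)$ are $2$ and $4$, lying in residue classes $2$ and $0$ modulo $4$, so the phase-weighted sum is $\ii^{-4}+\ii^{-2}=1-1=0$; hence $\mathfrak F(8,\ii)=0$ although $8$ is composite, and conjugation gives the same at $q=-\ii$. (The same value drops out of the bare defining sum $(q-1)q\sum_{2\le d<n,\,d\mid n}q^{-d}$ without invoking the lemma.)

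For (iv) I would turn ``the weights $R^{-d}$ favour small divisors'' into an estimate. Fix a composite $n$ and let $p=p(n)$ be its least prime factor; then $p$ is a proper divisor in $[2,n)$, it is the smallest such, and every other proper divisor $d$ satisfies $d\ge p+1$. Since $|q^{-d}|=R^{-d}$, the triangle inequality applied to $\mathfrak F(n,q)/\bigl((q-1)q\bigr)=\sum_{d\mid n,\,2\le d<n}q^{-d}$ yields
\[
\Bigl|\sum_{d\mid n,\,2\le d<n}q^{-d}\Bigr|
\ \ge\ R^{-p}-\sum_{d\ge p+1}R^{-d}
\ =\ R^{-p}\Bigl(1-\frac{1}{R-1}\Bigr),
\]
which is strictly positive as soon as $R>2$. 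Hence, for every angle $\theta$ and every $R>2$, $\mathfrak F(\cdot,q)$ has no composite integer zeros at all, so accidental zeros can occur only in the strip $1<R\le2$ --- the precise meaning of ``sparse/rare as $R$ increases''. Within $1<R\le2$ one still has sparsity for each fixed $n$: the map $q\mapsto\mathfrak F(n,q)$ is $(q-1)q$ times a nonzero Laurent polynomial in $q$ (its $q^{-p}$ coefficient is $1$), hence has finitely many zeros in $\C\setminus\{0\}$, so along any fixed ray $\arg q=\theta$ only finitely many radii are exceptional.

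The only non-routine point is the uniform choice of a dominant divisor in (iv); items (i)--(iii) are pure bookkeeping inside Lemma~\ref{lem:residue-balance}. The least-prime-factor term $R^{-p(n)}$ is the natural dominant term, and the essential --- but elementary --- observation is that the remaining proper divisors are all $\ge p+1$, which feeds the geometric tail and produces the clean threshold $R>2$. Lowering that threshold toward $R=1$ (to capture the ``generic angle'' picture for all $R>1$) would require controlling how the divisor phases $e^{-\ii\theta d}$ can conspire to cancel for small $R$, and is deliberately left outside the scope of the stated heuristic.
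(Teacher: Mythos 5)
Your proposal is correct. For items (i)--(iii) you do exactly what the paper does: the corollary is stated without a separate proof, being read off from Lemma~\ref{lem:residue-balance} together with Theorem~\ref{thm:integer-prime-zero} and Lemma~\ref{lem:qminus1-integer-zeros}, and your bookkeeping (including the check $\ii^{-2}+\ii^{-4}=0$ for $n=8$ at $q=\ii$, and the observation that restoring the $d=n$ term converts the proper-divisor sum into the closed form of Lemma~\ref{lem:qminus1-integer-zeros}) is precisely that specialization. For item (iv) you go beyond the paper: the paper deliberately leaves this claim heuristic (the subsequent remark explicitly disclaims any statement about the density of extra composite zeros), whereas your least-prime-factor triangle inequality $\bigl|\sum_{d\mid n,\,2\le d<n}q^{-d}\bigr|\ge R^{-p}\bigl(1-\tfrac{1}{R-1}\bigr)$ gives a clean rigorous supplement: no composite integer zeros whatsoever once $R>2$, for every angle. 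This is a genuine strengthening and the estimate is sound (all other proper divisors are $\ge p+1$, so the geometric tail $R^{-p}/(R-1)$ is a valid majorant). One small caveat worth flagging: your finiteness argument in the strip $1<R\le 2$ fixes a composite $n$ and varies $R$ along a ray, so it quantifies "sparse" in a different slicing than the corollary's informal reading (rare among composites $n$ for a fixed $q$); since (iv) is avowedly heuristic in the paper this does not constitute a gap, but if you wanted to present the $R>2$ bound as a formal addendum you should state which notion of sparsity it certifies.
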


\begin{proposition}[Phase-locking condition for real zeros of the sharp variant]
\label{prop:phase-locking}
Let $q=R e^{\mathrm{i}\theta}$ with $R>0$, and let $x\in\R$.
For the tangent-matched indicator $\Fsharp(\cdot,q)$ any real zero $x$ must satisfy the \emph{phase condition}
\[
\tan(\theta x)\;=\;\frac{\theta\,S_1(x)}{1+(\ln R)\,S_1(x)},
\qquad S_1(x)=\frac{\sin(2\pi x)}{2\pi},
\]
in addition to the real-part matching. In particular, for $q=-1$ ($R=1$, $\theta=\pi$) this reduces to
$\tan(\pi x)=\sin(\pi x)\cos(\pi x)$ and hence $\sin^3(\pi x)=0$, implying that no off-integer real zeros exist
(Theorem~\ref{thm:qminus1-no-offinteger}).
\end{proposition}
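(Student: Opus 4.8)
The plan is to reduce the vanishing of $\Fsharp(\cdot,q)$ on the real axis to the coincidence of two complex numbers and then to isolate the argument, or phase, part of that coincidence. Using $\Fsharp(x,q)=(q-1)q\bigl(S_q(x)-C(x,q)\bigr)$ with $S_q(x)=\sum_{i\ge2}q^{-i}\phi_i(x)$ and $C(x,q):=q^{-x}\bigl(1+(\log q)\,S_1(x)\bigr)$ (Definition~\ref{def:periodic-normalizer}, extended to complex $q$ by the same formula and read in the Abel sense when $|q|=1$), a real $x$ is a zero exactly when $S_q(x)=C(x,q)$, since $(q-1)q\neq0$. Writing $q=R e^{\ii\theta}$, so that $q^{-x}=R^{-x}e^{-\ii\theta x}$ and $\log q=\ln R+\ii\theta$, I would expand
\[
C(x,q)=R^{-x}\,e^{-\ii\theta x}\Bigl(\bigl(1+(\ln R)\,S_1(x)\bigr)+\ii\,\theta\,S_1(x)\Bigr),
\]
so that
\[
\im C(x,q)=R^{-x}\Bigl(\theta\,S_1(x)\cos(\theta x)-\bigl(1+(\ln R)\,S_1(x)\bigr)\sin(\theta x)\Bigr).
\]
The real-zero equation $S_q(x)=C(x,q)$ then splits into the \emph{real-part matching} $\re S_q(x)=\re C(x,q)$ and the imaginary-part equation $\im S_q(x)=\im C(x,q)$.

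Next I would use that on the real axis each Fej\'er factor $\phi_i(x)=\bigl(\sin(\pi x)/\sin(\pi x/i)\bigr)^2\ge0$, so whenever $q$ is real — in particular for $q=-1$ and for every $q<-1$ — the coefficients $q^{-i}$ are real, hence $S_q(x)\in\R$ and $\im S_q(x)=0$. The imaginary-part equation collapses to $\im C(x,q)=0$; dividing by $R^{-x}>0$ gives the denominator-free phase identity
\[
\theta\,S_1(x)\cos(\theta x)=\bigl(1+(\ln R)\,S_1(x)\bigr)\sin(\theta x),
\]
which is exactly $\tan(\theta x)=\theta S_1(x)/\bigl(1+(\ln R)S_1(x)\bigr)$ at every $x$ with $\cos(\theta x)\neq0$ and $1+(\ln R)S_1(x)\neq0$, and is read off directly from the cleared form at the remaining (thin, easily disposed of) exceptional $x$. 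Specializing to $q=-1$ ($R=1$, $\theta=\pi$) and using $\pi S_1(x)=\sin(\pi x)\cos(\pi x)$, the identity becomes $\sin(\pi x)\cos^2(\pi x)=\sin(\pi x)$, i.e.\ $\sin^3(\pi x)=0$, forcing $x\in\Z$; this recovers Theorem~\ref{thm:qminus1-no-offinteger}. For $\theta=0$ (real $q>1$) the identity reads $0=0$, consistent with the fact that the zero-free prime windows there come from the real-variable analysis rather than from phase-locking; and for $|q|=1$ the Abel damping $q^{-i}\mapsto(re^{\ii\theta})^{-i}$ keeps the partial sums real when $\theta\in\{0,\pi\}$, so the passage $r\uparrow1$ leaves $\im C(x,q)=0$ intact.

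The main obstacle is the genuinely complex case $\theta\notin\{0,\pi\}$: there $S_q(x)$ is a nonnegative combination of the $\phi_i(x)$ with complex weights $q^{-i}$ and is no longer real, so $\im S_q(x)$ need not vanish and the exact necessary condition couples the corrector's phase with $\arg S_q(x)$. Concretely, writing $C(x,q)=R^{-x}\rho(x)\,e^{\ii(\varphi(x)-\theta x)}$ with $\tan\varphi(x)=\theta S_1(x)/(1+(\ln R)S_1(x))$, a real zero forces $\arg S_q(x)\equiv\varphi(x)-\theta x\pmod{2\pi}$, and the compact identity in the statement is precisely the corrector-side component of this, valid verbatim exactly when $\arg S_q(x)\equiv0\pmod{\pi}$ — that is, for real $q$. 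Accordingly the proof I would write establishes the phase identity rigorously in exactly the regimes where the proposition is invoked ($q>1$, $q=-1$, $q<-1$), and records the general-complex version as the argument-matching equation above; the lone step that needs a genuine case distinction rather than a formula is the handling of the degenerate $x$ in the cleared identity, which a short direct check settles.
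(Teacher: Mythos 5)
Your proposal follows essentially the same route as the paper's own sketch: a real zero forces $S_q(x)=q^{-x}\bigl(1+(\log q)\,S_1(x)\bigr)$, and the phase condition is read off from the vanishing of the imaginary part of the corrector; your computation $\im C(x,q)=R^{-x}\bigl(\theta\,S_1(x)\cos(\theta x)-(1+\ln R\,S_1(x))\sin(\theta x)\bigr)$ coincides with the paper's, as does the $q=-1$ reduction to $\sin^3(\pi x)=0$. Where you differ is in the scope of the key realness step: the paper's sketch simply asserts that ``for real $x$, the Fej\'er sum is real,'' which holds only when the weights $q^{-i}$ are real, i.e.\ for $\theta\in\{0,\pi\}$ (real $q$); for genuinely complex $q$ (e.g.\ $q=2\ii$, where $q^{-3}$ is purely imaginary) one has $\im S_q(x)\neq 0$ in general, so the displayed condition is not by itself necessary --- the imaginary-part equation couples the corrector's phase with $\arg S_q(x)$, exactly as you record via the argument-matching formulation. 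Restricting the rigorous claim to real $q$ (covering $q>1$, $q=-1$, $q<-1$, which are all the regimes in which the proposition is actually invoked) and stating the general complex case as phase matching is therefore a sharpening rather than a gap in your argument: it repairs an overreach in the stated generality that the paper's one-line sketch glosses over. Your treatment of the degenerate points $\cos(\theta x)=0$ or $1+(\ln R)\,S_1(x)=0$ through the cleared, denominator-free identity is also the correct way to read the $\tan$ form of the condition.
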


\begin{proof}[Proof sketch]
For real $x$, the Fej\'er sum is real. Writing $q^{-x}=R^{-x}e^{-\mathrm{i}\theta x}$ and
$\log q=\ln R+\mathrm{i}\theta$, the imaginary part of the corrector
$e^{-\mathrm{i}\theta x}\bigl(1+(\ln R+\mathrm{i}\theta)S_1(x)\bigr)$ equals
$\cos(\theta x)\,\theta S_1(x)-\sin(\theta x)\bigl(1+\ln R\,S_1(x)\bigr)$.
The vanishing of the imaginary part yields the stated phase relation; the $q=-1$ reduction is immediate.
\end{proof}

\begin{remark}[Scope and use]
Propositions~\ref{prop:lerch-L-unit}--\ref{prop:lerch-deformation} identify when the spectral factor
collapses to a finite $L$--combination (unit circle, rational angle) and how it deforms for $R>1$.
Lemma~\ref{lem:residue-balance} provides a concrete criterion for integer zeros via residue classes,
recovering the alternating regime and predicting additional cases at roots of unity.
Proposition~\ref{prop:phase-locking} gives a compact real-axis constraint for the sharp variant,
with strict locking at $q=-1$.
No claim is made here regarding the density or distribution of extra composite integer zeros for general $q$;
empirically they are rare for $R>1$, and a quantitative study would require additional divisor-sum bounds.
\end{remark}

\subsection{Negative real parameters \texorpdfstring{$q<-1$}{q<-1}: a weighted-\texorpdfstring{$\eta$}{eta} bridge and basic morphology}
\label{sec:qlessminus1}
Robust properties for negative real parameters $q<-1$ are recorded. Write $q=-Q$ with $Q>1$.

\paragraph{Analyticity and integer values.}
For fixed $Q>1$ the series
\[
\mathfrak F(z,-Q)=Q(Q+1)\sum_{i\ge2}\frac{F(z,i)}{i^2}\,(-Q)^{-i}\;-\;Q(Q+1)\,(-Q)^{-z}
\]
converges absolutely and defines an entire function of $z$. At integers,
\[
\mathfrak F(n,-Q)=Q(Q+1)\sum_{\substack{d\mid n\\2\le d<n}}(-Q)^{-d},
\]
hence $\mathfrak F(p,-Q)=0$ for every prime $p$, whereas no positivity statement for composites is asserted.

\begin{definition}[Weighted alternating eta]
For $Q>1$ define the entire function
\[
\eta_{Q}(s):=\sum_{n\ge1}\frac{(-1)^{n-1}}{Q^{\,n}\,n^{s}}\qquad(\Re s>1\ \text{for absolute convergence, analytic continuation elsewhere}).
\]
Note that $\operatorname{Li}_s(-1/Q)=-\eta_Q(s)$.
\end{definition}

\begin{proposition}[Dirichlet series for negative parameters $q=-Q<-1$]\label{prop:qlessminus1-dirichlet}
Fix $Q>1$ and set $q=-Q$. For $\Re s>1$,
\[
\sum_{n\ge2}\frac{\mathfrak S(n,-Q)}{n^{s}}
\;=\;Q(Q+1)\,\zeta(s)\,\Big(\tfrac1Q-\eta_Q(s)\Big),\qquad
\sum_{n\ge2}\frac{\mathfrak F(n,-Q)}{n^{s}}
\;=\;Q(Q+1)\,(\zeta(s)-1)\,\Big(\tfrac1Q-\eta_Q(s)\Big),
\]
where $\mathfrak S$ denotes the Fej\'er sum part and $\eta_Q(s)=\sum_{n\ge1}(-1)^{n-1}(Q^{-n}/n^{s})$. In particular, the only singularity at $s=1$ comes from $\zeta(s)$ (or $\zeta(s)-1$); no Abel summation is required since $|1/q|<1$.
\end{proposition}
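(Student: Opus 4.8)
The plan is to recognize this as a direct instance of the general FD-lift machinery of Section~\ref{sec:fd-lift-main}, the crucial point being that for $q=-Q$ with $Q>1$ the geometric weights are \emph{absolutely summable}, so no Abel regularization is needed (unlike the $q=-1$ case). Concretely, I would set $a(1):=0$ and $a(i):=Q(Q+1)(-Q)^{-i}$ for $i\ge2$. Then $\sum_{i\ge1}|a(i)|=Q(Q+1)\sum_{i\ge2}Q^{-i}<\infty$, so by Lemma~\ref{lem:growth-type} the series $\mathcal T_a(z)=\sum_{i\ge1}a(i)F(z,i)/i^2$ equals $\mathfrak S(z,-Q)$ and defines an entire function, and Theorem~\ref{thm:fd-lift-core} applies verbatim: $\mathfrak S(n,-Q)=(a*1)(n)=\sum_{d\mid n}a(d)$ and $\sum_{n\ge1}\mathfrak S(n,-Q)\,n^{-s}=\zeta(s)A(s)$ on $\Re s>1$, with $A(s)=\sum_{i\ge1}a(i)i^{-s}$.

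Next I would compute $A(s)$ explicitly. Since $(-Q)^{-i}=(-1)^iQ^{-i}$ for integer $i$, and since $\eta_Q(s)=\sum_{n\ge1}(-1)^{n-1}Q^{-n}n^{-s}$ by definition, one has $\sum_{i\ge1}(-Q)^{-i}i^{-s}=-\eta_Q(s)$ (the series converges absolutely because $Q^{-1}<1$). Peeling off the $i=1$ term, which is absent from $A$, gives
\[
A(s)=Q(Q+1)\sum_{i\ge2}(-Q)^{-i}i^{-s}=Q(Q+1)\Bigl(-\eta_Q(s)-(-Q)^{-1}\Bigr)=Q(Q+1)\Bigl(\tfrac1Q-\eta_Q(s)\Bigr).
\]
Substituting into $\sum_{n\ge1}\mathfrak S(n,-Q)n^{-s}=\zeta(s)A(s)$ and noting $\mathfrak S(1,-Q)=a(1)=0$ yields the first displayed identity.

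For the $\mathfrak F$-identity I would subtract the corrector at integers. From the definition, $\mathfrak F(n,-Q)=\mathfrak S(n,-Q)-Q(Q+1)(-Q)^{-n}$ for $n\ge2$, and the same geometric computation shows $\sum_{n\ge2}Q(Q+1)(-Q)^{-n}n^{-s}=Q(Q+1)\bigl(\tfrac1Q-\eta_Q(s)\bigr)=A(s)$. Hence
\[
\sum_{n\ge2}\frac{\mathfrak F(n,-Q)}{n^{s}}=\zeta(s)A(s)-A(s)=(\zeta(s)-1)\,A(s)=Q(Q+1)\,(\zeta(s)-1)\Bigl(\tfrac1Q-\eta_Q(s)\Bigr),
\]
which is the second identity. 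The assertion that the only singularity at $s=1$ comes from $\zeta$ then follows because $\eta_Q$ is entire (as recorded in the definition of the weighted alternating eta), so $\tfrac1Q-\eta_Q(s)$ is regular there.

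There is no substantive obstacle: everything is absolutely convergent, so the rearrangement $\zeta(s)A(s)=\sum_d a(d)d^{-s}\sum_m m^{-s}$ is justified by Fubini/Tonelli, and the limiting procedure $r\uparrow1$ needed at $q=-1$ is simply unnecessary when $|1/q|=1/Q<1$. The only points to watch are bookkeeping: (i) the $i=1$ term is excluded from $A$, which is exactly why the factor is $\tfrac1Q-\eta_Q(s)$ rather than $-\eta_Q(s)$; (ii) the corrector removes the full divisor $d=n$ and the shift $\zeta(s)\mapsto\zeta(s)-1$ reflects that the Dirichlet series starts at $n\ge2$; and (iii) the sign convention $(-Q)^{-i}=(-1)^iQ^{-i}$ must be matched against the definition of $\eta_Q$. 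I would close by noting the consistency check $\operatorname{Li}_s(-1/Q)=-\eta_Q(s)$, so that these formulas are exactly the Polylog--Zeta factorization of Theorem~\ref{thm:polylog-zeta} specialized to $1/q=-1/Q$.
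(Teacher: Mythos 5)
Your proposal is correct and follows essentially the same route as the paper: absolute convergence for $|1/q|=1/Q<1$ permits the summation swap (the paper performs the Fubini interchange inline with the divisor-filter identity, you invoke Theorem~\ref{thm:fd-lift-core}, whose proof is that same swap), the weight series evaluates to $Q(Q+1)\bigl(\tfrac1Q-\eta_Q(s)\bigr)$, and subtracting the corrector converts $\zeta(s)$ into $\zeta(s)-1$. Your bookkeeping of the excluded $i=1$ term and of the $d=n$ corrector is accurate and yields exactly the stated factor $\tfrac1Q-\eta_Q(s)$.
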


\paragraph{Example ($q=-2$).}
With \(Q=2\) one has \(\eta_{2}(s)=\sum_{n\ge1}(-1)^{n-1}2^{-n}n^{-s}\). The Dirichlet series read
\[
\sum_{n\ge2}\frac{\mathfrak S(n,-2)}{n^{s}}=2\cdot3\,\zeta(s)\Big(\tfrac12-\eta_{2}(s)\Big),\qquad
\sum_{n\ge2}\frac{\mathfrak F(n,-2)}{n^{s}}=2\cdot3\,(\zeta(s)-1)\Big(\tfrac12-\eta_{2}(s)\Big).
\]
At integers \(n\), primes still map to \(0\), while signs may alternate for composites; the morphology inherits the $x$–anisotropy \(2^{-x}\) from the corrector.

\begin{lemma}[Prime-zero property at integers]\label{lem:qlessminus1-prime}
For every prime $p\ge2$, $\mathfrak F(p,-Q)=0$. For composite $n$, cancellations may occur and no sign constraint is claimed.
\end{lemma}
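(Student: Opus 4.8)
The plan is to reduce the statement to the integer-value formula for $\mathfrak F(\cdot,-Q)$ recorded just above in this subsection, and then read off the two cases. First I would re-derive that formula from the definition of $\mathfrak F(z,-Q)=Q(Q+1)\sum_{i\ge2}\tfrac{F(z,i)}{i^2}(-Q)^{-i}-Q(Q+1)(-Q)^{-z}$: evaluating at an integer $n\ge2$ and using the divisor-filter identity $F(n,i)/i^2=\mathbf 1_{i\mid n}$ (Proposition~\ref{prop:fejer-closed}, together with the convention $F(n,1):=1$) collapses the Fej\'er part to $Q(Q+1)\sum_{i\mid n,\ i\ge2}(-Q)^{-i}$, and the corrector $-Q(Q+1)(-Q)^{-n}$ cancels exactly the top term $i=n$, leaving $\mathfrak F(n,-Q)=Q(Q+1)\sum_{d\mid n,\ 2\le d<n}(-Q)^{-d}$. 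I would flag here that there is no branch ambiguity since $(-Q)^{-n}=(-1)^n Q^{-n}$ for integer $n$, and that the series is absolutely convergent because $|1/q|=1/Q<1$, so this is a genuine equality at integers and no Abel regularization is needed (in contrast to the $q=-1$ regime of Section~\ref{sec:qminus1-eta}).

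Second, I would specialize to a prime $p$: the divisors of $p$ are exactly $1$ and $p$, so there is no divisor $d$ with $2\le d<p$, the sum is empty, and hence $\mathfrak F(p,-Q)=0$. This is the prime-zero assertion. Third, for the composite clause I would observe that a composite $n$ has at least one proper divisor $d_0$ with $2\le d_0<n$, so the sum $\sum_{d\mid n,\ 2\le d<n}(-1)^d Q^{-d}$ is nonempty but carries signs $(-1)^d$ depending on the parity of each divisor; a one-line example (e.g.\ $n=2\ell$ with $\ell$ odd, where an even divisor contributes $+$ and an odd divisor contributes $-$) shows that neither strict positivity nor a fixed sign holds in general, so no sign constraint is claimed. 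Alternatively this sign-indefiniteness follows directly from Lemma~\ref{lem:residue-balance} specialized to $m=2$.

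There is essentially no serious obstacle here: the only points requiring a moment's care are the exact cancellation of the corrector against the divisor $i=n$, and the remark that the identity at integers is a true absolutely convergent equality (because $Q>1$) rather than an Abel-summed one. The composite clause is a disclaimer, so it needs only an illustrative example rather than a proof.
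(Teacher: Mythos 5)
Your proposal is correct and follows essentially the same route as the paper: the paper derives the integer-value formula $\mathfrak F(n,-Q)=Q(Q+1)\sum_{d\mid n,\,2\le d<n}(-Q)^{-d}$ from the divisor-filter identity with the corrector cancelling the $d=n$ term (absolute convergence since $|1/q|=1/Q<1$, no Abel regularization needed), and the prime case then reduces to an empty sum exactly as you argue. The composite clause is likewise treated in the paper as a disclaimer, so your illustrative sign example is all that is required.
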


\begin{remark}[Morphology in the $z$-plane]
Compared to $q=-1$, the corrector has modulus $|(-Q)^{-z}|=Q^{-x}e^{\pi y}$, which introduces an $x$-anisotropy. Empirically and in first-harmonic heuristics, two families remain prominent: (i) vertical comb-like alignments near integer $\Re z$ (coming from $\sin(\pi x)\approx0$), and (ii) balance curves shaped by $\sinh(\pi y)\approx c(Q)\,Q^{-x}e^{\pi y}$, which tend to appear more vertical as $Q$ increases. No exact symmetry about the imaginary axis is asserted for $Q>1$.
\end{remark}

\begin{remark}[Periodic normaliser]
The periodic normaliser from Section~\ref{sec:periodic-normalizer} applies verbatim for $q=-Q$: the value and tangent at integers match the Fej\'er superposition, so the linear term at integer windows is eliminated. Statements about the absence of off-axis companion phenomena are not made here.
\end{remark}

A three-dimensional view for $q>1$ is provided in Section~\ref{sec:qpos-morph-3d} and is used as a baseline for comparison with the negative-parameter regime.

\section{Discussion and Future Work}

The Fej\'er--Dirichlet lift provides a general and constructive framework for creating entire functions that interpolate arithmetic convolutions of the form $(a*1)$. This approach moves beyond specific constructions to offer a systematic method with several noteworthy conceptual aspects, which are discussed below alongside avenues for further research.

\paragraph{A Constructive Viewpoint: Mechanism versus Data}
\emph{Heuristic only; no formal equivalence is claimed.}
The framework presented in this paper can be seen as offering a perspective complementary to classical methods in analytic number theory. Classical explicit formulas, for example, rely fundamentally on spectral data—namely, the non-trivial zeros of the Riemann zeta function. In that view, the analytical complexity is concentrated in these input data, whose deep structure remains largely conjectural.

The FD-Lift, in contrast, can be viewed as relocating this complexity from the data to an explicit, constructive process. The properties of the resulting entire functions are derived from the well-understood structure of the Fej\'er kernel and the chosen weights. Two complementary ways to encode prime information are contrasted below.

\textbf{Local/filtering view (this paper).} Simple, structured input data (divisors at integers) are processed by an explicit but nontrivial analytic superposition (Fej\'er-based lift with normalization). The outcome is an entire function whose \emph{integer-argument} behavior (prime zeros, composite positivity, tangent matching) is quantitatively controlled.

\textbf{Spectral/global view (classical).} A simple analytic object (the Riemann zeta function via Dirichlet series and Euler product) is coupled to highly structured spectral input (its nontrivial zeros), yielding the explicit formula that governs fluctuations in prime counting.

\textbf{Takeaway.} The FD–Lift transports Dirichlet convolution into spectral multiplication ($\zeta\cdot A$) and thus isolates a local, verifiable mechanism at integers. This picture is heuristic and does \emph{not} assert any new equivalence between local filtering and the spectral theory of $\zeta$ beyond the proved statements in this paper.

\begin{table}[h]
\centering
\small
\begin{tabular}{@{}p{3.2cm}p{5.1cm}p{5.1cm}@{}}
\toprule
 & \textbf{Local/filtering (FD–Lift)} & \textbf{Spectral/global (classical)} \\
\midrule
\textbf{Input primitives} & Divisors at integers; Fej\'er kernel as divisor filter & Nontrivial zeros of $\zeta$; Euler product encodes primes \\
\textbf{Mechanism} & Explicit analytic superposition + periodic normalization; transports $(a*1)\mapsto \zeta A$ & Explicit formula via Perron/contour shift; links zeros to prime-counting fluctuations \\
\textbf{Control/Output} & Entire interpolants with integer anchors; prime zeros and composite positivity at integer arguments; window geometry near primes & Information on $\psi(x)$ and prime powers; oscillations driven by zero distribution \\
\textbf{Where complexity sits} & In the \emph{mechanism} (analytic superposition) & In the \emph{input data} (spectral zeros) \\
\bottomrule
\end{tabular}
\caption{Two complementary encodings of prime information. Heuristic contrast only; no equivalence is implied.}
\end{table}

\noindent\emph{Scope note.} All prime/composite statements for the FD–Lift concern values at integer arguments. No claim is made about the complex zero set beyond the theorems proved here, and no new equivalence with the spectral theory of $\zeta$ is asserted.
\\

\paragraph{The Polylog--Zeta Connection as a Central Identity}
Beyond a change of perspective, the framework reveals a structural identity that appears to govern a broad class of constructions. For any weight sequence $a:\mathbb N\to\mathbb C$ with Dirichlet series
\[
A(s)\;=\;\sum_{i\ge1}\frac{a(i)}{i^s},
\]
the Fej\'er--Dirichlet lift obeys
\begin{equation}\label{eq:fdl-master}
\sum_{n\ge1}\frac{\mathcal T_a(n)}{n^s}\;=\;\zeta(s)\,A(s)
\qquad(\text{on the joint half-plane of absolute convergence}),
\end{equation}
cf.\ Theorem~\ref{thm:fd-lift-core}. In this identity the zeta factor is structural, reflecting the divisor filter at the integers, while the companion factor $A(s)$ is dictated by the chosen $i$-side weights together with the adopted summability/regularisation. The Polylog--Zeta factorisation in Theorem~\ref{thm:polylog-zeta} thus appears as the geometric instance of \eqref{eq:fdl-master}: geometric damping on the $i$-side produces a polylogarithm on the spectral side, and the divisor filter contributes $\zeta(s)$.

\medskip
\noindent\emph{Summability--spectrum dictionary (illustrative inputs).}
The following inputs for $a(i)$ exemplify how summability choices translate into companion factors $A(s)$; in each case \eqref{eq:fdl-master} yields the total factor $\zeta(s)A(s)$. The items are intended as avenues for further development rather than as assertions of new theorems here.

\begin{itemize}
  \item[\textbf{(G)}] \textbf{Geometric damping (Abel interpretation).} For $a(i)=z^i$ with $|z|<1$,
  \[
  A(s)=\sum_{i\ge1}\frac{z^i}{i^s}=\operatorname{Li}_s(z),
  \quad\Rightarrow\quad
  \sum_{n\ge1}\frac{\mathcal T_{z^\bullet}(n)}{n^s}=\zeta(s)\,\operatorname{Li}_s(z).
  \]
  For $|z|=1$ the identity is read in the Abel sense by $z\mapsto rz$, $r\uparrow1$.
  \item[\textbf{(P)}] \textbf{Power damping.} With $a(i)=i^{-\alpha}$, $\alpha>1$,
  \[
  A(s)=\sum_{i\ge1}\frac{1}{i^{s+\alpha}}=\zeta(s+\alpha),
  \quad\Rightarrow\quad
  \sum_{n\ge1}\frac{\mathcal T_{i^{-\alpha}}(n)}{n^s}=\zeta(s)\,\zeta(s+\alpha).
  \]
  \item[\textbf{(M)}] \textbf{M\"obius-damped powers.} With $a(i)=\mu(i)\,i^{-\alpha}$, $\alpha>1$,
  \[
  A(s)=\sum_{i\ge1}\frac{\mu(i)}{i^{s+\alpha}}=\frac{1}{\zeta(s+\alpha)},
  \quad\Rightarrow\quad
  \sum_{n\ge1}\frac{\mathcal T_{\mu(\cdot)\,i^{-\alpha}}(n)}{n^s}=\frac{\zeta(s)}{\zeta(s+\alpha)}.
  \]
  \item[\textbf{(C)}] \textbf{Ces\`aro/Riesz ladders (finite polylog combinations).}
  For $k\in\mathbb N$ and $|z|<1$, the choice $a(i)=\binom{i+k-1}{k-1}z^i$ yields
  \[
  A(s)=\sum_{i\ge1}\binom{i+k-1}{k-1}\frac{z^i}{i^s}
  \;=\;\frac{1}{(1-z)^k}\sum_{j=0}^{k-1}\alpha_{k,j}\,\operatorname{Li}_{\,s-j}(z),
  \]
  with explicit coefficients $\alpha_{k,j}$; hence the companion factor is a finite linear combination of descending-order polylogarithms, and \eqref{eq:fdl-master} produces $\zeta(s)$ times this combination.
  \item[\textbf{(Tw)}] \textbf{Periodic twists (characters) with Abel parameter.}
  For a Dirichlet character $\chi\bmod m$ and $|z|<1$,
  \[
  A(s)=\sum_{i\ge1}\frac{\chi(i)\,z^i}{i^s}
  \;=\;\frac{1}{m^s}\sum_{r=1}^{m}\chi(r)\,\Phi\!\Bigl(z,s,\frac{r}{m}\Bigr),
  \]
  where $\Phi$ denotes the Lerch transcendent. When $z$ is a primitive $m$th root of unity, $A(s)$ can be written (via discrete Fourier expansion on residue classes) as a finite linear combination of Dirichlet $L$-functions $L(s,\psi)$ with $\psi\bmod m$; letting $z\to1^{-}$ recovers $A(s)\to L(s,\chi)$ in the Abel sense.
\end{itemize}

\noindent The picture suggested by these examples is semantic as well as analytic. The divisor filter furnishes a universal "global carrier" in the form of $\zeta(s)$, while the chosen summability method determines how the local divisor data manifest on the spectral side through the companion factor $A(s)$. The choice of the weight sequence $a(i)$ thus acts as a programmable input that governs the analytic character of the resulting function. Different damping or regularisation schemes select different spectral signatures (polylogarithms, Lerch transcendent, shifted or inverted zeta factors), providing a systematic way to generate new analytic objects with prescribed arithmetic properties. A comprehensive catalogue of admissible summation procedures and their corresponding companion factors appears to be a natural and promising direction for further work.

\paragraph{Motivation and Derivation of the Framework}
This constructive viewpoint is rooted in the foundational observation that motivated the general lift presented herein. The quotient of squared sines, encapsulated in $F(z,i)$, can be viewed as an analytic proxy for divisibility. At integer arguments, where this proxy becomes indeterminate, the Fej\'er identity provides a crucial bridge to the arithmetic domain. A key insight from antecedent work was that this transformation does not merely yield arbitrary non-zero values for composite integers. Instead, a superposition of these terms produces highly structured arithmetic information—specifically, functions such as the sum of the squares of the divisors ($\sigma_2(n)$). The fact that a canonical arithmetic function emerges directly from an analytic process based on 'divisibility remainders' was the principal motivation for abstracting this mechanism into the general FD-Lift framework.

\paragraph{The FD-Lift as a Modular Design Space}
The framework presented can be understood as a modular toolkit, wherein different functions are constructed by combining specific components to achieve distinct analytical and arithmetical goals. A central distinction arises between properties holding within the open domain of convergence (e.g., for real $q>1$) and those realized at its boundary (in the limit $q \to 1^+$). For instance, the property of being an entire function in $z$ is guaranteed for any $q>1$, while the recovery of classical arithmetic functions like $\tau(n)$ is a feature of the limit at the boundary point $q=1$. The functions discussed throughout this paper, including logical extensions like an improved q-analog, occupy different positions in this design space, as summarized in Table~\ref{tab:design-space}.

\begin{table}[h!]
\centering
\caption{Properties of different function families within the FD-Lift framework.}
\label{tab:design-space}
\small
\begin{tabularx}{\linewidth}{@{} l >{\centering\arraybackslash}X >{\centering\arraybackslash}X >{\centering\arraybackslash}X @{}}
\toprule
\textbf{Function Family} & \textbf{Entire in $z$ (for fixed $q>1$)?} & \textbf{No Companion Zeros?} & \textbf{Correct $\tau$-limit at $q\to1^+$?} \\
\midrule
\addlinespace[2pt]
\textbf{1. Prime Indicator} & \checkmark & \checkmark & \text{\sffamily X} \\
($\Fsharp$, optimized) & (By design) & (By "sharp" corrector) & (Becomes 0 due to prefactor) \\
\addlinespace[4pt]
\textbf{2. q-Analog} & \checkmark & \text{\sffamily X} & \checkmark \\
($\mathfrak{F}_\tau$, classical) & (Converges for $q>1$) & (Simple corrector) & (By design, no prefactor) \\
\addlinespace[4pt]
\textbf{3. q-Analog} & \checkmark & \checkmark & \checkmark \\
($\mathfrak{F}_\tau^\sharp$, improved) & (Converges for $q>1$) & (By "sharp" corrector) & (No prefactor, corrector vanishes at integers) \\
\bottomrule
\end{tabularx}
\end{table}

\paragraph{Summary of Contributions}
The main contributions of this work can be summarized in three areas. First, the General Fej\'er-Dirichlet Lift provides a unified and constructive framework for creating entire functions that interpolate arithmetical convolutions, starting from the classical identity $(a*1)\ \leftrightarrow\ \zeta A$. Second, two complementary approaches to prime number theory are demonstrated within this framework: a direct "adapter" method that interpolates the von Mangoldt function $\Lambda(n)$, and a dynamic method that reveals a structural connection between divisor functions and $\Lambda(n)$ via a differentiation operator. Third, a key technical innovation is the use of a periodic normalizer for "tangent matching," which is shown to provide precise control over the real-zero geometry of the prime indicator $\Fsharp$ in the vicinity of integers.

\paragraph{Future Research Directions}
The results presented open several avenues for future investigation:
\begin{enumerate}
    \item \textbf{Analysis of the Differentiation Operator.} The properties of the new entire functions generated by higher-order operators, $\mathcal{T}^{(k)}(z) := \mathcal{D}^k[\mathcal{T}(z,s)]$, are completely unexplored. An analysis of their zero distributions could provide new insights into the arithmetic functions they represent, such as the convolutions involving higher derivatives of the Zeta function.

    \item \textbf{The Inverse Spectral Problem.} A natural question concerns the $z$–$s$ duality suggested by the two-variable lift. For a fixed $z_0$, the set of complex numbers $s$ for which $\mathcal{T}(z_0, s) = 0$ constitutes a new type of spectrum associated with the number $z_0$. Investigating whether this "s-spectrum" encodes the arithmetic properties of $z_0$ is a promising research direction.

    \item \textbf{The Product Form of $\mathfrak{F}(z,q)$.} While the Dirichlet series of the FD-Lift possesses an Euler product if the weights are multiplicative, the entire function $\mathcal{T}_a(z)$ itself possesses a Weierstrass product over its zeros. An explicit determination of the Weierstrass product for $\mathfrak{F}(z,q)$ could provide a representation of an entire function in which the prime numbers appear as fundamental factors at the level of zeros, as a conceptual parallel to the Euler product in the spectral space. At present, a canonical product is not expected without substantially sharper control of zero locations and growth on vertical lines.
\end{enumerate}

In conclusion, the framework offers a set of constructive tools for exploring the interface between discrete number theory and continuous analysis, suggesting that the explicit design of analytic mechanisms can be a fruitful complement to the study of given spectral data.


\section*{Code and Data Availability}
All code to generate the figures is available at \\
\url{https://github.com/SebastianFoxxx/fejer-zeta-identities}.\\
An archival snapshot with a persistent identifier is provided at Zenodo:\\
\doi{10.5281/zenodo.17122709}.

\begin{remark}[Numerical verification]
For reproducibility, a verification script \\
(\texttt{real\_zero\_verifier.py}) in the companion repository
checks the explicit hypotheses \textup{(L)}–\textup{(M)}, the integer anchors, and the sign pattern on prime windows.
It uses conservative lower bounds (partial sums plus a geometric tail) for the Fej\'er superposition so that all reported
window minima are rigorous lower estimates of $\Fsharp(\cdot,q)$.
This tool is not part of any proof; it merely facilitates independent checks and produces JSON/CSV reports
documenting parameter choices and outcomes.
See the Code and Data Availability section for the repository and archival snapshot.
\end{remark}

\appendix

\section*{Appendix Overview}
This appendix is organized as follows. In Appendix~\ref{app:analyticity}, analyticity and Fej\'er-kernel facts used in the paper are collected. In Appendix~\ref{app:fdlift-proofs}, proofs for the FD-lift and spectral identities are provided. In Appendix~\ref{app:real-zero-original}, details for the real-zero analysis of the original indicator $\mathfrak F$ are given. In Appendix~\ref{app:real-zero-fsharp}, complementary details for the tangent-matched indicator $\Fsharp$ are documented. In Appendix~\ref{app:q-negative}, results for alternating and negative parameter regimes are recorded.


\section{Quick reference}\label{app:quickref}

\paragraph{Fej\'er filter.}
\[
F(z,i)\;=\;\sum_{k=-(i-1)}^{\,i-1}(i-|k|)\,e^{2\pi\ii k z/i}
\;=\;\left(\frac{\sin(\pi z)}{\sin(\pi z/i)}\right)^{\!2},
\qquad
\phi_i(z):=\frac{F(z,i)}{i^{2}}.
\]
Projector at integers: \(\phi_i(n)=\mathbf 1_{\,i\mid n}\).

\paragraph{FD-lift (one variable).}
For weights \(a(i)\) with Dirichlet series \(A(s)=\sum_{i\ge1}a(i)\,i^{-s}\),
\[
\mathcal T_a(z)\;:=\;\sum_{i\ge1} a(i)\,\phi_i(z)
\quad\text{(entire; locally uniform convergence)}.
\]
\textit{Integer values:} \(\mathcal T_a(n)=(a*1)(n)=\sum_{d\mid n}a(d)\).
\quad
\textit{Dirichlet product:} \(\displaystyle \sum_{n\ge1}\frac{\mathcal T_a(n)}{n^s}=\zeta(s)\,A(s)\) for \(\Re s>1\).

\paragraph{Prime indicator (original).}
For \(q>1\),
\[
\mathfrak F(z,q)\;=\;(q-1)q\Big(\sum_{i\ge2} q^{-i}\,\phi_i(z)\;-\;q^{-z}\Big).
\]
\textit{Integers:} \(\displaystyle \mathfrak F(n,q)=(q-1)q\!\!\sum_{\substack{d\mid n\\2\le d<n}} q^{-d}\);
thus \(\mathfrak F(p,q)=0\) for primes, and \(\mathfrak F(n,q)>0\) for composite \(n\ge4\).
\textit{Dirichlet series:}
\[
\sum_{n\ge2}\frac{\mathfrak F(n,q)}{n^{s}}
=(q-1)q\,(\zeta(s)-1)\,\bigl(\operatorname{Li}_{s}(1/q)-q^{-1}\bigr)\quad(\Re s>1).
\]

\paragraph{Tangent-matched variant.}
\[
\Fsharp(x,q)\;=\;(q-1)q\Big(S_q(x)-q^{-x}\bigl(1+(\log q)\,S_1(x)\bigr)\Big),
\quad
S_q(x):=\sum_{i\ge2} q^{-i}\phi_i(x),\;
S_1(x):=\frac{\sin(2\pi x)}{2\pi}.
\]
\(\Fsharp(n,q)=\mathfrak F(n,q)\) and \(\frac{d}{dx}\Fsharp(n,q)=0\) at all integers \(n\).
On \((p-1,p)\) no real zeros occur for sufficiently large primes \(p\ge P_0(q)\); at \(x=p\) the contact has multiplicity \(2\).
(Quantitative bounds listed in Appendix~\ref{app:real-zero-fsharp}.)

\paragraph{Companion zeros (original indicator).}
For each odd prime \(p\ge5\) there exists a unique zero \(x_p(q)\in(p-1,p)\) with displacement
\[
\Delta_p(q):=p-x_p(q)\;=\;\frac{\log q}{K(q,p)}\,q^{-p}\,\bigl(1+O(q^{-p})\bigr),
\]
\[
K(q,p)\;:=\;\tfrac12\!\left(\sum_{i\ge2} q^{-i}\,\phi_i''(p)\;-\;(\log q)^2\,q^{-p}\right),
\qquad
0<c_1(q)\le K(q,p)\le c_2(q).
\]
(Proof and explicit constants in Appendix~\ref{app:real-zero-original}.)

\paragraph{Track 1 (direct adapter).}
With \(a=\mu*\Lambda\) and the renormalization by \(\phi_\infty(z):=\bigl(\tfrac{\sin(\pi z)}{\pi z}\bigr)^2\),
\[
\mathcal T^{(\mathrm{ren})}_{\mu*\Lambda}(z):=\sum_{i\ge1}(\mu*\Lambda)(i)\,\bigl(\phi_i(z)-\phi_\infty(z)\bigr),
\quad
\mathcal T^{(\mathrm{ren})}_{\mu*\Lambda}(n)=\Lambda(n).
\]
Here \(A(s)=-\zeta'(s)/\zeta(s)\).

\paragraph{Track 2 (two-variable lift).}
Using \(\phi_i(z)-\phi_\infty(z)\) as kernel and a spectral parameter \(s\) produces integer values
\(\sum_{d\mid n} d^{-s}\) and Dirichlet-side factors \(\zeta(s+u)\);
\(s\)-differentiation yields \(\log\)-weights and \(\Lambda\).

\paragraph{Negative parameters.}
For \(q=-Q<-1\) (\(Q>1\)) the function \(\mathfrak F(\cdot,-Q)\) is entire and satisfies \(\mathfrak F(p,-Q)=0\) for primes;
the Dirichlet factorization involves \(\eta_Q(s)=\sum_{n\ge1}(-1)^{n-1}Q^{-n}n^{-s}\) with \(\operatorname{Li}_s(-1/Q)=-\eta_Q(s)\).
For \(q=-1\), all Dirichlet-series statements are taken in the Abel sense; prime zeros at integers persist, while composite positivity may fail.

\medskip

\section*{Notation and Conventions}\label{app:notation}
$\N:=\{1,2,3,\dots\}$. Integer statements are formulated for $n\ge2$ unless stated otherwise. For $x\in\R$,
$\operatorname{dist}(x,\Z):=\min_{k\in\Z}|x-k|$.
The principal branch of the logarithm is used, so $(1/q)^z=\exp\bigl(z\log(1/q)\bigr)$ with $\log$ on the principal branch; for integer $n$, $(1/q)^n$ is branch–independent.
Dirichlet convolution is denoted by $(f*g)(n):=\sum_{d\mid n} f(d)\,g(n/d)$; the constant $1$ denotes $1(n)\equiv 1$.
Standard functions: the Möbius function $\mu$, the von Mangoldt function $\Lambda$, the Riemann zeta function $\zeta$, and the polylogarithm
$\operatorname{Li}_s(z):=\sum_{n\ge1} z^n/n^s$ (entire in $s$ for fixed $|z|<1$; $\operatorname{Li}_s(1/q)$ is used for $|q|>1$, cf.\ \cite{dlmf}).
A partial sum $C_m(q):=\sum_{i=2}^{m} \frac{q^{-i}}{i^2}$ is used in far–field thresholds.
The variable $z$ is reserved for entire interpolants (Fej\'er–based hulls); the real variable is denoted by $x$. The Dirichlet–series variable is $s$, and an auxiliary variable $u$ appears in two–variable Dirichlet identities. The imaginary unit is denoted by $\ii$, while the symbol $i$ (plain roman) always denotes a positive integer index in sums and products.

\subsection*{Parameter regime and branch convention}\label{app:param-branch}
\begin{quote}
\textbf{Primary regime.} Real $q>1$. All core statements (integer anchors, positivity for composites, real–zero structure with tangent matching) are formulated and certified in this regime.

\textbf{Borderline/alternating regime.} $q=-1$. Dirichlet–series identities are interpreted in the Abel sense; prime–vanishing at integers persists, while positivity for composites is not asserted.

\textbf{Extended negative regime.} Real $q<-1$ (write $q=-Q$ with $Q>1$). Entirety in $z$ holds; prime–vanishing at integers persists; positivity for composites is not guaranteed. The weighted alternating eta function
$\eta_Q(s):=\sum_{n\ge1}(-1)^{n-1}Q^{-n}n^{-s}$ appears via $\operatorname{Li}_s(-1/Q)=-\eta_Q(s)$.

\textbf{Complex parameters.} Complex $q$ with $|q|>1$ on the principal branch. Prime–vanishing at integers persists; positivity for composites and converse statements are not asserted.
\end{quote}

\begin{table}[h!]
\centering
\small
\setlength{\tabcolsep}{4pt}
\begin{tabularx}{\linewidth}{@{}l c c X@{}}
\toprule
Regime & Prime-vanishing at integers? & Positivity for composites? & Remarks \\
\midrule
$q>1$ real      & yes & yes & normalized Fej\'er superposition; statements concern integer inputs \\
$q=-1$          & yes & not guaranteed & Dirichlet series in the Abel sense; alternating cancellations may occur \\
$q<-1$ real     & yes & not guaranteed & entire in $z$; $\eta_Q$ bridge with $q=-Q$, $Q>1$ \\
$|q|>1$ complex & yes (principal branch) & not guaranteed & only vanishing direction asserted; no general converse \\
\bottomrule
\end{tabularx}
\caption{Parameter regimes and integer–anchor properties (branch as stated).}
\label{tab:regimes}
\end{table}

\medskip
\noindent\textbf{Symbol overview.}\label{app:symbols}
\begin{center}
\begin{tabular}{@{}ll@{}}
$F(z,i)$ & Fej\'er term $\displaystyle \Big(\frac{\sin(\pi z)}{\sin(\pi z/i)}\Big)^{\!2}$ (divisor filter at integers) \\
$\phi_i(z)$ & $\displaystyle F(z,i)/i^2$ \\
$S_q(z)$ & $\displaystyle \sum_{i\ge2} q^{-i}\,\phi_i(z)$ \\
$S_1(z)$ & $\displaystyle \frac{\sin(2\pi z)}{2\pi}$ (periodic normalizer) \\
$\mathfrak S(z,q)$ & normalized Fej\'er superposition (sum part of $\mathfrak F$) \\
$\mathfrak F(z,q)$ & prime–vanishing entire interpolant at integers (primary regime $q>1$) \\
$\Fsharp(z,q)$ & tangent–matched variant: corrector $q^{-z}\bigl(1+(\log q)\,S_1(z)\bigr)$ \\
$\mathcal{T}_a(z)$ & Fej\'er–Dirichlet Lift with weights $a(\cdot)$ \\
$A(s)$ & Dirichlet series of weights: $\displaystyle A(s)=\sum_{n\ge1} a(n)\,n^{-s}$ \\
$\Sigma(q)$ & shorthand $\displaystyle \Sigma(q)=\sum_{i\ge2}\frac{q^{-i}}{i^2}$ \\
$\eta_Q(s)$ & weighted alternating eta: $\displaystyle \eta_Q(s)=\sum_{n\ge1}(-1)^{n-1}Q^{-n}n^{-s}$ \\
\end{tabular}
\end{center}

\section{Analyticity of $\mathfrak F(z,q)$}\label{app:analyticity}

\begin{proposition}
For fixed $q\in\C$ with $|q|>1$, the map $z\mapsto\mathfrak F(z,q)$ is entire. The same holds for the tangent–matched variant $\Fsharp(\cdot,q)$.
\end{proposition}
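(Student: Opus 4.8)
The plan is to exhibit $\mathfrak F(\cdot,q)$ as a locally uniform limit of entire functions and then invoke the Weierstrass convergence theorem. First I would observe that each individual building block is entire: by Proposition~\ref{prop:fejer-closed} the Fej\'er term equals the finite cosine sum $F(z,i)=i+2\sum_{k=1}^{i-1}(i-k)\cos(2\pi k z/i)$, which is manifestly entire, so $\phi_i(z)=F(z,i)/i^2$ is entire; for fixed $q$ with $|q|>1$ the exponential corrector $z\mapsto q^{-z}=e^{-z\log q}$ (principal branch, as fixed in the conventions) is entire; and for the sharp variant the periodic factor $S_1(z)=\sin(2\pi z)/(2\pi)$ is entire as well. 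Thus the only point requiring argument is convergence of the series $S_q(z)=\sum_{i\ge2}q^{-i}\phi_i(z)$.

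Second I would set up the Weierstrass $M$-test on compacta. By Lemma~\ref{lem:growth-type} one has $|F(x+\ii y,i)|\le i^2\cosh(2\pi|y|)$ for every $i\ge2$, hence $|q^{-i}\phi_i(z)|\le |q|^{-i}\cosh(2\pi|y|)$. Given a compact set $K\subset\C$, put $M_K:=\sup_{x+\ii y\in K}\cosh(2\pi|y|)<\infty$; then $\sum_{i\ge2}\sup_{z\in K}|q^{-i}\phi_i(z)|\le M_K\sum_{i\ge2}|q|^{-i}<\infty$, the last series converging precisely because $|q|>1$. The $M$-test therefore gives uniform convergence of $S_q$ on $K$, and since $K$ was arbitrary, locally uniform convergence on all of $\C$; the Weierstrass theorem on locally uniform limits of holomorphic functions then shows $S_q$ is entire.

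Third, I would assemble the conclusion: $\mathfrak F(z,q)=(q-1)q\,\bigl(S_q(z)-q^{-z}\bigr)$ is a finite $\C$-linear combination of entire functions, hence entire; and $\Fsharp(z,q)=(q-1)q\,\bigl(S_q(z)-q^{-z}(1+(\log q)\,S_1(z))\bigr)$ differs from it only by the additional entire term $(q-1)q(\log q)\,q^{-z}S_1(z)$, so it too is entire. I do not expect a genuine obstacle here; the single point demanding care is the vertical-growth estimate $|F(x+\ii y,i)|\le i^2\cosh(2\pi|y|)$, which is what makes the $i$-dependence uniform on compacta and summable against the geometric weights $q^{-i}$, and this is exactly what Lemma~\ref{lem:growth-type} supplies.
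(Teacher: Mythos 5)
Your proposal is correct and follows essentially the same route as the paper: expand $F(z,i)$ as an entire cosine polynomial, bound $|F(z,i)/i^2|$ uniformly on compacta (the paper uses an unspecified constant $K_R$, you use the explicit $\cosh(2\pi|y|)$ bound of Lemma~\ref{lem:growth-type}), apply the Weierstrass $M$-test against the geometric series $\sum_{i\ge2}|q|^{-i}$, and conclude entirety of $\mathfrak F$ and $\Fsharp$ by adding the entire correctors $q^{-z}$ and $q^{-z}S_1(z)$. No gaps.
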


\begin{proof}
Use the trigonometric polynomial form of the Fejér kernel,
\[
F(z,i)=\sum_{k=-(i-1)}^{i-1}(i-|k|)\,e^{2\pi\ii k z/i},
\]
which is entire in $z$. Hence each summand in
\[
\mathfrak S(z,q)=(q-1)q\sum_{i\ge2}\frac{F(z,i)}{i^2}q^{-i}
\]
is entire. On any compact $K\subset\C$, $|F(z,i)/i^2|\le K_R$ with $K_R$ independent of $z\in K$, and $\sum_{i\ge2}|q|^{-i}$ converges for $|q|>1$. Therefore $\mathfrak S(\cdot,q)$ is the locally uniform limit of entire functions and is entire by the Weierstrass theorem. Since $z\mapsto q^{-z}$ and $z\mapsto q^{-z}S_1(z)$ are entire, both $\mathfrak F(z,q)=\mathfrak S(z,q)-(q-1)q\,q^{-z}$ and
\[
\Fsharp(z,q)=(q-1)q\Big(\mathfrak S(z,q)-q^{-z}\bigl(1+(\log q)S_1(z)\bigr)\Big)
\]
are entire. The Fejér identity and standard kernel properties are classical; see \cite[Ch.~I]{katznelson2004} and \cite{zygmund2002}.
\end{proof}

\section{Proofs for the General FD-Lift and Key Results}\label{app:fdlift-proofs}
\noindent\textit{Supports:} Theorem~\ref{thm:fd-lift-core}, Theorem~\ref{thm:polylog-zeta}, Proposition~\ref{prop:spectral-diff}, and Theorem~\ref{thm:renormalized-muLambda}.

This appendix provides the formal proofs for the main theorems presented in Section~\ref{sec:fd-lift-main} and Section~\ref{sec:application-key-results}.

\paragraph{Analyticity of $\mathfrak{F}_\sigma(\cdot,q)$.}
On any compact $|z|\le R$, the Fejér polynomial form gives $|F(z,i)|\le K_R\,i^2$ and hence $|F(z,i)/i|\le K_R\,i$. Since $\sum_{i\ge2} i\,|q|^{-i}<\infty$ for $|q|>1$, the Weierstrass $M$–test yields local uniform convergence of $\sum_{i\ge2} \frac{F(z,i)}{i}q^{-i}$. Consequently $\mathfrak{F}_\sigma(\cdot,q)$ is entire.

\begin{proof}[Proof of Theorem~\ref{thm:fd-lift-core}]
At integers $n$, the Fej\'er filter satisfies $F(n,i)/i^2=\mathbf 1_{\,i\mid n}$, hence $\mathcal T_a(n)=\sum_{d\mid n}a(d)=(a*1)(n)$. For $\Re s>1$, absolute convergence of $\sum_{n\ge1} |a(n)|n^{-\Re s}$ and of $\zeta(s)$ justifies Tonelli/Fubini. Therefore
\[
\sum_{n\ge1}\frac{\mathcal T_a(n)}{n^s}
=\sum_{n\ge1}\frac{1}{n^s}\sum_{d\mid n}a(d)
=\sum_{d\ge1}\frac{a(d)}{d^s}\sum_{m\ge1}\frac{1}{m^s}
=A(s)\,\zeta(s).
\]
\end{proof}

\begin{proof}[Proof of Theorem~\ref{thm:polylog-zeta}]
With $a(i)=(q-1)q\,q^{-i}$ for $i\ge2$ and $a(1)=0$,
\[
A(s)=(q-1)q\sum_{i\ge2}\frac{q^{-i}}{i^s}=(q-1)q\bigl(\operatorname{Li}_s(1/q)-q^{-1}\bigr).
\]
By Theorem~\ref{thm:fd-lift-core},
\(
\sum_{n\ge1}\frac{\mathfrak S(n,q)}{n^s}=\zeta(s)A(s).
\)
Since $\mathfrak F(n,q)=\mathfrak S(n,q)-(q-1)q\,q^{-n}$, subtraction of $\sum_{n\ge1}(q-1)q\,q^{-n}n^{-s}$ gives the stated identities.
\end{proof}

\begin{proof}[Proof of Proposition~\ref{prop:spectral-diff}]
Uniformly on compact $z$-sets,
\[
\phi_i(z)-\phi_\infty(z)
=\left(\frac{\sin(\pi z)}{\sin(\pi z/i)}\right)^{\!2}\frac{1}{i^2}
-\left(\frac{\sin(\pi z)}{\pi z}\right)^{\!2}
=O(i^{-2}),
\]
since $\sin(\pi z/i)=(\pi z/i)\bigl(1+O(i^{-2})\bigr)$. Hence, for $\sigma=\Re s>-1$,
\[
\sum_{i\ge1}\bigl|\phi_i(z)-\phi_\infty(z)\bigr|\,i^{-\sigma}
\ll \sum_{i\ge1} i^{-2-\sigma}<\infty,
\]
with normal convergence on compact $z$-sets and vertical strips $\sigma\ge\sigma_0>-1$. Therefore $z\mapsto \sum_{i\ge1}(\phi_i(z)-\phi_\infty(z))i^{-s}$ is entire and $s\mapsto \sum_{i\ge1}(\phi_i(z)-\phi_\infty(z))i^{-s}$ is holomorphic on $\Re s>-1$. Termwise differentiation in $s$ is justified by $\sum_{i\ge1}(\log i)\,i^{-2-\sigma}<\infty$.

At integers $n$, $\phi_\infty(n)=0$ and $\phi_i(n)=\mathbf 1_{\,i\mid n}$, so $\mathcal T^{\mathrm{ren}}(n,s)=\sum_{d\mid n} d^{-s}$. The Dirichlet identity $\sum_{n\ge1}\big(\sum_{d\mid n}d^{-s}\big)n^{-u}=\zeta(u)\,\zeta(u+s)$ on $\Re u>1$ follows by absolute convergence. Differentiation at $s=0$ is permitted on $\Re u>1$, which yields the stated logarithmic weights and, with Apostol’s identity $\mu*\log=\Lambda$, the claims of the proposition.
\end{proof}

\begin{proof}[Proof of Theorem~\ref{thm:renormalized-muLambda}]
Fix $R>0$. Set $\phi_\infty(z):=\left(\frac{\sin(\pi z)}{\pi z}\right)^{2}$ with $\phi_\infty(0):=1$. For $|z|\le R$,
\[
\sin\!\left(\frac{\pi z}{i}\right)=\frac{\pi z}{i}+\Big(\frac{\pi z}{i}\Big)^3 \rho_i(z),
\]
where the remainder satisfies $|\rho_i(z)|\le \tfrac{1}{6}$ by the standard bound $|\sin w - w|\le |w|^3/6$; hence
\[
\sin\!\left(\frac{\pi z}{i}\right)=\frac{\pi z}{i}\,\bigl(1+E_i(z)\bigr),\qquad |E_i(z)|\ \le\ \frac{\pi^2 R^2}{6\,i^{2}}.
\]
Consequently,
\[
\frac{F(z,i)}{i^2}
=\phi_\infty(z)\,\frac{1}{\bigl(1+E_i(z)\bigr)^2}
=\phi_\infty(z)\,\bigl(1+O_R(i^{-2})\bigr),
\]
so $\phi_i(z)-\phi_\infty(z)=O_R(i^{-2})$ uniformly on $|z|\le R$. Therefore
\[
\sum_{i\ge1}\bigl|\phi_i(z)-\phi_\infty(z)\bigr|\ \ll_R\ \sum_{i\ge1} i^{-2}<\infty.
\]
Moreover,
\[
(\mu*\Lambda)(i)=\sum_{d\mid i}\mu(d)\,\Lambda(i/d)\ \ll\ \log i\qquad(i\ge2),
\]
see Apostol~\cite[Ch.~2]{apostol1976}. Consequently,
\[
\sum_{i\ge1}\bigl|(\mu*\Lambda)(i)\bigr|\,\bigl|\phi_i(z)-\phi_\infty(z)\bigr|
\ \ll_R\ \sum_{i\ge1}\frac{\log i}{i^{2}}<\infty,
\]
uniformly on $|z|\le R$. This proves absolute and locally uniform convergence on $\C$ and entireness.

For $n\in\mathbb{Z}$ one has $\phi_\infty(n)=0$ and $\phi_i(n)=\mathbf{1}_{i\mid n}$. Hence
\[
\mathcal{T}^{(\mathrm{ren})}_{\mu*\Lambda}(n)=\sum_{i\mid n}(\mu*\Lambda)(i)=(\mu*\Lambda*1)(n).
\]
Since $(\Lambda*1)(n)=\sum_{d\mid n}\Lambda(d)=\log n$ and Möbius inversion yields $\mu*\log=\Lambda$ (Apostol~\cite[Ch.~2]{apostol1976}), it follows that
\[
(\mu*\Lambda*1)(n)=\mu*(\Lambda*1)(n)=\mu*\log(n)=\Lambda(n).
\]
This proves Theorem~\ref{thm:renormalized-muLambda}.
\end{proof}

\begin{lemma}[Trigonometric sums]\label{lem:trig-sums}
Let $m\ge2$ and $n\in\mathbb Z$ with $m\nmid n$. Then
\[
\sum_{k=1}^{m-1}\cos\!\Big(\frac{2\pi kn}{m}\Big)=-1,
\qquad
\sum_{k=1}^{m-1}k\,\cos\!\Big(\frac{2\pi kn}{m}\Big)=-\frac{m}{2}.
\]
\end{lemma}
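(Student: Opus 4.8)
The plan is to pass to the root of unity $\omega:=e^{2\pi\ii n/m}$. The hypothesis $m\nmid n$ is exactly what guarantees $\omega\ne1$, while $\omega^m=e^{2\pi\ii n}=1$. For the first identity I would invoke the finite geometric sum:
\[
\sum_{k=0}^{m-1}\omega^{k}=\frac{\omega^{m}-1}{\omega-1}=0,
\qquad\text{hence}\qquad
\sum_{k=1}^{m-1}\omega^{k}=-1.
\]
Taking real parts and using $\re(\omega^{k})=\cos(2\pi kn/m)$ gives $\sum_{k=1}^{m-1}\cos(2\pi kn/m)=-1$.

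For the second identity the cleanest route is the symmetry $k\mapsto m-k$ together with the first identity, rather than any computation with derivatives. Since $2\pi n$ is an integer multiple of $2\pi$,
\[
\cos\!\Big(\frac{2\pi(m-k)n}{m}\Big)=\cos\!\Big(2\pi n-\frac{2\pi kn}{m}\Big)=\cos\!\Big(\frac{2\pi kn}{m}\Big),
\]
so writing $S:=\sum_{k=1}^{m-1}k\cos(2\pi kn/m)$ and reindexing $k\rightsquigarrow m-k$ yields $S=\sum_{k=1}^{m-1}(m-k)\cos(2\pi kn/m)$. Adding the two expressions for $S$ and applying the first identity,
\[
2S=\sum_{k=1}^{m-1}\bigl(k+(m-k)\bigr)\cos\!\Big(\frac{2\pi kn}{m}\Big)
=m\sum_{k=1}^{m-1}\cos\!\Big(\frac{2\pi kn}{m}\Big)=-m,
\]
whence $S=-m/2$.

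An alternative for the second sum, if a self-contained derivation not relying on the symmetry is preferred, is to differentiate $\sum_{k=0}^{m-1}x^{k}=(x^{m}-1)/(x-1)$ in $x$, multiply by $x$, and evaluate at $x=\omega$: using $\omega^{m}=1$ and $\omega^{m-1}=\omega^{-1}$ one gets $\sum_{k=1}^{m-1}k\omega^{k}=-m/(1-\omega)$, and then $\re\bigl(1/(1-\omega)\bigr)=\tfrac12$ (from $|1-\omega|^{2}=2-2\cos\theta$ with $\theta=2\pi n/m$) recovers $-m/2$. There is no genuine obstacle here; the only point requiring care is that every step uses $\omega\ne1$, i.e.\ the hypothesis $m\nmid n$, and that the cosine-symmetry argument is valid precisely because $2\pi n\in2\pi\Z$. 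I would present the $\omega$-based geometric sum plus the $k\mapsto m-k$ symmetry as the main proof and relegate the differentiation computation to a parenthetical remark.
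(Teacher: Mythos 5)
Your proposal is correct. The first identity is handled exactly as in the paper: pass to $\omega=e^{2\pi\ii n/m}\neq1$ (which is precisely where $m\nmid n$ enters), use $\sum_{k=0}^{m-1}\omega^k=0$, and take real parts. For the second identity your primary argument differs from the paper's: you exploit the reflection $k\mapsto m-k$ together with the evenness $\cos\bigl(2\pi(m-k)n/m\bigr)=\cos\bigl(2\pi kn/m\bigr)$ (valid because $2\pi n\in2\pi\Z$), add the two expressions for $S$, and reduce the weighted sum to $m$ times the first identity. The paper instead quotes the closed form $\sum_{k=1}^{m-1}k\,\omega^k=-m/(1-\omega)$ and computes $\re\bigl((1-\omega)^{-1}\bigr)=\tfrac12$ — exactly the route you relegate to a parenthetical remark, and your verification of that route (differentiate the geometric sum, multiply by $x$, evaluate at $\omega$, then $|1-\omega|^2=2-2\cos\theta$) is also sound. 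The symmetry argument buys a slightly more elementary proof: it stays entirely at the level of cosines and the already-established first identity, avoiding any manipulation of $1/(1-\omega)$; the paper's route is shorter to state if one is willing to cite the standard weighted geometric-sum formula, and it generalizes more directly to sums where the reflection symmetry is unavailable (e.g. weights other than $k$ or non-real twists). Either presentation is acceptable; no gap.
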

\begin{proof}
Let $\omega=e^{2\pi \ii n/m}\neq 1$. Since $\sum_{k=0}^{m-1}\omega^k=0$, taking real parts gives the first identity. For the second, use the standard formula
\(
\sum_{k=1}^{m-1}k\,\omega^k=-\,\frac{m}{1-\omega},
\)
and take real parts; with $\omega=e^{\ii\theta}$ and $\theta=2\pi n/m$, $\Re\big((1-\omega)^{-1}\big)=\frac12$, hence the value $-\frac{m}{2}$. 
\end{proof}

\section{Standard tools}\label{app:standard-tools}

\paragraph{Absolute convergence $\Rightarrow$ reordering.}
Whenever a double series $\sum_{n}\sum_{m} b_{n,m}$ is absolutely convergent, reordering is legitimate (Fubini for series). Throughout, all rearrangements are justified by absolute convergence on the stated half-planes.

\paragraph{Weierstrass $M$–test (local uniformity).}
If $\sum_i M_i<\infty$ and $|f_i(z)|\le M_i$ on compact sets, then $\sum_i f_i(z)$ converges uniformly on compacta; sums and derivatives can be exchanged termwise within the indicated ranges.

\paragraph{Dirichlet convolution conventions.}
Dirichlet series of convolutions satisfy $D_{f*g}(s)=D_f(s)\,D_g(s)$ on $\Re s>1$, provided both sides converge absolutely. The Möbius and von Mangoldt identities used here follow Apostol~\cite[Ch.~2]{apostol1976}.

\paragraph{Fejér kernel facts.}
The Fejér kernel $F(z,i)$ satisfies $F(n,i)/i^2=\mathbf 1_{i\mid n}$ at integers $n$ (divisor filter). For the functions $\phi_i(x)=F(x,i)/i^2$ one has a local quadratic behavior at integers: writing $x=n+\eta$,
\[
\phi_i(x)=
\begin{cases}
\displaystyle \frac{\pi^2}{i^2\sin^2(\pi n/i)}\,\eta^2+O(\eta^4), & i\nmid n,\\[1ex]
\displaystyle 1-\frac{\pi^2}{3}\!\left(1-\frac{1}{i^2}\right)\eta^2+O(\eta^4), & i\mid n,
\end{cases}
\]
whence $\phi_i'(n)=0$ for all integers $n$. This local structure, rather than a global evenness property, is used in the symmetry arguments; see Zygmund~\cite{zygmund2002}.

\bigskip

\section{Real-zero window analysis I (Original Indicator)}\label{app:real-zero-original}
\noindent\textit{Supports:} Lemma~\ref{lem:derivative-at-prime}, Theorem~\ref{thm:existence-left-companion}, Proposition~\ref{prop:disp-asymp}, Lemma~\ref{lem:K-bounds}, and Corollary~\ref{cor:disp-bounds}.

\paragraph{Transition (back to the original indicator $\mathfrak F$).}
The next two subsections pertain to the original indicator $\mathfrak F(\cdot,q)$ without the periodic normalizer. 
For comparison, the behaviour of $\F$ without tangent matching is recorded; the companion-zero phenomenon is quantified, whereas it is structurally eliminated for $\Fsharp$ by tangent matching at integers.

\begin{lemma}\label{lem:derivative-at-prime}
Let $q>1$ and $p$ be prime. Then
\[
\mathfrak F'(p,q)\;=\;(q-1)q\,(\log q)\,q^{-p}\;>\;0.
\]
In particular, the linear term of the correction factor determines the sign in a left neighbourhood of $p$.
\end{lemma}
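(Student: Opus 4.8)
The plan is to differentiate the defining series of $\mathfrak F(\cdot,q)$ term by term and then use the fact that every Fej\'er filter $\phi_i(x)=F(x,i)/i^2$ has a critical point at each integer. Recall
\[
\mathfrak F(z,q)=(q-1)q\Bigl(\sum_{i\ge2}q^{-i}\,\phi_i(z)\;-\;q^{-z}\Bigr).
\]
First I would justify termwise differentiation of the sum. Differentiating the cosine polynomial $F(z,i)=i+2\sum_{k=1}^{i-1}(i-k)\cos(2\pi k z/i)$ and using $|\sin(x+\mathrm{i}y)|\le\cosh(y)$ together with $\sum_{k=1}^{i-1}(i-k)k=\tfrac{i(i^2-1)}{6}$ yields, on any disc $|z|\le R$, a bound $|\phi_i'(z)|\le C_R$ uniform in $i$; hence the differentiated series $\sum_{i\ge2}q^{-i}\phi_i'(z)$ is dominated by $C_R\sum_{i\ge2}q^{-i}<\infty$ for $q>1$, and by the Weierstrass $M$-test it converges locally uniformly. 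Consequently $\mathfrak F'(z,q)=(q-1)q\bigl(\sum_{i\ge2}q^{-i}\phi_i'(z)+(\log q)\,q^{-z}\bigr)$, where the last term comes from $\tfrac{d}{dz}q^{-z}=\tfrac{d}{dz}e^{-z\log q}=-(\log q)\,q^{-z}$ under the principal-branch convention; this is the same mechanism already used for analyticity in Appendix~\ref{app:analyticity}.

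Next I would evaluate at $z=p$. The decisive input is the local quadratic behavior of $\phi_i$ at integers recorded in Appendix~\ref{app:standard-tools} (and reflected in the quadratic-contact remark following Proposition~\ref{prop:fejer-closed}): writing $x=n+\eta$ one has $\phi_i(x)=\phi_i(n)+O(\eta^2)$ for every integer $n$ and every $i\ge2$, so that $\phi_i'(n)=0$. In particular $\phi_i'(p)=0$, whence $\sum_{i\ge2}q^{-i}\phi_i'(p)=0$ and
\[
\mathfrak F'(p,q)=(q-1)q(\log q)\,q^{-p}.
\]
Since $q>1$, the factors $q-1$, $q$, $\log q$, and $q^{-p}$ are each strictly positive, giving $\mathfrak F'(p,q)>0$; the positivity of this first Taylor coefficient at $p$ then determines the sign of $\mathfrak F(\cdot,q)$ in a left neighbourhood of $p$.

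I do not expect a genuine obstacle: the two points requiring (routine) care — the domination estimate legitimizing termwise differentiation and the identity $\phi_i'(n)=0$ — are both already established in the paper (Appendices~\ref{app:analyticity} and \ref{app:standard-tools}), and the derivative of the corrector is immediate from the branch convention. It is worth noting afterwards that primality of $p$ is not actually used: the same computation gives $\mathfrak F'(n,q)=(q-1)q(\log q)\,q^{-n}\neq 0$ at every integer $n$, which is precisely the tangent mismatch between the Fej\'er superposition (with vanishing slope at integers) and the exponential corrector that the periodic normalizer of Section~\ref{sec:periodic-normalizer} is designed to cancel.
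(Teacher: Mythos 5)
Your proposal is correct and follows essentially the same route as the paper's proof: termwise differentiation of $S_q$ justified by a uniform bound (you derive an $i$-uniform bound from the cosine-polynomial form, while the paper cites its Lemma~\ref{lem:uniform-summability}), then $\phi_i'(p)=0$ from the quadratic contact of the Fej\'er filters at integers, so only the corrector's derivative $(\log q)\,q^{-p}$ survives. Your closing observation that primality is never used and that the identity holds at every integer is also consistent with the paper, whose own computation establishes $\phi_i'(n)=0$ at all integers $n$.
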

\begin{proof}
Write $\mathfrak F(x,q)=(q-1)q\big(S(x)-q^{-x}\big)$ with $S(x)=\sum_{i\ge2}q^{-i}\phi_i(x)$ and $\phi_i(x)=F(x,i)/i^2$. For any integer $n$ and $x=n+\eta$,
\[
\sin(\pi x)=\pi\eta+O(\eta^3)\quad\Rightarrow\quad \sin^2(\pi x)=\pi^2\eta^2+O(\eta^4).
\]
If $i\nmid n$, then $\sin(\pi n/i)\neq 0$, so $\csc^2(\pi x/i)$ is real-analytic and bounded near $x=n$. Hence
\[
\phi_i(x)=\frac{\sin^2(\pi x)}{i^2\sin^2(\pi x/i)}=A_i\,\eta^2+O(\eta^3)\quad\text{with}\quad A_i=\frac{\pi^2}{i^2\sin^2(\pi n/i)},
\]
and therefore $\phi_i'(n)=0$. If $i\mid n$, then $n=i\ell$ and
\[
\sin\!\Big(\frac{\pi x}{i}\Big)=(-1)^\ell\Big(\frac{\pi\eta}{i}-\frac{\pi^3\eta^3}{6i^3}+O(\eta^5/i^5)\Big),
\]
so a direct expansion yields
\[
\phi_i(x)=1-\frac{\pi^2}{3}\Big(1-\frac{1}{i^2}\Big)\eta^2+O(\eta^4),
\]
and again $\phi_i'(n)=0$. By Lemma~\ref{lem:uniform-summability} with $k=1$, the derivative series converges uniformly on a neighborhood of $p$; hence termwise differentiation is justified and $S'(p)=\sum_{i\ge2}q^{-i}\phi_i'(p)=0$. Since $\frac{d}{dx}(-q^{-x})=(\log q)\,q^{-x}$, it follows that
\[
\mathfrak F'(p,q)=(q-1)q\Big(S'(p)+(\log q)\,q^{-p}\Big)=(q-1)q\,(\log q)\,q^{-p}>0.
\]
For the tangent-matched variant, Lemma~\ref{lem:int-tangent} implies $(\Fsharp)'(p,q)=0$.
\end{proof}

\subsection{Normalization identity: details and auxiliary bounds}\label{app:prime-construction}
Three modifications are applied to the general form:
\begin{enumerate}
    \item \textbf{Weighting via FD-Lift:} Use the standard FD-Lift form $\mathcal{T}_a(z)=\sum_{i\ge1} a(i)\,F(z,i)/i^2$ with $a(i)=(q-1)q\,q^{-i}$.
    \item \textbf{Convergence and normalization:} This choice ensures absolute convergence for $|q|>1$ and yields bounded contributions at integers; see Lemma~\ref{lem:normalization}.
    \item \textbf{Correction term:} Subtract the term $(q-1)q\,q^{-z}$ to remove the contribution of the improper divisor $d=n$ in the evaluation at integers.
\end{enumerate}

The role of the prefactor is a normalization: the upper envelope of the geometric contribution equals $\sum_{i=2}^{\infty} q^{-i}=\frac{1}{q(q-1)}$. Multiplication by $(q-1)q$ therefore scales the summation part to the unit range, which is convenient for comparisons across $q$.

\subsection{Correction term and final definition of $\mathfrak{F}(z,q)$}
\label{app:correction-term}
Let $\mathfrak{S}(z,q)$ denote the normalized sum: $\mathfrak{S}(z,q) = (q-1)q \sum_{i=2}^{\infty} \frac{F(z,i)}{i^2} (\frac{1}{q})^i$. For a prime $p$, this function evaluates to the non-zero value $(q-1)q(1/q)^p$. To ensure primes are mapped to zero, a correction term is subtracted. This term is formulated with the complex variable $z$ in the exponent to preserve the analyticity of the overall function across the complex plane.

In this subsection, $\mathfrak{F}(z,q)$ refers to the function defined in Section~\ref{sec:prime-indicator-construction}.

\subsection{Analyticity}
\label{app:analyticity-brief}
Analyticity of $z\mapsto \mathfrak{F}(z,q)$ for fixed $|q|>1$ follows from uniform convergence on compacta via the Weierstrass M-test; details are provided in Appendix~\ref{app:analyticity}.


\subsection{The Normalization Property}
\begin{lemma}[Normalization identity]\label{lem:normalization}
For $q>1$,
\[
(q-1)q\sum_{i=2}^{\infty} q^{-i} = 1.
\]
\end{lemma}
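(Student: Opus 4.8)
The plan is to recognize the left-hand side as the tail of a geometric series and evaluate it in closed form. First I would observe that $q>1$ forces $|1/q|<1$, so the series $\sum_{i\ge 2} q^{-i}$ converges absolutely; this is the only convergence point that needs checking, and it is immediate.

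Next I would factor out $q^{-2}$ and reindex, writing
\[
\sum_{i=2}^{\infty} q^{-i} \;=\; q^{-2}\sum_{k=0}^{\infty} q^{-k} \;=\; \frac{q^{-2}}{1-q^{-1}},
\]
using the standard geometric sum $\sum_{k\ge 0} r^{k} = (1-r)^{-1}$ with $r=1/q$. Simplifying the right-hand side by clearing denominators (multiply numerator and denominator by $q$) gives
\[
\sum_{i=2}^{\infty} q^{-i} \;=\; \frac{1}{q(q-1)}.
\]

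Finally I would multiply by the prefactor $(q-1)q$, which cancels the denominator exactly and leaves $1$, as claimed. There is no genuine obstacle: the statement is a routine geometric-series evaluation, and the only point requiring care is that the reindexing and the closed form are valid precisely because $q>1$ places $1/q$ strictly inside the unit disk.
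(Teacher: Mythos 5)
Your proposal is correct and follows exactly the route the paper takes: the paper's proof is the one-line observation $\sum_{i\ge2}q^{-i}=q^{-2}/(1-1/q)=1/(q(q-1))$, which your reindexed geometric series reproduces. Nothing is missing; the convergence remark for $q>1$ is the only point of care, and you address it.
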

\noindent Proof is immediate from the geometric series $\sum_{i\ge2}q^{-i}=q^{-2}/(1-1/q)$.

\begin{remark}[Observation: stability in $q$]\label{lem:q-bridge}
Numerically it is observed that if $\F(\cdot,q_0)$ stays uniformly away from $0$ on $[a,X]$, then the sign pattern persists for $q$ in a small neighbourhood of $q_0$. A rigorous version would follow from uniform-in-$q$ bounds on $\partial_q\F$ and $\partial_x\F$ on $[a,X]$, together with an explicit margin; this is left as a potential refinement.
\end{remark}

\subsection{Quantitative displacement of companion zeros}\label{sec:companion-displacement}
Let $q>1$ be real and let $p\ge5$ be an odd prime. There exists at least one real zero 
\[
x_p(q)\in(p-1,p)
\]
of $\mathfrak F(\cdot,q)$, referred to as a \emph{companion zero} to $p$ (i.e. a zero located in $(p-1,p)$). Set
\[
\Delta_p(q):=p-x_p(q)\in(0,1].
\]
The following result quantifies $\Delta_p(q)$ and explains its exponential decay in $p$.

\begin{definition}[Companion zero]\label{def:companion-zero}
Let $p$ be a prime. A \emph{companion zero} to $p$ is a real zero of the indicator located in the window $(p-1,p)$. When the position matters, the term “left companion zero” is used interchangeably.
\end{definition}

\begin{proof}[Proof of Theorem~\ref{thm:existence-left-companion}]
At $x=p$, $\mathfrak F(p,q)=0$ and by Lemma~\ref{lem:derivative-at-prime},
$\mathfrak F'(p,q)=(q-1)q(\log q)q^{-p}>0$. Hence $\mathfrak F(x,q)<0$ for all $x\in(p-\varepsilon,p)$ with $\varepsilon>0$ sufficiently small.

At the left endpoint, $x=p-1$, Lemma~\ref{lem:int-values} gives
\[
S(p-1)=\sum_{\substack{d\mid p-1\\ d\ge2}}q^{-d} \ \ge\ q^{-2}.
\]
Therefore,
\[
\mathfrak F(p-1,q)=(q-1)q\big(S(p-1)-q^{-(p-1)}\big)\ \ge\ (q-1)q\big(q^{-2}-q^{-(p-1)}\big)\ >\ 0,
\]
since $p-1\ge4$. A sign change between $x=p-1$ and a point $x\in(p-\varepsilon,p)$ yields a zero in $(p-1,p)$ by the intermediate value theorem.
\end{proof}

\begin{lemma}[Master Taylor bound]
Let $m\in\mathbb{Z}$ and $\varepsilon\in(0,\tfrac13]$. There exist $C_\varepsilon>0$ and $\varepsilon_0\in(0,\varepsilon]$ such that for all $i\ge2$ and all $x$ with $|x-m|\le\varepsilon_0$,
\[
\bigl|\phi_i^{(k)}(x)\bigr|\le C_\varepsilon\qquad (k=0,1,2,3),
\]
where $\phi_i(x)=i^{-2}\left(\frac{\sin(\pi x)}{\sin(\pi x/i)}\right)^2$.
\end{lemma}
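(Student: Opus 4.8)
The plan is to abandon the quotient form $\bigl(\sin(\pi x)/\sin(\pi x/i)\bigr)^2$ — whose derivatives near integers would force a case split on $i\mid m$ versus $i\nmid m$ together with delicate control of $\csc^{2}(\pi x/i)$ near its poles — and instead use the finite Fej\'er identity, writing
\[
\phi_i(x)=\frac{F(x,i)}{i^2}=\frac1i+\frac{2}{i^2}\sum_{k=1}^{i-1}(i-k)\cos\!\left(\frac{2\pi k x}{i}\right).
\]
Since the right-hand side is a genuine trigonometric polynomial, differentiation is termwise: each application of $\partial_x$ produces a factor $2\pi k/i$ while cycling $\cos\mapsto-\sin\mapsto-\cos\mapsto\sin$. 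Hence for $j\in\{1,2,3\}$ one obtains
\[
\phi_i^{(j)}(x)=\frac{2}{i^2}\sum_{k=1}^{i-1}(i-k)\left(\frac{2\pi k}{i}\right)^{j}g_j\!\left(\frac{2\pi k x}{i}\right),
\qquad g_j=\pm\cos\ \text{or}\ \pm\sin\ \text{according to }j\bmod 4.
\]

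The estimate is then purely mechanical. Bounding $|g_j|\le1$, $(i-k)\le i$, and $(k/i)^j\le1$ for $1\le k\le i-1$ gives
\[
\bigl|\phi_i^{(j)}(x)\bigr|\ \le\ \frac{2(2\pi)^j}{i^2}\sum_{k=1}^{i-1}(i-k)\ =\ \frac{2(2\pi)^j}{i^2}\cdot\frac{i(i-1)}{2}\ <\ (2\pi)^j
\]
for $j=1,2,3$ and \emph{all} real $x$ and all $i\ge2$; for $j=0$ the same identity $\sum_{k=1}^{i-1}(i-k)=i(i-1)/2$ yields $|\phi_i(x)|\le 1/i+(i-1)/i\le1$. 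Taking $C_\varepsilon:=(2\pi)^3$ and $\varepsilon_0:=\varepsilon$ satisfies the conclusion; the advertised $\varepsilon$-dependence is in fact vacuous, the bound being global and uniform in $x$ and $i$.

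I do not expect a genuine obstacle here: the whole content of the lemma lies in the choice of representation — the Fej\'er cosine sum converts the problem into a finite, triangle-weighted character sum, making the derivative bounds transparent, whereas the quotient form would demand precisely the delicate $\csc^{2}$ analysis near the zeros of $\sin(\pi x/i)$ that the Fej\'er identity circumvents. The only care needed is the differentiation bookkeeping (the power $(2\pi k/i)^j$ and the $\cos/\sin$ cycling) and the elementary evaluation of $\sum_{k=1}^{i-1}(i-k)$. If a sharper constant is ever wanted, comparing $\sum_{k=1}^{i-1}(i-k)(k/i)^j$ with the Beta integral $i^2\int_0^1(1-t)t^j\,dt=i^2/\bigl((j+1)(j+2)\bigr)$ refines this to $\bigl|\phi_i^{(j)}(x)\bigr|\lesssim 2(2\pi)^j/\bigl((j+1)(j+2)\bigr)$, but the crude bound already suffices for the window analysis of $\mathfrak F$ in this appendix.
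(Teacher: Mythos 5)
Your proof is correct, and it takes a genuinely different route from the paper's. The paper proves the lemma from the quotient form $i^{-2}\bigl(\sin(\pi x)/\sin(\pi x/i)\bigr)^2$, splitting into the cases $i\nmid m$ (where $\csc(\pi x/i)$ and $\cot(\pi x/i)$ are bounded by $O(i)$ near $x=m$, the prefactor $i^{-2}$ absorbing the resulting $O(i^2)$ factors under Leibniz) and $i\mid m$ (local Taylor expansion $\phi_i(x)=1-\tfrac{\pi^2}{3}(1-i^{-2})\eta^2+O(\eta^4)$), with a possible shrinking of $\varepsilon_0$; the resulting bound is local to the integer window. You instead invoke the finite Fej\'er cosine representation (Proposition~\ref{prop:fejer-closed}, already established in the paper), differentiate the trigonometric polynomial termwise, and bound $|\phi_i^{(j)}(x)|\le \frac{2(2\pi)^j}{i^2}\sum_{k=1}^{i-1}(i-k)<(2\pi)^j$ for $j\le3$ (and $\le1$ for $j=0$), which is uniform in $i$, in $m$, and in fact in all real $x$ — so $C_\varepsilon=(2\pi)^3$, $\varepsilon_0=\varepsilon$ work and the $\varepsilon$-dependence of the statement is indeed vacuous. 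This is cleaner and strictly stronger for the purpose of this lemma, with explicit constants. What the paper's quotient-form analysis buys, and your crude sup bound does not, is the finer local information exploited elsewhere in the appendix (sign and size of the curvature at integers, and $i$-dependent decay of the form $i^{-2}\csc^2(\pi m/i)$ as in Lemma~\ref{lem:uniform-summability}); but none of that is required by the statement being proved, so your argument fully suffices.
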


\begin{proof}
Write $x=m+\eta$. If $i\nmid m$, then $\sin(\pi m/i)\neq0$ and $\sin(\pi x/i)=\sin(\pi m/i)+O(\eta/i)$, so $\csc(\pi x/i)$ and $\cot(\pi x/i)$ stay $O(i)$ on $|\eta|\le\varepsilon_0$. Repeated differentiation of $\sin^{-2}(\pi x/i)$ yields $O(i^2)$ factors, while $i^{-2}$ in front cancels them; combined with boundedness of derivatives of $\sin^2(\pi x)$, Leibniz gives $|\phi_i^{(k)}(x)|\ll1$ for $k\le3$. If $i\mid m$, then $m=i\ell$ and the Taylor expansion at $x=m$ gives $\phi_i(x)=1-\frac{\pi^2}{3}(1-\frac1{i^2})\eta^2+O(\eta^4)$, implying bounded derivatives up to order $3$. Uniform constants follow by shrinking $\varepsilon_0$ if necessary.
\end{proof}

\begin{proof}[Proof of Proposition~\ref{prop:disp-asymp}]
Set $S_q(x):=\sum_{i\ge2}q^{-i}\phi_i(x)$ and $H(\varepsilon):=S_q(p+\varepsilon)-q^{-(p+\varepsilon)}$. Lemma~\ref{lem:int-values} gives $S_q(p)=q^{-p}$ and $S_q'(p)=0$. By Lemma~\ref{lem:uniform-summability} (with $k\le 3$), the series for $S_q^{(k)}$ converge uniformly on a neighbourhood of $p$, so $S_q$ is $C^3$ there and Taylor’s theorem with remainder applies:
\[
H(\varepsilon)=a\,\varepsilon+b\,\varepsilon^2+R_3(\varepsilon),
\]
with
\[
a=(\log q)\,q^{-p},\qquad b=\tfrac12\Big(S_q''(p)-(\log q)^2\,q^{-p}\Big)=K(q,p),
\]
and $|R_3(\varepsilon)|\le C(q)\,|\varepsilon|^3$ for $|\varepsilon|\le\varepsilon_0(q)$. Lemma~\ref{lem:K-bounds} shows $b$ is bounded above and below by positive constants depending only on $q>1$.

At $x=p-1$, $\mathfrak F(p-1,q)>0$ (Theorem~\ref{thm:existence-left-companion}); for $0<-\varepsilon\ll1$, $\mathfrak F(p+\varepsilon,q)<0$ by Lemma~\ref{lem:derivative-at-prime}. Hence a zero $\varepsilon_\star\in(-1,0)$ exists. The quadratic approximation $a\varepsilon+b\varepsilon^2=0$ has root $\varepsilon_0=-a/b$, and $|\varepsilon_0|=O(q^{-p})$. A standard perturbation estimate using $|R_3(\varepsilon_0)|\le C(q)|\varepsilon_0|^3$ yields
\[
\varepsilon_\star=\varepsilon_0\Big(1+O(|\varepsilon_0|)\Big)\ =\ -\frac{(\log q)\,q^{-p}}{K(q,p)}\Big(1+O(q^{-p})\Big).
\]
Therefore $\Delta_p(q)=-\varepsilon_\star=\frac{\log q}{K(q,p)}\,q^{-p}\big(1+O(q^{-p})\big)$, uniformly in $p$, and $x_p(q)=p-\Delta_p(q)\in(p-1,p)$.
This proves Proposition~\ref{prop:disp-asymp}.
\end{proof}

\begin{remark}[Standing assumption]\label{rem:standing-K}
In what follows, bounds for $K(q,p)$ use the fact that, for a prime $p$, the only divisor $i\ge 2$ with $i\mid p$ is $i=p$. Consequently, among the terms in $\sum_{i\ge 2} q^{-i}\phi_i''(p)$ at most a single negative contribution arises, namely the $i=p$ term.
\end{remark}

\begin{proof}[Proof of Lemma~\ref{lem:K-bounds}]
From Lemma~\ref{lem:second-derivatives}, for $i\nmid p$,
\[
\phi_i''(p)=\frac{2\pi^2}{i^2\sin^2(\pi p/i)}\ \le\ \frac{\pi^2}{2},
\]
using $\sin(\pi/i)\ge 2/i$. If $i\mid p$ and $i\ge2$, then
\[
\phi_i''(p)=-\frac{2\pi^2}{3}\Big(1-\frac{1}{i^2}\Big)\ge -\frac{2\pi^2}{3}.
\]
For $p$ prime, the only divisor $i\ge 2$ with $i\mid p$ is $i=p$; hence among the terms with $i\ge2$ there is exactly one possible negative contribution, namely the $i=p$ term, weighted by $q^{-p}$. All other terms ($i\nmid p$) are nonnegative and $\le \pi^2/2$. The explicit contributions at $i=2,3$ equal $\pi^2/2$ and $8\pi^2/27$. For $i=4,5,6$ the generic bound $\phi_i''(p)\le \pi^2/2$ yields the displayed terms. The tail $i\ge7$ satisfies
\[
\sum_{i\ge7}q^{-i}\,|\phi_i''(p)|\ \le\ \frac{\pi^2}{2}\,\sum_{i\ge7}q^{-i}
=\frac{\pi^2}{2}\cdot\frac{q^{-7}}{1-1/q}.
\]
Since $K(q,p)$ contains the factor $\tfrac12$ outside the sum, the tail contributes at most
\[
\frac12\cdot \frac{\pi^2}{2}\cdot\frac{q^{-7}}{1-1/q}=\frac{\pi^2}{4}\,\frac{q^{-7}}{1-1/q}.
\]
Finally, subtract $\tfrac{1}{2}q^{-p}(\log q)^2$ and note $q^{-p}\le q^{-5}$ for $p\ge5$ to obtain the lower bound. This shows that $K(q,p)$ is bounded below by a positive $c_1(q)$ and above by $c_2(q)$, both independent of $p$.
\end{proof}

\begin{center}\emph{The statement of Conjecture~\ref{conj:uniqueness-companion} is presented in the main text.}\end{center}

\begin{remark}[Numerical evidence]
The verification script \texttt{real\_zero\_verifier.py} (see Code and Data Availability) reports a single zero in $(p-1,p)$ for a wide range of $q>1$ and $p\le 10^6$, with rigorous interval bracketing and monotonic refinements. This supports Conjecture~\ref{conj:uniqueness-companion}.
\end{remark}

\begin{corollary}[Uniform exponential fall-off]\label{cor:disp-bounds}
For each fixed $q>1$ and every odd prime $p\ge5$,
\[
\frac{\log q}{\,c_2(q)\,}\,q^{-p}\;\le\;\Delta_p(q)\;\le\;\frac{\log q}{\,c_1(q)\,}\,q^{-p},
\]
where the positive constants $c_1(q),c_2(q)$ are given explicitly by Lemma~\ref{lem:K-bounds} as any valid lower/upper bounds for $K(q,p)$ that are independent of $p$.
\end{corollary}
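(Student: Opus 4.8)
The plan is to obtain Corollary~\ref{cor:disp-bounds} as an immediate consequence of the asymptotic displacement law (Proposition~\ref{prop:disp-asymp}) together with the uniform curvature bounds (Lemma~\ref{lem:K-bounds}). Proposition~\ref{prop:disp-asymp} already gives
\[
\Delta_p(q)=\frac{\log q}{K(q,p)}\,q^{-p}\bigl(1+\theta_p(q)\bigr),\qquad |\theta_p(q)|\le C(q)\,q^{-p},
\]
uniformly over odd primes $p\ge 5$, with $C(q)$ the explicit constant from that proof, while Lemma~\ref{lem:K-bounds} supplies $0<c_1(q)\le K(q,p)\le c_2(q)$ with both constants independent of $p$. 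Inverting the sandwich $c_1(q)\le K(q,p)\le c_2(q)$ and substituting yields at once
\[
\frac{\log q}{c_2(q)}\,q^{-p}\bigl(1-C(q)q^{-p}\bigr)\ \le\ \Delta_p(q)\ \le\ \frac{\log q}{c_1(q)}\,q^{-p}\bigl(1+C(q)q^{-p}\bigr),
\]
so that the only real content left is the removal of the lower-order factor $1\pm C(q)q^{-p}$.

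First I would dispose of that factor by relaxing the admissible constants: since the corollary permits $c_1,c_2$ to be \emph{any} valid $p$-independent lower/upper bounds for $K(q,p)$, I would replace $c_2(q)$ by $\widetilde c_2(q):=c_2(q)/(1-C(q)q^{-5})$ and $c_1(q)$ by $\widetilde c_1(q):=c_1(q)/(1+C(q)q^{-5})$ (enlarging the threshold past $\log C(q)/\log q$ if needed to keep $\widetilde c_2>0$). Because $1-C(q)q^{-p}\ge 1-C(q)q^{-5}$ and $1+C(q)q^{-p}\le 1+C(q)q^{-5}$ for all $p\ge 5$, and since $\widetilde c_2\ge c_2\ge K(q,p)$ and $\widetilde c_1\le c_1\le K(q,p)$ remain admissible, the two displayed inequalities collapse to the clean bound
\[
\frac{\log q}{\widetilde c_2(q)}\,q^{-p}\ \le\ \Delta_p(q)\ \le\ \frac{\log q}{\widetilde c_1(q)}\,q^{-p}.
\]

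A more self-contained alternative that I would include rather than invoke the relaxed constants is to read the bound off the zero equation directly. Writing $H(\varepsilon):=S_q(p+\varepsilon)-q^{-(p+\varepsilon)}$, one has $H(0)=0$ and $H'(0)=(\log q)q^{-p}=:a$ by Lemma~\ref{lem:derivative-at-prime}, together with a Lagrange expansion $H(\varepsilon)=a\varepsilon+\tfrac12H''(\xi)\varepsilon^2$. Evaluating at the companion zero $\varepsilon_\star=-\Delta_p(q)$ and dividing by $\varepsilon_\star\neq0$ gives $\Delta_p(q)=a\big/\bigl(\tfrac12H''(\xi_\star)\bigr)$ for some $\xi_\star\in(-\Delta_p(q),0)$; since $p+\xi_\star$ is not an integer, the Master Taylor bound (uniform in $p$, via Lemma~\ref{lem:uniform-summability}) forces $\tfrac12H''(p+\xi_\star)=K(q,p)+O(\Delta_p(q))$ with the implied constant depending only on $q$, and then one sandwiches $\tfrac12H''(\xi_\star)$ between constants exactly as above.

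The hard part will be the uniformity in $p$ at the low end: the curvature actually controlling the zero is $\tfrac12H''$ evaluated at the interior point $p+\xi_\star$ rather than $K(q,p)=\tfrac12H''|_{\xi=0}$, and comparing the two needs $\Delta_p(q)$ to be small, which is automatic for large $p$ but not a priori for the finitely many primes $5\le p<P_0(q)$. Those are handled either by the direct numerical check (the verifier of the Code and Data section certifies the bracketing and the position of the companion zero for a wide range of $q$ and $p\le10^6$) or, cleanly, by the relaxed-constant device above, which makes the displayed inequalities formally valid for all odd $p\ge5$ simultaneously. Once the curvature comparison is in place, the remainder is the bookkeeping of inserting $c_1(q)\le K(q,p)\le c_2(q)$ and reading off the two displayed bounds.
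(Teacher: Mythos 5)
Your proposal is correct and takes essentially the same route as the paper: Proposition~\ref{prop:disp-asymp} gives $\Delta_p(q)=\frac{\log q}{K(q,p)}\,q^{-p}\bigl(1+O(q^{-p})\bigr)$, Lemma~\ref{lem:K-bounds} supplies the $p$-independent sandwich $c_1(q)\le K(q,p)\le c_2(q)$, and the $1+O(q^{-p})$ factor is absorbed by enlarging $c_2$ and shrinking $c_1$, which remain admissible bounds for $K(q,p)$. Your explicit relaxed constants $\widetilde c_1,\widetilde c_2$ and the caveat about the smallest primes merely spell out the paper's one-line ``absorb the negligible term'' step, and the mean-value alternative is not needed.
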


\begin{proof}
Proposition~\ref{prop:disp-asymp} gives
\(
\Delta_p(q)=\frac{\log q}{K(q,p)}\,q^{-p}\,\bigl(1+O(q^{-p})\bigr).
\)
Lemma~\ref{lem:K-bounds} supplies $p$-independent bounds
\(
0<c_1(q)\le K(q,p)\le c_2(q)<\infty.
\)
Absorb the negligible $O(q^{-p})$-term into the constants by enlarging $c_2(q)$ and shrinking $c_1(q)$ if necessary, which yields the displayed two-sided estimate for all $p\ge5$.
\end{proof}

\begin{theorem}[Uniqueness of the companion zero for the original indicator]\label{thm:uniq-companion-F}
Let $q>1$ and $p\ge5$ be an odd prime. Assume there exists $\alpha\in(0,\tfrac12)$ such that
\begin{align}
\cos^2\!\Big(\tfrac{\pi\alpha}{2}\Big)\ &\ge\ q^{-2}, \tag{L0}\label{cond:L0}\\
\sin^2(\pi\alpha)\!\!\sum_{i\ge2}\frac{q^{-i}}{i^2}\ &\ge\ q^{-4-\alpha}. \tag{M0}\label{cond:M0}
\end{align}
Define
\[
T_3(q,\alpha)\;:=\;\sup_{|x-p|\le \alpha}\ \sum_{i\ge2} q^{-i}\, \big|\phi_i^{(3)}(x)\big|
\qquad\text{and}\qquad 
K(q,p)\;:=\;\tfrac12\Big(S''(p)-(\log q)^2 q^{-p}\Big).
\]
Assume in addition the explicit convexity condition
\begin{equation}\tag{C0}\label{cond:C0}
\alpha\ \le\ \frac{K(q,p)}{\,T_3(q,\alpha)\;+\;(\log q)^3\,q^{-p}\,}.
\end{equation}
Then $\mathfrak F(\cdot,q)$ has exactly one real zero in $(p-1,p)$.
\end{theorem}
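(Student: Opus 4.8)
The plan is to run a refined three–window analysis of $(p-1,p)$ in the spirit of Figure~\ref{fig:three-window-schema}, the difference being that the right–hand window now genuinely contains the companion zero and is controlled by the convexity hypothesis (C0). Write $\mathfrak F(x,q)=(q-1)q\,g(x)$ with $g(x):=S(x)-q^{-x}$ and $S(x):=\sum_{i\ge2}q^{-i}\phi_i(x)$; since $(q-1)q>0$ it suffices to count zeros of $g$ on $(p-1,p)$. By the Master Taylor bound (and Lemma~\ref{lem:uniform-summability}) the series for $S,S',S'',S'''$ converge uniformly on a neighbourhood of $[p-1,p]$, so $g\in C^{3}$ there with termwise derivatives; moreover $S'(p)=0$ and $S''(p)=2K(q,p)+(\log q)^{2}q^{-p}$, whence $g(p)=0$, $g'(p)=(\log q)q^{-p}>0$ (Lemma~\ref{lem:derivative-at-prime}), and $g''(p)=2K(q,p)>0$ (Lemma~\ref{lem:K-bounds}).

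First I would establish $g>0$ on all of $(p-1,p-\alpha]$ by the left and middle windows. On the left window $(p-1,p-1+\alpha]$, dropping the nonnegative terms $\phi_i$ ($i\ge3$) and using $\phi_2(x)=\cos^{2}(\pi x/2)$ with $p-1$ even (so $\phi_2(p-1+t)=\cos^{2}(\pi t/2)\ge\cos^{2}(\pi\alpha/2)$ for $0<t\le\alpha<\tfrac12$) gives $S(x)\ge q^{-2}\cos^{2}(\pi\alpha/2)\ge q^{-4}$ by (L0), while $q^{-x}<q^{-(p-1)}\le q^{-4}$ since $p\ge5$; hence $g>0$. On the middle window $[p-1+\alpha,p-\alpha]$, the bound $\phi_i(x)\ge\sin^{2}(\pi x)/i^{2}$ together with $\sin^{2}(\pi t)\ge\sin^{2}(\pi\alpha)$ for $t\in[\alpha,1-\alpha]$ gives $S(x)\ge\sin^{2}(\pi\alpha)\,\Sigma(q)\ge q^{-4-\alpha}$ by (M0), while $q^{-x}\le q^{-(p-1+\alpha)}\le q^{-4-\alpha}$; the two boundary points are handled by the adjacent left–window bound and by $\alpha<\tfrac12$ with $p\ge5$, upgrading the estimate to a strict inequality.

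Next I would treat the right window $[p-\alpha,p]$ via convexity. For $|x-p|\le\alpha$ one has $|g'''(x)|\le T_3(q,\alpha)+(\log q)^{3}q^{-p}$ (up to a bounded factor $q^{\alpha}$ coming from $q^{-x}\le q^{-p+\alpha}$, which I absorb into the bookkeeping), so a first–order Taylor estimate from $x=p$ gives, using (C0), $g''(x)\ge 2K(q,p)-\alpha\bigl(T_3(q,\alpha)+(\log q)^{3}q^{-p}\bigr)\ge K(q,p)>0$ for all $x\in[p-\alpha,p]$; thus $g$ is strictly convex there. Since $g(p)=0$ and $g'(p)>0$, $g<0$ just to the left of $p$; combined with $g(p-\alpha)>0$ and convexity, the sublevel set $\{x\in[p-\alpha,p]:g(x)\le0\}$ is a sub-interval $[c,p]$ with $c\in(p-\alpha,p)$, and (a flat zero set being excluded by real-analyticity) $g(c)=0$, $g>0$ on $[p-\alpha,c)$, $g<0$ on $(c,p)$. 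Hence $g$ has exactly one zero in $[p-\alpha,p)$, none in $(p-1,p-\alpha]$, and therefore $\mathfrak F(\cdot,q)$ has exactly one real zero in $(p-1,p)$; its existence is also guaranteed by Theorem~\ref{thm:existence-left-companion}.

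The main obstacle is the uniform third–derivative control underpinning the right–window step: one needs a $p$-independent bound on $T_3(q,\alpha)=\sup_{|x-p|\le\alpha}\sum_{i\ge2}q^{-i}\,|\phi_i^{(3)}(x)|$ even though the individual kernels are large before normalization, which forces a divisor-by-divisor expansion near the prime $p$ (using $i\mid p\iff i=p$, the quadratic-at-integers structure of $\phi_i$, and $\csc(\pi p/i)\ll i$) — precisely the estimate the Master Taylor bound supplies. Granting that and Lemma~\ref{lem:K-bounds}, the left and middle windows are elementary estimates read off from (L0) and (M0), and the single genuinely new ingredient beyond Theorem~\ref{thm:existence-left-companion} is the convexity certificate (C0).
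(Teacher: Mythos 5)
Your proposal is correct and follows essentially the same route as the paper: positivity on the left and middle windows from (L0)/(M0) via the $i=2$ harmonic and the Fej\'er-mass bound, negativity just left of $p$ from $\mathfrak F'(p,q)>0$, and strict convexity on $[p-\alpha,p)$ from a first-order Taylor bound on the second derivative controlled by $T_3(q,\alpha)$ and (C0), which forces a single crossing. The only cosmetic difference is that you bound $|g'''|$ on the whole right window and Taylor-expand $g''$, while the paper expands $S''$ and treats the $-(\log q)^2q^{-x}$ term by monotonicity; both incur (and absorb) the same harmless $q^{\alpha}$ slack in the $(\log q)^3q^{-p}$ term.
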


\begin{proof}
Write $\mathfrak F(x,q)=(q-1)q\big(S(x)-q^{-x}\big)$ and split $(p-1,p)$ into $I_{\mathrm L},I_{\mathrm M},I_{\mathrm R}$ with the same $\alpha$ as above. Under \eqref{cond:L0} and \eqref{cond:M0}, Lemmas~\ref{lem:left-only}–\ref{lem:left-middle} give $\mathfrak F(x,q)>0$ on $I_{\mathrm L}\cup I_{\mathrm M}$. By Lemma~\ref{lem:derivative-at-prime}, $\mathfrak F'(p,q)>0$, hence $\mathfrak F(x,q)<0$ for $x$ sufficiently close to $p$ on the left.

To exclude more than one zero, it suffices to prove strict convexity on $I_{\mathrm R}=[p-\alpha,p)$. Differentiating twice yields
\[
\mathfrak F''(x,q)=(q-1)q\Big(S''(x)-(\log q)^2 q^{-x}\Big).
\]
A first-order Taylor bound around $x=p$ gives
\[
S''(x)\ \ge\ S''(p)\ -\ \sup_{|t-p|\le\alpha}\,|S^{(3)}(t)|\cdot|x-p|\ \ge\ S''(p)-T_3(q,\alpha)\,\alpha,
\]
using $T_3(q,\alpha)$ and the locally uniform summability of $\sum q^{-i}\phi_i^{(3)}$ (cf. Lemma~\ref{lem:uniform-summability}). Since $x\in[p-\alpha,p)$ implies $q^{-x}\le q^{-(p-\alpha)}$, 
\[
-(\log q)^2 q^{-x}\ \ge\ -(\log q)^2 q^{-(p-\alpha)} \ =\ -(\log q)^2 q^{-p}\,(1+O(\alpha\log q)).
\]
Combining and recalling $S''(p)-(\log q)^2q^{-p}=2K(q,p)$ yields
\[
S''(x)-(\log q)^2q^{-x}\ \ge\ 2K(q,p)\ -\ T_3(q,\alpha)\,\alpha\ -\ (\log q)^3 q^{-p}\,\alpha.
\]
By \eqref{cond:C0} the right-hand side is $\ge K(q,p)>0$. Therefore $\mathfrak F''(x,q)>0$ on $I_{\mathrm R}$, i.e. $\mathfrak F(\cdot,q)$ is strictly convex there. Since $\mathfrak F$ is $>0$ on $I_{\mathrm L}\cup I_{\mathrm M}$ and $<0$ sufficiently close to $p$, strict convexity on $I_{\mathrm R}$ enforces a single crossing, hence exactly one zero in $(p-1,p)$.
\end{proof}

\begin{remark}[Newton refinement (pointer)]
For the refinement summary see Remark~\ref{rem:newton} in the main text; implementation details are provided in the companion repository.
\end{remark}

\section{Real-zero window analysis II (Tangent-matched indicator $\Fsharp$)}\label{app:real-zero-fsharp}
\noindent\textit{Supports:} Lemma~\ref{lem:right-positive}, Theorem~\ref{thm:no-companions}, and Theorem~\ref{thm:real-zero-structure}.

\subsubsection*{Explicit constants and threshold}\label{subsec:explicit-constants}
For completeness the constants used in the three-window argument are collected:
\[
\Sigma(q):=\sum_{i\ge2}\frac{q^{-i}}{i^2},\qquad
\Lambda_{\sin}(q):=1+\frac{\log q}{2\pi}.
\]
Let $\delta_{\sin}(q)\in(0,1)$ be the prime-uniform radius from Lemma~\ref{lem:right-positive} and set
\[
\alpha(q):=\min\Big\{\frac{1}{4},\frac{\delta_{\sin}(q)}{2}\Big\}.
\]
Define the explicit threshold
\[
P_0(q):=\max\!\left\{
3+\frac{\log\!\Big(\frac{\Lambda_{\sin}(q)}{\cos^2(\frac{\pi\alpha(q)}{2})}\Big)}{\log q}\,,\;
1+\alpha(q)+\frac{\log\!\Big(\frac{\Lambda_{\sin}(q)}{\sin^2(\pi\alpha(q))\,\Sigma(q)}\Big)}{\log q}\,,\;
5\right\}.
\]
These expressions are algebraically identical to those referenced in the main text; they are placed here to improve the narrative flow.

\subsection{Real zeros of \texorpdfstring{$\Fsharp(\cdot,q)$}{Fsharp} for \texorpdfstring{$q>1$}{q>1}}
\label{sec:real-zeros-fsharp}

\begin{remark}[Scope of this section]\label{rem:scope-fsharp}
All statements in this section concern the tangent–matched lift $\Fsharp(\cdot,q)$ for real $q>1$. Integer anchor identities coincide with those of $\mathfrak F$ (see Lemma~\ref{lem:int-values}). The absence of interior zeros in prime windows $(p-1,p)$ and the boundary contact of multiplicity two at $x=p$ hold for all $q>1$ once $p$ exceeds an explicit $q$–dependent threshold $P_0(q)\ge 5$ (Theorem~\ref{thm:no-companions}); no global real–zero statement is asserted here for $\mathfrak F$.
\end{remark}

\begin{definition}[Tangent-matched lift]\label{def:tangent-matched}
The tangent-matched lift $\Fsharp$ is the unique modification of $\F$ such that
\[
\Fsharp(n,q)=\F(n,q)\quad\text{and}\quad \partial_x\Fsharp(n,q)=0\qquad(n\in\mathbb Z).
\]
Terminology note: references to “companion zeros” pertain exclusively to the original indicator $\mathfrak F(\cdot,q)$; the tangent-matched variant $\Fsharp$ is analysed precisely to eliminate such interior zeros in prime windows under explicit hypotheses.
The explicit corrector used in this paper is presented below.
\end{definition}

\begin{table}[h]
\small
\centering
\begin{tabular}{ll}
\hline
Property & $\F$ vs.\ $\Fsharp$ \\
\hline
Values at integers & Both: $\F(n,q)=\Fsharp(n,q)= (q-1)q\sum_{\substack{d\mid n\\2\le d<n}} q^{-d}$; primes map to $0$. \\
First derivative at integers & $(\F)'(n,q)\neq 0$ in general; $(\Fsharp)'(n,q)=0$ by construction. \\
Prime windows $(p-1,p)$ & $\F$: one interior zero (companion) exists (Thm.~\ref{thm:existence-left-companion}); \\
& $\Fsharp$: no interior zero for $p\ge P_0(q)$ (Thm.~\ref{thm:no-companions}). \\
Spectral identity & Both satisfy FD-lift factorisations as stated in Sec.~\ref{sec:fd-lift-main}. \\
\hline
\end{tabular}
\caption{Side-by-side summary of the two indicators used in the paper.}
\label{tab:F-vs-Fsharp}
\end{table}

For $q>1$ and $x\in\mathbb R$ define
\[
\Fsharp(x,q)\ :=\ (q-1)q\Big(S_q(x)-q^{-x}\bigl(1+(\log q)\,S_1(x)\bigr)\Big),
\qquad
S_q(x)\ :=\ \sum_{i\ge2} q^{-i}\,\phi_i(x).
\]
For brevity, $S_q$ is denoted by $S$ when $q>1$ is fixed within the subsection; all occurrences refer to $S_q$, where
\[
\phi_i(x)\ :=\ \frac{1}{i^2}\left(\frac{\sin(\pi x)}{\sin(\pi x/i)}\right)^2,
\qquad
S_1(x)\ :=\ \frac{\sin(2\pi x)}{2\pi}.
\]
Note that $|S_1(x)|\le \tfrac{1}{2\pi}$ for all $x\in\mathbb R$, hence
\begin{equation}\label{eq:corrector-upper-bound}
q^{-x}\bigl(1+(\log q)\,|S_1(x)|\bigr)\ \le\ q^{-x}\,\Lambda_{\sin}(q)\qquad(x\in\mathbb R).
\end{equation}

\begin{lemma}[Global bounds for the periodic corrector]\label{lem:corrector-deriv-bounds}
Let $q>1$ and $\lambda:=\log q$. For all $x\in\mathbb R$,
\[
\big|q^{-x}\big(1+\lambda S_1(x)\big)\big|\ \le\ q^{-x}\,\Lambda_{\sin}(q),
\]
\[
\left|\frac{d}{dx}\Big(q^{-x}\big(1+\lambda S_1(x)\big)\Big)\right|\ \le\ q^{-x}\,\lambda\,(1+\Lambda_{\sin}(q)),
\]
\[
\left|\frac{d^2}{dx^2}\Big(q^{-x}\big(1+\lambda S_1(x)\big)\Big)\right|\ \le\ q^{-x}\Big(\frac{\lambda^{3}}{2\pi}+3\lambda^{2}+2\pi\lambda\Big).
\]
Here $\Lambda_{\sin}(q)=1+\lambda/(2\pi)$ and the inequalities use $|S_1(x)|\le 1/(2\pi)$, $|S_1'(x)|\le 1$, $|S_1''(x)|\le 2\pi$.
\end{lemma}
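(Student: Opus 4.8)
The plan is to write the corrector as $g(x):=q^{-x}\bigl(1+\lambda S_1(x)\bigr)=e^{-\lambda x}h(x)$ with $h(x):=1+\lambda S_1(x)$, view it as a product of the positive exponential $e^{-\lambda x}=q^{-x}$ and the bounded periodic factor $h$, and push every estimate through the Leibniz rule followed by the triangle inequality. First I would record the three elementary sup-bounds that are explicitly quoted in the statement: from $S_1(x)=\sin(2\pi x)/(2\pi)$ one has $|S_1(x)|\le 1/(2\pi)$, $|S_1'(x)|=|\cos(2\pi x)|\le 1$, and $|S_1''(x)|=|{-}2\pi\sin(2\pi x)|\le 2\pi$. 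The zeroth-order bound is then immediate: $|g(x)|\le q^{-x}\bigl(1+\lambda|S_1(x)|\bigr)\le q^{-x}\bigl(1+\lambda/(2\pi)\bigr)=q^{-x}\Lambda_{\sin}(q)$, since $\Lambda_{\sin}(q)=1+\lambda/(2\pi)$.

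For the first derivative I would compute $g'(x)=e^{-\lambda x}\bigl(-\lambda h(x)+h'(x)\bigr)$ and substitute $h'=\lambda S_1'$, giving $g'(x)=\lambda e^{-\lambda x}\bigl(S_1'(x)-1-\lambda S_1(x)\bigr)$. The triangle inequality together with the sup-bounds yields $|g'(x)|\le \lambda q^{-x}\bigl(1+1+\lambda/(2\pi)\bigr)=\lambda q^{-x}\bigl(1+\Lambda_{\sin}(q)\bigr)$, which is exactly the asserted middle inequality. For the second derivative I would differentiate once more to obtain $g''(x)=e^{-\lambda x}\bigl(\lambda^2 h(x)-2\lambda h'(x)+h''(x)\bigr)$, then substitute $h=1+\lambda S_1$, $h'=\lambda S_1'$, $h''=\lambda S_1''$ to get $g''(x)=e^{-\lambda x}\bigl(\lambda^2+\lambda^3 S_1(x)-2\lambda^2 S_1'(x)+\lambda S_1''(x)\bigr)$. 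Collecting the coefficients and inserting $|S_1|\le 1/(2\pi)$, $|S_1'|\le 1$, $|S_1''|\le 2\pi$ gives $|g''(x)|\le q^{-x}\bigl(\lambda^2+\lambda^3/(2\pi)+2\lambda^2+2\pi\lambda\bigr)=q^{-x}\bigl(\lambda^{3}/(2\pi)+3\lambda^{2}+2\pi\lambda\bigr)$, as claimed.

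There is no genuine obstacle here; the lemma is a routine product-rule estimate. The one point that warrants a moment of care is the combinatorial coefficient $-2\lambda h'$ appearing in the expansion of $g''$: it is the cross term produced by applying the product rule twice to $e^{-\lambda x}\cdot h$ (the $\binom{2}{1}$ in Leibniz's formula), and must not be carried through as $-\lambda h'$, since otherwise the $\lambda^2$-coefficient in the final bound would come out as $2$ rather than $3$. Verifying that single coefficient — and checking the signs in the alternating Leibniz expansion of $e^{-\lambda x}h$ — is effectively the whole content of the argument; everything else is a direct triangle-inequality application of the three already-listed bounds on $S_1$, $S_1'$, $S_1''$.
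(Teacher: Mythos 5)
Your proof is correct and follows the same route as the paper: the paper's proof simply computes the first and second derivatives explicitly and applies the triangle inequality with $|S_1|\le 1/(2\pi)$, $|S_1'|\le 1$, $|S_1''|\le 2\pi$, which is exactly your Leibniz-rule calculation (including the correct cross term $-2\lambda h'$ producing the coefficient $3\lambda^2$).
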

\begin{proof}
With $S_1(x)=\sin(2\pi x)/(2\pi)$ one has $|S_1|\le 1/(2\pi)$, $S_1'(x)=\cos(2\pi x)$, $S_1''(x)=-2\pi\sin(2\pi x)$.
Set $\lambda=\log q$ and compute the first and second $x$-derivatives explicitly; the displayed bounds follow by the triangle inequality.
\end{proof}

\begin{equation}\label{eq:corrector-exp-bound}
q^{-x}\exp\!\bigl((\log q)\,S_1(x)\bigr)\ \le\ q^{-x}\,q^{\frac{1}{2\pi}}\qquad(x\in\mathbb R),
\end{equation}
since $|S_1(x)|\le \frac{1}{2\pi}$ implies $\exp\big((\log q)S_1(x)\big)\le \exp\big((\log q)\frac1{2\pi}\big)=q^{1/(2\pi)}$.

\begin{remark}[Left-window maximiser]
On $I_{\mathrm L}=(p-1,p-1+\alpha]$ the map $x\mapsto q^{-x}$ is strictly decreasing. Hence
\[
\sup_{x\in I_{\mathrm L}} q^{-x}\exp\!\bigl((\log q)S_1(x)\bigr)
\;\le\; q^{-(p-1)}\,q^{\frac{1}{2\pi}}.
\]
For the three-window estimates, only the linear majorant
\[
q^{-x}\bigl(1+(\log q)\,|S_1(x)|\bigr)\ \le\ q^{-x}\,\Lambda_{\sin}(q)
\]
is used; the exponential bound is recorded for context.
\end{remark}

\paragraph{Three-window constants and standing notation.}
Throughout this subsection, $q>1$ is fixed. The following prime–uniform constants and shorthands are used to state and check the three–window bounds:

\[
\Sigma(q)\ :=\ \sum_{i\ge2}\frac{q^{-i}}{i^2},
\qquad
\Lambda_{\sin}(q)\ :=\ 1+\frac{\log q}{2\pi},
\qquad
K(q,p)\ :=\ \tfrac12\Big(S''(p)-(\log q)^2\,q^{-p}\Big).
\]

Two explicit $p$–independent curvature constants are introduced from Lemma~\ref{lem:K-bounds}:
\[
C'_{\sin}(q)\ :=\ \frac{1}{2}\Big(\tfrac{\pi^2}{2}\,q^{-2}+\tfrac{8\pi^2}{27}\,q^{-3}+\tfrac{\pi^2}{4}\,q^{-4}-\big(\tfrac{2\pi^2}{3}+(\log q)^2\big)\,q^{-5}\Big),
\]
\[
C''_{\sin}(q)\ :=\ \frac{1}{2}\Big(\tfrac{\pi^2}{2}\,q^{-2}+\tfrac{8\pi^2}{27}\,q^{-3}+\tfrac{\pi^2}{4}\,q^{-4}+\tfrac{\pi^2}{2}\,q^{-5}+\tfrac{\pi^2}{2}\,q^{-6}\Big)\ +\ \frac{\pi^2}{4}\,\frac{q^{-7}}{1-1/q}.
\]
Then Lemma~\ref{lem:K-bounds} implies, for every odd prime $p\ge5$,
\[
C'_{\sin}(q)\ \le\ K(q,p)\ \le\ C''_{\sin}(q).
\]

\begin{lemma}[Strict positivity of $C'_{\sin}(q)$ for all $q>1$]\label{lem:Csin-positive}
For every $q>1$,
\[
C'_{\sin}(q)\;=\;\frac12\!\left(\frac{\pi^2}{2}\,q^{-2}+\frac{8\pi^2}{27}\,q^{-3}+\frac{\pi^2}{4}\,q^{-4}-\Big(\frac{2\pi^2}{3}+(\log q)^2\Big)\,q^{-5}\right)\;>\;0.
\]
\end{lemma}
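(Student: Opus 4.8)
The plan is to clear the manifestly positive factor $\tfrac12 q^{-5}$ and reduce the statement to a comparison between a cubic polynomial and $(\log q)^2$. Since $q>1$ gives $q^{5}>0$, multiplying the definition of $C'_{\sin}$ through by $2q^{5}$ shows that $C'_{\sin}(q)>0$ is \emph{equivalent} to
\[
P(q)\ :=\ \frac{\pi^2}{2}\,q^{3}+\frac{8\pi^2}{27}\,q^{2}+\frac{\pi^2}{4}\,q-\frac{2\pi^2}{3}\ >\ (\log q)^{2}\qquad(q>1).
\]
So the whole lemma rests on controlling $P$ from below by $(\log q)^2$ on $(1,\infty)$.

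The second step is to dispose of the transcendental term by the elementary estimate $0<\log q<q-1$ for $q>1$ (which one may obtain from $\log q=\int_1^q t^{-1}\,dt\le q-1$). Squaring gives $(\log q)^2<(q-1)^2$, so it suffices to prove the purely polynomial inequality $P(q)\ge (q-1)^2$ on $[1,\infty)$, i.e.
\[
h(q)\ :=\ P(q)-(q-1)^2\ =\ \frac{\pi^2}{2}\,q^{3}+\Bigl(\frac{8\pi^2}{27}-1\Bigr)q^{2}+\Bigl(\frac{\pi^2}{4}+2\Bigr)q-\Bigl(\frac{2\pi^2}{3}+1\Bigr)\ \ge\ 0 .
\]

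Third, I would observe that every coefficient of a positive power of $q$ in $h$ is positive: the leading coefficient is $\pi^2/2>0$; the coefficient of $q^{2}$ is $\tfrac{8\pi^2}{27}-1>0$ because $\pi^2>\tfrac{27}{8}$; and the coefficient of $q$ is $\tfrac{\pi^2}{4}+2>0$. Hence $h'(q)=\tfrac{3\pi^2}{2}q^{2}+2\bigl(\tfrac{8\pi^2}{27}-1\bigr)q+\bigl(\tfrac{\pi^2}{4}+2\bigr)>0$ for all $q\ge 0$, so $h$ is strictly increasing on $[1,\infty)$, and it remains only to check the endpoint value $h(1)=P(1)=\tfrac{41\pi^2}{108}>0$ (one line of rational arithmetic over the common denominator $108$: $54+32+27-72=41$). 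Combining, for $q>1$ one gets
\[
P(q)-(\log q)^2\ =\ h(q)+\bigl((q-1)^2-(\log q)^2\bigr)\ \ge\ h(q)\ >\ h(1)\ >\ 0,
\]
and therefore $C'_{\sin}(q)>0$.

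The only place that needs any care — the closest thing to an obstacle — is checking the sign of the $q^{2}$-coefficient $\tfrac{8\pi^2}{27}-1$, which is just the crude bound $\pi^2>27/8=3.375$; everything else is monotonicity of a cubic with nonnegative data together with a single rational evaluation at $q=1$. A minor variant avoids $\log q<q-1$ entirely and instead estimates $(\log q)^2$ against, say, $(q-1)^2$ via the same integral bound, which changes nothing in the argument.
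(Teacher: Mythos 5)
Your proof is correct. It shares the paper's overall skeleton---clear the positive factor $\tfrac12 q^{-5}$, then prove positivity on $(1,\infty)$ by monotonicity plus a single endpoint evaluation at $q=1$---but the key estimate is different. The paper keeps the transcendental term: after dividing by $\pi^2$ it studies $G(q)=\tfrac12 q^3+\tfrac{8}{27}q^2+\tfrac14 q-\tfrac23-\pi^{-2}(\log q)^2$, checks $G(1)>0$, and shows $G'(q)>0$ by bounding the derivative contribution of the log term via $\sup_{q\ge1}(\log q)/q=1/e$. You instead eliminate the log term at the outset with $0<\log q<q-1$, so that the whole lemma reduces to the purely polynomial inequality $h(q)=P(q)-(q-1)^2\ge0$, which follows from positivity of the coefficients of $q^3,q^2,q$ (the only nontrivial check being $\tfrac{8\pi^2}{27}>1$) together with $h(1)=\tfrac{41\pi^2}{108}>0$. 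Your route is marginally more elementary (no extremal fact about $\log q/q$ is needed, only the integral bound $\log q\le q-1$), at the mild cost of giving away a little in the inequality by replacing $(\log q)^2$ with $(q-1)^2$; the paper's version keeps the comparison tighter but relies on the $1/e$ bound and a small numerical margin in $G'$. Both arguments are complete and yield the same conclusion.
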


\begin{proof}
Multiplying by $2q^5/\pi^2$ gives
\[
G(q)\;:=\;\tfrac12 q^3+\tfrac{8}{27}q^2+\tfrac14 q-\tfrac{2}{3}-\frac{(\log q)^2}{\pi^2}.
\]
At $q=1$,
\(
G(1)=\tfrac12+\tfrac{8}{27}+\tfrac14-\tfrac23>0.3796\ldots
\)
and
\[
G'(q)=\tfrac{3}{2}q^2+\tfrac{16}{27}q+\tfrac14-\frac{2\log q}{\pi^2 q}.
\]
The elementary bound $\sup_{q\ge 1}\frac{\log q}{q}=\frac{1}{e}$ yields
\[
\frac{2\log q}{\pi^2 q}\ \le\ \frac{2}{e\pi^2}\ <\ 0.075.
\]
Hence
\[
G'(q)\ \ge\ \tfrac{3}{2}q^2+\tfrac{16}{27}q+\tfrac14-\frac{2}{e\pi^2}\ >\ 1.5+0.592\ldots+0.25-0.075\ >\ 2.26
\]
already at $q=1$, and $G'(q)$ is strictly increasing thereafter. Therefore $G$ is strictly increasing on $(1,\infty)$ and, since $G(1)>0$, one has $G(q)>0$ for all $q>1$. This implies $C'_{\sin}(q)>0$.
\end{proof}

A prime–uniform right–window radius implied by Lemma~\ref{lem:right-positive} is denoted by
\[
\delta_{\sin}(q)\in(0,1)\qquad\text{such that}\qquad
\Fsharp(x,q)\ \ge\ (q-1)q\,\frac{C'_{\sin}(q)}{2}\,(p-x)^2
\quad\bigl(x\in(p-\delta_{\sin}(q),p)\bigr).
\]

\begin{table}[h!]
\centering
\small
\setlength{\tabcolsep}{6pt}
\begin{tabularx}{\linewidth}{@{}l l X@{}}
\toprule
Symbol & Definition & Role in three–window bounds \\ \midrule
$\Sigma(q)$ & $\displaystyle \sum_{i\ge2} q^{-i}/i^2$ & Middle–window Fej\'er mass lower bound \\
$\Lambda_{\sin}(q)$ & $\displaystyle 1+\frac{\log q}{2\pi}$ & Linear corrector majorant from \eqref{eq:corrector-upper-bound} \\
$C'_{\sin}(q)$ & explicit lower bound for $K(q,p)$ & Right–window quadratic positivity (uniform in $p$) \\
$C''_{\sin}(q)$ & explicit upper bound for $K(q,p)$ & Curvature control / truncation accuracy \\
$\delta_{\sin}(q)$ & prime–uniform radius & Ensures full coverage of the right window $I_{\mathrm R}$ \\
\bottomrule
\end{tabularx}
\caption{Standing constants for the zero–free three–window analysis of $\Fsharp(\cdot,q)$ for $q>1$.}
\label{tab:three-window-constants}
\end{table}

\subsubsection{Values and derivatives at integers}

\begin{lemma}[Integer evaluation and first derivative]\label{lem:int-values}
For every integer $m\ge2$ one has
\[
S_q(m)\ =\ \sum_{\substack{d\mid m\\ d\ge 2}} q^{-d},
\qquad
S_q'(m)\ =\ 0,
\]
and consequently
\begin{equation}\label{eq:fsharp-at-integers}
\Fsharp(m,q)\ =\ (q-1)q\!\sum_{\substack{d\mid m\\ 2\le d<m}} q^{-d}\ \ge\ 0,
\end{equation}
with equality if and only if $m$ is prime. Moreover, for every integer $m$,
\[
\big(q^{-x}\bigl(1+(\log q)\,S_1(x)\bigr)\big)\Big|_{x=m}\ =\ q^{-m},
\qquad
\frac{d}{dx}\Big(q^{-x}\bigl(1+(\log q)\,S_1(x)\bigr)\Big)\Big|_{x=m}\ =\ 0.
\]
\end{lemma}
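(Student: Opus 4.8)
The plan is to reduce every assertion to two pointwise facts about the Fej\'er filters at integers --- the projector identity $\phi_i(m)=\mathbf 1_{i\mid m}$ and the vanishing slope $\phi_i'(m)=0$ --- together with the elementary values $S_1(m)=0$ and $S_1'(m)=\cos(2\pi m)=1$, and then to assemble $\Fsharp(m,q)$ directly from its definition.

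First I would treat $S_q$. By definition $S_q(x)=\sum_{i\ge2}q^{-i}\phi_i(x)$, which converges absolutely and locally uniformly for $|q|>1$ by the Weierstrass $M$-test (each $|\phi_i|$ is bounded on compacta and $\sum_{i\ge2}|q|^{-i}<\infty$). Evaluating at the integer $m$ and applying the projector identity from Proposition~\ref{prop:fejer-closed} gives
\[
S_q(m)=\sum_{\substack{i\ge2\\ i\mid m}}q^{-i}=\sum_{\substack{d\mid m\\ d\ge2}}q^{-d},
\]
which is the first claim. For the derivative I would invoke Lemma~\ref{lem:uniform-summability} (equivalently, the Master Taylor bound of Appendix~\ref{app:real-zero-original}), which supplies a constant $C_\varepsilon$ with $|\phi_i'(x)|\le C_\varepsilon$ for all $i\ge2$ and all $x$ in a fixed neighbourhood of $m$; hence $\sum_{i\ge2}q^{-i}\phi_i'(x)$ converges uniformly there, termwise differentiation is legitimate, and $S_q'(m)=\sum_{i\ge2}q^{-i}\phi_i'(m)=0$, since each $\phi_i$ has a local quadratic expansion at every integer (as computed in the proof of Lemma~\ref{lem:derivative-at-prime}), so $\phi_i'(m)=0$.

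Next I would handle the periodic corrector $C(x):=q^{-x}\bigl(1+(\log q)S_1(x)\bigr)$ with $S_1(x)=\sin(2\pi x)/(2\pi)$. Since $S_1(m)=0$ one gets $C(m)=q^{-m}$ at once. Differentiating,
\[
C'(x)=(\log q)\,q^{-x}\bigl(-1-(\log q)S_1(x)+S_1'(x)\bigr),
\]
and at $x=m$ the bracket equals $-1-0+\cos(2\pi m)=0$, so $C'(m)=0$; both corrector identities hold for every integer $m$, not merely $m\ge2$.

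Finally I would combine these via $\Fsharp(x,q)=(q-1)q\bigl(S_q(x)-C(x)\bigr)$ to obtain
\[
\Fsharp(m,q)=(q-1)q\Bigl(\sum_{\substack{d\mid m\\ d\ge2}}q^{-d}-q^{-m}\Bigr)=(q-1)q\!\!\sum_{\substack{d\mid m\\ 2\le d<m}}\!\! q^{-d},
\]
since $m$ is itself a divisor $\ge2$. For real $q>1$ every summand is positive and $(q-1)q>0$, so $\Fsharp(m,q)\ge0$, with equality exactly when the index set $\{d:d\mid m,\ 2\le d<m\}$ is empty --- i.e.\ when the only divisors of $m$ are $1$ and $m$ --- which for $m\ge2$ means $m$ is prime. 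No substantive difficulty arises; the only step needing care is the justification of termwise differentiation of $S_q$ at integers, which is precisely what the uniform bound on $\phi_i'$ near integers supplies.
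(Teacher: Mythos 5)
Your proof is correct and follows the same overall decomposition as the paper: projector identity $\phi_i(m)=\mathbf 1_{i\mid m}$ for $S_q(m)$, vanishing slope of $S_q$ at integers, the explicit computation $S_1(m)=0$, $S_1'(m)=1$ for the corrector, and then assembly of $\Fsharp(m,q)$ with the $d=m$ term cancelled. The only place you diverge is the justification of $S_q'(m)=0$: the paper's proof of this lemma argues that each $F(\cdot,i)$ is ``even about every integer,'' whereas you use the local quadratic expansions of $\phi_i$ at integers together with the uniform bound on $\phi_i'$ near $m$ (Lemma~\ref{lem:uniform-summability}/Master Taylor bound) to legitimize termwise differentiation. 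Your route is in fact the safer one: global evenness of $F(\cdot,i)$ about an arbitrary integer $m$ fails in general (one has $\sin^2(\pi m/i+\pi\eta/i)-\sin^2(\pi m/i-\pi\eta/i)=\sin(2\pi m/i)\sin(2\pi\eta/i)$, so evenness about $m$ requires $i\mid 2m$), and the paper's own ``standard tools'' appendix notes that the local quadratic contact, not global evenness, is the correct mechanism — which is exactly what you use. Citing Lemma~\ref{lem:uniform-summability} is a forward reference within the appendix but creates no circularity, since that lemma does not rely on the present one.
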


\begin{proof}
For $i\mid m$ both numerator and denominator in $\phi_i$ vanish linearly, and $\phi_i(m)=1$ by L’Hospital/Taylor; for $i\nmid m$ the denominator is nonzero and $\phi_i(m)=0$. Thus $S(m)=\sum_{d\mid m,\,d\ge2}q^{-d}$. Using $F(z,i)=\sum_{k=-(i-1)}^{i-1}(i-|k|)\,e^{2\pi \ii k z/i}$ with symmetric real coefficients $(i-|k|)=(i-|{-}k|)$ shows that $F(\cdot,i)$ is real-valued on $\mathbb R$ and even about every integer. Consequently $\phi_i(x)=F(x,i)/i^2$ is even about integers; hence $\phi_i'(m)=0$ and $S'(m)=\sum_{i\ge2}q^{-i}\phi_i'(m)=0$ (uniform convergence on compacta follows from $\sum q^{-i}<\infty$ and standard trigonometric bounds). Since $S_1(n)=0$ and $S_1'(n)=\cos(2\pi n)=1$, and $S_q'(n)=0$ for all integers $n$ by evenness of each $\phi_i$ about integers, the claims for the corrector and \eqref{eq:fsharp-at-integers} follow.
\end{proof}

\begin{corollary}[Prime anchors]\label{cor:prime-anchors}
For every prime $p$ one has $\Fsharp(p,q)=0$, and for every composite $m\ge4$ one has $\Fsharp(m,q)>0$.
\end{corollary}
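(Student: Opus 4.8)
The plan is to read the statement directly off the integer–evaluation identity established in Lemma~\ref{lem:int-values}. That lemma yields, for every integer $m\ge 2$,
\[
\Fsharp(m,q)\ =\ (q-1)q\!\!\sum_{\substack{d\mid m\\ 2\le d<m}}\! q^{-d},
\]
so both halves of the corollary reduce to analysing this finite divisor sum together with the sign of the prefactor $(q-1)q$. Note that the use of Lemma~\ref{lem:int-values} is legitimate because the periodic normalizer vanishes at integers ($S_1(m)=\sin(2\pi m)/(2\pi)=0$), so $\Fsharp$ agrees with the Fej\'er superposition there up to the simple corrector $q^{-m}$ that cancels the improper divisor $d=m$; this is exactly what is recorded in Lemmas~\ref{lem:int-tangent} and \ref{lem:int-values}.

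First I would handle the prime case. If $p$ is prime, its only positive divisors are $1$ and $p$, so there is no divisor $d$ with $2\le d<p$; the sum above is empty and hence $\Fsharp(p,q)=0$. (Alternatively one may invoke $\Fsharp(p,q)=\mathfrak F(p,q)$ together with Theorem~\ref{thm:integer-prime-zero}, but the empty–sum observation is self-contained and requires nothing beyond Lemma~\ref{lem:int-values}.)

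Next, for a composite $m\ge 4$, the definition of compositeness furnishes a divisor $d$ with $1<d<m$, hence $2\le d\le m-1<m$, so the index set of the sum is nonempty. Since $q>1$, every term $q^{-d}$ is strictly positive and the prefactor $(q-1)q$ is strictly positive, whence $\Fsharp(m,q)>0$.

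There is essentially no obstacle here: the only point requiring a word of care is that the divisor window $\{d : 2\le d<m\}$ is nonempty precisely when $m$ is composite, which is immediate from the definition. The substantive analytic content — the integer value of the Fej\'er superposition $S_q$ and the vanishing of the periodic normalizer and its derivative at integers — has already been carried out in Lemmas~\ref{lem:int-tangent} and \ref{lem:int-values}, so the corollary is a bookkeeping consequence of those results.
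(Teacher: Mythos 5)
Your proposal is correct and matches the paper's route: the corollary is read directly off the integer-evaluation identity of Lemma~\ref{lem:int-values}, with the divisor window $\{d:2\le d<m\}$ empty exactly for primes and nonempty (with strictly positive terms, since $q>1$) for composites $m\ge4$. Nothing further is needed.
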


\subsubsection{Local quadratic expansions at integers}

\begin{lemma}[Second derivatives at integers]\label{lem:second-derivatives}
Let $m\ge2$ be an integer.
\begin{enumerate}
\item If $i\nmid m$, then, for $x=m+\eta$ with $|\eta|$ sufficiently small,
\[
\phi_i(x)\ =\ \frac{\pi^2}{i^2\sin^2(\pi m/i)}\,\eta^2+O\big(\eta^4\big),
\qquad
\phi_i''(m)\ =\ \frac{2\pi^2}{i^2\sin^2(\pi m/i)}\ >0.
\]
\item If $i\mid m$ and $i\ge2$, then, for $x=m+\eta$ with $|\eta|$ sufficiently small,
\[
\phi_i(x)\ =\ 1-\frac{\pi^2}{3}\!\left(1-\frac1{i^2}\right)\eta^2+O\big(\eta^4\big),
\qquad
\phi_i''(m)\ =\ -\frac{2\pi^2}{3}\!\left(1-\frac1{i^2}\right)\ <0.
\]
\end{enumerate}
The $O(\,\cdot\,)$ terms are locally uniform in $x$ for each fixed $m$, and after weighting by $q^{-i}$ the resulting series are absolutely and locally uniformly summable. In particular, as $i\to\infty$ with $m$ fixed one has $\sin(\pi m/i)\sim \pi m/i$, hence
\[
\phi_i''(m)=\frac{2\pi^2}{i^2\sin^2(\pi m/i)}=\frac{2}{m^2}+O(i^{-2}),
\]
and therefore $\sum_{i\ge2} q^{-i}\phi_i''(m)$ converges absolutely and locally uniformly.
\end{lemma}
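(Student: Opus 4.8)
The plan is to obtain both local expansions directly from the evenness of $\phi_i$ about integers, computing only the coefficient of $\eta^2$ in each of the two divisibility cases. Write $x=m+\eta$ and $\phi_i(x)=F(x,i)/i^2=\sin^2(\pi x)\big/\bigl(i^2\sin^2(\pi x/i)\bigr)$. The proof of Lemma~\ref{lem:int-values} shows that $F(\cdot,i)$ is real on $\mathbb R$ and even about every integer (its exponential form has symmetric real coefficients $i-|k|$), so $\phi_i(m+\eta)=\phi_i(m-\eta)$ and the Taylor expansion at $m$ contains only even powers of $\eta$: $\phi_i(m+\eta)=\phi_i(m)+\tfrac12\phi_i''(m)\,\eta^2+O(\eta^4)$ with $\phi_i'(m)=0$. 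It then suffices to identify $\phi_i(m)$ (which is $0$ if $i\nmid m$ and $1$ if $i\mid m$, again by Lemma~\ref{lem:int-values}) and the $\eta^2$-coefficient in each case.

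First I would treat the case $i\nmid m$. Since $\sin\bigl(\pi(m+\eta)\bigr)=(-1)^m\sin(\pi\eta)$, the numerator equals $\sin^2(\pi\eta)=\pi^2\eta^2+O(\eta^4)$, while the denominator $i^2\sin^2(\pi x/i)$ is real-analytic and nonvanishing near $x=m$ with value $i^2\sin^2(\pi m/i)\neq0$ there. Naively dividing would leave an $O(\eta^3)$ remainder originating from the linear term of $\sin^2(\pi x/i)$ at $x=m$, but the evenness forces that $\eta^3$-coefficient to vanish, so $\phi_i(m+\eta)=\dfrac{\pi^2}{i^2\sin^2(\pi m/i)}\,\eta^2+O(\eta^4)$ and hence $\phi_i''(m)=\dfrac{2\pi^2}{i^2\sin^2(\pi m/i)}>0$. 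The symmetry argument is the clean way to legitimise the error bookkeeping; alternatively one differentiates the quotient twice.

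For $i\mid m$, write $m=i\ell$, so $\sin(\pi x/i)=(-1)^\ell\sin(\pi\eta/i)$ and thus $\sin^2(\pi x/i)=\tfrac{\pi^2\eta^2}{i^2}\bigl(1-\tfrac{\pi^2\eta^2}{3i^2}+O(\eta^4)\bigr)$, while $\sin^2(\pi x)=\pi^2\eta^2\bigl(1-\tfrac{\pi^2\eta^2}{3}+O(\eta^4)\bigr)$. The factors $\pi^2\eta^2$ and $i^{-2}$ cancel, and expanding the residual quotient $\bigl(1-\tfrac{\pi^2\eta^2}{3}+\cdots\bigr)\big/\bigl(1-\tfrac{\pi^2\eta^2}{3i^2}+\cdots\bigr)=1-\tfrac{\pi^2}{3}\bigl(1-\tfrac1{i^2}\bigr)\eta^2+O(\eta^4)$ gives $\phi_i(m+\eta)=1-\tfrac{\pi^2}{3}\bigl(1-\tfrac1{i^2}\bigr)\eta^2+O(\eta^4)$, whence $\phi_i''(m)=-\tfrac{2\pi^2}{3}\bigl(1-\tfrac1{i^2}\bigr)<0$ for $i\ge2$.

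Finally, only finitely many $i$ divide $m$, so for all large $i$ the first case applies and $\sin(\pi m/i)=\tfrac{\pi m}{i}\bigl(1+O(i^{-2})\bigr)$ yields $\phi_i''(m)=\tfrac{2}{m^2}+O(i^{-2})$. For absolute summability of $\sum_{i\ge2}q^{-i}\phi_i''(m)$ I would record the uniform bound $|\phi_i''(m)|\le\tfrac{2\pi^2}{3}$ for all $i\ge2$: when $i\mid m$ this is immediate, and when $i\nmid m$ one has $\operatorname{dist}(m/i,\mathbb Z)\ge 1/i$, so the concavity estimate $\sin(\pi t)\ge 2t$ on $[0,\tfrac12]$ gives $\sin^2(\pi m/i)\ge 4/i^2$ and $\phi_i''(m)\le\pi^2/2$; then $\sum_{i\ge2}q^{-i}|\phi_i''(m)|\le\tfrac{2\pi^2}{3}\sum_{i\ge2}q^{-i}<\infty$. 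Local uniformity of the $O(\cdot)$ remainders and of this summation is supplied by the Master Taylor bound of Appendix~\ref{app:real-zero-original} together with Lemma~\ref{lem:uniform-summability}. The only step needing care is propagating the $O(\eta^4)$ errors through the two divisions — which the evenness structure handles cleanly — and the elementary lower bound on $\sin^2(\pi m/i)$; I do not anticipate any genuine obstacle.
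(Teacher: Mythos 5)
Your computations of the second derivatives themselves, the sign statements, the large-$i$ asymptotics $\phi_i''(m)=\tfrac{2}{m^2}+O(i^{-2})$, and the summability argument via $\operatorname{dist}(m/i,\mathbb Z)\ge 1/i$ and $\sin(\pi t)\ge 2t$ on $[0,\tfrac12]$ are all correct and essentially the paper's route (the paper expands $\sin(\pi x)$ and $\sin(\pi x/i)$ directly at $x=m$ and uses $\sin(\pi/i)\ge 2/i$ for the tail; your dist-based bound is the right generalization for arbitrary $m$).

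The genuine gap is the symmetry argument in case (1). The function $\phi_i$ is \emph{not} even about every integer when $i\nmid m$: for instance
$\phi_3(4+\eta)=\sin^2(\pi\eta)\big/\bigl(9\sin^2(4\pi/3+\pi\eta/3)\bigr)$, and replacing $\eta$ by $-\eta$ changes the denominator. The trigonometric-polynomial form $\sum_{k}(i-|k|)e^{2\pi\ii kz/i}$ is even about $z=0$ and about multiples of $i$ (and of $i/2$), not about arbitrary integers; the assertion in the proof of Lemma~\ref{lem:int-values} that you lean on is only valid in the weaker form $\phi_i'(m)=0$, which holds because $\sin^2(\pi x)$ has a double zero at $x=m$. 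Consequently your claim that evenness kills the $\eta^3$-coefficient fails: expanding $1/\sin^2\!\bigl(\pi m/i+\pi\eta/i\bigr)$ gives
\[
\phi_i(m+\eta)\;=\;\frac{\pi^2}{i^2\sin^2(\pi m/i)}\,\eta^2\;-\;\frac{2\pi^3\cot(\pi m/i)}{i^3\sin^2(\pi m/i)}\,\eta^3\;+\;O(\eta^4),
\]
so the cubic term is generically nonzero (it vanishes only when $\cos(\pi m/i)=0$, e.g.\ $i=2$, $m$ odd). This does not damage what is actually needed downstream — $\phi_i(m)=0$, $\phi_i'(m)=0$ and $\phi_i''(m)=2\pi^2/(i^2\sin^2(\pi m/i))$ already follow from the expansion to order $\eta^2$, which is all the paper's proof derives, and third-order control is supplied separately by Lemma~\ref{lem:uniform-summability} — but your justification of the $O(\eta^4)$ remainder in case (1) is wrong as written and should either be replaced by tracking the cubic term (and stating the remainder as $O(\eta^3)$) or dropped; note the lemma's own $O(\eta^4)$ in case (1) is only accurate under the same proviso. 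In case (2) the evenness in $\eta$ is genuine, since both $\sin^2(\pi(m+\eta))=\sin^2(\pi\eta)$ and $\sin^2(\pi(m+\eta)/i)=\sin^2(\pi\eta/i)$ are even, so that part of your argument stands.
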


\begin{proof}
Write $\phi_i(x)=\dfrac{\sin^2(\pi x)}{i^2\sin^2(\pi x/i)}$. 

(i) If $i\nmid m$, then $\sin(\pi m/i)\neq 0$; set $x=m+\eta$ and expand numerator and denominator around $\eta=0$:
\[
\sin(\pi x)=\sin(\pi m+\pi\eta)=(-1)^m\sin(\pi\eta)=\pi\eta-\tfrac{\pi^3}{6}\eta^3+O(\eta^5),
\]
\[
\sin\!\Big(\frac{\pi x}{i}\Big)=\sin\!\Big(\frac{\pi m}{i}+\frac{\pi\eta}{i}\Big)
=\sin\!\Big(\frac{\pi m}{i}\Big)\cos\!\Big(\frac{\pi\eta}{i}\Big)
+\cos\!\Big(\frac{\pi m}{i}\Big)\sin\!\Big(\frac{\pi\eta}{i}\Big).
\]
Using $\cos(\tfrac{\pi\eta}{i})=1-\tfrac{\pi^2\eta^2}{2i^2}+O(\eta^4)$ and $\sin(\tfrac{\pi\eta}{i})=\tfrac{\pi\eta}{i}+O(\eta^3)$ gives
\[
\sin\!\Big(\frac{\pi x}{i}\Big)=\sin\!\Big(\frac{\pi m}{i}\Big)+\frac{\pi\eta}{i}\cos\!\Big(\frac{\pi m}{i}\Big)+O(\eta^2).
\]
A direct expansion of $\phi_i(x)$ to order $\eta^2$ yields the stated quadratic term with coefficient $\frac{\pi^2}{i^2\sin^2(\pi m/i)}$ and $\phi_i''(m)=\frac{2\pi^2}{i^2\sin^2(\pi m/i)}$.

(ii) If $i\mid m$, write $m=i\ell$ and $x=m+\eta$. Then
\[
\sin(\pi x)=\pi\eta-\tfrac{\pi^3}{6}\eta^3+O(\eta^5),\qquad
\sin\!\Big(\frac{\pi x}{i}\Big)=\sin\!\Big(\pi\ell+\frac{\pi\eta}{i}\Big)=(-1)^\ell\sin\!\Big(\frac{\pi\eta}{i}\Big)=\frac{\pi\eta}{i}-\frac{\pi^3\eta^3}{6i^3}+O\!\Big(\frac{\eta^5}{i^5}\Big).
\]
Hence
\[
\phi_i(x)=\frac{\big(\pi\eta-\tfrac{\pi^3}{6}\eta^3+O(\eta^5)\big)^2}{i^2\big(\tfrac{\pi\eta}{i}-\tfrac{\pi^3\eta^3}{6i^3}+O(\eta^5/i^5)\big)^2}
=\frac{\pi^2\eta^2\big(1-\tfrac{\pi^2}{3}\eta^2+O(\eta^4)\big)}{\pi^2\eta^2\big(1-\tfrac{\pi^2}{3i^2}\eta^2+O(\eta^4/i^2)\big)}
=1-\frac{\pi^2}{3}\Big(1-\frac{1}{i^2}\Big)\eta^2+O(\eta^4),
\]
which implies the claimed second derivative at $x=m$. Uniformity on compact $\eta$-ranges follows from the displayed expansions.
\end{proof}

\begin{lemma}[Local uniform summability of derivatives up to order $3$]\label{lem:uniform-summability}
Fix $m\in\mathbb N$ and $\varepsilon\in(0,\tfrac12)$. There exists $C=C(m,\varepsilon)>0$ such that for all $x$ with $|x-m|\le \varepsilon$, all $i\ge 2$, and all integers $0\le k\le 3$,
\[
|\phi_i^{(k)}(x)|\ \le\
\begin{cases}
\displaystyle \frac{C}{\sin^2(\pi m/i)}\,\frac{1}{i^{2}}, & i\nmid m,\\[1.2ex]
\displaystyle C, & i\mid m.
\end{cases}
\]
Consequently, for every fixed $q>1$ and each $0\le k\le 3$ the series $\sum_{i\ge2} q^{-i}\phi_i^{(k)}(x)$ converges absolutely and uniformly on $|x-m|\le \varepsilon$, and $x\mapsto \sum_{i\ge2} q^{-i}\phi_i^{(k)}(x)$ is continuous (indeed real-analytic).
\end{lemma}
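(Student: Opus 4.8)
The plan is to bound $\phi_i^{(k)}$ on the interval $|x-m|\le\varepsilon$ by splitting the summation index according to whether $i\mid m$, and then to feed the resulting $x$--independent majorant into the Weierstrass $M$--test; continuity and real--analyticity of the limit then follow as in Appendix~\ref{app:analyticity}.

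For the finitely many $i$ with $i\mid m$ I would simply use the trigonometric--polynomial form $\phi_i(z)=\tfrac1i+\tfrac2{i^2}\sum_{\kappa=1}^{i-1}(i-\kappa)\cos(2\pi\kappa z/i)$ from Proposition~\ref{prop:fejer-closed}. Differentiating $k$ times and estimating termwise gives $|\phi_i^{(k)}(x)|\le 2(2\pi)^k\,i^{-(k+2)}\sum_{\kappa=1}^{i-1}(i-\kappa)\kappa^k\le 2(2\pi)^k$, using the crude bound $\sum_{\kappa=1}^{i-1}(i-\kappa)\kappa^k\le i^{k+2}$; this holds for every real $x$ and every $i\ge2$, so in particular $|\phi_i^{(k)}(x)|\le C$ for $i\mid m$ and $0\le k\le3$. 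This step is routine.

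The substantive case is $i\nmid m$. The first point is a uniform lower bound for the denominator $g_i(x):=\sin(\pi x/i)$: the elementary inequality $|\sin\pi t|\ge 2\,\mathrm{dist}(t,\Z)$ together with $\mathrm{dist}(m/i,\Z)\ge 1/i$ (true precisely because $i\nmid m$) gives $|\sin(\pi m/i)|\ge 2/i$, and the mean value theorem gives $|g_i(x)-\sin(\pi m/i)|\le \pi\varepsilon/i\le \tfrac{\pi\varepsilon}{2}\,|\sin(\pi m/i)|$, whence $|g_i(x)|\ge c_0\,|\sin(\pi m/i)|$ on $|x-m|\le\varepsilon$ with $c_0:=1-\tfrac{\pi\varepsilon}{2}>0$ (here $\varepsilon<\tfrac12$ is exactly what is used). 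Writing $\phi_i(x)=\tfrac1{i^2}\,\sin^2(\pi x)\cdot g_i(x)^{-2}$, I would then expand the $k$--th derivative by Leibniz and Fa\`a di Bruno: every term of $(g_i^{-2})^{(k-j)}$ is a product of derivatives of $g_i$ of orders summing to $k-j$ — which supplies an overall factor $(\pi/i)^{k-j}$ — divided by $g_i^{2+r}$ with $r\le k-j$, while the smooth factor $\sin^2(\pi x)$ contributes only bounded derivatives. Bounding $|g_i|^{-(2+r)}\le c_0^{-5}\,|\sin(\pi m/i)|^{-(2+r)}$ for $k\le3$ and absorbing the surplus inverse powers against the $(\pi/i)^{k-j}$ factor via $|\sin(\pi m/i)|\ge 2/i$ and $i\ge2$, the estimate collapses to $|\phi_i^{(k)}(x)|\le C(m,\varepsilon)\,i^{-2}\sin^{-2}(\pi m/i)$, which is the claimed bound. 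The only real care here is this bookkeeping — making sure every inverse factor of $\sin(\pi m/i)$ beyond the required square is paired with a compensating $1/i$ coming from differentiating $\sin(\pi x/i)$, so that the constant is uniform in $i$ — and I expect this matching of powers to be the one place a careless argument could go wrong; no delicate analysis is needed.

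With both cases in hand the series part is immediate: the majorant $\sum_{i\ge2}q^{-i}\sup_{|x-m|\le\varepsilon}|\phi_i^{(k)}(x)|\le \tfrac14\,C(m,\varepsilon)\sum_{i\ge2}q^{-i}+C\sum_{i\mid m}q^{-i}<\infty$ is finite and independent of $x$ (using $i^2\sin^2(\pi m/i)\ge4$ in the first sum), so the Weierstrass $M$--test gives absolute and uniform convergence of $\sum_{i\ge2}q^{-i}\phi_i^{(k)}(x)$ on $|x-m|\le\varepsilon$ for each $0\le k\le3$, hence continuity of the sum. Real--analyticity I would then obtain by applying the same estimate on a complex neighbourhood of the real axis (each $\phi_i$ is entire and satisfies $|\phi_i^{(k)}(x+\ii y)|\ll_k\cosh(2\pi|y|)$ by the cosine--sum form), exactly as in the analyticity argument of Appendix~\ref{app:analyticity}.
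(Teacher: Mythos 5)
Your proof is correct and follows essentially the same route as the paper: for $i\nmid m$ the paper likewise works from the quotient form, keeps the denominator bounded below by (a constant multiple of) $\sin(\pi m/i)$ on the window, and uses that each derivative of $\sin(\pi x/i)$ contributes a factor $\pi/i$ to absorb the surplus inverse powers of the denominator, before summing against $q^{-i}$ via the $M$--test. Your handling of the $i\mid m$ branch through the cosine--polynomial coefficient bound $2(2\pi)^k$, and your explicit mean-value lower bound $|g_i(x)|\ge(1-\tfrac{\pi\varepsilon}{2})\,|\sin(\pi m/i)|$, merely make precise (indeed slightly cleaner) steps that the paper's terser proof treats by a Taylor-expansion remark and an implicit denominator estimate.
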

\begin{proof}
Write $\phi_i(x)=i^{-2}\bigl(\sin(\pi x)/\sin(\pi x/i)\bigr)^2$. On $|x-m|\le \varepsilon$ and $i\nmid m$, the denominator stays bounded away from $0$ by $\sin(\pi m/i)$, and the map $x\mapsto \sin(\pi x/i)$ has derivatives of order $k$ bounded by $O(i^{-k})$. Repeated differentiation shows that for $k\le 3$ the contribution of the numerator/denominator and their derivatives yields the stated bound $\ll i^{-2}/\sin^2(\pi m/i)$. For $i\mid m$ a direct Taylor expansion at $x=m$ gives $\phi_i(x)=1+O\big((x-m)^2\big)$ and the derivatives up to order $3$ remain $O(1)$ uniformly on $|x-m|\le \varepsilon$. Multiplying by $q^{-i}$ and summing over $i$ gives absolute and uniform convergence by comparison with $\sum q^{-i}$.
\end{proof}

\begin{lemma}[Uniform third-derivative control near primes]\label{lem:third-derivative-uniform}
Fix $q>1$ and $\alpha\in(0,\tfrac12]$. Let $B:=\frac{1}{\,2-\pi\alpha\,}$. Then
\[
T_3(q,\alpha)\;:=\;\sup_{|x-p|\le \alpha}\ \sum_{i\ge2} q^{-i}\, \big|\phi_i^{(3)}(x)\big|
\ \le\ \frac{\pi^3}{\,q-1\,}\Big(4B^2+12B^3+18B^4+24B^5\Big)\ +\ C_{\mathrm{loc}}(\alpha)\,q^{-5},
\]
where $C_{\mathrm{loc}}(\alpha):=\sup_{|x-p|\le\alpha}\big|\phi_p^{(3)}(x)\big|$ is finite by Lemma~\ref{lem:second-derivatives} and the Master Taylor bound.
\end{lemma}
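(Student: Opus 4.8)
The plan is to split $\sum_{i\ge2}q^{-i}\,|\phi_i^{(3)}(x)|$ into the single divisor term $i=p$ and the tail $\{i\ge2:\ i\ne p\}$, treating the two by entirely different mechanisms. For $i=p$: since $F(\cdot,p)$ is entire (Proposition~\ref{prop:fejer-closed}), so is $\phi_p=F(\cdot,p)/p^2$, hence $\phi_p^{(3)}$ is continuous on the compact interval $[p-\alpha,p+\alpha]$ and its supremum there is the finite number $C_{\mathrm{loc}}(\alpha)$ (the divisor-case expansion in Lemma~\ref{lem:second-derivatives} and the Master Taylor bound serving as a quantitative substitute). Since $p\ge5$ one has $q^{-p}\le q^{-5}$, so this term contributes at most $C_{\mathrm{loc}}(\alpha)\,q^{-5}$, which is the second summand in the claim.

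For $i\ge2$ with $i\ne p$ — equivalently $i\nmid p$, since $p$ is prime — the first step is a denominator bound uniform in $i$ and in $p$. Writing $p=im+r$ with $1\le r\le i-1$ gives $\operatorname{dist}(p/i,\Z)\ge 1/i$; perturbing by $|x/i-p/i|\le\alpha/i$ and using $\alpha\le\tfrac12$ yields $\operatorname{dist}(x/i,\Z)\ge(1-\alpha)/i$, which is $\le\tfrac12$ for $i\ge2$, whence $|\sin(\pi x/i)|\ge\sin\!\bigl(\pi(1-\alpha)/i\bigr)$. Because $u\mapsto(\sin u)/u$ is decreasing on $(0,\pi/2]$, the quantity $i\sin(\pi(1-\alpha)/i)=\pi(1-\alpha)\cdot(\sin u)/u$ is smallest at $i=2$, where it equals $2\cos(\pi\alpha/2)\ge 2(1-\tfrac{\pi\alpha}{2})=2-\pi\alpha$; hence
\[
|\sin(\pi x/i)|\ \ge\ \frac{2-\pi\alpha}{i}\ =\ \frac{1}{iB}\qquad(|x-p|\le\alpha,\ i\ge2,\ i\ne p).
\]

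Next I would differentiate $\phi_i(x)=i^{-2}\,\sin^2(\pi x)\,v(x)^{-2}$ with $v(x):=\sin(\pi x/i)$ three times via Leibniz on the product $\sin^2(\pi x)\cdot v^{-2}$. For the first factor the elementary bounds $\bigl|\tfrac{d^k}{dx^k}\sin^2(\pi x)\bigr|\le 1,\pi,2\pi^2,4\pi^3$ for $k=0,1,2,3$ are used; for $v^{-2}$ one expands $(v^{-2})'=-2v'v^{-3}$, $(v^{-2})''=-2v''v^{-3}+6(v')^2v^{-4}$, $(v^{-2})'''=-2v'''v^{-3}+18v'v''v^{-4}-24(v')^3v^{-5}$, and bounds the $v$-derivatives by $|v^{(k)}|\le(\pi/i)^k$ together with $1/|v|\le iB$. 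The $i^{-2}$ prefactor exactly cancels the $i$-powers produced in the top-order pieces: the $k=3$ Leibniz term yields $4\pi^3B^2$, the $k=2$ term $12\pi^3B^3$, the $k=1$ term $18\pi^3B^4$, and the $k=0$ term $24\pi^3B^5$; every remaining cross-term carries an extra $i^{-1}$ — in fact $i^{-2}$, after using $v''=-(\pi/i)^2v$ to cancel one power of the small denominator — so those contribute a lower-order amount. Summing against $\sum_{i\ge2}q^{-i}=\frac{1}{q(q-1)}\le\frac{1}{q-1}$ and adding the $i=p$ contribution gives the stated bound.

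The main obstacle is the bookkeeping in that last step: organising the Leibniz/quotient-rule expansion of $\phi_i^{(3)}$ so that the cancellation of the positive $i$-powers against $i^{-2}$ is transparent and the exact coefficients $(4,12,18,24)$ fall out of the four top-order pieces, while verifying that all lower-order cross-terms are genuinely $O(i^{-2})$ (hence summable with room to spare against the geometric tail). This is also where working with the $\sin^2$-quotient form rather than the unsquared quotient $\sin(\pi x)/\sin(\pi x/i)$ pays off: it keeps the numerator derivatives bounded by clean constants and lets the small denominator be tracked through a single power $v^{-2}$, so that the structural bound $\pi^3\cdot(\text{polynomial of degree }5\text{ in }B)$ is visibly $i$-independent after the $i^{-2}$ normalisation.
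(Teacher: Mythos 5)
Your overall architecture is the same as the paper's: split off the single exceptional index $i=p$ (bounded by $C_{\mathrm{loc}}(\alpha)\,q^{-p}\le C_{\mathrm{loc}}(\alpha)\,q^{-5}$), establish the uniform denominator bound $|\sin(\pi x/i)|\ge(2-\pi\alpha)/i=1/(iB)$ for $i\nmid p$ (your distance-to-integers argument is correct and equivalent to the paper's Lipschitz estimate), and apply Leibniz to $\phi_i=i^{-2}\sin^2(\pi x)\cdot v^{-2}$ with the bounds $|f^{(k)}|\le1,\pi,2\pi^2,4\pi^3$. However, there is a genuine gap in the final accounting. The lemma asserts the inequality with the \emph{exact} per-$i$ constant $\pi^3(4B^2+12B^3+18B^4+24B^5)$, and your decomposition of $(v^{-2})''$ and $(v^{-2})'''$ into monomials in $v',v'',v'''$ and negative powers of $v$, followed by the triangle inequality, does not reach it: even after the substitution $v''=-(\pi/i)^2v$, the pieces $-2v''v^{-3}$, $18v'v''v^{-4}$ and $-2v'''v^{-3}$ leave strictly positive leftover contributions of size roughly $i^{-2}\pi^3(6B^2+20B^3)$ per index. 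These are not covered by the stated bound, and your dismissal of them as "lower-order ... room to spare against the geometric tail" is not a proof: after summing against $q^{-i}$ the leftover is comparable to $\pi^3(6B^2+20B^3)\sum_{i\ge2}q^{-i}i^{-2}$, whereas the only slack in the target is the factor $q$ wasted in $\sum_{i\ge2}q^{-i}=\tfrac{1}{q(q-1)}\le\tfrac{1}{q-1}$; showing the leftover fits inside that slack uniformly in $q>1$ and $\alpha$ requires a separate comparison you never make (and for $q$ close to $1$ it is delicate).

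The paper avoids this entirely by a different organization of the same computation: it writes $g_i=\csc^2(\pi x/i)$ and expresses $g_i',g_i'',g_i^{(3)}$ purely in $\csc$ and $\cot$, namely $g_i''=(\pi/i)^2\bigl(4\csc^2\cot^2+2\csc^4\bigr)$ and $g_i^{(3)}=(\pi/i)^3\bigl(-8\csc^2\cot^3-16\csc^4\cot\bigr)$, and then bounds \emph{every} factor by $Bi$. In that grouping each monomial is homogeneous of the top degree, so $|g_i''|\le6\pi^2B^4i^2$ and $|g_i^{(3)}|\le24\pi^3B^5i^2$ with nothing left over, and the constants $(4,12,18,24)$ drop out exactly. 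Your grouping is the same function rewritten via $\csc^2=1+\cot^2$ (and it also splits $-16\csc^2\cot$ into $-18\cot\csc^2+2\cot\csc^2$, discarding a sign cancellation), which is precisely why absolute values applied to your monomials overshoot. To repair the argument, either redo the derivative bounds in the $\csc/\cot$ form using $|\cot|\le|\csc|\le Bi$, or keep your extra terms explicitly and prove an absorption estimate against the $\tfrac{1}{q(q-1)}$-versus-$\tfrac{1}{q-1}$ slack; as written, the claimed inequality is not established.
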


\begin{proof}
Write $\phi_i(x)=i^{-2}f(x)g_i(x)$ with $f(x)=\sin^2(\pi x)$ and $g_i(x)=\csc^2(\pi x/i)$. The derivatives of $f$ satisfy $|f(x)|\le 1$, $|f'(x)|\le \pi$, $|f''(x)|\le 2\pi^2$, $|f^{(3)}(x)|\le 4\pi^3$. For $i\nmid p$ and $|x-p|\le\alpha\le\tfrac12$,
\[
|\sin(\pi x/i)|\ \ge\ \sin(\pi/i)-\frac{\pi}{i}\,|x-p|\ \ge\ \frac{2-\pi\alpha}{i}\ =\ \frac{1}{Bi}.
\]
Hence $|\csc(\pi x/i)|\le Bi$ and $|\cot(\pi x/i)|\le Bi$. Using
\[
g_i'(\cdot)=-(2\pi/i)\,\csc^2\!\cdot\ \cot\!\cdot,\quad
g_i''(\cdot)=(\pi/i)^2\big(4\,\csc^2\!\cdot\ \cot^2\!\cdot+2\,\csc^4\!\cdot\big),
\]
\[
g_i^{(3)}(\cdot)=(\pi/i)^3\big(-8\,\csc^2\!\cdot\ \cot^3\!\cdot-16\,\csc^4\!\cdot\ \cot\!\cdot\big),
\]
yields the bounds
\[
|g_i(x)|\le B^2 i^2,\quad |g_i'(x)|\le 2\pi B^3 i^2,\quad |g_i''(x)|\le 6\pi^2 B^4 i^2,\quad |g_i^{(3)}(x)|\le 24\pi^3 B^5 i^2.
\]
The Leibniz rule for the third derivative gives
\[
\phi_i^{(3)}=i^{-2}\big(f^{(3)}g+3f''g'+3f'g''+fg^{(3)}\big),
\]
hence, for $i\nmid p$,
\[
|\phi_i^{(3)}(x)|\ \le\ \pi^3\Big(4B^2+12B^3+18B^4+24B^5\Big).
\]
Summing over $i\ge2$, $i\ne p$, and using $\sum q^{-i}\le 1/(q-1)$ proves
\[
\sum_{\substack{i\ge2\\i\ne p}} q^{-i}\,|\phi_i^{(3)}(x)|\ \le\ \frac{\pi^3}{q-1}\Big(4B^2+12B^3+18B^4+24B^5\Big).
\]
For the single exceptional index $i=p$, Lemma~\ref{lem:second-derivatives} and the Master Taylor bound ensure that $\phi_p$ is analytic near $x=p$ with third derivative bounded on $|x-p|\le\alpha$; set $C_{\mathrm{loc}}(\alpha):=\sup_{|x-p|\le\alpha}|\phi_p^{(3)}(x)|<\infty$. Then
\[
q^{-p}\,|\phi_p^{(3)}(x)|\ \le\ C_{\mathrm{loc}}(\alpha)\,q^{-5}.
\]
Combining both contributions yields the stated bound.
\end{proof}

\begin{lemma}[Right endpoint positivity]\label{lem:right-positive}
For every odd prime $p\ge5$,
\[
K(q,p):=\tfrac12\Big(S''(p)-(\log q)^2 q^{-p}\Big)\ \ge\ C'_{\sin}(q)\ >0.
\]
Consequently there exist $\delta_{\sin}(q)\in(0,1)$ and $K_0(q)>0$ such that
\[
\Fsharp(x,q)\ \ge\ (q-1)q\,K_0(q)\,(p-x)^2\qquad\bigl(x\in(p-\delta_{\sin}(q),\,p)\bigr),
\]
with $K_0(q):=\tfrac12\,C'_{\sin}(q)$.
\end{lemma}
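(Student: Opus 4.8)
The plan is to treat the two assertions separately: the scalar bound $K(q,p)\ge C'_{\sin}(q)>0$, and the local quadratic minorant for $\Fsharp$ just to the left of $p$, with a radius $\delta_{\sin}(q)$ that does not depend on $p$. For the first I would simply invoke Lemma~\ref{lem:K-bounds}, whose explicit $p$-independent lower bound for $K(q,p)$ (valid for every odd prime $p\ge5$) is by definition $C'_{\sin}(q)$, together with Lemma~\ref{lem:Csin-positive}, which gives $C'_{\sin}(q)>0$ for all $q>1$. That settles the displayed inequality; all the real work is in the quadratic estimate.

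For the quadratic minorant I would write $\Fsharp(x,q)=(q-1)q\bigl(S_q(x)-C(x)\bigr)$ with $C(x)=q^{-x}\bigl(1+(\log q)S_1(x)\bigr)$, and first extract the integer data from Lemma~\ref{lem:int-values}: $S_q(p)=q^{-p}$, $S_q'(p)=0$, $C(p)=q^{-p}$, $C'(p)=0$, so that $\Fsharp(p,q)=0$ and $(\Fsharp)'(p,q)=0$. One more differentiation (the computation behind Proposition~\ref{prop:local-nonneg}: $C''(p)=-(\log q)^2q^{-p}$ and $S_q''(p)$ from Lemma~\ref{lem:second-derivatives}) gives $(\Fsharp)''(p,q)=2(q-1)q\,K(q,p)\ge 2(q-1)q\,C'_{\sin}(q)>0$. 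The role of the $\tfrac12$ in $K_0(q)$ is to leave room for the cubic remainder, so next I would promote positive curvature at the single point $p$ to a uniform positive lower bound for $(\Fsharp)''$ on a whole left neighbourhood. By Lemma~\ref{lem:uniform-summability} the map $x\mapsto(\Fsharp)''(x,q)$ is real-analytic, and by Lemma~\ref{lem:third-derivative-uniform} together with one further differentiation of the corrector estimates of Lemma~\ref{lem:corrector-deriv-bounds} (which yields $|C^{(3)}(x)|\ll_q q^{-x}$, hence a bound depending only on $q$ for $x$ near $p\ge5$), one obtains $|(\Fsharp)^{(3)}(x,q)|\le M(q)$ on $|x-p|\le\alpha_0$ with $M(q)$ and $\alpha_0\in(0,\tfrac12)$ independent of $p$. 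Setting $\delta_{\sin}(q):=\min\{\alpha_0,\ (q-1)q\,C'_{\sin}(q)/M(q),\ \tfrac12\}$ then forces $(\Fsharp)''(x,q)\ge (q-1)q\,C'_{\sin}(q)=2(q-1)q\,K_0(q)$ throughout $(p-\delta_{\sin}(q),p)$.

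To finish I would integrate this twice: since $\Fsharp(p,q)=(\Fsharp)'(p,q)=0$, the integral form of Taylor's remainder gives $\Fsharp(x,q)=\int_x^p(u-x)\,(\Fsharp)''(u,q)\,du$ for $x<p$, and bounding the integrand below by $2(q-1)q\,K_0(q)$ on $(p-\delta_{\sin}(q),p)$ yields $\Fsharp(x,q)\ge (q-1)q\,K_0(q)\,(p-x)^2$, as claimed.

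The delicate point is the $p$-uniformity of $\delta_{\sin}(q)$ in the third step. The index $i=p$ makes the Fej\'er factor $\phi_p$ nearly degenerate near $x=p$, so a naive third-derivative bound would blow up and force $\delta_{\sin}(q)$ to shrink as $p\to\infty$. This is exactly what Lemma~\ref{lem:third-derivative-uniform} is designed to overcome: it isolates the exceptional term $i=p$, bounds the remaining sum by $\tfrac{\pi^3}{q-1}(4B^2+12B^3+18B^4+24B^5)$ with $B=1/(2-\pi\alpha)$, and controls the $i=p$ contribution by $C_{\mathrm{loc}}(\alpha)\,q^{-5}$, both uniformly over odd primes $p\ge5$. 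With that lemma available, every remaining ingredient is a routine Taylor-with-remainder estimate.
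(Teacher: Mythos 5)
Your proof is correct and follows essentially the same route as the paper: the scalar bound is exactly Lemma~\ref{lem:K-bounds} combined with Lemma~\ref{lem:Csin-positive}, and the quadratic minorant rests on tangent matching at $p$ together with a $p$-uniform third-derivative bound (Lemma~\ref{lem:third-derivative-uniform}), just as in Definition~\ref{def:delta-sin-explicit} and Lemma~\ref{lem:quadratic-dominance}. The only variation is minor: you lower-bound $(\Fsharp)''$ on the whole left window and integrate twice instead of bounding the cubic Taylor remainder, which in fact recovers the full constant $(q-1)q\,K_0(q)$ stated in the lemma, whereas the paper's Lemma~\ref{lem:quadratic-dominance} only delivers $(q-1)q\,K_0(q)/2$.
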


\begin{definition}[Explicit choice of $\delta_{\sin}(q)$]\label{def:delta-sin-explicit}
Let $\lambda=\log q$. Define the finite constant
\[
C_3^{\mathrm{tot}}(q)\ :=\ 4\pi^3\,\frac{q^{-2}}{1-1/q}\ +\ \lambda^3\Bigl(4+\frac{\lambda}{2\pi}\Bigr)\ +\ 6\pi\,\lambda^2\ +\ 4\pi^2\,\lambda.
\]
Set $K_0(q):=\tfrac12\,C'_{\sin}(q)$ and choose
\[
\delta_{\sin}(q)\ :=\ \min\Bigl\{\,1\,,\ \frac{3\,K_0(q)}{C_3^{\mathrm{tot}}(q)}\Bigr\}.
\]
\end{definition}

\begin{lemma}[Quadratic dominance near primes]\label{lem:quadratic-dominance}
Let $q>1$ and $p\ge5$ prime. For all $x\in(p-\delta_{\sin}(q),\,p)$,
\[
\Fsharp(x,q)\ \ge\ (q-1)q\cdot \frac{K_0(q)}{2}\,(p-x)^2 .
\]
\end{lemma}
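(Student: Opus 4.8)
The plan is to expand $\Fsharp(\cdot,q)$ by Taylor's theorem to second order at $x=p$, with a Lagrange third-order remainder, and to absorb that remainder into the constant $K_0(q)/2$ by using the explicit choice of window radius $\delta_{\sin}(q)$ from Definition~\ref{def:delta-sin-explicit}. In effect this is a quantitative, slack-equipped version of Lemma~\ref{lem:right-positive} with the now pinned-down $\delta_{\sin}(q)$, so the work lies entirely in making the remainder estimate effective and uniform in $p$.

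First I would collect the local data at $x=p$ supplied by the preceding results: $\Fsharp(p,q)=0$ and $(\Fsharp)'(p,q)=0$ (Lemma~\ref{lem:int-values}), $(\Fsharp)''(p,q)=2(q-1)q\,K(q,p)$ (Proposition~\ref{prop:local-nonneg}), and $C^{3}$-smoothness of $\Fsharp(\cdot,q)$ on a fixed neighbourhood of $p$ (Lemma~\ref{lem:uniform-summability} for the Fej\'er sum $S_q$, plus the evident smoothness of $x\mapsto q^{-x}(1+(\log q)S_1(x))$). I would also record $K(q,p)\ge C'_{\sin}(q)=2K_0(q)>0$, which follows from Lemma~\ref{lem:right-positive} via Lemma~\ref{lem:K-bounds} and the strict positivity in Lemma~\ref{lem:Csin-positive}. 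Writing $x=p-t$ with $0<t<\delta_{\sin}(q)\le1$, Taylor's theorem gives, for some $\xi\in(p-t,p)$,
\[
\Fsharp(p-t,q)\;=\;(q-1)q\,K(q,p)\,t^{2}\;-\;\tfrac16\,(\Fsharp)'''(\xi,q)\,t^{3}.
\]

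Next I would bound $|(\Fsharp)'''(\xi,q)|\le(q-1)q\,C_3^{\mathrm{tot}}(q)$ uniformly in $p$ and over the whole window. Splitting $\Fsharp/((q-1)q)=S_q(x)-q^{-x}(1+(\log q)S_1(x))$: for the Fej\'er part I would pass to the finite cosine form $\phi_i(x)=i^{-1}+2i^{-2}\sum_{k=1}^{i-1}(i-k)\cos(2\pi kx/i)$, differentiate termwise, and use $2(2\pi)^{3}i^{-5}\sum_{k=1}^{i-1}(i-k)k^{3}\le4\pi^{3}$ to get the global bound $|\phi_i'''(x)|\le4\pi^{3}$ for every $i\ge2$ and every real $x$, hence $|S_q'''(x)|\le4\pi^{3}q^{-2}/(1-1/q)$; for the periodic corrector I would repeat the computation behind Lemma~\ref{lem:corrector-deriv-bounds} one derivative further, using $|S_1|\le\tfrac1{2\pi}$, $|S_1'|\le1$, $|S_1''|\le2\pi$, $|S_1'''|\le4\pi^{2}$ and $q^{-\xi}\le1$ (valid since $\xi>p-1\ge4$), obtaining the bound $\lambda^{3}(4+\tfrac{\lambda}{2\pi})+6\pi\lambda^{2}+4\pi^{2}\lambda$ with $\lambda=\log q$. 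Adding the two contributions reproduces exactly the constant $C_3^{\mathrm{tot}}(q)$ of Definition~\ref{def:delta-sin-explicit}.

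Finally I would combine these into
\[
\Fsharp(p-t,q)\;\ge\;(q-1)q\,t^{2}\bigl(K(q,p)-\tfrac16\,C_3^{\mathrm{tot}}(q)\,t\bigr),
\]
and, since $t<\delta_{\sin}(q)\le3K_0(q)/C_3^{\mathrm{tot}}(q)$ gives $\tfrac16 C_3^{\mathrm{tot}}(q)\,t<\tfrac12K_0(q)$ while $K(q,p)\ge2K_0(q)$, the bracket exceeds $2K_0(q)-\tfrac12K_0(q)=\tfrac32K_0(q)\ge\tfrac12K_0(q)$, which is the asserted inequality (with room to spare; values $t\ge1$ do not arise because $\delta_{\sin}(q)\le1$). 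The one step needing care is the uniform third-derivative estimate: one wants a bound on $|\phi_i'''|$ with no dependence on $i$, on $p$, or on the point $\xi$, and the clean way to obtain it is precisely the finite cosine representation of $F(x,i)$ rather than the csc-quotient form, whose derivatives are less transparently controlled near the window; this is exactly what underlies the first term of $C_3^{\mathrm{tot}}(q)$. With that estimate in hand, the rest is routine Taylor-remainder bookkeeping together with unwinding the definition of $\delta_{\sin}(q)$.
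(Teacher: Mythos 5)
Your proposal is correct and follows essentially the same route as the paper's proof: a second-order Taylor expansion at $x=p$ (using the vanishing value and slope there and the coefficient $K(q,p)\ge K_0(q)$), with the third-order remainder bounded by $\tfrac16 C_3^{\mathrm{tot}}(q)\,|p-x|^3$ and absorbed via the explicit choice $\delta_{\sin}(q)\le 3K_0(q)/C_3^{\mathrm{tot}}(q)$. The only difference is that you actually re-derive the constant $C_3^{\mathrm{tot}}(q)$ (the $|\phi_i'''|\le 4\pi^3$ bound from the cosine form plus the Leibniz bound for the corrector), a verification the paper's proof invokes from Definition~\ref{def:delta-sin-explicit} without spelling out.
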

\begin{proof}
Write $x=p+\varepsilon$ with $\varepsilon\in(-\delta_{\sin}(q),0)$.
A Taylor expansion at $p$, using $S_q'(p)=0$ and
\[
\frac{d}{dx}\Big(q^{-x}\big(1+(\log q)\,S_1(x)\big)\Big)\Big|_{x=p}=0,
\]
yields
\[
\Fsharp(p+\varepsilon,q)=(q-1)q\Big(K(q,p)\,\varepsilon^2 + R_3(\varepsilon)\Big),
\]
where $K(q,p)=\tfrac12\big(S_q''(p)-(\log q)^2 q^{-p}\big)\ge K_0(q)$ and the third-order remainder satisfies
$|R_3(\varepsilon)|\le \frac{C_3^{\mathrm{tot}}(q)}{6}\,|\varepsilon|^3$.
By the choice of $\delta_{\sin}(q)$, $(C_3^{\mathrm{tot}}(q)/6)\,|\varepsilon|\le K_0(q)/2$, hence
\(
\Fsharp(p+\varepsilon,q)\ge (q-1)q\cdot \frac{K_0(q)}{2}\,\varepsilon^2.
\)
\end{proof}

\subsubsection{No companion zeros in prime windows \texorpdfstring{$(p-1,p)$}{(p-1,p)} for \texorpdfstring{$p\ge5$}{p≥5}}

Fix an odd prime $p\ge5$ and split $(p-1,p)$ by $\alpha\in(0,\tfrac12]$ into
\[
I_{\mathrm L}:=(p-1,\,p-1+\alpha],\qquad
I_{\mathrm M}:=[p-1+\alpha,\,p-\alpha],\qquad
I_{\mathrm R}:=[p-\alpha,\,p).
\]

\begin{lemma}[Left-window lower bound via $i=2$]\label{lem:left-only}
Fix an odd prime $p\ge5$ and $\alpha\in(0,\tfrac12]$. On $I_{\mathrm L}=(p-1,\,p-1+\alpha]$,
\[
S_q(x)\ \ge\ q^{-2}\,\phi_2(x)\ \ge\ q^{-2}\,\cos^2\!\Big(\frac{\pi\alpha}{2}\Big).
\]
\end{lemma}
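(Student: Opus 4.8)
The plan is to prove the two inequalities in turn; both reduce to elementary facts. For the first, $S_q(x)\ge q^{-2}\phi_2(x)$, I would use that every Fej\'er filter is nonnegative on the real line: by the closed form of Proposition~\ref{prop:fejer-closed}, $\phi_i(x)=i^{-2}\bigl(\sin(\pi x)/\sin(\pi x/i)\bigr)^2$ is a nonnegative real number for each real $x$ (equal to $1$ or $0$ at integers, by the divisor-filter identity). Since $q>1$ gives $q^{-i}>0$ for all $i$, the series $S_q(x)=\sum_{i\ge2}q^{-i}\phi_i(x)$ has only nonnegative terms, so discarding every term except $i=2$ only decreases the sum; this yields $S_q(x)\ge q^{-2}\phi_2(x)$ for all $x\in\mathbb R$, in particular on $I_{\mathrm L}$.

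For the second bound I would first record the closed form $\phi_2(x)=\cos^2(\pi x/2)$, which follows at once from the double-angle identity $\sin(\pi x)=2\sin(\pi x/2)\cos(\pi x/2)$ (this is the $i=2$ case already noted in Remark~\ref{rem:qminus1-morphology}). The decisive point is that $p$ is odd, so $p-1$ is an even integer, say $p-1=2m$ with $m\ge2$. Writing $x=p-1+t=2m+t$ with $t\in(0,\alpha]$, one gets $\cos(\pi x/2)=\cos(\pi m+\pi t/2)=(-1)^m\cos(\pi t/2)$, hence $\phi_2(x)=\cos^2(\pi t/2)$. Since $\alpha\le\tfrac12$, the argument $\pi t/2$ ranges over $(0,\pi\alpha/2]\subseteq(0,\pi/4]$, an interval on which $t\mapsto\cos(\pi t/2)$ is positive and strictly decreasing; therefore $\cos(\pi t/2)\ge\cos(\pi\alpha/2)>0$ and $\phi_2(x)\ge\cos^2(\pi\alpha/2)$. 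Chaining the two estimates gives $S_q(x)\ge q^{-2}\phi_2(x)\ge q^{-2}\cos^2(\pi\alpha/2)$ on all of $I_{\mathrm L}$, which is the claim.

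I do not expect any genuine obstacle: the lemma is essentially bookkeeping, and no analytic input beyond nonnegativity of the Fej\'er filters and the trivial closed form of $\phi_2$ is needed. The one point requiring care is the parity of $p$: the clean lower bound $\cos^2(\pi\alpha/2)$ is available precisely because $p-1$ is even, so that $\phi_2=\cos^2(\pi\cdot/2)$ attains its maximal value $1$ exactly at the left endpoint $x=p-1$ and decays monotonically across $I_{\mathrm L}$; for $p=2$ (the even prime) the same estimate would fail near the left endpoint, which is why the hypothesis restricts to odd primes $p\ge5$. Likewise the hypothesis $\alpha\le\tfrac12$ is used exactly to keep $\pi\alpha/2\le\pi/4<\pi/2$, so that $\cos$ stays positive on the window and squaring preserves the inequality.
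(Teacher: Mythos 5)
Your proof is correct; the paper states this lemma without giving an explicit proof, and your argument (nonnegativity of every Fej\'er term so only $i=2$ is kept, the closed form $\phi_2(x)=\cos^2(\pi x/2)$, evenness of $p-1$, and monotonicity of $\cos$ on $(0,\pi\alpha/2]\subseteq(0,\pi/4]$) is exactly the elementary reasoning the three-window argument relies on. One small aside: the parity of $p$ is indeed the essential hypothesis, but odd $p\ge3$ already suffices for this particular bound; the stronger restriction $p\ge5$ is needed elsewhere in the window analysis, not here.
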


\begin{lemma}[Left and middle lower bounds]\label{lem:left-middle}
Fix an odd prime $p\ge5$ and the partition $(p-1,p)=I_{\mathrm L}\cup I_{\mathrm M}\cup I_{\mathrm R}$ defined above. 
For $x\in I_{\mathrm M}$,
\[
|\sin(\pi x)|\ \ge\ \sin(\pi\alpha),\qquad 
|\sin(\pi x/i)|\ \le\ 1.
\]
Hence
\[
\phi_i(x)\ =\ \frac{1}{i^2}\left(\frac{\sin(\pi x)}{\sin(\pi x/i)}\right)^{\!2}
\ \ge\ \frac{\sin^2(\pi\alpha)}{i^2},
\]
and therefore
\[
S(x)\ \ge\ \sin^2(\pi\alpha)\sum_{i\ge2}\frac{q^{-i}}{i^2}.
\]
\end{lemma}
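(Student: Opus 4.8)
The plan is to reduce the lemma to two elementary trigonometric facts on the middle window $I_{\mathrm M}=[p-1+\alpha,\,p-\alpha]$ and then sum termwise against the geometric weights. First I would observe that $I_{\mathrm M}$ contains no integer: since $\alpha\in(0,\tfrac12]$ one has $p-1<p-1+\alpha$ and $p-\alpha<p$, so $x\notin\mathbb Z$ for $x\in I_{\mathrm M}$, and consequently $x/i\notin\mathbb Z$ for every $i\ge2$, i.e. $\sin(\pi x/i)\neq0$. Thus on $I_{\mathrm M}$ each $\phi_i$ is genuinely given by the ratio $\sin^2(\pi x)/(i^2\sin^2(\pi x/i))$ with no removable–singularity subtlety, and the denominator being small only makes $\phi_i$ larger, which is harmless for a lower bound.

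Next I would establish the numerator estimate $|\sin(\pi x)|\ge\sin(\pi\alpha)$ on $I_{\mathrm M}$. Writing $x=p-1+t$ with $t\in[\alpha,1-\alpha]$ and using the shift identity
\[
\sin(\pi x)=\sin\!\bigl(\pi(p-1)+\pi t\bigr)=(-1)^{p-1}\sin(\pi t)=\sin(\pi t),
\]
where $(-1)^{p-1}=1$ because $p$ is an odd prime, the claim becomes $\sin(\pi t)\ge\sin(\pi\alpha)$ for $t\in[\alpha,1-\alpha]$. This follows since $s\mapsto\sin(\pi s)$ is nonnegative and concave on $[0,1]$ and symmetric about $s=\tfrac12$, so on the sub–interval $[\alpha,1-\alpha]$ it attains its minimum at the two endpoints, where it equals the common value $\sin(\pi\alpha)=\sin(\pi(1-\alpha))$. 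The complementary bound $|\sin(\pi x/i)|\le1$ is trivial.

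Combining the two estimates gives, for every $i\ge2$ and every $x\in I_{\mathrm M}$,
\[
\phi_i(x)=\frac{1}{i^2}\left(\frac{\sin(\pi x)}{\sin(\pi x/i)}\right)^{\!2}\ \ge\ \frac{\sin^2(\pi\alpha)}{i^2}.
\]
Finally I would multiply by $q^{-i}$ and sum: for $q>1$ the majorant $\sum_{i\ge2}q^{-i}/i^2=\Sigma(q)$ converges absolutely—this is exactly the regime in which Lemma~\ref{lem:uniform-summability} guarantees locally uniform convergence of the series defining $S_q$—so termwise comparison is legitimate and yields $S_q(x)\ge\sin^2(\pi\alpha)\,\Sigma(q)$ for all $x\in I_{\mathrm M}$, as asserted.

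The argument is essentially routine; the only point requiring a moment's care is the numerator estimate, namely justifying that $\sin(\pi t)$ is minimized on $[\alpha,1-\alpha]$ at its endpoints—done by concavity and symmetry rather than calculus—and that the parity of $p$ removes the sign ambiguity in the shift identity. Non-vanishing of the denominators, the trivial upper bound $|\sin|\le1$, and the interchange of the pointwise lower bound with the convergent sum are all immediate from the setup and the earlier summability lemmas, so I do not anticipate any genuine obstacle.
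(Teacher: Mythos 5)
Your proposal is correct and follows essentially the same route as the paper, which states the two elementary bounds ($|\sin(\pi x)|\ge\sin(\pi\alpha)$ on the middle window via periodicity, and $|\sin(\pi x/i)|\le 1$) and then sums termwise against the positive weights $q^{-i}$; your filling in of the endpoint-minimum argument for $\sin(\pi t)$ on $[\alpha,1-\alpha]$ and the non-vanishing of the denominators just makes explicit what the paper leaves implicit.
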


\begin{proof}[Proof of Theorem~\ref{thm:no-companions}]
Partition $(p-1,p)$ into the left, middle, and right windows $I_{\mathrm L}$, $I_{\mathrm M}$, and $I_{\mathrm R}$ using $\alpha:=\alpha(q)\in\bigl(0,\min\{\tfrac12,\delta_{\sin}(q)\}\bigr)$.

\emph{Right window.} By Lemma~\ref{lem:right-positive} and $\alpha\le\delta_{\sin}(q)$,
\(
\Fsharp(x,q)\ge (q-1)q\,K_0(q)\,(p-x)^2>0
\)
for $x\in I_{\mathrm R}$.

\emph{Left window.} Lemma~\ref{lem:left-only} gives
\(
S(x)\ge q^{-2}\cos^2(\frac{\pi\alpha}{2})
\)
for $x\in I_{\mathrm L}$. The corrector admits the linear majorant
\(
q^{-x}\bigl(1+(\log q)\,|S_1(x)|\bigr)\le q^{-x}\Lambda_{\sin}(q)\le q^{-(p-1)}\Lambda_{\sin}(q).
\)
Therefore the strict positivity on $I_{\mathrm L}$ follows once
\[
q^{-2}\cos^2\!\Big(\tfrac{\pi\alpha}{2}\Big)
\;\ge\; q^{-(p-1)}\,\Lambda_{\sin}(q),
\]
i.e.
\(
p\ \ge\ 3+\frac{\log\!\big(\Lambda_{\sin}(q)/\cos^2(\frac{\pi\alpha}{2})\big)}{\log q}.
\)

\emph{Middle window.} Lemma~\ref{lem:left-middle} gives
\(
S(x)\ge \sin^2(\pi\alpha)\,\Sigma(q)
\)
for $x\in I_{\mathrm M}$, and the corrector satisfies
\(
q^{-x}\bigl(1+(\log q)\,|S_1(x)|\bigr)\le q^{-(p-1+\alpha)}\Lambda_{\sin}(q)
\)
since $x\ge p-1+\alpha$. Positivity on $I_{\mathrm M}$ is ensured by
\[
\sin^2(\pi\alpha)\,\Sigma(q)\ \ge\ q^{-(p-1+\alpha)}\,\Lambda_{\sin}(q),
\]
i.e.
\(
p\ \ge\ 1+\alpha+\frac{\log\!\big(\Lambda_{\sin}(q)/(\sin^2(\pi\alpha)\Sigma(q))\big)}{\log q}.
\)

Taking the maximum of the three lower bounds on $p$ and of $5$ yields $P_0(q,\alpha)$, which guarantees $\Fsharp(x,q)>0$ on $I_{\mathrm L}\cup I_{\mathrm M}\cup I_{\mathrm R}$ and $\Fsharp(p,q)=0$.
\end{proof}

\subsubsection{Global real-zero structure}

\begin{proof}[Proof of Theorem~\ref{thm:real-zero-structure}]
Item (1) follows from Lemma~\ref{lem:int-values}. Item (2) follows from Theorem~\ref{thm:no-companions} together with the quadratic contact at $x=p$ established in Lemma~\ref{lem:right-positive} and the subsequent local expansion, which yields multiplicity two at the boundary.
\end{proof}

\begin{remark}[Scope of real-zero results and relation to $\mathfrak F$]\label{rem:scope-f-vs-fsharp}
The global real-zero structure on the real axis is established here for the tangent-matched indicator $\Fsharp(\cdot,q)$. 
For the original indicator $\mathfrak F(\cdot,q)$ (without the periodic normalizer) this global real-zero structure is not claimed; in particular, $\mathfrak F$ exhibits companion zeros in prime windows $(p-1,p)$ as analysed in Section~\ref{sec:companion-displacement}. 
The elimination in Theorem~\ref{thm:no-companions} concerns $\Fsharp$ only and holds for all sufficiently large primes, uniformly for each fixed $q>1$. For $\mathfrak F$, uniqueness of the zero in $(p-1,p)$ follows under an explicit middle-window dominance analogous to \eqref{cond:M0}, which ensures positivity on $I_{\mathrm L}\cup I_{\mathrm M}$ and strict negativity in a right neighbourhood of $p$, forcing exactly one sign change in $(p-1,p)$.
\end{remark}

\begin{remark}[Terminology on companion zeros]
References to ``companion zeros'' pertain exclusively to the original indicator $\mathfrak F(\cdot,q)$ (without the periodic normalizer). For the tangent–matched variant $\Fsharp(\cdot,q)$, left–window companions in $(p-1,p)$ are eliminated under the explicit hypotheses of Theorem~\ref{thm:no-companions}.
\end{remark}

\section{Alternating and Negative Parameter Regimes}\label{app:q-negative}
\noindent\textit{Supports:} Proposition~\ref{prop:qminus1-entire-sym}, Proposition~\ref{prop:qminus1-dirichlet}, and Proposition~\ref{prop:qlessminus1-dirichlet}.

\begin{proof}[Proof of Proposition~\ref{prop:qminus1-entire-sym}]
For $0<r<1$ set
\[
\mathfrak S_r(z)=2\sum_{i\ge2}(-r)^i\,\frac{F(z,i)}{i^2},\qquad
\mathfrak F_r(z)=\mathfrak S_r(z)-2\,e^{-\,\ii\pi z}.
\]
Let $\phi_\infty(z):=(\sin(\pi z)/(\pi z))^2$ with $\phi_\infty(0)=1$. On compact $z$-sets,
\[
\frac{F(z,i)}{i^2}-\phi_\infty(z)=O(i^{-2})\qquad(i\to\infty),
\]
since $\sin(\pi z/i)=(\pi z/i)+O(i^{-3})$ uniformly in $z$, hence
\[
\frac{F(z,i)}{i^2}=\left(\frac{\sin(\pi z)}{\pi z}\right)^2+O(i^{-2})=\phi_\infty(z)+O(i^{-2}).
\]
Therefore,
\[
2\sum_{i\ge2}(-r)^i\Big(\frac{F(z,i)}{i^2}-\phi_\infty(z)\Big)
\]
converges absolutely and locally uniformly up to $r\uparrow 1$ by comparison with $\sum i^{-2}$. Moreover,
\[
2\sum_{i\ge2}(-r)^i\,\phi_\infty(z)=2\,\phi_\infty(z)\,\frac{r^2}{1+r}\ \xrightarrow{\,r\uparrow1\,}\ \phi_\infty(z).
\]
Thus the Abel limits exist locally uniformly and
\[
\mathfrak S(z,-1)=\sum_{i\ge2}(-1)^i\Big(\frac{F(z,i)}{i^2}-\phi_\infty(z)\Big)+\phi_\infty(z),
\]
is entire; hence so is
\[
\mathfrak F(z,-1)=\mathfrak S(z,-1)-2e^{-\,\ii\pi z}.
\]
The conjugation symmetry follows from $F(-z,i)=F(z,i)$ and real weights $(-1)^i$.
\end{proof}

\begin{proof}[Proof of Proposition~\ref{prop:qminus1-dirichlet}]
Specialise the Polylog--Zeta factorisations to $q=-1$. Since $(q-1)q=2$ and $\operatorname{Li}_s(-1)=\sum_{i\ge1}(-1)^i/i^s=-\eta(s)$, the identities follow by Abel summation (the manipulations are justified on $\Re s>1$).
\end{proof}

\begin{proof}[Proof of Proposition~\ref{prop:qlessminus1-dirichlet}]
Write $q=-Q$ with $Q>1$. For $\Re s>1$,
\[
\sum_{n\ge1}\frac{\mathfrak S(n,-Q)}{n^s}
=\sum_{n\ge1}\frac{1}{n^s}\sum_{i\ge2}\frac{(q-1)q}{i^2}q^{-i}F(n,i)
=(q-1)q\Big(\sum_{i\ge2}\frac{q^{-i}}{i^s}\Big)\Big(\sum_{n\ge1}\frac{1}{n^s}\Big),
\]
where absolute convergence allows Fubini and the Fejér filter identity $F(n,i)/i^2=\mathbf 1_{i\mid n}$ has been used. With $q=-Q$ and $(q-1)q=Q(Q+1)$,
\[
\sum_{i\ge2}\frac{q^{-i}}{i^s}
=\sum_{i\ge2}\frac{(-1)^i Q^{-i}}{i^s}
=\operatorname{Li}_s(-1/Q)-(-1)Q^{-1}
=-\eta_Q(s)+Q^{-1}.
\]
The subtraction of the analytic correction term $(q-1)q\,q^{-n}$ on the $n$-side contributes $(q-1)q\,q^{-1}n^{-s}$ on the Dirichlet side, which exactly cancels the $Q^{-1}$ above. Therefore
\[
\sum_{n\ge2}\frac{\mathfrak F(n,-Q)}{n^s}
=(q-1)q\,(\zeta(s)-1)\,\bigl(\operatorname{Li}_s(-1/Q)+Q^{-1}\bigr)
=Q(Q+1)\,(\zeta(s)-1)\,\bigl(-\eta_Q(s)\bigr),
\]
which is the claimed factorisation. Absolute convergence on $\Re s>1$ justifies all rearrangements.
\end{proof}


\vfill
\par\noindent
\small
Sebastian Fuchs \\[1ex]
\textit{DOI:} \texttt{\href{https://doi.org/10.5281/zenodo.17122709}{10.5281/zenodo.17122709}} \\ 
\textit{ORCID:} \texttt{\href{https://orcid.org/0009-0009-1237-4804}{0009-0009-1237-4804}} \\

\end{document}